\definecolor{gray}{rgb}{0.1,0.1,.1}
\newcommand{\figdraft}{false}%
\newcommand{\figfile}[1]{#1}%
\theoremstyle{plain}%
\newtheorem{theorem}{Theorem}[]%
\newtheorem{lemma}[theorem]{Lemma}%
\newtheorem{proposition}[theorem]{Proposition}%
\newtheorem*{result}{Main result}%
\newtheorem{definition}[theorem]{Definition}%
\newtheorem{assumption}[theorem]{Assumption}%
\newtheorem{remark}[theorem]{Remark}%
\newcommand{\tsnu}{t_*}
\def\longrightharpoonup{
\relbar\joinrel\joinrel\relbar\joinrel\joinrel\relbar\joinrel\joinrel\relbar\joinrel\joinrel\relbar\joinrel\joinrel\relbar\joinrel\joinrel\rightharpoonup}
\newcommand{\xrightharpoonup}[1]{\stackrel{#1}{\longrightharpoonup}}
\newcommand{\sgn}{\mathrm{sgn}}
\newcommand{\fspace}[1]{{\mathsf{#1}}}
\newcommand{\fspaceL}{\fspace{L}}
\newcommand{\fspaceC}{\fspace{C}}
\newcommand{\av}{\mathrm{av}}
\newcommand{\ol}[1]{{\overline{#1}}}
\newcommand{\Rset}{{\mathbb{R}}}
\newcommand{\Nset}{{\mathbb{N}}}
\newcommand{\ocinterval}[2]{\left(#1,\,#2\right]}%
\newcommand{\cointerval}[2]{\left[#1,\,#2\right)}%
\newcommand{\oointerval}[2]{\left(#1,\,#2\right)}%
\newcommand{\ccinterval}[2]{\left[#1,\,#2\right]}%
\newcommand{\bccinterval}[2]{\big[#1,\,#2\big]}%
\newcommand{\DO}[1]{{O\at{#1}}}
\newcommand{\Do}[1]{{o\at{#1}}}
\newcommand{\nDO}[1]{{O\nat{#1}}}
\newcommand{\threshold}{{\rm thres}}
\newcommand{\med}{{\rm med}}
\newcommand{\ini}{{\rm ini}}
\newcommand{\const}{{\rm const}}
\newcommand{\tdots}{{...}}%
\newlength{\mhpicDwidth}
\newlength{\mhpicDvsep}
\newlength{\mhpicDhsep}
\newlength{\mhpicPwidth}
\newlength{\mhpicPvsep}
\newlength{\mhpicPhsep}
\newlength{\mhpicWhsep}
\newcommand{\pair}[2]{{\left({#1},\,{#2}\right)}}
\newcommand{\bpair}[2]{{\big({#1},\,{#2}\big)}}
\newcommand{\at}[1]{{\left({#1}\right)}}
\newcommand{\nat}[1]{(#1)}
\newcommand{\bat}[1]{{\big(#1\big)}}
\newcommand{\Bat}[1]{{\Big(#1\Big)}}
\newcommand{\ato}[1]{{\left[{#1}\right]}}
\newcommand{\triple}[3]{{\left({#1},\,{#2},\,{#3}\right)}}
\newcommand{\btriple}[3]{{\big({#1},\,{#2},\,{#3}\big)}}
\newcommand{\ul}[1]{\underline{#1}}
\newcommand{\D}{\displaystyle}
\newcommand{\bigpar}{\par\quad\newline\noindent}
\newcommand{\norm}[1]{\|{#1}\|}
\newcommand{\abs}[1]{\left|{#1}\right|}
\newcommand{\babs}[1]{\big|{#1}\big|}
\newcommand{\Babs}[1]{\Big|{#1}\Big|}
\newcommand{\nabs}[1]{|{#1}|}
\newcommand{\dint}[1]{\,\mathrm{d}#1}
\newcommand{\Om}{{\Omega}}
\newcommand{\al}{{\alpha}}
\newcommand{\be}{{\beta}}
\newcommand{\ga}{{\gamma}}
\newcommand{\eps}{{\varepsilon}}
\newcommand{\si}{{\sigma}}
\newcommand{\calD}{\mathcal{D}}
\newcommand{\calE}{\mathcal{E}}
\newcommand{\calI}{\mathcal{I}}
\newcommand{\calL}{\mathcal{L}}
\newcommand{\calR}{\mathcal{R}}
\newcommand{\calS}{\mathcal{S}}
\newcommand{\calV}{\mathcal{V}}
\begin{document}%
%
%
\title{Rate-independent dynamics and Kramers-type phase transitions \\
in nonlocal Fokker-Planck equations with dynamical control}%
\date{\today}%
\author{ %
Michael Herrmann\footnote{{\scriptsize{Institute for Computational and Applied Mathematics, University of M\"unster, {\tt michael.herrmann@uni-muenster.de}}}}%
\and
Barbara Niethammer\footnote{{\scriptsize{Institute for Applied Mathematics, University of Bonn, {\tt niethammer@iam.uni-bonn.de}}}}%
\and
Juan J.L. Vel\'azquez\footnote{{\scriptsize{Institute for Applied Mathematics, University of Bonn, {\tt velazquez@iam.uni-bonn.de}}}}%
}%
\maketitle%
%
%
%
\vspace{-5mm}%
\begin{abstract}%
The hysteretic behavior of many-particle systems with non-convex free energy 
can be modeled by nonlocal Fokker-Planck equations that involve two small parameters 
and are driven by a time-dependent constraint. In this paper we consider the fast reaction 
regime related to Kramers-type phase transitions and show that the dynamics in the 
small-parameter limit can be described by a rate-independent evolution equation with hysteresis. For the proof 
we first derive mass-dissipation estimates by means of  Muckenhoupt constants, 
formulate conditional stability estimates, and characterize 
the mass flux between the different phases in terms of moment estimates that encode
large deviation results. Afterwards we combine all these partial results and
establish the dynamical stability of localized peaks as well as sufficiently strong compactness 
results for the basic macroscopic quantities.
\end{abstract}%
%
%
\begin{minipage}[t]{0.15\textwidth}%
\emph{\small Keywords:} %
\end{minipage}%
\begin{minipage}[t]{0.8\textwidth}%
\small nonlocal Fokker-Planck equations, multi-scale dynamics of PDE and gradient flows, \\
mass-dissipation estimates, Kramers' formula in time-dependent potentials,\\
rate-independent models for hysteresis and phase transitions

\end{minipage}%
\medskip
\newline\noindent
\begin{minipage}[t]{0.15\textwidth}%
\emph{\small MSC (2000):} %
\end{minipage}%
\begin{minipage}[t]{0.8\textwidth}%
\small
35B40, 
35Q84, 
82C26, 
82C31 
\end{minipage}%
%
%
%
%
%
\renewcommand{\contentsname}{\small{Contents}}
\setcounter{tocdepth}{3} %
\setcounter{secnumdepth}{3}
{\scriptsize{\tableofcontents}}%
%

\section{Introduction}\label{sect:intro}
%

It is an ubiquitous and intriguing question in the mathematical analysis 
under which conditions the dynamics of a given
high-dimensional system with small parameters
can be described by low-dimensional, reduced evolution equations. 
In this paper we answer this question, at least partially, for a particular example, namely
the Fokker-Planck equation
\begin{align}
\label{Eqn:FP1}\tag{FP$_1$}
\tau\partial_t\varrho\pair{t}{x}=\partial_x\Bat{\nu^2\partial_x\varrho\pair{t}{x} + \bat{H^\prime\at{x}-\si\at{t}}\varrho\pair{t}{x}},
\end{align}
where $\tau$ and $\nu$ are the small parameters and
 $x\in\Rset$ is a one-dimensional state variable. Moreover, $H$ is supposed to be a double-well potential and $\si$ is 
a dynamical multiplier chosen such that the solution complies with
\begin{align}
\label{Eqn:FP2P}\tag{FP$_2$}
\int_\Rset x\varrho\pair{t}{x}\dint{x}=\ell\at{t},
\end{align}
where $\ell$ is a prescribed control function. This dynamical constraint
is, for admissible initial data, equivalent to the mean-field formula
\begin{align}
\label{Eqn:FP2}\tag{FP$_2^{\,\prime}$}
\si\at{t}=\int_\Rset H^\prime\at{x}\varrho\pair{t}{x}\dint{x}+\tau\dot\ell\at{t},
\end{align}
which turns \eqref{Eqn:FP1} into a nonlocal, nonlinear, and non-autonomous PDE.
\par
Nonlocal Fokker-Planck equations like \eqref{Eqn:FP1}+\eqref{Eqn:FP2P} have been introduced in \cite{DGH11a} in order to model the hysteretic behavior of many-particle storage systems such as modern Lithium-ion batteries (for the physical background, we also refer to
\cite{DJGHMG10}). In this context, $x\in\Rset$ describes 
the thermodynamic state of a single particle (nano-particle made of iron-phosphate in the battery case),
$H$ is the free energy of each particle, and $\nu$ accounts for entropic effects. Moreover, 
$\varrho$ is the probability density of a many-particle ensemble and
the dynamical control $\ell$ reflects that the whole system is driven by some external process
(charging or discharging of the battery).
\par
Since $H$ is non-convex, the dynamics of \eqref{Eqn:FP1}+\eqref{Eqn:FP2P} can be rather involved as they are
related 
to three different time scales, namely the small relaxation time $\tau$, the time 
scale of the control $\ell$ (which is supposed to be of order $1$),
and the Kramers time scale. The latter is given by
\begin{align}
\label{Eqn:KramersScale}
\tau\exp\at{\frac{\min\{h_-\at{\si},h_+\at\si\,\}}{\nu^2}}
\end{align}
and corresponds, as discussed below, to probabilistic transitions between the different wells of the
time-dependent effective potential with energy barriers $h_-\at\si$, $h_+\at\si$. 
\par
 In this paper we restrict our considerations to the fast reaction regime, in which the particular scaling relation between $\tau$ and $\nu$ guarantees that the time scale \eqref{Eqn:KramersScale} is of order $1$ for certain values of $\si$, and study the small-parameter limit  $\tau,\nu\to0$. Our main 
result is that  the microscopic PDE \eqref{Eqn:FP1}+\eqref{Eqn:FP2P} 
can be replaced by a low-dimensional dynamical system which
governs the evolution of the dynamical multiplier $\si$ and the phase fraction 
\begin{align*}
\mu\at{t}:=\int_{\text{right stable region}}\varrho\pair{t}{x}\dint{x}-\int_{\text{left stable region}}\varrho\pair{t}{x}\dint{x},
\end{align*}
where the stable regions (or `phases') are the connected components of $\{x\in\Rset\;:\;H^{\prime\prime}\at{x}>0\}$.
The micro-to-macro transition studied here has much in common with those in \cite{PT05,Mie11b,MT12}, which likewise
derive macroscopic models for the dynamics of phase transitions from 
microscopic gradient flows with non-convex energy and external driving. Our microscopic system, however, is different as it involves the diffusive term $\nu^2\partial_x^2\varrho$, which causes specific effects and necessitates the use
of different methods. 
\paragraph{Many-particle interpretation}%
It is well-known, see for instance \cite{Ris89}, that the linear Fokker-Planck equation \eqref{Eqn:FP1} with $\si\equiv0$
is equivalent to the Langevin equation $\tau\dint{}x=-H^\prime\at{x}\dint{t}+\sqrt{2}\nu\dint{W}$, where
$W$ denotes a standard Brownian motion in $\Rset$. In other words, 
$\varrho$ describes for $\si\equiv0$ the statistics of a large ensemble of identical particles, 
where each single particle is driven by the gradient flow of $H$ but also affected by stochastic fluctuations. If both $\tau$ and $\nu$ are small, the deterministic force is very strong and dominates the stochastic term. Most of the particles are therefore located near either one of the two local minima of $H$ and
random transitions between these wells are very unlikely. For the linear
Fokker-Planck equation this means that $\varrho$ quickly approaches 
a meta-stable state composed of two narrow peaks; the time scale 
of this fast relaxation process is $\tau$, the width of the peaks scales with $\nu$, and the 
mass distribution between the meta-stable peaks depends on the initial data for $\varrho$.
In the long run, however, the stochastic fluctuations imply that the mass distribution between the peaks
converges to its unique equilibrium value. Kramers investigated this problem 
in the context of chemical reactions in \cite{Kra40} and derived his seminal formula
for the effective mass flux between the two phases, see formula \eqref{Eqn:KramersFormula} below. For more details on Kramers' formula and the connection to the theory of large deviations we refer, 
for instance, to \cite{HTB90, Ber13}.  
\par
For the nonlocal Fokker-Planck equation \eqref{Eqn:FP1}, the dynamical constraint \eqref{Eqn:FP2P} 
augments the deterministic force by the nonlocal coupling term $\si$. The energy landscape for the 
single-particle evolution is therefore no longer given by $H$ but by the effective potential 
\begin{align}
\label{Eqn:EffectivePotential}
H_\si\at{x}:=H\at{x}-\si{x},
\end{align}
which is, depending on the value of $\si$, either a non-convex single-well or a double-well potential. 
The crucial point is that the number, the positions, and the energies of local minima
of $H_\si$ are time-dependent as $\si$ is non-constant. Moreover, we have no a priori information 
about the evolution of $H_\si$ because $\si$ is not given explicitly but only implicitly via \eqref{Eqn:FP2}. Any
asymptotic statement about the small-parameter dynamics of nonlocal Fokker-Planck equations therefore
requires a careful analysis of the  time scale on which $\si$ is changing. In the parameter 
regime studied below we are able to establish a substitute to $\abs{\dot{\si}}\leq\nDO{\dot\ell}=\DO{1}$, which then
implies, roughly speaking, that $\si$ evolves regularly and hence that there is always a clear but state-dependent relation between
the different time scales in the problem.
\paragraph{Parameter regimes}%
The different dynamical regimes for $0<\nu,\tau\ll1$
have been investigated by the authors in \cite{HNV12} using formal asymptotic analysis. According to these results,
there exist two main regimes, called \emph{slow reactions} and \emph{fast reactions}, as well as several sub-regimes
related to limiting or borderline cases, see Table \ref{Tbl:ScalingRegimes}. In each small-parameter regime, numerical simulations as well as heuristic arguments indicate that the probability density $\varrho\pair{t}{\cdot}$ can -- at least at most of the times $t$ -- 
be described by either one or two narrow and localized peaks. We therefore expect that
the small-parameter dynamics can always be characterized in terms of the positions and masses of 
these peaks. The key dynamical features, however, depend very much on the scaling relation 
between $\tau$ and $\nu$ and the temporal behavior of the macroscopic quantities $\si$ and $\mu$
is rather different for $0<\tau\ll\nu^{2/3}\ll1$ and $0<\nu^{2/3}\ll\tau\ll1$; see Figure \ref{Fig:NumSim} for an illustration and \cite{HNV12} for further numerical results.
\begin{table}[ht!]%
\centering{%
\begin{tabular}{llll}%
\emph{scaling law}&\emph{regime} &\emph{}%
\\\hline\\%
$\displaystyle\tau=\frac{a_\#}{\log{1}/{\nu}}$&slow reactions & single-peak limit for $a_\#\geq a_\threshold$
\medskip\\%
$\tau=\nu^{p_\#}$&borderline regime & either $a_\#\to0$ (for $p_\#<\tfrac23$) or $h_\#\to0$ (for $p_\#>\tfrac23$)
\medskip\\%
$\displaystyle\tau=\exp\at{-\frac{h_\#}{\nu^2}}$&fast reactions&  quasi-stationary limit for 
$h_\#\geq h_\threshold$
\end{tabular}%
}%
\caption{The different small-parameters regimes as described in \cite{HNV12}, where 
$a_\#$, $p_\#$, and $h_\#$ denote positive scaling parameters which are, at least to leading order, independent of $\nu$; the borderline regime can be viewed as the limiting case in which either $a_\#$ or $h_\#$ tends to $0$ as $\nu\to0$. 
}%
\label{Tbl:ScalingRegimes}%
\end{table}%
\par
In the fast reaction regime, each localized peak is always confined to one of the stable regions and only
a very small fraction of the mass is inside the spinodal interval $\{x\,:\,H^{\prime\prime}\at{x}<0\}$. Moreover, 
for most values of $\si$ the mass distribution between both peaks does not change and the dynamical constraint 
just alters the peak positions by adapting them to the instantaneous values of $\ell$. For certain values of $\si$, however, 
the peaks do exchange mass since the particles that pass through the spinodal region by stochastic fluctuations 
produce a non-negligible mass flux between the different phases. This 'leaking' or 'tunneling' process is governed by 
Kramers formula and described in \S\ref{sect:HeuristicDynamics} in more detail.
\par
The peak dynamics in the slow reaction regime is completely different for two reasons. First,
Kramers-type phase transitions are not feasible anymore since the corresponding time 
scale \eqref{Eqn:KramersScale} is, for all relevant values of $\si$, much larger than $1$.
The second important observation is that the dynamical constraint \eqref{Eqn:FP2P} can drive a localized peak 
with positive mass into the spinodal region. The emerging unstable peak remains narrow for a while but 
splits suddenly into two stable peaks due to a subtle interplay between the parabolic and the kinetic terms 
of \eqref{Eqn:FP1}; see \cite{HNV12} for more details.
\par
The macroscopic behavior in the borderline regime can be regarded as the limiting case of both the fast and the slow reaction regime; see also the respective comments in the caption of Figure \ref{Fig:NumSim}. More precisely, for algebraic scaling relations $\tau\sim\nu^{p_\#}$
with $0<p_\#<2/3$, the localized peaks can still penetrate the spinodal interval but they split into stable peaks after a short time
depending on $\nu$. For $p_\#>2/3$, however, the Kramers mechanism still produces macroscopic phase transitions but only when the minimal energy barrier of $H_\si$ is algebraically small in $\nu$. Finally, 
phase transitions happen in the quasi-stationary limit whenever the two wells of $H_\si$ have the same energy, that means when $\si$ attains a certain value which depends on $H$ but not on $h_\#$.
\par
\begin{figure}[ht!]%
\centering{%
\includegraphics[height=.20\textwidth, draft=\figdraft]{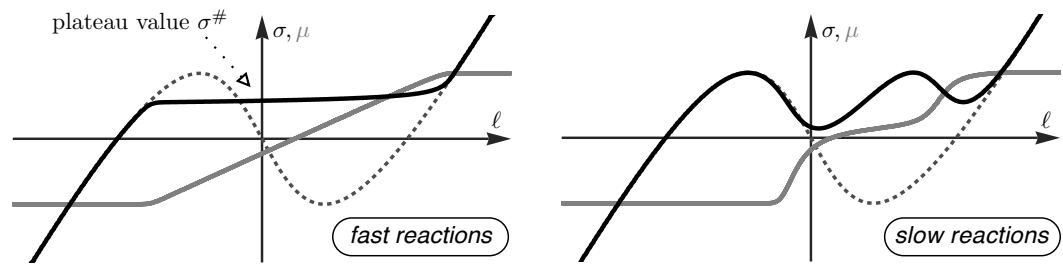}%
}%
\caption{Two typical numerical solutions for strictly increasing $\ell$ and an 
even double-well potential $H$, see \cite{HNV12} for more details. The black and gray 
solid lines represent the curves  $t\mapsto\bpair{\ell\at{t}}{\si\at{t}}$ and 
$t\mapsto\bpair{\ell\at{t}}{\mu\at{t}}$, respectively; the dashed line is the graph of the 
bistable function $H^\prime$. \emph{Left.} In the fast reaction regime, there exists a time interval 
in which $\si$ is basically constant with value $\si^\#$, where $\si^\#$ is determined by the scaling parameter $h_\#$. During this `plateau dynamics', a 
Kramers-type phase transition transfers mass form the left stable region to the right stable 
one. \emph{Right.} In the slow reaction regime, the 
function $\si$ is oscillatory since the phase transition is now driven by different phenomena, namely the 
formation and eventual splitting of unstable peaks (which requires a different definition of the phase 
fraction $\mu$). \emph{Remark on limiting cases.} The borderline regime $\tau=\nu^{p_\#}$ corresponds
to the maximal plateau value $\si^\#=\si^*$, where $\si^*$ is the local maximum of $H^\prime$. For 
$p_\#<\tfrac23$, however, this maximal plateau is superimposed by many oscillations with very small amplitudes. In the 
quasi-stationary limit $h_\#\geq h_\threshold$ we find the minimal plateau value $\si^\#=0$.
}%
\label{Fig:NumSim}
\end{figure}%
\paragraph{Limit dynamics}
In what follows we solely consider the fast reaction regime, that means we suppose that $\tau$ and $\mu$ are coupled as in the bottom row of  Table \ref{Tbl:ScalingRegimes} by some positive scaling parameter $h_\#$, which is basically independent of $\nu$ but not too large (see Assumption \ref{Ass:Tau} below). 
The key dynamical observation is that Kramers-type phase transitions
can manifest on the macroscopic scale only if the dynamical multiplier $\si$ attains one of two 
critical values $\si_\#$ and $\si^\#$, which depend on $H$ and $h_\#$, because otherwise the corresponding microscopic mass flux between the two phases is either too small or too large. The limit dynamics for $\nu\to0$ is therefore completely characterized by the flow rule
\begin{align*}
\dot\mu\at{t}\leq0\quad\text{for}\quad\si\at{t}=\si_\#,
\qquad
\dot\mu\at{t}\geq0\quad\text{for}\quad\si\at{t}=\si^\#,
\qquad
\dot\mu\at{t}=0\quad\text{otherwise},
\end{align*}
and pointwise relations $\ell\at{t}=\calL\bpair{\si\at{t}}{\mu\at{t}}$ 
that encode the dynamical constraint \eqref{Eqn:FP2P}. These findings can be summarized as follows.
\begin{result}
Under natural assumptions on $H$, the control $\ell$,  and the initial data, the triple
$\triple{\ell}{\si}{\mu}$ satisfies in the limit $\nu\to0$ of the fast reaction regime a closed rate-independent 
evolution equation with hysteresis. Moreover, the limit solution is unique provided that
the initial data are well-prepared.
\end{result}
We expect
that our methods and results 
can be generalized to both the borderline regime and the quasi-stationary limit for the price of more notational and analytical effort; see the discussion at the end of \S\ref{sect:limitmodel}. The small-parameter dynamics of the
slow reaction regime, however, has to be studied in a completely different framework; it is currently unclear, at least to the authors, how the formal results from \cite{HNV12} can be justified rigorously. 
\bigpar
The rest of the paper is organized as follows. In \S\ref{sect:prelim} we give a more detailed introduction to the problem.
More  precisely, in \S\ref{sect:potential} and \S\ref{sect:solutions} we specify our assumptions and review the existence theory for \eqref{Eqn:FP1}+\eqref{Eqn:FP2} with arbitrary $\nu,\tau>0$ as it is developed in Appendix \ref{app:ExistenceAndUniqueness}. In \S\ref{sect:HeuristicDynamics} we then
explain heuristically the key dynamical features in the fast reaction regime and proceed with a precise formulation of the limit model in \S\ref{sect:limitmodel}.  The main analytical work is done in \S\ref{sect:AuxResults} and \S\ref{sect:limit} but we postpose the discussion of the underlying ideas to \S\ref{sect:overview}. 
%
%
%
\section{Preliminaries}\label{sect:prelim}
%
%
In this section we introduce our assumption on $H$, $\ell$, and the initial data,  
and summarize some important properties of solutions to the non-local Fokker-Plank equation.
Moreover, we discuss the dynamics in the fast reaction regime on a heuristic level
and formulate the rate-independent limit model. 
%

\subsection{Assumptions on the potential, the parameter, and the data}\label{sect:potential}
Throughout this paper we assume that $H$ is a double-well potential with the following properties, see 
Figure \ref{Fig:IntroPotential} for an illustration.

\begin{figure}[ht!]%
\centering{%
\includegraphics[height=.275\textwidth, draft=\figdraft]{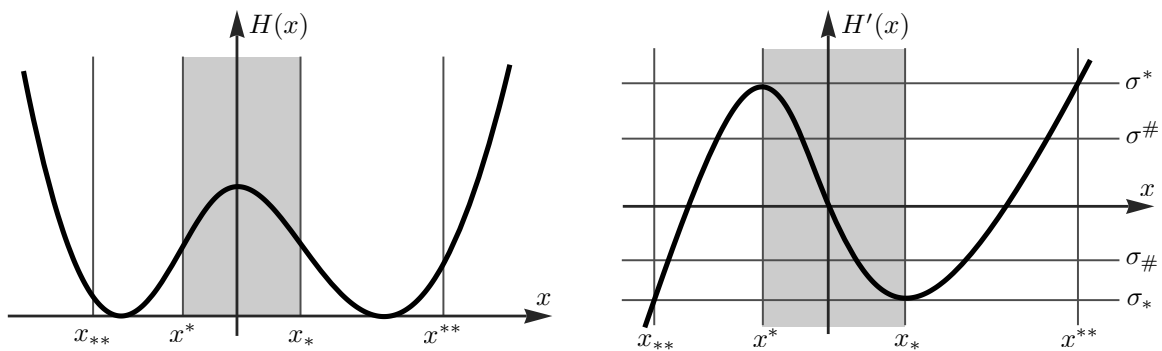}%
}%
\caption{Cartoon of a double-well potential $H$ that satisfies Assumption \ref{Ass:Potential} with $\si_\#$ and $\si^\#$ as in
Assumption \ref{Ass:Tau}. The shaded regions illustrate the spinodal (or unstable) interval $\oointerval{x^*}{x_*}$.%
}%
\label{Fig:IntroPotential}
\end{figure}%

\begin{assumption}[properties of $H$]
\label{Ass:Potential}\quad
\begin{enumerate}
\item  $H$ is three times continuously differentiable, attains a local maximum at $x=0$ and
its global minimum at precisely two points. 
\item $H^{\prime\prime}$ has only two zeros $x^*$, $x_*$ with
 $x^*<0<x_*$ such that \begin{enumerate}
\item $H^{\prime\prime}\at{x}<0$ for any $x$ with $x^*<x<x_*$, and
\item $H^{\prime\prime}\at{x}>0$ for all $x<x^*$ and all $x>x_*$.
\end{enumerate}
We set $\si^*:=H^\prime\at{x^*}$, $\si_*:=H^\prime\at{x_*}$ 
and the properties of $H^{\prime\prime}$ imply $\si_*<0<\si^*$.
\item $H^\prime$ is asymptotically linear in the sense of
$\lim_{x\to\pm\infty}H^{\prime\prime\prime}\at{x}=0$ and 
$\lim_{x\to\pm\infty}H^{\prime\prime}\at{x}=c_\pm$ for some constants $c_\pm$. In particular, there exist unique numbers $x^{**}$ and $x_{**}$
such that 
\begin{align*}
H^\prime\at{x_{**}}=H^\prime\at{x_*},\qquad 
H^\prime\at{x^{**}}=H^\prime\at{x^*}
\end{align*}
as well as $x_{**}<x^*$, $x^{**}>x_*$.
\end{enumerate}
\end{assumption}
The assumption that the two wells of $H$ are global minima is not crucial and can always
be guaranteed by means of elementary transformations. In fact, \eqref{Eqn:FP1} and \eqref{Eqn:FP2} are, for any given $c\in\Rset$, invariant under $H\rightsquigarrow H+c{x}$, $\si\rightsquigarrow \si+c$. Moreover, 
by an appropriate shift in $x$ we can always ensure that the local maximum is attained at $x=0$. The assumption that $H$ grows quadratically at infinity is of course more restrictive and excludes, for instance, polynomial double-well potentials.
We made this assumption in order to simplify some technical arguments, especially in the context of moment estimates, as it allows us control the tail contributions of $\varrho$ quite easily. However, we expect that our convergence result is also true for more general double-well potentials $H$ provided that
those grow super-quadratically or that the initial data
decay sufficiently fast.
\bigpar
As a direct consequence of Assumption \ref{Ass:Potential} we can define three functions $X_-$, $X_0$, and $X_+$ 
such that $H^\prime\circ X_j=\mathrm{id}$.
\begin{remark}[functions $X_-$, $X_0$, and $X_+$]
\label{Rem:PropertiesOfXi}
The inverse of $H^\prime$ has three strictly monotone and smooth branches
\begin{align*}
X_-:\ocinterval{-\infty}{\si^*}\to\ocinterval{-\infty}{x^*},\quad
X_0:\ccinterval{\si_*}{\si^*}\to\ccinterval{x^*}{x_*},\quad
X_+:\cointerval{\si_*}{+\infty}\to\cointerval{x_*}{+\infty}.
\end{align*}
In particular, we have
\begin{enumerate}
\item  $X_+\at{\si}-X_-\at{\si}\geq{c}$ for all $\si\in\ccinterval{\si_*}{\si^*}$,
\item $c\leq X_\pm^\prime\at{\si}\leq{C_\eps}$ for all $\si\in\ccinterval{\si_*+\eps}{\si^*-\eps}$,
\item $\abs{\si_2-\si_1}\leq {C} \babs{X_\pm\at{\si_2}-X_\pm\at{\si_1}}$ for
all $\si_1,\si_2$ in the domain of $X_\pm$,
\end{enumerate}
for any $\eps$ with $0<\eps<\tfrac{1}{2}\at{\si^*-\si_*}$ and some constants $c$, $C$ and $C_\eps$.
\end{remark}
We also observe that the effective potential $H_\si$ from \eqref{Eqn:EffectivePotential} 
is a genuine double-well potential for $\si\in\oointerval{\si_*}{\si^*}$, where the two energy barriers are given by
\begin{align}
\label{Eqn:DefEnergyBarriers}
h_\pm\at{\si}:=H_\si\at{X_0\at\si}-H_\si\at{X_\pm\at\si}
\end{align}
and feature prominently in Kramers' formula for the mass flux between the two phases; see Figure \ref{Fig:EffectivePotential} and discussion in \S\ref{sect:HeuristicDynamics}. For $\si<\si_*$ and $\si>\si^*$, however, 
$H_\si$ has only a single well located at $X_-\at{\si}$ and $X_+\at\si$, respectively.
\begin{remark}[properties of $h_\pm$]
The functions $h_-$ and $h_+$ are well-defined and smooth on the interval
$\ccinterval{\si_*}{\si^*}$ with $h_-\at{0}=h_+\at{0}>0$. Moreover, $h_-$ is strictly decreasing with $h_-\at{\si^*}=0$ and
$h_+$ is strictly increasing with $h_+\at{\si_*}=0$.
\end{remark}

\begin{figure}[ht!]%
\centering{%
\includegraphics[height=.275\textwidth, draft=\figdraft]{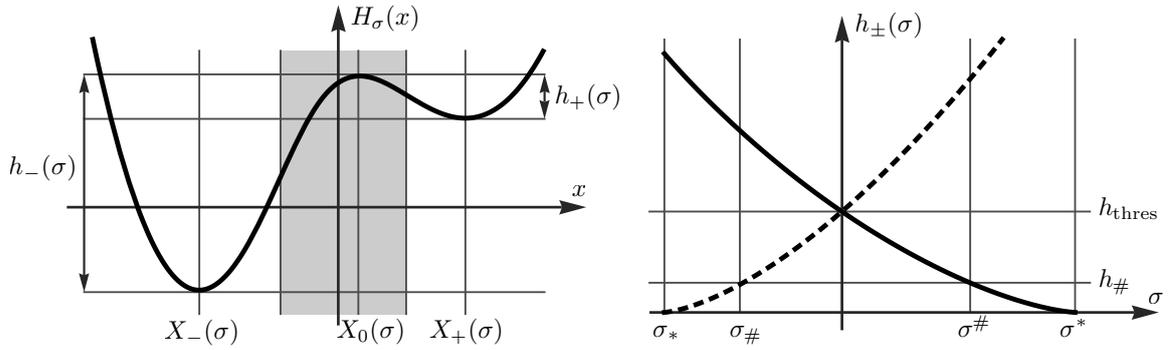}%
}%
\caption{\emph{Left panel}. 
Cartoon of the effective potential $H_\si$ for  $\si_*<\si<0$. For $0<\si<\si^*$, the barrier $h_+$ exceeds $h_-$,
while $\si<\si_*$ or $\si>\si^*$ implies that $H_\si$ is a nonconvex single-well potential.   
\emph{Right panel.} The  energy barriers  $h_-$ (solid line) and $h_+$ (dashed line) as functions of $\si$. The values $\si_\#$ and $\si^\#$ are defined by $h_-\at{\si^\#}=h_+\at{\si_\#}=h_\#$ with $h_\#=-\lim_{\nu\to0}\nu^2\ln\tau$.
}%
\label{Fig:EffectivePotential}
\end{figure}%

We finally describe the coupling between $\tau$ and $\nu$ and introduce the values $\si_\#$ and $\si^\#$.
\begin{assumption}[coupling between $\tau$ and $\nu$]
\label{Ass:Tau}
The parameter $\tau$ is positive, depends on $\nu$, and
satisfies 
\begin{align*}
\nu^2\ln\tau\quad\xrightarrow{\;\;\nu\to0\;\;}\quad - h_\#
\end{align*}
for some $h_\#$ with $0<h_\#<h_\threshold:=h_\pm\at{0}$. In particular, there exist $\si_\#$ and $\si^\#$ such that
\begin{align*}
\si_*<\si_\#<0<\si^\#<\si^*,\qquad h_\#=h_-\nat{\si^\#}=h_+\at{\si_\#},
\end{align*}
and hence $h_\#<h_{\threshold}<\min\{h_-\at{\si_\#},h_+\nat{\si^\#}\}$.
\end{assumption}

\subsection{Existence and properties of solutions }\label{sect:solutions}

It is well established, see \cite{JKO97,JKO98}, that the linear Fokker-Planck equation without dynamical constraint -- that is
\eqref{Eqn:FP1} with $\si\at{t}\equiv0$ --  is the Wasserstein gradient flow of the energy
\begin{align}
\label{Eqn:DefEnergy}
\calE\at{t}&:=
\nu^2\int_\Rset\varrho\pair{t}{x}\ln\varrho\pair{t}{x}\dint{x}+
\int_\Rset H\at{x}\varrho\pair{t}{x}\dint{x}.
\end{align}
Similarly, the non-driven variant of the nonlocal Fokker-Planck equations -- 
that is \eqref{Eqn:FP1}+\eqref{Eqn:FP2} with $\dot\ell\at{t}\equiv0$ -- 
can be regarded as the Wasserstein gradient flow of $\calE$ restricted to the constraint manifold 
$\int_\Rset x\varrho\dint{x}=\ell$. In the general case $\dot{\ell}\neq0$, however, the energy $\calE$ is no longer strictly
decreasing as \eqref{Eqn:FP1} and \eqref{Eqn:FP2} imply the energy law
\begin{align}
\notag%
\tau\dot{\calE}\at{t}
&=\int_\Rset
\tau\partial_t\varrho\pair{t}{x}\Bat{\nu^2\ln \varrho\pair{t}{x} +\nu^2 +H\at{x}}\dint{x}
\\\notag
&=-\int_\Rset
\Bat{\nu^2 \partial_x \varrho\pair{t}{x} + \bat{H^\prime\at{x}-\si\at{t}}\varrho\pair{t}{x}}\at{\nu^2\frac{\partial_x \varrho\pair{t}{x}}{\varrho\pair{t}{x}}+H^\prime\at{x}}
\dint{x}
\\\notag%
&=-\calD\at{t}-\si\at{t}\int_\Rset 
\bat{H^\prime\at{x}-\si\at{t}}\varrho\pair{t}{x}\dint{x}
\\\label{Eqn:EnergyBalance}%
&=-\calD\at{t}+\tau\si\at{t}\dot\ell\at{t},
\end{align}
where the dissipation is given by
\begin{align}
\label{Eqn:DefDissipation}
\calD\at{t}&:=\int_\Rset
\frac{\Bat{\nu^2\partial_x\varrho\pair{t}{x}+\bat{H^\prime\at{x}-\si\at{t}}\varrho\pair{t}{x}}^2}{\varrho\pair{t}{x}}\dint{x}.
\end{align}
In particular, the inequality $\dint{E}\leq  \sigma\dint\ell$ holds along each trajectory and can be viewed as the Second Law of Thermodynamics, evaluated for the free energy of the many-particle ensemble in the presence of the dynamical 
control \eqref{Eqn:FP2P}; see \cite{DGH11a} for the physical interpretation of \eqref{Eqn:DefEnergy} and \eqref{Eqn:EnergyBalance}.  
Moreover, it has been shown in \cite{DHMRW14} that the nonlocal
equations \eqref{Eqn:FP1}+\eqref{Eqn:FP2P}
can in fact be interpreted as a \emph{constraint gradient system} with proper Lagrangian multiplier $\si$.
\par
The energy-dissipation relation \eqref{Eqn:EnergyBalance} is essential for passing to the limit $\nu\to0$ since it reveals that the dissipation $\calD$ is very small with respect to the $\fspaceL^1$-norm and hence,
loosely speaking, also small at most of the times. For linear Fokker-Planck equations without constraint,
the underlying gradient structure can be used to establish $\Gamma$-convergence 
as $\tau\to0$. The resulting evolution equation is a one-dimensional reaction ODE for the phase fraction $\mu$ and 
equivalent to Kramers' celebrated formula, see \cite{PSV10,AMPSV12,HN11}. However, 
it is not clear to us whether this variational approach can be adapted to the present case with dynamical constraint;
the methods developed here employ the estimate for $\calD$ but make no further use of the gradient flow interpretation
of \eqref{Eqn:FP1}+\eqref{Eqn:FP2P}.
\bigpar
Since the system \eqref{Eqn:FP1}+\eqref{Eqn:FP2} is a nonlinear and nonlocal PDE,
it is not clear a priori that the initial value problem is well-posed in an appropriate function space.
In the case of a bounded spatial domain and Neumann boundary conditions, the existence and uniqueness of solutions
has been established in \cite{Hut12,DHMRW14} using an $\fspaceL^q$-setting for $\varrho$ with $q>1$. 
Since here we are interested in solutions that are defined on the whole real axis, we sketch an alternative existence
and uniqueness proof in Appendix \ref{app:ExistenceAndUniqueness}. The key idea there
is to obtain solutions as unique fixed points of a rather natural iteration scheme on the state space of all
probability measures with bounded variance. Moreover, adapting standard techniques for parabolic PDE 
we derive in Appendix \ref{app:ExistenceAndUniqueness} several bounds to reveal how these solutions depend on $\nu$. We finally mention that well-posedness with $\varrho\pair{t}{\cdot}\in\fspaceL^1\at\Rset$ has recently be shown in \cite{Ebe13} using a gradient flow approach. 
\par
Our assumptions and key findings concerning the existence and regularity of solutions 
to the nonlocal Fokker-Planck equation \eqref{Eqn:FP1}+\eqref{Eqn:FP2} can be summarized as follows.

\begin{assumption}[dynamical control $\ell$]
\label{Ass:Constraint}
The final time $T$ with $0<T<\infty$ is independent of $\nu$. The control $\ell$
is also independent  of $\nu$ and twice continuously differentiable on $\ccinterval{0}{T}$.
In particular, we have
\begin{align*}
\sup\limits_{t\in\ccinterval{0}{T}}\at{\babs{\ell\at{t}}+\babs{\dot{\ell}\at{t}}+\babs{\ddot{\ell}\at{t}}}\leq{C}
\end{align*}
for some constant $C$ independent of $\nu$.
\end{assumption}
\begin{assumption}[initial data]
\label{Ass:RegularityOfInitialData}
The initial data are nonnegative and satisfy
\begin{align*}
\int_\Rset\varrho\pair{0}{x} \dint{x}=1,\qquad
\int_\Rset x \varrho\pair{0}{x} \dint{x}=\ell\at{0},\qquad
\int_\Rset x^2  \varrho\pair{0}{x} \dint{x}\leq{C}
\end{align*} 
for some constant $C$ independent of $\nu$.
\end{assumption} 
 
\begin{lemma}[existence and properties of solution]
\label{Lem:PropertiesOfSolutions}
For any $\nu$ with $0<\nu\leq1$ and given initial data there exists a unique solution $\varrho$
to the initial value problem \eqref{Eqn:FP1}+\eqref{Eqn:FP2} which is nonnegative and smooth for $t>0$,
and satisfies
\begin{align*}
\int_\Rset\varrho\pair{t}{x} \dint{x}=1,\qquad
\int_\Rset x \varrho\pair{t}{x} \dint{x}=\ell\at{t}
\end{align*}
for all $t\in\ccinterval{0}{T}$. Moreover, each solution satisfies
\begin{align*}
\sup_{t\in\ccinterval{0}{T}}\at{\babs{\si\at{t}}+\int_\Rset x^2 \varrho\pair{t}{x} \dint{x}}+
\sup_{t\in\ccinterval{\tsnu}{T}}\nu^2\,\norm{\varrho\pair{t}{\cdot}}_\infty+\tau^{-1}\int_{\tsnu}^{T}
\calD\at{t}\dint{t}\leq{C}
\end{align*}
with $\tsnu:=\nu^2\tau$ for some constant $C$ which depends only on $H$, $\ell$ and $
\int_\Rset x^2  \varrho\pair{0}{x} \dint{x}$. 
\end{lemma}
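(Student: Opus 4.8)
The plan is to separate the existence-and-uniqueness part (which is the content of Appendix~\ref{app:ExistenceAndUniqueness}) from the quantitative bounds that the later sections actually use. For existence I would fix $\nu\in\oointerval{0}{1}$ and set up the natural decoupling iteration: given a trajectory $\widetilde\varrho$ of probability measures, define $\widetilde\si$ by the mean-field formula \eqref{Eqn:FP2}, solve the resulting \emph{linear} non-autonomous Fokker--Planck equation \eqref{Eqn:FP1} with drift coefficient $H^\prime-\widetilde\si\at{t}$ and the prescribed initial datum by classical parabolic theory, and let this be the next iterate. On the space of probability measures with a uniform second-moment bound, suitably metrized by a Wasserstein-type distance, this map is a contraction on a short time interval---the linear-in-$\widetilde\varrho$ dependence of $\widetilde\si$ is controlled in this space precisely because $H$ grows quadratically---and the a priori moment estimate below permits reiteration up to the fixed final time $T$. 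Nonnegativity is inherited at each step from the maximum principle and survives the limit, and smoothness for $t>0$ follows from parabolic regularity, bootstrapped together with the regularity of $\si$ once $\varrho$ is known to be smooth.

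The conservation laws are then immediate: integrating \eqref{Eqn:FP1} against $1$ over $\Rset$ shows the total mass is constant, hence equal to $1$, and integrating against $x$ gives $\tau\,\tfrac{\ddiff}{\ddiff t}\int_\Rset x\varrho\dint{x}=-\int_\Rset H^\prime\at{x}\varrho\dint{x}+\si\at{t}$, whose right-hand side equals $\tau\dot\ell\at{t}$ once \eqref{Eqn:FP2} is inserted; since the initial first moment is $\ell\at{0}$ by Assumption~\ref{Ass:RegularityOfInitialData}, this integrates to the constraint \eqref{Eqn:FP2P} on all of $\ccinterval{0}{T}$. For the bounds on $\si$ and on the second moment I would use that the quadratic growth of $H$ (Assumption~\ref{Ass:Potential}) gives $\babs{H^\prime\at{x}}\leq C\at{1+\abs{x}}$ and $xH^\prime\at{x}\geq c\,x^2-C$. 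Testing \eqref{Eqn:FP1} with $x^2$ and integrating by parts twice produces a differential inequality of the form $\tau\,\tfrac{\ddiff}{\ddiff t}\int_\Rset x^2\varrho\dint{x}\leq -c\int_\Rset x^2\varrho\dint{x}+C$, where the coercivity of $xH^\prime$ supplies the good sign, $\nu\leq1$ kills the $\nu^2$ contribution, and the cross term from $\si$ is absorbed by Young's inequality together with $\babs{\si\at{t}}\leq C\bat{1+(\int_\Rset x^2\varrho\dint{x})^{1/2}}$, itself read off from \eqref{Eqn:FP2}. A comparison argument then bounds the second moment on $\ccinterval{0}{T}$ by $\max\{\int_\Rset x^2\varrho\pair{0}{x}\dint{x},\,C\}$ with $C$ depending only on $H$ and $\ell$, and the bound on $\babs{\si}$ follows.

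It remains to establish the $\fspaceL^\infty$-smoothing bound and the dissipation estimate, both starting from the positive time $\tsnu=\nu^2\tau$. After rescaling time by $\tau$ the linear equation reads $\partial_s\varrho=\partial_x\bat{\nu^2\partial_x\varrho+b\pair{s}{x}\varrho}$ with $b=H^\prime-\si$; I would derive a Gaussian-type upper bound for its fundamental solution---either by comparing with an explicitly solvable Ornstein--Uhlenbeck kernel or by Aronson/De~Giorgi--Nash--Moser estimates with the dependence on the diffusivity $\nu^2$ tracked explicitly---to get $\norm{\varrho\pair{s}{\cdot}}_\infty\leq C\at{\nu^2 s}^{-1/2}$ for $s$ bounded, hence $\nu^2\norm{\varrho\pair{\tsnu}{\cdot}}_\infty\leq C$, and then propagate this bound to $\ccinterval{\tsnu}{T}$. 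For the dissipation I would simply integrate the energy balance \eqref{Eqn:EnergyBalance} over $\ccinterval{\tsnu}{T}$, obtaining $\tau^{-1}\int_{\tsnu}^{T}\calD\at{t}\dint{t}=\calE\at{\tsnu}-\calE\at{T}+\int_{\tsnu}^{T}\si\at{t}\dot\ell\at{t}\dint{t}$; the last term is controlled by the bounds on $\abs{\si}$ and $\abs{\dot\ell}$, $\calE\at{T}$ is bounded below by comparing $\int_\Rset\varrho\ln\varrho\dint{x}$ with a Gaussian (which yields $\int_\Rset\varrho\ln\varrho\dint{x}\geq -C\bat{1+\int_\Rset x^2\varrho\dint{x}}$) and using $\inf H>-\infty$, and $\calE\at{\tsnu}$ is bounded above since the just-proved $\fspaceL^\infty$ bound gives $\int_\Rset\varrho\ln\varrho\dint{x}\leq\ln\norm{\varrho\pair{\tsnu}{\cdot}}_\infty\leq C+2\ln\at{1/\nu}$, so its $\nu^2$-weighted version stays bounded, while $\int_\Rset H\varrho\dint{x}\leq C\bat{1+\int_\Rset x^2\varrho\dint{x}}$ is controlled by the moment bound. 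This is exactly why the statement is phrased with $\tsnu$ rather than $0$: the initial datum may be a singular measure, so $\calE\at{0}$ and $\norm{\varrho\pair{0}{\cdot}}_\infty$ can be infinite, and one needs a quantitative regularization time before the energy argument can start.

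The step I expect to be the main obstacle is the $\nu$-uniform $\fspaceL^\infty$-regularization estimate. The drift $b=H^\prime-\si$ is only locally bounded---it grows linearly at spatial infinity---so off-the-shelf heat-kernel bounds for bounded-drift equations do not apply directly, and one has to either localize and use the moment control to confine essentially all of the mass to a bounded region, or exploit the asymptotically linear structure to compare with an Ornstein--Uhlenbeck process, in both cases being careful that every constant that appears depends on the diffusivity only through the explicit factor $\at{\nu^2 s}^{-1/2}$.
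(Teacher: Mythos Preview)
Your outline is sound and correctly identifies the $\nu$-uniform $\fspaceL^\infty$-regularization as the crux, but the paper executes two of the steps differently, and in each case its route is more concrete.

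For existence and uniqueness you iterate on $\varrho$ in a Wasserstein-type metric on measures. The paper instead iterates on the scalar function $\si\in\fspaceC\at{\ccinterval{0}{T}}$: it defines $\calS\ato\si\at{t}:=\int_\Rset H^\prime\at{x}\calR\ato\si\pair{t}{x}\dint{x}+\tau\dot\ell\at{t}$, where $\calR\ato\si$ solves the linear equation with frozen multiplier, and shows $\calS$ is a contraction for the weighted norm $\sup_t\mhexp{-2Ct}\abs{\si\at{t}}$. The Lipschitz estimate is obtained not via transport distances but by an $\fspaceL^1$-bound on the difference of the distribution functions $R_i\pair{t}{x}=\int_{-\infty}^x\varrho_i\pair{t}{y}\dint{y}$. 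This sidesteps the need to set up a Wasserstein framework and reduces the problem to a one-dimensional fixed point.

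For the $\fspaceL^\infty$-bound you propose Aronson-type or Ornstein--Uhlenbeck kernel estimates and flag the unbounded drift as the obstacle. The paper avoids this machinery entirely by a direct Duhamel argument: writing \eqref{Eqn:FP1} as $\tau\partial_t\varrho=\nu^2\partial_x^2\varrho+\partial_x f$ with $f=\bat{H^\prime-\si}\varrho$, it expresses $\varrho$ via the rescaled heat kernel and bounds the inhomogeneous term through $\int_\Rset f^2\dint{x}\leq C\norm{\varrho}_\infty$ (using the uniform second-moment bound already established). This produces a self-referential inequality for $M_{t_0}\at{t}:=\sup_{0\leq s\leq t}\norm{\sqrt{s}\,\varrho\pair{t_0+s}{\cdot}}_\infty$ of the form $M_{t_0}\at{t}\leq C\sqrt\tau/\nu+C\sqrt{tM_{t_0}\at{t}}/\at{\nu^{3/2}\tau^{1/4}}$, which closes algebraically and, evaluated at $t=\nu^2\tau$, gives $\norm{\varrho}_\infty\leq C/\nu^2$. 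Since $t_0$ is arbitrary, this also provides the propagation you mention without a separate argument. Your moment and dissipation arguments match the paper's essentially line for line.
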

\begin{proof}
All claims follow from 
Proposition \ref{Prop:ExistenceAndUniqueness} and Proposition \ref{Prop:UniformBounds}
in Appendix \ref{app:ExistenceAndUniqueness}.
\end{proof}
The assertions of Lemma \ref{Lem:PropertiesOfSolutions} 
reflect the existence of an initial transient regime. At first we have to wait for
the time $\tsnu$ before we can guarantee that
$\norm{\varrho\pair{t}{\cdot}}_\infty\leq C/\nu^2$ and $\int_{\tsnu}^T\calD\at{t}\dint{t}\leq C\tau$. The first estimate
is needed within \S\ref{sect:AuxResults} in order to show that no mass can penetrate the spinodal region from outside,
and that there is no mass flux through the spinodal region for subcritical $\si\in\oointerval{\si_\#}{\si^\#}$.
Furthermore, it is in general not before a
time of order $\tau^{1-\beta}$ that the dissipation $\calD\at{t}$ 
is eventually smaller than $\tau^\beta$ (the exponent $0<\beta<1$ will be identified below).
In \S\ref{sect:limit} we prove that the solutions to the nonlocal Fokker-Planck equation
behave nicely after the second time, even though we are not able to exclude that 
$\calD\at{t}$ becomes large (again) at some later time. 
\par
The initial transient regime corresponds to very fast relaxation processes during which the 
system dissipates a large amount of energy leading to rapid changes of especially the multiplier $\si$ and the phase fraction $\mu$. For generic initial data, we therefore expect to find
several limit solutions as $\nu\to0$ depending on the microscopic details of the initial data.
The only possibility to avoid such non-uniqueness is to start with well-prepared initial data.
\begin{definition}[well-prepared initial data]
\label{Def:WellPreparedData}
The initial data from Assumption \ref{Ass:RegularityOfInitialData} are \emph{well-prepared}, if 
they additionally satisfy
\begin{align*}
\nu^2\norm{\varrho\pair{0}{\cdot}}_\infty+\tau^{-1}\calD\at{0}\leq {C}
\end{align*}
for some constant $C$ independent of $\nu$, and if we have
\begin{align*}
\si\at{0}\quad\xrightarrow{\;\;\nu\to0\;\;}\quad\si_\ini
\end{align*}
for some $\si_\ini\in\Rset$.
\end{definition}
\begin{remark}
\label{Rem:WellPreparedInitialData} 
For well prepared initial data we can choose $\tsnu=0$ in Lemma \ref{Lem:PropertiesOfSolutions}. Moreover, we have
\begin{align*}
\varrho\pair{0}{x}\quad\xrightarrow{\;\;\nu\to0\;\;}\quad
\varrho_\ini:=\frac{1-\mu_\ini}{2}\delta_{X_-\at{\si_\ini}}\at{x}+\frac{1+\mu_\ini}{2}\delta_{X_+\at{\si_\ini}}\at{x}
\end{align*}
in the sense of weak$\star$ convergence of measures, where 
$\delta_X$ denotes the Dirac distribution at $X\in\Rset$ and
$\mu_\ini:=\int_{-\infty}^{x^*} \varrho_\ini\at{x}\dint{x}-\int_{x_*}^{+\infty} \varrho_\ini\at{x}\dint{x}$.
\end{remark}
\begin{proof}
The assertions follow from Lemma  \ref{Lem:ImpovedInitialData} and
the mass dissipation estimates formulated in
Lemma \ref{Lem:MD.MassOutsideTwoPeaks} and
Lemma \ref{Lem:MD.MassOutsideOnePeak}.
\end{proof}

\subsection{Heuristic description of the fast reaction regime}
\label{sect:HeuristicDynamics}

In order to highlight the key ideas for our convergence proof, we now
give an informal overview on the effective dynamics for $\nu\ll1$. For numerical simulations
as well as formal asymptotic analysis we refer again to \cite{HNV12}. 
\par
As explained above, the underlying gradient structure ensures
that the system approaches -- after a short initial transient regime with large dissipation --
at time $0<t_0\ll1$ a state with small dissipation. Assuming that $\calD\at{t}$ remains small 
for all times $t\geq t_0$, we can characterize the probability measures $\varrho\pair{t}{\cdot}$ for $\nu\ll1$ as follows. 
%
%
\paragraph*{Formation of peaks} 
%
%
\begin{figure}[ht!]%
\centering{%
\includegraphics[height=.3\textwidth, draft=\figdraft]{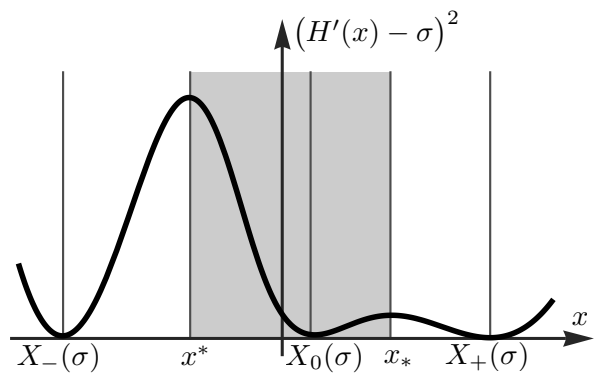}%
}%
\caption{Moment weight for the definition of $\xi$ with $\si_*<\si<0$. For $\si\in\oointerval{\si_*}{\si^*}$ and
$\xi\ll1$, almost all of the total mass is concentrated in three narrow peaks located at $X_-\at\si$, $X_0\at\si$, and $X_+\at\si$, but only the peaks at $X_\pm\at{\si}$ are dynamically stable. 
For $\si<\si_*$ and $\si>\si_*$, the mass is concentrated for $\xi\ll1$  in a single stable peak at $X_-\at\si$ and $X_+\at\si$, respectively.
}%
\label{Fig:MomentXi}
\end{figure}%
The small dissipation assumption implies (see also Lemma \ref{Lem:PS.AuxFormula} below)
that the moment
\begin{align}
\label{Eqn:Def.MomentXi}
\xi\at{t}&:=\int_{\Rset} 
\bat{H^\prime\at{x}-\si\at{t}}^2\varrho\pair{t}{x}\dint{x}
\end{align}
is also small, and we conclude that all the mass of the system must be concentrated in narrow peaks 
located at the solutions to $H^\prime\at{x}=\si\at{t}$, which are illustrated in Figure \ref{Fig:MomentXi} and provided by the functions $X_i$ from 
Remark \ref{Rem:PropertiesOfXi}. We can therefore
(at least in a weak$\star$-sense)
approximate
\begin{align}
\label{Eqn:Approx2}
\varrho\pair{t}{x}\approx \delta_{X_-\at{\si\at{t}}}\quad \text{for}\quad\si\at{t}<\si_*,\qquad
\varrho\pair{t}{x}\approx \delta_{X_+\at{\si\at{t}}}\quad \text{for}\quad\si\at{t}>\si^*
\end{align}
as well as
\begin{align}
\label{Eqn:Approx1}
\varrho\pair{t}{x}\approx
\sum_{i\in\{-,0,+\}}
m_i\at{t}\delta_{X_i\at{\si\at{t}}}\at{x}
\qquad \text{for}\quad \si\at{t}\in\oointerval{\si_*}{\si^*},
\end{align}
where the partial masses are defined by
\begin{align}
\label{Eqn:Def.PartialMasses}
m_-\at{t}:=
\int_{-\infty}^{x^*} \varrho\pair{t}{x}\dint{x},\qquad
m_0\at{t}:=
\int_{x^*}^{x_*} \varrho\pair{t}{x}\dint{x},\qquad
m_+\at{t}:=
\int_{x_*}^{+\infty} \varrho\pair{t}{x}\dint{x}.
\end{align}
Notice that $m_-\at{t}+m_0\at{t}+m_+\at{t}=1$ holds by construction
and that the moment $\xi$ can be regarded as the formal limit of the dissipation as $\nu\to0$.
\par
Thanks to \eqref{Eqn:Approx2}, we have $m_0\at{t}\approx m_+\at{t}\approx0$ for $\si\at{t}<\si_*$ and the dynamical
constraint \eqref{Eqn:FP2P} implies $X_-\bat{\si\at{t}}\approx \ell\at{t}$, which determines the evolution of $\si$. Similarly,
with $\si\at{t}>\si^*$ we find $m_-\at{t}\approx m_0\at{t}\approx0$ and $X_+\bat{\si\at{t}}\approx \ell\at{t}$.
These results reflect that $H_\si$ is a single-well potential for both $\si<\si_*$ and $\si>\si^*$ attaining the global minimum at
$X_-\at{\si}$ and $X_+\at{\si}$, respectively.
\par
In the case of $\si\at{t}\in\oointerval{\si_*}{\si^*}$, the corresponding effective potential 
has two local minima and a local maximum corresponding to the three possible peak positions in \eqref{Eqn:Approx1}.
The peaks located at $X_\pm\bat{\si\at{t}}$ are dynamically stable because
adjacent characteristics of
the transport operator in \eqref{Eqn:FP1} approach each other exponentially fast for $H^{\prime\prime}\at{x}>0$. 
Moreover, asymptotic analysis of the entropic effects reveals that each
stable peak is basically a rescaled Gaussian with width of order $\nu/\sqrt{H^{\prime\prime}}\at{X_\pm\at\si}$.
A peak at the center position $X_0\at{\si}$, however, is dynamically unstable because the spinodal characteristics separate exponentially fast with local rate proportional to $\tau^{-1}$, and because the width of each peak is at least of order $\nu$.
Each possible initial peak at $X_0\at{0}$ therefore disappears rapidly,
and by enlarging $t_0$ if necessary we can assume that $m_0\at{t}\approx 0$ for all $t\geq t_0$.
(This is different to the slow reaction regime, in which unstable peaks can survive for a long time
due to $0<\nu\ll\tau\ll1$).
\par
In summary, for any time $t\geq t_0$ with $\si\at{t}\in\oointerval{\si_*}{\si^*}$ 
we expect that the mass is concentrated in the two stable peaks at $X_\pm\at{\si\at{t}}$. In the limit $\nu\to0$,
we therefore have $m_0\at{t}=0$ and hence
\begin{align*}
m_-\at{t}+m_+\at{t}=1,\qquad
\ell\at{t}=m_-\at{t}X_-\bat{\si\at{t}}+m_+\at{t}X_+\bat{\si\at{t}},
\end{align*}
where the last identity stems from the dynamical constraint \eqref{Eqn:FP2P}. Notice that
these formulas hold also for $\si\at{t}<\si_*$ and $\si\at{t}>\si^*$ with
$m_+\at{t}=0$ and $m_-\at{t}=0$, respectively.
%
%
\paragraph*{Dynamics of peaks} 
%
%

%
%
It remains to understand 
the dynamics of the multiplier $\si\at{t}$ and the partial masses
$m_\pm\at{t}$ in the case of
$\si\at{t}\in\oointerval{\si_*}{\si^*}$. To this end we also suppose, at least for times $t\geq t_0$, that $\si$ is changing regularly, that means on the time scale $1$. The underlying idea is that 
the instantaneous probability density $\varrho\pair{t}{\cdot}$ corresponds to a meta-stable state of the linear Fokker-Planck equation with frozen $\si$, and hence that the mass flux between the different phases can in fact be related to Kramers formula.
On a heuristic level it is clear that this regularity assumption on $\si$ is 
connected with our first assumption on the smallness of the dissipation since both predict the formation of two narrow peaks,
 but it is not clear whether there exists a corresponding rigorous estimate.
\par
As already mentioned in \S\ref{sect:intro}, the key observation  for $\si\at{t}\in\oointerval{\si_*}{\si^*}$ is that the two spatially separated peaks can 
exchange mass by a Kramers-type phase transition, that means because the stochastic fluctuations allow the particles to
cross the energetic barriers of the effective potential \eqref{Eqn:EffectivePotential}. Using the energy barriers $h_\pm\at{\si}$ from \eqref{Eqn:DefEnergyBarriers}, the resulting net mass flux can be quantified by 
\begin{align}
\label{Eqn:KramersFormula}
-\dot{m}_-\at{t}\approx + \dot{m}_+\at{t}
\approx m_-\at{t}F_-\at{t} - m_+\at{t}F_+\at{t},\qquad
\tau F_\pm \approx  C_\pm\at{\si}\exp\at{-\frac{h_\pm\at{\si}}{\nu^2}},
\end{align}
which is a variant of Kramers' celebrated asymptotic formula, see \cite{HNV12} for the details. Notice also 
that the precise values of the constants $C_\pm\at{\si}$, which can be computed explicitly,
are not important in our context because the dominant effects are, as discussed below, the dynamical constraint and time dependence of the energy barriers $h_\pm$. 
\bigpar
We next discuss the implications of \eqref{Eqn:KramersFormula} for the different ranges of $\si$. In particular, we argue that
Kramers-type phase transitions can change the partial masses $m_\pm$ in the limit $\nu\to0$ if and only if
$\si$ attains one of the two critical values $\si_\#$, $\si^\#$.
\par
\emph{\ul{Subcritical case:}} For $\si\at{t}\in\oointerval{\si_\#}{\si^\#}$, both energetic barriers of 
$H_\si$ exceed the critical value, that means we have
$h_-\at{\si\at{t}}>h_\#$ and $h_+\at{\si\at{t}}>h_\#$. Combining this
with $\tau^{-1}\approx \exp\at{h_\#/\nu^2}$, see Assumption \ref{Ass:Tau}, we then obtain 
\begin{align*}
 F_\pm\at{t}\approx\tau^{-1}\exp\at{-\frac{h_\pm\bat{\si\at{t}}}{\nu^2}}=\exp\at{\frac{h_\#-h_\pm\bat{\si\at{t}}}{\nu^2}}
\ll1
\end{align*}
and conclude that there is virtually no mass exchange between both phases. The macroscopic dynamics therefore reduces to
\begin{align*}
\dot{m}_\pm\at{t}=0,\qquad \dot\ell\at{t}=\dot\si\at{t}\Bat{
m_-\at{t}X_-^\prime\bat{\si\at{t}}+
m_+\at{t}X_+^\prime\bat{\si\at{t}}
}
\end{align*}
and describes that the localized peaks are just transported by the dynamical constraint \eqref{Eqn:FP2}, see the right panel
in Figure \ref{Fig:Transport}.
\par
\begin{figure}[ht!]%
\centering{%
\includegraphics[height=.275\textwidth, draft=\figdraft]{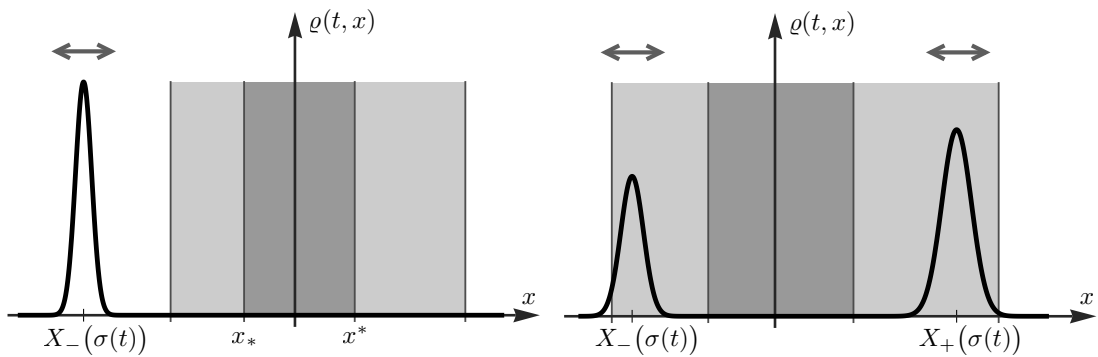}%
}%
\caption{\emph{Left panel:} 
For supercritical $\sigma<\si_\#$, all the mass is contained in a single stable peak, which is located at $X_-\at{\si}$ and 
transported by the dynamical constraint \eqref{Eqn:FP2P}. (A similar statement holds for supercritical $\si>\si_\#$.)
\emph{Right panel:} For subcritical $\si\in\oointerval{\si_\#}{\si^\#}$, the mass is in general
concentrated in two stable peaks, which are located at 
$X_\pm\at{\si}$ and move according to the dynamics of $\ell$. \emph{Both panels:} The width of each peak is proportional to ${\nu}/\sqrt{H^{\prime\prime}\at{X}}$, where $X$ denotes the position, and the arrows indicate that the peaks can move either to the left (for $\dot\ell<0$) or to the right 
(for $\dot\ell>0$). The shaded regions in light and dark gray represent the intervals 
$\ccinterval{X_-\at{\si_\#}}{X_+\at{\si^\#}}$ and $\ccinterval{x^*}{x_*}$, respectively.
}%
\label{Fig:Transport}
\end{figure}%
\begin{figure}[ht!]%
\centering{%
\includegraphics[height=.275\textwidth, draft=\figdraft]{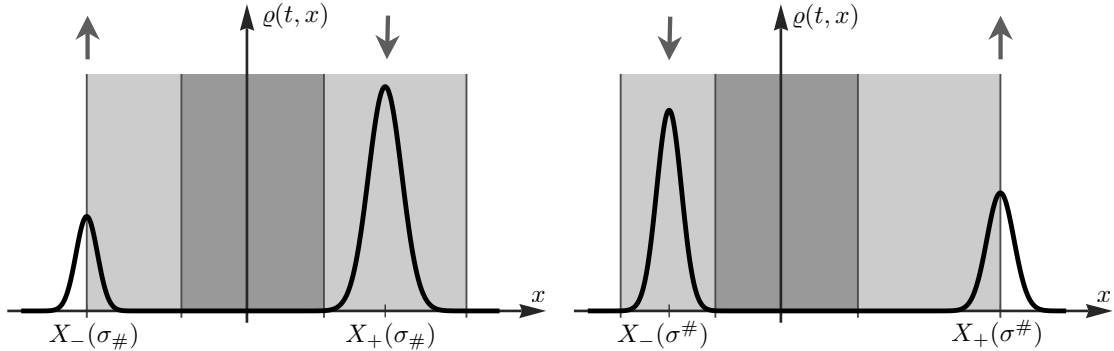}%
}%
\caption{For critical $\si$, the coexisting stable peaks exchange mass by a Kramers-type phase transition, where
$\sigma=\sigma_\#$ (left panel) and
$\sigma=\sigma^\#$ (right panel) correspond to 
negative and positive mass flux, respectively.
}%
\label{Fig:Kramers}
\end{figure}%
\emph{\ul{Critical cases:}} For $\si\at{t}=\si_\#$, we obtain $h_+\at{\si\at{t}}=h_\#<h_-\at{\si\at{t}}$ and hence
\begin{align*}
F_-\at{t} \ll 1 \approx  F_+\at{t}.
\end{align*}
In other words, the particles can move from the well at $X_+\bat{\si\at{t}}$ towards the well
at $X_-\bat{\si\at{t}}$ but not the other way around. Kramers-type phase transitions are therefore feasible
and correspond to time intervals of positive length in which the limit $\nu\to0$ provides 
\begin{align*}
\si\at{t}=\si_\#,\qquad +\dot{m}_+\at{t}=-\dot{m}_-\at{t}\leq0,\qquad
\dot\ell\at{t}=
\dot{m}_-\at{t}X_-\bat{\si_\#}+
\dot{m}_+\at{t}X_+\bat{\si_\#}.
\end{align*}
Notice that the macroscopic dynamics of $m_\pm$ is
completely determined by the evolution of $\ell$ and hence independent of
the prefactor $C_+\at{\si_\#}$ in Kramers formula \eqref{Eqn:KramersFormula}. This seems to be surprising at a first glance but can be understood as follows. For $\nu>0$, small
fluctuations around $\si_\#$ -- this means $\si\at{t}=\si_\#+\nu^2\tilde{\si}\at{t}$ with $\tilde\si\at{t}=\DO{1}$ -- can change $\dot{m}_\pm\at{t}$ considerably
and are hence capable of adjusting the mass flux to the dynamical constraint \eqref{Eqn:FP2P}.
\par
The discussion of the second critical case $\si=\si^\#$ is entirely similar. The macroscopic dynamics in this case is  given by
\begin{align*}
\si\at{t}=\si^\#,\qquad +\dot{m}_+\at{t}=-\dot{m}_-\at{t}\geq0,\qquad
\dot\ell\at{t}=
\dot{m}_-\at{t}X_-\bat{\si^\#}+
\dot{m}_+\at{t}X_+\bat{\si^\#}
\end{align*}
and reflects an effective mass flux from the well at $X_-\bat{\si^\#}$ towards the
well at $X_+\bat{\si^\#}$. Both critical cases are illustrated in Figure \ref{Fig:Kramers}. 
\par
\emph{\ul{Supercritical cases:}}
For $\si\at{t}\in\oointerval{\si_*}{\si_\#}$, we verify
\begin{align*}
h_+\at{\si\at{t}}<h_\#<h_-\at{\si\at{t}},\qquad
F_-\at{t}\ll 1 \ll F_+\at{t}
\end{align*}
and conclude that particles escape very rapidly from the well at $X_+\at{\si\at{t}}$ 
but are trapped inside the other well at $X_-\at{\si\at{t}}$. The only consistent choice for the
macroscopic dynamics in this case is 
\begin{align*}
m_-\at{t}=1,\qquad
m_+\at{t}=0,\qquad \dot\ell\at{t}=X_-^\prime\bat{\si\at{t}}\dot\si\at{t},
\end{align*}
which describes the transport of a single stable peak, see the left panel from Figure \ref{Fig:Transport}. To be more precise,
for states with $\si\at{t}\in\oointerval{\si_*}{\si_\#}$ and $m_+\at{t}>0$, the mass-dissipation estimates
derived below imply that $\calD\at{t}$ is large, and hence we expect 
that such states cannot be reached dynamically. (If such states are imposed as initial data,
a very rapid mass transfer during the initial transient regime produces $m_-\at{t_0}\approx0$.)
Similarly, for $\si\at{t}\in\oointerval{\si^\#}{\si^*}$ the macroscopic
evolution reads
\begin{align*}
m_-\at{t}=0,\qquad
m_+\at{t}=1,\qquad \dot\ell\at{t}=X_+^\prime\bat{\si\at{t}}\dot\si\at{t}
\end{align*}
and can be justified by analogous arguments. Notice
that the limit dynamics in the
supercritical cases is the same as in the single-well cases $\si\at{t}<\si_*$ and $\si\at{t}>\si^*$.
%
%
\subsection{Rate-independent model for the limit dynamics}\label{sect:limitmodel}
%
%
%
The above formulas for the limit dynamics can be translated into closed 
evolution equations for $\ell$, $\si$, and the phase fraction $\mu:=m_+-m_-$. These equations
are illustrated in Figure \ref{Fig:LimitModel} and turn out to be rate-independent 
because the macroscopic solution corresponding to $\tilde{\ell}\at{t}=\ell\at{ct}$ with $c>0$ 
is given by $\tilde{\si}\at{t}=\si\at{ct}$ and $\tilde{\mu}\at{t}=\mu\at{ct}$. For more details on the general theory of rate-independent systems and the different solution concepts we refer to \cite{Mie11c}. Moreover,
the limit dynamics exhibit hysteresis in the sense that the value of the output $\si$ at time $t$ depends not only
on the instantaneous value of the input $\ell$ but also on the history of the evolution (or, equivalently, on the state of the internal variable $\mu$). 
\par
In order to give a precise formulation of our limit model, we now define an 
appropriate notion of solutions in the space of Lipschitz-continuous functions. To this end 
we observe that the parameter constraints
\begin{align}
\label{Eqn:LimDyn.Constraints}
\mu\in\ccinterval{-1}{1},\qquad \si\in\Rset,\qquad 
\ell\in\left\{\begin{array}{lcl}
\{X_-\at\si\}&\text{for}&\si<\si_\#,\\
\ccinterval{X_-\at\si}{X_+\at\si}&\text{for}&\si\in\ccinterval{\si_\#}{\si^\#},\\
\{X_+\at\si\}&\text{for}&\si>\si^\#
\end{array}\right.
\end{align}
define the macroscopic state space
\begin{align}
\label{Eqn:LimDyn.Def1}
\Om:=\Big\{\triple{\ell}{\si}{\mu}\in\Rset^3 \quad \text{satisfying}\quad\eqref{Eqn:LimDyn.Constraints}\Big\}
\end{align}
and that the macroscopic analogue to the dynamical constraint \eqref{Eqn:FP2P} can be written as $\ell=\calL\pair{\si}{\mu}$ with
\begin{align}
\label{Eqn:LimDyn.Def2}
\calL\pair{\si}{\mu}:=\frac{1-\mu}{2}X_-\at{\sigma}+\frac{1+\mu}{2}X_+\at{\si}.
\end{align}
We also recall
that any Lipschitz function admits a classical derivative in almost all points (Rademacher's Theorem, see for instance
\cite[Proposition 23.2]{DiB02}
).
\begin{figure}[ht!]%
\centering{%
\includegraphics[height=.3\textwidth, draft=\figdraft]{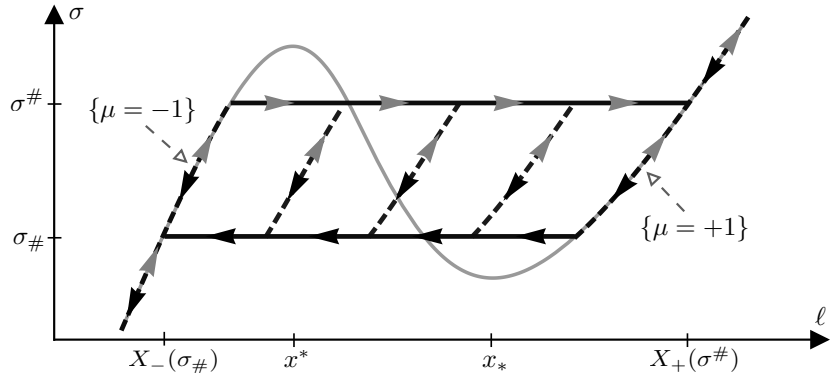}%
}%
\caption{Cartoon of the \emph{macroscopic limit dynamics} in the $\pair{\ell}{\si}$-plane.
The gray solid curve is the graph of $H^\prime$, the dashed black lines represent the level curves of $\mu$, and the 
solid black lines correspond to the critical values $\si_\#$ and $\si^\#$, for which mass  
transfer according to a Kramers-type phase transition is feasible. 
The black and gray arrows indicate the evolution for 
decreasing and increasing $\ell$, respectively.
\emph{Microscopic dynamics for small $\nu$:} The evolution of $\varrho$
along the level sets of $\mu$ is illustrated in Figure \ref{Fig:Transport}, whereas the panels in
Figure \ref{Fig:Kramers} correspond to $\si\at{t}=\si_\#$  and $\si\at{t}=\si^\#$.
}%
\label{Fig:LimitModel}
\end{figure}%
\begin{definition}[solutions to the limit model]
\label{Def:LimitModel}
A pair $\pair{\si}{\mu}\in\fspaceC^{0,\,1}\at{\ccinterval{0}{T};\Rset^2}$ is called \emph{a solution to the limit problem} for given $\ell\in\fspaceC^{0,\,1}\at{\ccinterval{0}{T}}$,
if the pointwise relations 
\begin{align}
\label{Eqn:LimitModel.AR}
\btriple{\ell\at{t}}{\si\at{t}}{\mu\at{t}}\in\Om\quad\text{with}\quad
\ell\at{t}=\calL\bpair{\si\at{t}}{\mu\at{t}}
\end{align}
are satisfied for all $t\in\ccinterval{0}{T}$, and if the dynamical relations
\begin{align}
\label{Eqn:LimitModel.DR}
\dot\mu\at{t}=0\quad\text{if}\quad \si\at{t}\notin\{\si_\#,\si^\#\},\qquad
\dot\mu\at{t}\leq 0\quad\text{if}\quad \si\at{t}=\si_\#,\qquad
\dot\mu\at{t}\geq 0\quad\text{if}\quad \si\at{t}=\si^\#
\end{align}
hold for almost all $t\in\ccinterval{0}{T}$. 
\end{definition}
In Appendix \ref{app:LimitModel}, Proposition \ref{App:Prop:WellPosednessLimitModel} we prove that for each $\ell$ as in Assumption \ref{Ass:Constraint} and any admissible choice of the initial data
$\pair{\si\at{0}}{\mu\at{0}}$ there exists a unique solution 
to the limit model, which is moreover piecewise continuously differentiable.
We also mention that the limit model is equivalent to 
 a constrained variational inequality. More precisely, introducing the convex functionals
\begin{align*}
\calR\at{\dot\mu}:=\dot\mu\left\{\begin{array}{lll}
\si_\#&\text{if}&\dot\mu\leq0,\\
\si^\#&\text{if}&\dot\mu\geq0,
\end{array}\right.\qquad
\calI\at{\mu}:=\left\{\begin{array}{lll}
0&\text{if}&-1\leq\mu\leq+1,\\
+\infty&\text{else},
\end{array}\right.
\end{align*}
the dynamical relations \eqref{Eqn:LimitModel.DR}  can be 
formulated as
\begin{align}
\notag
\si\at{t}\in\partial_{\dot\mu}\calR\bat{\dot{\mu}\at{t}}+\partial_\mu\calI\bat{\mu\at{t}}.
\end{align}
Here, $\partial$ means the set-valued derivative in the sense of subgradients, and the dynamical constraint
enters via the pointwise relations \eqref{Eqn:LimitModel.AR}.
\bigpar
We finally emphasize that the above limit model also governs the 
macroscopic evolution in the borderline regime and in the quasi-stationary limit. Specifically, 
the case $\tau\sim\nu^{p_\#}$ corresponds to $\si_\#=\si_*$ and $\si^\#=\si^*$, whereas for 
$h_\#\geq h_\threshold=h_\pm\at{0}$ we set $\si_\#=\si^\#=0$. We also expect that our proof 
strategy as summarized in \S\ref{sect:overview} can be generalized to 
these extreme cases but important details would be different. In the quasi-stationary limit,
our arguments can even be simplified since there is no subcritical regime at all. In the borderline regime, however,
many of the asymptotic estimates in \S\ref{sect:AuxResults} and \S\ref{sect:limit} must be formulated more carefully
since the mass can now be concentrated near $x_*$ or $x^*$ and because some of error terms decay, as $\nu\to0$, no longer 
exponentially but only algebraically. 
%
\subsection{Overview on the proof strategy}\label{sect:overview}
%
The heuristic derivation of the limit dynamics in \S\ref{sect:HeuristicDynamics} relies on two crucial assumptions
for $t\geq t_0$, namely
$\at{a}$ that the dissipation $\calD\at{t}$ is pointwise small, and $\at{b}$ that $\dot{\si}\at{t}$ is pointwise of order $1$.
In numerical simulations one observes such a nice behavior but our convergence proof is based on weaker statements, which are, however, sufficient for passing to the limit $\nu\to0$.
Specifically, below we only show
 $\at{a}$ that the moment
$\xi\at{t}+m_0\at{t}$ remains small, and $\at{b}$ 
that  the dynamical multiplier $\si$ is Lipschitz continuous up to small error terms. 
\par
A further technical difficulty is that the constants in many of the asymptotic estimates derived below 
degenerate if $\si$ approaches one of the critical values $\{\si_\#, \si^\#\}$ because the Kramers time scale \eqref{Eqn:KramersScale} is then of order $1$. Our strategy in \S\ref{sect:AuxResults} and \S\ref{sect:limit} is 
therefore as follows. We introduce an artificial parameter $\eps>0$ and establish most of our results concerning 
the effective dynamics for $0<\nu\ll1$ under the assumptions $\at{i}$ that $\eps$ is small but independent of $\nu$
and $\at{ii}$ that $\si$ remains outside of the $\eps$-neighborhood of $\si_*$ and/or $\si^*$. In this way we obtain 
strong results for both the subcritical and the supercritical evolution but have only incomplete control over the dynamics 
whenever $\si\at{t}\approx\si_\#$ or $\si\at{t}\approx\si^\#$. In the final step we then pass to the limit 
$\nu\to0$ along sequences $\at{\eps_n}_{n\in\Nset}$ and $\at{\nu_n}_{n\in\Nset}$ with $\eps_n\to0$ and 
$0<\nu_n\leq \bar{\nu}\at{\eps_n}\to0$, where the critical value $\bar{\nu}\at{\eps_n}$ will be identified in the 
proof of Theorem \ref{Thm:Compactness}. 
\bigpar
We proceed with a more detailed overview about the basic ideas 
for the rigorous justification of the limit dynamics as used in 
\S\ref{sect:AuxResults} and \S\ref{sect:limit}.
%
\par {\bf{Mass dissipation estimates.}} 
In  order to control the amount of mass that is 
concentrated near the stable peak positions, we establish in \S\ref{sect:PandMConstants}, 
see Lemma \ref{Lem:MD.AuxEstimate} and also \eqref{Eqn:MD.AuxFormula}, the estimate 
\begin{align}
\label{Eqn:MassDissAbtract}
\int_{J}\varrho\pair{t}{x}\dint{x}\leq C_1 \calD\at{t} +C_2\int_{I}\varrho\pair{t}{x}\dint{x},
\end{align}
where the constants $C_1$ and $C_2$ depend crucially on $\nu$, the instantaneous value of $\si$, 
and the choice of the intervals $J\subseteq{I}\subseteq\Rset$. More precisely, denoting by $\ga_\si$ 
the (re-scaled) equilibrium solution of the linearized Fokker Plank equation with frozen $\si$ -- 
see \eqref{Eqn:DefEquilibriumDensities} for a precise definition -- the constant $C_1$ from \eqref{Eqn:MassDissAbtract} is basically 
the Poincar\'e constant of $\ga_\si$ restricted to $I$ and $C_2$ measures the equilibrium mass 
inside $J$. For our analysis it is essential to quantify these constants  
for small values of $\nu$. While the computation of $C_2$ is rather straight forward, the asymptotic 
analysis of $C_1$ is more involved and requires to estimate the so-called Muckenhoupt constants of 
$\ga_\si|_I$; see \eqref{Eqn:DefMuckenhouptConstants} and the proofs in \S\ref{sect:AsymptoticsForCP}. In \S\ref{sect:MassInPeaks} we finally formulate two particular mass-dissipation estimates that cover 
different ranges for $\si$ and correspond to different choices of $J$ and $I$. Specifically, in Lemma \ref{Lem:MD.MassOutsideTwoPeaks} we assume that $H_\si$ is a genuine double-well potential and control 
the mass outside the two stable peaks, whereas Lemma \ref{Lem:MD.MassOutsideOnePeak} estimates 
the mass outside a single peak located at the global minimum of $H_\si$. 
%
\par {\bf{Dynamical stability estimates.}} 
Since we lack pointwise estimates for $\calD\at{t}$, mass-dissipation estimates like \eqref{Eqn:MassDissAbtract} are not sufficient for 
passing to the limit $\nu\to0$. Our arguments are therefore 
also based on pointwise upper bounds
for $\xi\at{t}+m_0\at{t}$ as these imply the 
dynamical stability of localized peaks  (in a weak sense). In \S\ref{sect:PeakStability} we 
start these stability investigations and derive some preliminary results 
using moment ODEs as well as the maximum principle for linear Fokker-Planck equations. 
In particular, in Lemma \ref{Lem:PS.NoMassInSpinodalRegion1} and Lemma \ref{Lem:PS.NoMassInSpinodalRegion2} we 
control the evolution of $\xi\at{t}+m_0\at{t}$ under the assumption that $\si$ remains confined to certain ranges
depending on $\eps$.
%
\par {\bf{Monotonicity relations.}} 
Further building blocks for the limit $\nu\to0$ are discussed in \S\ref{sect:Montonicity} and provide monotonicity
relations that control the evolution of the partial masses $m_\pm$ up to small error terms. In the proof of Lemma \ref{Lem:ChangeOfMasses} 
we deduce from certain moment equations that $m_-$ and $m_+$ are essentially decreasing at all times $t$ 
with $\si\at{t}>\si_\#+\eps$ and $\si\at{t}<\si^\#-\eps$, respectively. These results can be regarded as 
the rigorous analogue to the informal discussion about the orders of magnitude in Kramers 
formula \eqref{Eqn:KramersFormula}. In particular, they guarantee that there is virtually no mass flux 
as long as $\si$ remains confined to the subcritical range $\oointerval{\si_\#+\eps}{\si^\#-\eps}$ and 
give moreover rise to monotonicity relations between the 
dynamical control $\ell$ and the dynamical multiplier $\si$; see Lemma \ref{Lem:EllAndSigma}.  
\par {\bf{Approximation by stable peaks.}} 
In \S\ref{sect:AppByStablePeaks} we continue our investigations on the dynamical peak stability and provide
a corresponding approximation result that holds for all values of $\si\at{t}$ and ensures additionally that almost 
all mass is contained in a single peak as long as $\si\at{t}$ is strictly supercritical. More precisely, in 
Lemma \ref{Lem:ZetaDeltaBounds} we show that $\calD\at{t_0}\leq\tau^\beta$ with
$\beta\in\oointerval{0}{1}$ implies that the moment
\begin{align}
\label{Eqn:DefZeta}
\zeta\at{t} := \xi\at{t}+m_0\at{t}+
\left\{
\begin{array}{lcl}
m_+\at{t}&\text{for}&\si\at{t}\in\ocinterval{-\infty}{\si_\#-\eps},\\
0&\text{for}&\si\at{t}\in\oointerval{\si_\#-\eps}{\si^\#+\eps},\\
m_-\at{t}&\text{for}&\si\at{t}\in\cointerval{\si^\#+\eps}{+\infty},
\end{array}\right.
\end{align}
is bounded by $C\nu^2$ for all times $t\in\ccinterval{t_0}{T}$. The proof combines
all auxiliary results from \S\ref{sect:AuxResults} for the different $\si$-ranges and relies additionally on the following key observation:
The $\fspaceL^1$-bound for the dissipation ensures that each time interval with length of order $1$
contains at least one time $t$ at which 
the instantaneous state $\varrho\pair{t}{\cdot}$ can be controlled by mass-dissipation estimates.
\par {\bf{Continuity estimates for $\si$.}} 
Since our asymptotic results for small $\nu$ do not provide uniform bounds for
$\dot{\si}\at{t}$, we describe in \S\ref{sect:Continuity} the evolution of $\si$ by combining the moment estimates for
$\zeta$ with the monotonicity relations from \S\ref{sect:AuxResults} and the dynamical  constraint \eqref{Eqn:FP2P}. 
In particular, Lemma \ref{Lem:LipschitzSigma} reveals that $\si$ is almost Lipschitz continuous in the sense that 
$\abs{\si\at{t_2}-\si \at{t_1}}$ can be bounded by $C\abs{t_2-t_1}+E$ for some
constant $C$ independent of $\nu$, where the error term $E$ depends nicely on $\nu$ and $\eps$. 
These Lipschitz-type estimates not only allow us to verify the dynamical constraint in the limit $\nu=0$ but 
also provide the compactness of $\si$ as $\nu\to0$.
\par {\bf{Passage to the limit.}} 
In  \S\ref{sect:compactness} we finally take the limit $\nu\to0$. We first justify the limit model from 
\S\ref{sect:limitmodel} along subsequences $\eps_n,\nu_n\to0$ and discuss afterwards the case of 
well-prepared initial data, see Theorems \ref{Thm:Compactness} and \ref{Thm:Convergence}, respectively.
%
%
\section{Auxiliary results}\label{sect:AuxResults}
%
%
%
The quantities $c$, $C$, and $\alpha$ always denote positive but generic constants (so their
value may change from line to line) which are 
independent of $\nu$ but can depend on $H$, $\ell$, $T$, the constant from Assumption \ref{Ass:RegularityOfInitialData}, and other
parameters to be introduced below.  Notice that 
the scaling law between $\tau$ and $\nu$, see Assumption \ref{Ass:Tau}, implies that 
a given positive quantity is exponentially small with respect to $\nu$ if and only if
it  is bounded by $ C\tau^\alpha$ for some constants $\al$ and $C$ independent of $\nu$.

\subsection{Mass-dissipation estimates}\label{sect:MassDiss}
In this section we derive mass-dissipation estimates, this means we show that small dissipation requires the total 
mass of the system to be concentrated near either both or one of the stable peak positions
$X_-\at{\si}$ and $X_+\at\si$. These estimates become important in \S\ref{sect:limit}
because they guarantee (in combination with the $\fspaceL^1$-bound for $\calD$) 
that for each time $t_1$ there exists another time $t_2\in\ccinterval{t_1}{t_1+\tau^\beta}$ with $0<\beta<1$ such that $\varrho\pair{t_2}{\cdot}$ consists of two narrow peaks located at
$X_-\at{\si\at{t_2}}$ and $X_+\at{\si\at{t_2}}$.
In the present section, however, all arguments and results hold pointwise in $t$ 
and thus we omit the time dependence in all quantities.
\bigpar
For the following considerations
we introduce, for each $\si\in\Rset$, the relative equilibrium density
\begin{align}
\label{Eqn:DefEquilibriumDensities}
\ga_\si\at{x}:=\exp\at{-\frac{H_\sigma\at{x}}{\nu^2}},\quad
z_\si:=\int_\Rset\ga_\si\at{x}\dint{x}
\end{align}
see Figure \ref{Fig:Equilibria} for an illustration, and denote
by $\ga_{\si,-}$ and 
$\ga_{\si,+}$ the restriction of $\ga_\si$ to the intervals
\begin{align}
\label{Eqn:DefSiIntervals}
I_{\si,-}:=\oointerval{-\infty}{X_0\at{\si}}\qquad\text{and}\qquad
I_{\si,+}:=\oointerval{X_0\at{\si}}{+\infty},
\end{align}
respectively. The functions $\ga_\si$ are naturally related to states with small dissipation. In fact,
$\ga_\si/z_\si$ is the global equilibrium of the linear Fokker-Planck equation \eqref{Eqn:FP1} 
with $\si\at{t}=\si=\const$, and the modified energy functional
\begin{align}
\label{Eqn:DefEnergyFunctional.A}
\calE_\si\at{\varrho}:=\nu^2\int_{\Rset}\varrho\at{x}\ln\varrho\at{x}\dint{x}+\int_\Rset H_\si\at{x}\varrho\at{x}\dint{x},
\end{align}
 just gives the relative entropy of $\varrho$ with respect to $\ga_\si$, that is
\begin{align}
\label{Eqn:DefEnergyFunctional.B}
\calE_\si\at{\varrho}=\nu^2 \int_{\Rset}\varrho\at{x}\ln\at{\frac{\varrho\at{x}}{\ga_\si\at{x}}}\dint{x}.
\end{align}
Notice that the definition of the modified energy $\calE_\si$ involves $H_\si$ instead of $H$, so
the total energy $\calE$ from \eqref{Eqn:DefEnergy} is given by
$\calE_\si+\si\ell$ .  

\begin{figure}[ht!]%
\centering{%
\includegraphics[height=.275\textwidth, draft=\figdraft]{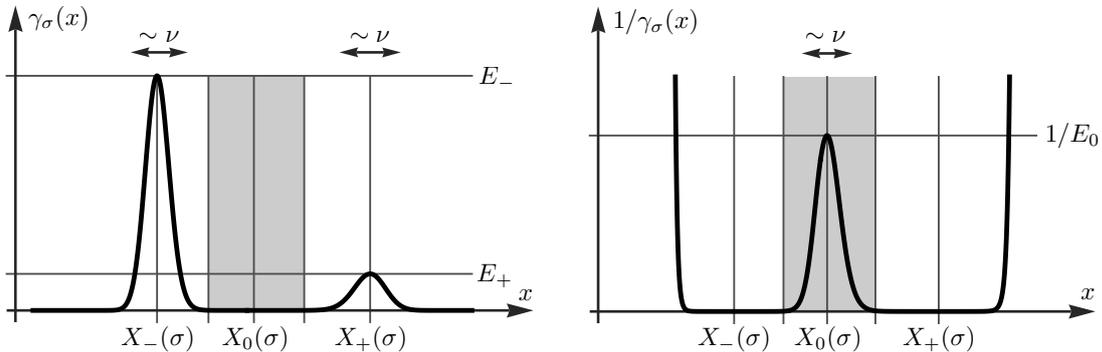}%
}%
\caption{The relative equilibrium density $\gamma_\sigma$ for $\si_*<\si<0$, where
$E_j:=\exp\at{-H_\si\at{X_j\at{\si}}/\nu^2}$.
For $0<\nu\ll1$, the density $\gamma_\sigma$ exhibits two peaks located
at $X_-\at{\si}$ and $X_+\at{\si}$. The width of these peaks is of order $\DO{\nu}$  and 
the mass ratio between the peaks scales
with $\exp\at{\at{h_-\at{\si}-h_+\at{\si}}/\nu^2}$. The inverse density $1/\gamma_\sigma$
has a peak at $X_0\at{\si}$ and grows very rapidly for $x\to\pm\infty$.
}%
\label{Fig:Equilibria}
\end{figure}%

%

\subsubsection{On Poincar\'e and Muckenhoupt constants}\label{sect:PandMConstants}
We now summarize some well-known facts about $\fspaceL^1$-measures which then
allow us to establish mass-dissipation estimates in \S\ref{sect:MassInPeaks}. 
Within this section, let $I=\oointerval{x_-}{x_+}$ be some (bounded or unbounded) interval, 
$\ga$
be a positive $\fspaceL^1$-function on the interval $I$, and $C_P\at\ga$ the
Poincar\'e constant of $\ga$. The latter reads
\begin{align}
\label{Eqn:DefPoincareConstants}
\frac{1}{C_P\at\ga} := \inf_{w\in\fspaceL^2\at{\ga\dint{x}}}\frac{\D\int_{I}\at{w^\prime\at{x}}^2\ga\at{x}\dint{x}}{\D\int_I\bat{w\at{x}-w_\av}^2\ga\at{x}\dint{x}},
\qquad w_\av:=\frac{\D\int_I w\at{x}\ga\at{x}\dint{x}}{\D\int_I\ga\at{x}\dint{x}},
\end{align}
where $w^\prime$ abbreviates the derivative of $w$ with respect to $x$ and
$\fspaceL^2\at{\ga\dint{x}}:=\{w\;:\;\int_Iw\at{x}^2 \ga\at{x}\dint{x}<\infty\}$.
For each $x_0\in{I}$, we also introduce 
the one-sided Muckenhoupt constants $C_M^\pm\pair{\ga}{x_0}$ by
\begin{align}
\label{Eqn:DefMuckenhouptConstants}
\begin{split}
C_M^-\pair\ga{x_0} &:= 
\sup_{x\in\ocinterval{x_-}{x_0}}\at{\int_{x}^{x_0}\frac{1}{\ga\at{y}}\dint{y}}
\at{\int_{x_-}^{x}\ga\at{y}\dint{y}},
\\
C_M^+\pair\ga{x_0}&:= 
\sup_{x\in\cointerval{x_0}{x_+}}
\at{\int_{x_0}^{x}\frac{1}{\ga\at{y}}\dint{y}}
\at{\int_{x}^{x_+}\ga\at{y}\dint{y}}.
\end{split}
\end{align}
It is known, see the discussion in \cite{Fou05,Sch12},
that $\ga$ admits a finite Poincar\'e constant if and only if
the Muckenhoupt constants are bounded. More precisely, 
a lower bound is given by 
\begin{align*}
C_P\at{\ga}\geq \tfrac12\max\Big\{C_M^-\pair{\ga}{x_\med},\,C_M^+\pair{\ga}{x_\med}\Big\},
\end{align*}
where the median $x_\med$ is uniquely defined by $\int_{x_-}^{x_\med}\ga\at{y}\dint{y}=
\int_{x_\med}^{x^+}\ga\at{y}\dint{y}$, and an upper bound can be formulated as follows. 
\begin{lemma}[Muckenhoupt  constants bound Poincar\'e constant]
\label{Lem:MD.PoincareVersusMuckenhoupt}
We have 
\begin{align*}
C_P\at{\ga}\leq 4 \max\Big\{C_M^-\pair{\ga}{x_0},\,C_M^+\pair{\ga}{x_0}\Big\}
\end{align*}
for all $\ga$ and any $x_0\in{I}$.
\end{lemma}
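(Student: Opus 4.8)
The plan is to fix $x_0\in I$ and show that for any $w\in\fspaceL^2(\ga\dint x)$ we have
\begin{align*}
\int_I\bat{w\at{x}-w_\av}^2\ga\at{x}\dint{x}\leq 4\max\Big\{C_M^-\pair{\ga}{x_0},\,C_M^+\pair{\ga}{x_0}\Big\}\int_I\at{w^\prime\at{x}}^2\ga\at{x}\dint{x},
\end{align*}
which gives $C_P(\ga)\leq 4\max\{C_M^-,C_M^+\}$ upon taking the infimum. The first reduction is to replace the variational average $w_\av$ by the \emph{value} $w\at{x_0}$: since $w_\av$ minimizes $c\mapsto\int_I(w-c)^2\ga\dint x$ over constants $c$, we have $\int_I(w-w_\av)^2\ga\dint x\leq\int_I(w-w\at{x_0})^2\ga\dint x$. (One must first argue $w$ has a continuous representative, which is standard since $w\in\fspaceL^2_{\loc}$ has $w'\in\fspaceL^2(\ga\dint x)$ and $\ga$ is locally bounded below on compacta, so $w\in W^{1,1}_{\loc}(I)$.)

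The second and main step is the one-sided estimate: for the right half $I_+:=I\cap[x_0,x_+)$,
\begin{align*}
\int_{I_+}\bat{w\at{x}-w\at{x_0}}^2\ga\at{x}\dint{x}\leq 4\,C_M^+\pair{\ga}{x_0}\int_{I_+}\at{w^\prime\at{y}}^2\ga\at{y}\dint{y},
\end{align*}
and symmetrically on $I_-:=I\cap(x_-,x_0]$; adding the two and dropping the factor-$2$ overlap (both halves share only the single point $x_0$) yields the claim with the $\max$. For the one-sided bound I would write $w\at{x}-w\at{x_0}=\int_{x_0}^x w^\prime\at{y}\dint{y}$ and insert the weight $\ga$ via Cauchy--Schwarz in the sharp Hardy-type way:
\begin{align*}
\bat{w\at{x}-w\at{x_0}}^2=\Bat{\int_{x_0}^x w^\prime\at{y}\ga\at{y}^{1/2}\,\ga\at{y}^{-1/2}\dint{y}}^2\leq\Bat{\int_{x_0}^x\at{w^\prime\at{y}}^2\ga\at{y}\dint{y}}\Bat{\int_{x_0}^x\frac{\dint{y}}{\ga\at{y}}}.
\end{align*}
Then multiply by $\ga\at{x}$, integrate over $x\in I_+$, enlarge the inner $y$-integral of $(w')^2\ga$ to run over all of $I_+$, and reorganize so that what remains is $\int_{I_+}\ga\at{x}\at{\int_{x_0}^x\ga^{-1}}\dint{x}$; this last integral is controlled by $C_M^+$ via the standard one-dimensional Hardy inequality argument (partition into dyadic level sets of $\int_{x_0}^x\ga^{-1}$, or use the known sharp constant $C_M\leq C_{\mathrm{Hardy}}\leq 4C_M$), producing the factor $4$. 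This is the classical Muckenhoupt--Hardy computation; the factor $4$ is exactly the gap between the Poincaré constant and the Muckenhoupt constant in the one-dimensional weighted inequality, so no sharper bookkeeping is needed.

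The main obstacle is the Hardy-inequality step that converts $\int_{I_+}\ga\at{x}\bat{\int_{x_0}^x\ga^{-1}}\dint{x}\leq 4C_M^+\pair{\ga}{x_0}$ — or rather its $(w')$-weighted refinement — into the stated constant; this is where the precise value $4$ is earned, and it is the only place where genuine care (as opposed to bookkeeping) is required. Everything else — the reduction from $w_\av$ to $w\at{x_0}$, the splitting at $x_0$, the continuity of $w$, and the harmless doubling when recombining the two halves — is routine. I would simply cite the relevant lemma from \cite{Fou05,Sch12} for the one-sided Hardy bound if it is available in the form needed, and otherwise include the short dyadic-decomposition proof.
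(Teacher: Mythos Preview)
The paper does not actually prove this lemma: its entire proof is a citation to \cite[Proposition~5.21]{Sch12}. Your outline is the standard argument behind that reference --- reduce from $w_\av$ to $w\at{x_0}$, split at $x_0$, and invoke the one-sided weighted Hardy inequality whose sharp constant lies between $C_M^\pm$ and $4C_M^\pm$ --- so in spirit you are doing strictly more than the paper.

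That said, the specific sequence of steps you wrote for the Hardy part has a genuine error. After Cauchy--Schwarz, if you ``enlarge the inner $y$-integral of $(w')^2\ga$ to run over all of $I_+$'' you are left with the task of bounding
\[
\int_{I_+}\ga\at{x}\Bat{\int_{x_0}^x\frac{\dint{y}}{\ga\at{y}}}\dint{x}
\]
by $4C_M^+$, and this is \emph{false} in general: take $\ga\at{x}=e^{-x}$ on $\cointerval{0}{\infty}$, where $C_M^+=\sup_x e^{-x}(e^x-1)=1$ but the integral above equals $\int_0^\infty(1-e^{-x})\dint{x}=\infty$. The crude enlargement throws away exactly the cancellation that makes the Muckenhoupt condition sufficient. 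You seem to be aware of this (``or rather its $(w')$-weighted refinement''), and indeed the correct route is to run the Hardy argument \emph{without} first pulling out the full $\fspaceL^2(\ga)$-norm of $w'$: one either uses the dyadic decomposition of the level sets of $\int_{x_0}^x\ga^{-1}$ or the direct integration-by-parts trick, both of which keep $w'$ inside and yield the constant $4$. Since you already plan to cite \cite{Fou05,Sch12} for this step, the proposal is fine once you drop the misleading enlargement sentence.
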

\begin{proof}
The proof can be found in \cite[Proposition 5.21]{Sch12}.
\end{proof}
We also mention that the Muckenhoupt constants can easily be estimated for logarithmically concave functions $\ga$.
\begin{lemma}[$C^{\pm}_M$ for logarithmically concave $\ga$]
\label{Lem:Muckenhoupt.LogConcave}
Let $\ga\at{x}=\exp\bat{-V\at{x}}$.
For any convex and strictly increasing potential $V:\cointerval{x_0}{+\infty}\to\Rset$ we have
\begin{align*}
C_M^+\pair{\ga}{x_0}\leq \sup_{x\geq x_0}\frac{x-x_0}{ V^\prime\at{x}}.
\end{align*}
Similarly,  the estimate
\begin{align*}
C_M^-\pair{\ga}{x_0}\leq \sup_{x\leq x_0}\frac{x-x_0}{ V^\prime\at{x}}
\end{align*}
holds provided that $V:\ocinterval{-\infty}{x_0}\to\Rset$ is convex and  strictly decreasing. 
\end{lemma}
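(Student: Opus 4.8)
The plan is to prove the two bounds by a direct estimate on the one-sided Muckenhoupt quantities in \eqref{Eqn:DefMuckenhouptConstants}, exploiting that for a logarithmically concave weight the derivative $V'$ is monotone. I will treat the case $C_M^+$ in detail; the bound for $C_M^-$ follows by the symmetric argument (or by applying the $C_M^+$-statement to $\widetilde{V}(x):=V(2x_0-x)$ on $\cointerval{x_0}{+\infty}$).

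For the $C_M^+$-estimate, fix $x\geq x_0$. The inner factor $\int_{x_0}^x \ga(y)\,\dint{y}^{-1}$ appearing in $C_M^+$ is actually $\int_{x_0}^x 1/\ga(y)\,\dint{y}$, which equals $\int_{x_0}^x \mhexp{V(y)}\,\dint{y}$; I will bound this using that $V$ is increasing so $V(y)\leq V(x)$, giving $\int_{x_0}^x \mhexp{V(y)}\,\dint{y}\leq (x-x_0)\,\mhexp{V(x)}$. For the tail factor I will use convexity of $V$ via the supporting-line inequality at $x$: for $y\geq x$ one has $V(y)\geq V(x)+V'(x)(y-x)$, hence $\ga(y)=\mhexp{-V(y)}\leq \mhexp{-V(x)}\mhexp{-V'(x)(y-x)}$ and therefore $\int_x^{x_+}\ga(y)\,\dint{y}\leq \mhexp{-V(x)}\int_0^\infty \mhexp{-V'(x)s}\,\dint{s}=\mhexp{-V(x)}/V'(x)$, where $V'(x)>0$ since $V$ is strictly increasing. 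Multiplying the two bounds, the factor $\mhexp{V(x)}$ cancels and I obtain, for every $x\geq x_0$,
\begin{align*}
\at{\int_{x_0}^{x}\frac{1}{\ga\at{y}}\dint{y}}\at{\int_{x}^{x_+}\ga\at{y}\dint{y}}\leq \frac{x-x_0}{V'\at{x}},
\end{align*}
and taking the supremum over $x\in\cointerval{x_0}{x_+}$ gives the claimed bound on $C_M^+\pair{\ga}{x_0}$, after noting $x_+\leq+\infty$ so the supremum is over all $x\geq x_0$.

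The symmetric statement for $C_M^-$ is obtained the same way: for $x\leq x_0$, the factor $\int_x^{x_0} 1/\ga(y)\,\dint{y}=\int_x^{x_0}\mhexp{V(y)}\,\dint{y}\leq (x_0-x)\mhexp{V(x)}$ because $V$ is decreasing, while convexity at $x$ gives $V(y)\geq V(x)+V'(x)(y-x)$ for $y\leq x$, and since $V'(x)<0$ the integral $\int_{x_-}^{x}\ga(y)\,\dint{y}\leq \mhexp{-V(x)}\int_{-\infty}^0\mhexp{-V'(x)s}\dint{s}=\mhexp{-V(x)}/(-V'(x))$; the product is then bounded by $(x_0-x)/(-V'(x))=(x-x_0)/V'(x)$, matching the statement.

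I do not expect a serious obstacle here: the only points needing a little care are (i) checking that $V'(x)\neq 0$ throughout, which is exactly what strict monotonicity guarantees, and that the improper integral defining $C_M^\pm$ is finite; and (ii) making sure the supporting-line inequality is applied on the correct side — at the point $x$ rather than at $x_0$ — so that the integrals over the tails converge. One should also remark that the supremum on the right-hand side is allowed to be $+\infty$, in which case the inequality is vacuous; the estimate is only useful (and will be used in \S\ref{sect:AsymptoticsForCP}) when $V'$ grows at least linearly at infinity, which holds for the equilibrium densities $\ga_\si$ since $H_\si$ is asymptotically quadratic by Assumption \ref{Ass:Potential}.
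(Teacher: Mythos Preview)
Your proof is correct and follows essentially the same approach as the paper: bound the factor $\int_{x_0}^x 1/\ga$ by $(x-x_0)\mhexp{V(x)}$ using monotonicity of $V$, bound the tail $\int_x^{x_+}\ga$ by $\mhexp{-V(x)}/V'(x)$ via the supporting-line inequality from convexity, and multiply. The paper only sketches the symmetric case, so your explicit treatment of $C_M^-$ and your remarks on strict monotonicity ensuring $V'(x)\neq 0$ are welcome additions but not a different method.
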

\begin{proof}
By symmetry it is sufficient to study the first case only. For $x\geq x_0$ we estimate
\begin{align*}
\int_{x_0}^x \frac{1}{\ga\at{y}}\dint{y}=\int_{x_0}^x \exp\bat{V\at{y}}\dint{y}\leq \exp\bat{V\at{x}}\at{x-{x_0}}.
\end{align*}
Moreover, employing Taylor expansion of $V$ at $x$ as well as the monotonicity of $V^\prime$ we find
\begin{align*}
\int_x^\infty \ga\at{y} \dint{y}&=\int_x^\infty \exp\bat{-V\at{y}}\dint{y}
\\&\leq
\exp\bat{-V\at{x}}\int_x^\infty \exp\bat{-V^\prime\at{x}\at{y-x}}\dint{y}
=\frac{\exp\bat{-V\at{x}}}{V^\prime\at{x}},
\end{align*}
and the claim follows immediately.
\end{proof}

The mass-dissipation estimates derived below rely on asymptotic expressions for the
Muckenhoupt constants of $\ga_\si$ and the following observation.

\begin{lemma}[variant of the Poincar\'e inequality]
\label{Lem:MD.AuxEstimate} 
For any $\ga$, the estimate
\begin{align*}
\int_J w\at{x}^2\ga\at{x}\dint{x}\leq 2 C_P\at{\ga}\int_I \at{w^\prime\at{x}}^2\ga\at{x}\dint{x}  +2 C_J\at{\ga}
\int_{I} w\at{x}^2\ga\at{x}\dint{x},\qquad
C_J\at\ga:=\frac{\D\int_{J}\ga\at{x}\dint{x}}{\D\int_{I}\ga\at{x}\dint{x}}
\end{align*}
holds for all $w\in\fspaceL^2\at{\ga\dint{x}}$ and any subinterval $J\subseteq{I}$.
\end{lemma}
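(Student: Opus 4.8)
The plan is to reduce the claim to the usual Poincaré inequality on the full interval $I$, which replaces the $\fspaceL^2$-norm of $w$ by the $\fspaceL^2$-norm of $w-w_\av$, and then absorb the mean term. First I would apply the definition of $C_P\at\ga$ directly to get
\begin{align*}
\int_I\bat{w\at{x}-w_\av}^2\ga\at{x}\dint{x}\leq C_P\at\ga\int_I\at{w^\prime\at{x}}^2\ga\at{x}\dint{x},
\end{align*}
and note that restricting the left-hand integral to the subinterval $J\subseteq I$ only decreases it, so the same bound holds with $\int_J$ in place of $\int_I$ on the left.

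Next I would undo the centering on $J$. Since $(a+b)^2\leq 2a^2+2b^2$ with $a=w\at{x}-w_\av$ and $b=w_\av$, we get
\begin{align*}
\int_J w\at{x}^2\ga\at{x}\dint{x}\leq 2\int_J\bat{w\at{x}-w_\av}^2\ga\at{x}\dint{x}+2 w_\av^2\int_J\ga\at{x}\dint{x}.
\end{align*}
The first term is controlled by $2C_P\at\ga\int_I\at{w^\prime}^2\ga\dint{x}$ from the previous step. For the second term I would bound $w_\av^2$ by Jensen's inequality (or Cauchy--Schwarz with respect to the probability measure $\ga\dint{x}/\int_I\ga\dint{x}$):
\begin{align*}
w_\av^2=\at{\frac{\int_I w\at{x}\ga\at{x}\dint{x}}{\int_I\ga\at{x}\dint{x}}}^2\leq\frac{\int_I w\at{x}^2\ga\at{x}\dint{x}}{\int_I\ga\at{x}\dint{x}},
\end{align*}
so that $w_\av^2\int_J\ga\dint{x}\leq C_J\at\ga\int_I w\at{x}^2\ga\at{x}\dint{x}$ by the definition of $C_J\at\ga$. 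Combining the two contributions gives exactly the asserted inequality.

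There is essentially no obstacle here: the only points requiring a word of care are that $w\in\fspaceL^2\at{\ga\dint{x}}$ guarantees $w_\av$ is finite and all integrals make sense, and that when $C_P\at\ga=\infty$ the statement is vacuous, so one may assume $\ga$ admits a finite Poincaré constant. The lemma is a soft consequence of the Poincaré inequality plus two elementary convexity estimates; the substantive work — estimating $C_P\at{\ga_\si}$ and $C_J\at{\ga_\si}$ asymptotically in $\nu$ via the Muckenhoupt characterization of Lemmas \ref{Lem:MD.PoincareVersusMuckenhoupt} and \ref{Lem:Muckenhoupt.LogConcave} — is deferred to the later subsections.
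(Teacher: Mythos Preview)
Your proof is correct and essentially the same as the paper's: both restrict the Poincar\'e inequality to $J$, bound $w_\av^2$ via Jensen/H\"older, and use an elementary quadratic inequality to separate the centered and mean parts. Your use of $(a+b)^2\leq 2a^2+2b^2$ is a slightly more direct packaging than the paper's expansion of $(w-w_\av)^2$ followed by Young's inequality on the cross term, but the two arguments are algebraically equivalent and yield identical constants.
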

\begin{proof}

 Thanks to $2ab\leq \eta a^2+\eta^{-1}b^2$ and H\"older's inequality we have
\begin{align*}
2w_\av\int_{J}w\at{x}\ga\at{x}\dint{x}\leq 
\eta w_\av ^2 + \eta^{-1}
\at{\int_J w\at{x}^2\ga\at{x}\dint{x}}\at{\int_J\ga\at{x}\dint{x}},
\end{align*}
where $w_\av$ is defined in \eqref{Eqn:DefPoincareConstants}, and with 
$\eta := 2 \int_J\ga\at{x}\dint{x}$ we estimate
\begin{align*}
\int_I\bat{w\at{x}-w_\av}^2\ga\at{x}\dint{x}&\geq
\int_J\bat{w\at{x}-w_\av}^2\ga\at{x}\dint{x}\\&=
\int_Jw\at{x}^2\ga\at{x}\dint{x}
+w_\av^2\int_J\ga\at{x}\dint{x}
-2w_\av\int_{J}w\at{x}\ga\at{x}\dint{x}
\\&\geq
\tfrac{1}{2} \int_Jw\at{x}^2\ga\at{x}\dint{x}- w_\av^2\int_J\ga\at{x}\dint{x}.
\end{align*}
H\"older's inequality also implies
\begin{align*}
w_\av^2\leq\frac{\D\int_I w\at{x}^2\ga\at{x}\dint{x}}{\D\int_{I}\ga\at{x}\dint{x}},
\end{align*} 
and combining the latter two estimates with the Poincar\'{e} estimate for $w$ and $\ga$, see again \eqref{Eqn:DefPoincareConstants}, we arrive at the desired result. 
\end{proof}

\subsubsection{Asymptotics of Poincar\'e constants \texorpdfstring{for $\ga_\si$}{}}
\label{sect:AsymptoticsForCP}

In this section we derive upper bounds for the Poincar\'e constants of the functions $\ga_\si$ and $\ga_{\si,\pm}$
which have been introduced in \eqref{Eqn:DefEquilibriumDensities} and \eqref{Eqn:DefSiIntervals}, respectively. 
To this end we first mention that standard methods from asymptotic analysis
allows one to justify the following statements concerning the $\nu$-dependence for any fixed value of $\si$:
\begin{enumerate}
\item For $\si>\sigma^*$ or $\si<\si_*$, the effective potential 
$H_\si$ from \eqref{Eqn:EffectivePotential} is a single-well potential which grows quadratically at infinity, and this 
implies 
\begin{align*}
C_P\at{\ga_\si}\sim\nu^2\bat{1+\Do{1}},
\end{align*}
where $\Do{1}$ means as usual arbitrary small for small $\nu$.
\item For $\si\in\oointerval{\si_*}{\si^*}$, the effective potential is 
a genuine double-well potential with energy barriers \eqref{Eqn:DefEnergyBarriers}. The Poincar\'e constant of $\ga_\si$ is therefore given by
\begin{align}
\label{Eqn:ExamplePoimcareConstant}
C_P\at{\ga_\si}\sim\exp\at{\frac{\min\big\{h_-\at\si,\,h_+\at{\si}\big\}}{\nu^2}}\bat{1+\Do{1}},
\end{align}
while the Poincar\'e constants of $\ga_{\si,-}$ and $\ga_{\si,+}$ are of order $\DO{1}$. Notice that \eqref{Eqn:ExamplePoimcareConstant}
involves the same exponential terms as Kramers formula  \eqref{Eqn:KramersFormula}, this means
$C_P\at{\ga_\si}$ determines the time scale on which probabilistic transitions between the different wells of $H_\si$ become relevant.
In particular, we have 
\begin{enumerate}
\item $C_P\at{\ga_\si}\ll1/\tau$  for supercritical $\si$
as the minimal energy barrier $\min\{h_-\at\si,\,h_+\at\si\}$
is smaller than the critical value $h_\#=h_-\at{\si^\#}=h_+\at{\si_\#}$, but
\item $C_P\at{\ga_\si}\gg 1/\tau$  for subcritical $\si$ since both $h_-\at\si$ and $h_+\at\si$ exceed $h_\#$.
\end{enumerate} 
\end{enumerate}
For our purposes, the above asymptotic results are not sufficient 
since both the leading order constants and the next-to-leading order corrections depends 
on $\si$. In the subsequent series of lemmata we therefore establish 
(non-optimal) upper bounds for the Poincar\'e constant $C_P$ that hold uniform in certain ranges of $\si$.
\begin{figure}[ht!]%
\centering{%
\includegraphics[height=.25\textwidth, draft=\figdraft]{\figfile{muckenhoupt}}%
}%
\caption{Examples of the effective potential $H_\si$ with $\si<\si_*$ 
(left panel) and $\si_*<\si<0$ (right panel). The Muckenhoupt constants
$C^{\pm}_M\bpair{\ga_\si}{X_-\at\si}$ are estimated 
in the proofs of Lemma \ref{Lem:MD.Muckenhoupt.B} 
and Lemma \ref{Lem:MD.Muckenhoupt.C} 
}%
\label{Fig:Muckenhoupt}
\end{figure}%

\begin{lemma}[Poincar\'e constants of $\ga_{\si,\pm}$ if $H_\si$ is a double-well potential]
\label{Lem:MD.Muckenhoupt.A}
For each $\eps$ with $0<\eps<\tfrac{1}{2}\at{\si^*-\si_*}$
there exists a constant $C$, which depends only on $\eps$ and $H$,
such that
\begin{align*}
C_P\at{\ga_{\si,\pm}}\leq C
\end{align*}
holds for all $0<\nu\leq1$ and $\si\in\ccinterval{\si_*+\eps}{\si^*-\eps}$.
\end{lemma}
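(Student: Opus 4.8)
The plan is to bound $C_P\at{\ga_{\si,\pm}}$ through the one-sided Muckenhoupt constants, exploiting that Lemma \ref{Lem:MD.PoincareVersusMuckenhoupt} lets us choose the base point $x_0$ freely. The natural choice is $x_0=X_+\at\si$ in the estimate for $C_P\at{\ga_{\si,+}}$ and $x_0=X_-\at\si$ in the estimate for $C_P\at{\ga_{\si,-}}$, i.e.\ the unique local minimum of $H_\si$ inside the respective interval $I_{\si,\pm}$; note $X_-\at\si<x^*<X_0\at\si<x_*<X_+\at\si$ for $\si\in\oointerval{\si_*}{\si^*}$, so these points indeed lie in $I_{\si,-}$ and $I_{\si,+}$. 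With this choice $H_\si|_{I_{\si,\pm}}$ is \emph{unimodal}: it has its only minimum at $X_\pm\at\si$, increases monotonically from there to the finite endpoint, where it attains a local maximum at $X_0\at\si$, and increases monotonically (and, by Assumption \ref{Ass:Potential}, convexly) on the other side towards $\pm\infty$. Hence each of the four relevant Muckenhoupt constants is a supremum of a product of two integrals over a branch on which $H_\si$ is strictly monotone.

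For the two \emph{interior} branches — $\ocinterval{X_0\at\si}{X_+\at\si}$ entering $C_M^-\pair{\ga_{\si,+}}{X_+\at\si}$ and $\cointerval{X_-\at\si}{X_0\at\si}$ entering $C_M^+\pair{\ga_{\si,-}}{X_-\at\si}$ — I would use that these are bounded intervals whose length is controlled by $\eps$ and $H$ uniformly in $\si\in\ccinterval{\si_*+\eps}{\si^*-\eps}$ (by continuity and monotonicity of $X_0,X_\pm$, cf.\ Remark \ref{Rem:PropertiesOfXi}). On such a branch $H_\si$ is monotone, so in each of the two factors of the Muckenhoupt product the exponent is bounded by its value at the common endpoint $x$; the two exponentials then cancel, leaving the elementary bound $\big(\text{dist to }X_0\at\si\big)\cdot\big(\text{dist to }X_\pm\at\si\big)\le\tfrac14\big(\text{branch length}\big)^2$. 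This gives a uniform bound without using the smallness of $\nu$ at all.

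For the two \emph{exterior} branches — $\cointerval{X_+\at\si}{+\infty}$ in $C_M^+\pair{\ga_{\si,+}}{X_+\at\si}$ and $\ocinterval{-\infty}{X_-\at\si}$ in $C_M^-\pair{\ga_{\si,-}}{X_-\at\si}$ — the cancellation trick fails, since one factor is now an integral over an infinite range. Here $H_\si''=H''>0$ outside $\ccinterval{x^*}{x_*}$, so $H_\si$ is convex and strictly monotone away from the well, and Lemma \ref{Lem:Muckenhoupt.LogConcave} applies with $V=H_\si/\nu^2$, giving e.g.\ $C_M^+\pair{\ga_{\si,+}}{X_+\at\si}\le\nu^2\sup_{x>X_+\at\si}\bat{x-X_+\at\si}/H_\si'\at{x}$. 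Writing $H_\si'\at{x}=H'\at{x}-H'\bat{X_+\at\si}=H''\at\eta\bat{x-X_+\at\si}$ with $\eta\in\oointerval{X_+\at\si}{x}$ and using that $H''$ is bounded below by a positive constant on $\cointerval{X_+\at{\si_*+\eps}}{+\infty}$ — which holds because $H''>0$ on $\oointerval{x_*}{\infty}$, $H''$ is continuous, and $H''\to c_+>0$ at infinity — one obtains a bound by $\nu^2$ times a constant depending only on $\eps$ and $H$; since $\nu\le1$ this is uniform. The symmetric argument handles the $\ga_{\si,-}$ branch. Combining the four estimates via Lemma \ref{Lem:MD.PoincareVersusMuckenhoupt} yields the claim.

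The main point requiring care is uniformity in $\si$: all the geometric constants (branch lengths, and the positive lower bound for $H''$) must be taken over the full closed interval $\ccinterval{\si_*+\eps}{\si^*-\eps}$, and this is exactly why one stays $\eps$ away from $\si_*$ and $\si^*$ — at those endpoints $X_0\at\si$ collides with $x_*$ or $x^*$ and the relevant branch of $H''$ degenerates by including a zero of $H''$. The exterior-branch step, where convexity and quadratic growth of $H$ at infinity genuinely enter, is the only nontrivial ingredient; the rest is bookkeeping.
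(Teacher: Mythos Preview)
Your proposal is correct and follows essentially the same route as the paper: choose the base point at the local minimum $X_\pm\at\si$, handle the exterior (unbounded, convex) branch via Lemma \ref{Lem:Muckenhoupt.LogConcave} together with a uniform lower bound on $H''$ away from $\ccinterval{x^*}{x_*}$, and on the interior (bounded, monotone) branch bound the two exponentials by their common value at $x$ so they cancel, leaving a product of distances controlled by the branch length. The paper carries this out explicitly for $\ga_{\si,-}$ and invokes symmetry for $\ga_{\si,+}$; your treatment of uniformity in $\si$ is exactly what is needed.
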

\begin{proof}
Let $\si\in\ccinterval{\si_*+\eps}{\si^*-\eps}$ be given. Since the effective potential $H_\si$ is strongly convex and strictly decreasing on the interval $\ocinterval{-\infty}{X_-\at{\si}}$,
Lemma \ref{Lem:Muckenhoupt.LogConcave} provides
\begin{align*}
C_M^-\at{\ga_{\si,-},X_-\at{\si}}&=\sup_{x\leq X_-\at{\si}}
\at{\int_x^{X_-\at\si}\exp\at{+\frac{H_\si\at{y}}{\nu^2}}\dint{y}}
\at{\int_{-\infty}^x\exp\at{-\frac{H_\si\at{y}}{\nu^2}}\dint{y}}
\\&\leq
\nu^2\sup\limits_{x\leq X_-\at{\si}}\frac{x-X_-\at{\si}}{H_\si^\prime\at{x}}
=
\nu^2\sup\limits_{x\leq X_-\at{\si}}\frac{x-X_-\at{\si}}{H^\prime\at{x}-H^\prime\at{X_-\at{\si}}}
\\&\leq
\frac{\nu^2}{\inf_{x\leq X_-\at{\si}}H^{\prime\prime}\at{x}}\leq
\frac{\nu^2}{\inf_{x\leq X_-\at{\si^*-\eps}}H^{\prime\prime}\at{x}} =C\nu^2.
\end{align*}
 Moreover, $H_\si$ is 
strictly increasing on the interval $\ccinterval{X_-\at{\si}}{X_0\at{\si}}$, and thus we estimate
\begin{align*}
C_M^+\at{\ga_{\si,-},X_-\at{\si}}&=
\sup_{x\in\ccinterval{X_-\at{\si}}{X_0\at{\si}}}
\at{\int_{X_-\at\si}^{x}\exp\at{+\frac{H_\si\at{y}}{\nu^2}}\dint{y}}
\at{\int_x^{X_0\at{\si}}\exp\at{-\frac{H_\si\at{y}}{\nu^2}}\dint{y}}
\\&\leq
\sup_{x\in\ccinterval{X_-\at{\si}}{X_0\at{\si}}}
\at{\exp\at{+\frac{H_\si\at{x}}{\nu^2}}\bat{x-X_-\at{\si}}}
\at{\exp\at{-\frac{H_\si\at{x}}{\nu^2}}\bat{X_0\at{\si}-x}}
\\&=
\sup_{x\in\ccinterval{X_-\at{\si}}{X_0\at{\si}}}
\bat{x-X_-\at{\si}}
\bat{X_0\at{\si}-x}
 \\&\leq 
\bat{X_0\at{\si}-X_-\at{\si}}^2\leq{C}. 
\end{align*}
From Lemma \ref{Lem:MD.PoincareVersusMuckenhoupt} we now conclude that
\begin{align*}
C_P\at{\ga_{\si,-}}\leq\max\big\{C\nu^2,\,C\big\},
\end{align*}
and the corresponding estimate for $\ga_{\si,+}$ follows by symmetry.

\end{proof}
\begin{lemma}[Poincar\'e constant of $\ga_\si$ if $H_\si$ is a single-well potential]
\label{Lem:MD.Muckenhoupt.B}
For each $\eps>0$ there exists a constant $C$, which depends only on $\eps$ and $H$, such that
\begin{align*}
C_P\at{\ga_{\si}}\leq C
\end{align*}
holds for all $0<\nu\leq{1}$ and $\si\in\ccinterval{\si_*-1/\eps}{\si_*}\cup\ccinterval{\si^*}{\si^*+1/\eps}$.
\end{lemma}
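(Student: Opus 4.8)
The plan is to mimic the proof of Lemma~\ref{Lem:MD.Muckenhoupt.A} but for the situation where $H_\si$ has only a single well. Fix $\si\in\ccinterval{\si_*-1/\eps}{\si_*}\cup\ccinterval{\si^*}{\si^*+1/\eps}$; by the symmetry $H\rightsquigarrow H(-\cdot)$ (or simply running the argument on each side) it suffices to treat $\si\in\ccinterval{\si^*}{\si^*+1/\eps}$, in which case $H_\si$ is a nonconvex single-well potential with its unique minimum at $X_+\at{\si}\in\ccinterval{x_*}{x^{**}}$. The strategy is to bound the two one-sided Muckenhoupt constants $C_M^\pm\bpair{\ga_\si}{X_+\at\si}$ and then invoke Lemma~\ref{Lem:MD.PoincareVersusMuckenhoupt} with $x_0=X_+\at\si$.

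For the right-hand tail, i.e.\ $C_M^+\bpair{\ga_\si}{X_+\at\si}$, the potential $H_\si$ is strictly increasing on $\cointerval{X_+\at\si}{+\infty}$; however it is \emph{not} convex there (it has an inflection somewhere between $x_*$ and $\infty$ coming from the spinodal behaviour of $H$), so Lemma~\ref{Lem:Muckenhoupt.LogConcave} does not apply directly. The fix is to split the half-line at a fixed point $x_1>x^{**}$ (chosen uniformly, since $x^{**}$ is a fixed number independent of $\nu$ and $\si$): on $\cointerval{x_1}{+\infty}$ the asymptotic linearity of $H'$ from Assumption~\ref{Ass:Potential}(3), namely $H''\to c_+>0$, makes $H_\si$ strongly convex with a $\si$-uniform lower bound on $H_\si''$ and strictly increasing (because $H'\at{x}-\si\geq H'\at{x_1}-\si^*-1/\eps>0$ for $x_1$ large enough, using that $H'$ is monotone increasing for $x>x_*$ and grows linearly), so Lemma~\ref{Lem:Muckenhoupt.LogConcave} gives a bound of the form $C\nu^2$ there; on the bounded interval $\ccinterval{X_+\at\si}{x_1}$ one argues exactly as in the second display of the proof of Lemma~\ref{Lem:MD.Muckenhoupt.A}, bounding $\int\exp(+H_\si/\nu^2)$ and $\int\exp(-H_\si/\nu^2)$ crudely by the supremum times the interval length, so that the exponential factors cancel and one is left with $(x_1-X_+\at\si)^2\leq C$. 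Combining the two pieces (with the elementary observation that the Muckenhoupt-type supremum over the union is controlled by the sum of the two, up to a factor involving the total masses, all of which are $\si$-uniformly bounded above and below) yields $C_M^+\bpair{\ga_\si}{X_+\at\si}\leq C$.

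For the left-hand tail $C_M^-\bpair{\ga_\si}{X_+\at\si}$ the potential $H_\si$ is strictly decreasing on $\ocinterval{-\infty}{X_+\at\si}$ but, again, non-monotone in its second derivative: it is strongly convex far to the left (asymptotic linearity, $H''\to c_->0$) and on the bounded middle region it may even be concave in the spinodal interval $\oointerval{x^*}{x_*}$. Here I would split at a fixed point $x_2<x^{*}$ chosen so that $H''\geq c>0$ uniformly on $\ocinterval{-\infty}{x_2}$: on $\ccinterval{x_2}{X_+\at\si}$ the same ``cancellation of exponentials'' trick bounds that portion's contribution by $(X_+\at\si-x_2)^2\leq C$; on $\ocinterval{-\infty}{x_2}$ one applies the second half of Lemma~\ref{Lem:Muckenhoupt.LogConcave} to $\ga_\si=\exp(-H_\si)$ restricted there (strongly convex, strictly decreasing) to get $C\nu^2$. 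Then Lemma~\ref{Lem:MD.PoincareVersusMuckenhoupt} delivers $C_P\at{\ga_\si}\leq 4\max\{C,C\nu^2\}\leq C$ for all $0<\nu\leq1$, with $C$ depending only on $H$ and $\eps$.

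The main obstacle, and the only genuinely non-routine point, is handling the non-convexity of $H_\si$ on the bounded spinodal region: Lemma~\ref{Lem:Muckenhoupt.LogConcave} is only available on the convex far-field pieces, so one must isolate the bounded ``bad'' portion and estimate its Muckenhoupt contribution by the crude bound in which the two exponential factors cancel. One has to be a little careful that the splitting points $x_1,x_2$ can be chosen once and for all independently of $\si\in\ccinterval{\si^*}{\si^*+1/\eps}$ and of $\nu$ — this is exactly where the boundedness of the $\si$-range (the $1/\eps$ cutoff) and the uniform asymptotic-linearity estimates enter — and that the masses $\int_{-\infty}^{x}\ga_\si$ and $\int_x^{+\infty}\ga_\si$ appearing in the Muckenhoupt definitions are controlled from above by $z_\si<\infty$; everything else is bookkeeping analogous to Lemma~\ref{Lem:MD.Muckenhoupt.A}.
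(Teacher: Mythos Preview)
Your approach is essentially the paper's (modulo choosing the mirror case $\si\geq\si^*$ rather than $\si\leq\si_*$): bound the two Muckenhoupt constants at the unique minimum by splitting each half-line into a convex far-field piece, where Lemma~\ref{Lem:Muckenhoupt.LogConcave} applies, and a bounded piece, where the exponential factors cancel. Two corrections are in order, though.

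First, a harmless factual slip: for $\si\in\ccinterval{\si^*}{\si^*+1/\eps}$ one has $X_+\at\si\geq X_+\at{\si^*}=x^{**}>x_*$, and Assumption~\ref{Ass:Potential} gives $H''>0$ on $\oointerval{x_*}{+\infty}$, so $H_\si$ \emph{is} convex on $\cointerval{X_+\at\si}{+\infty}$ and Lemma~\ref{Lem:Muckenhoupt.LogConcave} applies there directly; no splitting is needed for $C_M^+$. The non-convex spinodal region lies entirely on the \emph{left} of $X_+\at\si$, so only $C_M^-$ requires the splitting argument.

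Second, the ``combining'' step is too vague. The Muckenhoupt supremum does not decompose as a sum of restricted suprema plus a mass factor: when $x$ lies in the bounded piece $\ccinterval{x_2}{X_+\at\si}$, the mass integral $\int_{-\infty}^x\ga_\si$ still carries the full tail $\int_{-\infty}^{x_2}\ga_\si$, and the product of this tail with $\int_x^{X_+\at\si}\ga_\si^{-1}$ is a genuine cross term that must be estimated separately. The paper does exactly this (its constant $C_\si$), bounding $\int_{X_-\at\si}^{\overline{X}}\ga_\si^{-1}$ by the length times the worst exponential and the tail $\int_{\overline{X}}^{\infty}\ga_\si$ by $\nu^2/H_\si'\at{\overline{X}}$ times the matching exponential, so that the exponentials cancel and one is left with $C\nu^2/H_\si'\at{\overline{X}}\leq C$. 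You should make the analogous estimate explicit.
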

\begin{proof}
Let $\overline{X}>x_*$ be arbitrary but fixed and choose $\si\in\ccinterval{\si_*-1/\eps}{\si_*}$. The potential $H_\si$ is strongly convex and strictly decreasing on the interval
$\ocinterval{-\infty}{X_-\at{\si}}$, and hence we show
\begin{align*}
C_M^-\at{\ga_{\si},X_-\at{\si}}&\leq
\frac{\nu^2}{\inf_{x\leq X_-\at{\si_*}}H^{\prime\prime}\at{x}} =C\nu^2
\end{align*}
as in the proof of Lemma \ref{Lem:MD.Muckenhoupt.A}. In order to estimate
\begin{align*}
C_M^+\at{\ga_{\si},X_-\at{\si}}&=
\sup_{x\geq X_-\at{\si}}
\at{\int_{X_-\at\si}^x\exp\at{+\frac{H_\si\at{y}}{\nu^2}}\dint{y}}
\at{\int_{x}^{+\infty}\exp\at{-\frac{H_\si\at{y}}{\nu^2}}\dint{y}}
\end{align*}
we notice that $H_\si$ is strongly 
convex and strictly increasing on $\cointerval{\overline{X}}{+\infty}$, 
see Figure \ref{Fig:Muckenhoupt}. We therefore find
\begin{align*}
\inf_{x\geq \overline{X}} \frac{H_\si^\prime\at{x}}{x-\overline{X}}\geq\inf_{x\geq \overline{X}}
 \frac{H_\si^\prime\at{\bar{X}}+c\bat{x-\overline{X}}}{x-\overline{X}}\geq{c},
\end{align*}
with $c:=\inf_{x\geq\overline{X}}H^{\prime\prime}\at{x}>0$, and Lemma \ref{Lem:Muckenhoupt.LogConcave} yields
\begin{align*}
\sup\limits_{x\geq\overline{X}}\at{\int_{\overline{X}}^x\frac{1}{\ga_\si\at{y}}\dint{y}}\at{\int_x^{+\infty}\ga_\si\at{y}\dint{y}}\leq{C}\nu^2.
\end{align*}
For $x\geq \overline{X}$ we therefore obtain
\begin{align*}
\at{\int_{X_-\at{\si}}^x\frac{1}{\ga_\si\at{y}}\dint{y}}\at{\int_x^{+\infty}\ga_\si\at{y}\dint{y}}\leq{C}_\si+C\nu^2,
\end{align*}
where
\begin{align*}
{C}_\si:=\at{\int_{X_-\at{\si}}^{\overline{X}}\frac{1}{\ga_\si\at{y}}\dint{y}}\at{\int_{\overline{X}}^{+\infty}\ga_\si\at{y}\dint{y}}.
\end{align*}
Moreover, for $x\in\ccinterval{X_-\at{\si}}{\overline{X}}$ we estimate
\begin{align*}
\at{\int_{X_-\at{\si}}^x\frac{1}{\ga_\si\at{y}}\dint{y}}\at{\int_x^{+\infty}\ga_\si\at{y}\dint{y}}
&\leq
\at{\int_{X_-\at{\si}}^x\frac{1}{\ga_\si\at{y}}\dint{y}}\at{\int_x^{\overline{X}}\ga_\si\at{y}\dint{y}}+C_\si
\\&
\leq \bat{x-X_-\at{\si}}\at{\overline{X}-x}+C_\si
\\&\leq \bat{\overline{X}-X_-\at{\si}}^2+C_\si\leq C+C_\si,
\end{align*}
where we used that $\ga_\si$ is strictly decreasing on $\ccinterval{X_-\at{\si}}{\overline{X}}$ and that 
the assumed bounds for $\si$ as well as the monotonicity of $X_-$ guarantee
$X_-\at{\si_*-1/\eps}\leq X_-\at{\si}<X_-\at{\si_*}=x_{**}<\overline{X}$. Combining all estimates derived so far with
Lemma \ref{Lem:MD.PoincareVersusMuckenhoupt} gives
\begin{align*}
C_P\at{\ga_\si}\leq C\at{1+\nu^2}+C_\si,
\end{align*}
and thus it remains to bound $C_\si$. To this end we employ the
monotonicity properties of $H_\si$ and $H_\si^\prime$ to find
\begin{align*}
C_\si&=\at{\int_{X_-\at{\si}}^{\overline{X}}\exp\at{\frac{H_\si\at{y}-H_\si\at{\overline{X}}}{\nu^2}}\dint{y}
}\at{\int_{\overline{X}}^{+\infty}\exp\at{\frac{H_\si\at{\overline{X}}-H_\si\at{y}}{\nu^2}}\dint{y}}
\\&\leq
\bat{\overline{X}-X_-\at{\si}}\int_{\overline{X}}^{+\infty}\exp\at{\frac{-H_\si^\prime\at{\overline{X}}\at{y-\overline{X}}}{\nu^2}}\dint{y}\leq C\frac{\nu^2}{H_\si^\prime\at{\overline{X}}}\leq{C}.
\end{align*}
The discussion in the case of $\si\in\ccinterval{\si^*}{\si^*+1/\eps}$ is analogous.
\end{proof}

\begin{lemma}[Poincar\'e constant of $\ga_\si$ if $H_\si$ is a supercritical double-well potential]
\label{Lem:MD.Muckenhoupt.C}
For each $\eps$ with $0<\eps<\min\{\si_\#-\si_*,\,\si^*-\si^\#\}$ there exist constants $\alpha$ and $C$ which depend only on $\eps$ and $H$ such that
\begin{align*}
C_P\at{\ga_{\si}}\leq C\tau^{\alpha-1}
\end{align*}
holds for all $\si\in\ccinterval{\si_*}{\si_\#-\eps}\cup\ccinterval{\si^\#+\eps}{\si^*}$ and 
all sufficiently small $\nu>0$.
\end{lemma}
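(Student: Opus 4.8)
The plan is to bound $C_P\at{\ga_\si}$ from above through the one-sided Muckenhoupt constants, exactly as in the proofs of Lemma \ref{Lem:MD.Muckenhoupt.A} and Lemma \ref{Lem:MD.Muckenhoupt.B}, but this time keeping track of the exponentially large factor produced by the energy barrier. By the reflection $x\rightsquigarrow-x$, $\si\rightsquigarrow-\si$, under which Assumption \ref{Ass:Potential} is preserved with the roles of the indices $\pm$ interchanged, it suffices to treat $\si\in\ccinterval{\si_*}{\si_\#-\eps}$. For such $\si$ the well at $X_-\at\si$ is the deeper one, the relevant barrier is $h_+\at\si$, and by strict monotonicity of $h_+$ we have $h_+\at\si\leq h_+\at{\si_\#-\eps}=:h_\#-\delta$ with $\delta=\delta\at{\eps}>0$, since $h_+\at{\si_\#}=h_\#$.

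I fix the reference point $x_0:=X_-\at\si$ in Lemma \ref{Lem:MD.PoincareVersusMuckenhoupt}, so that only $C_M^-\bpair{\ga_\si}{X_-\at\si}$ and $C_M^+\bpair{\ga_\si}{X_-\at\si}$ remain to be estimated. The first is $\leq C\nu^2$ by exactly the computation in Lemma \ref{Lem:MD.Muckenhoupt.A}: $H_\si$ is strictly decreasing and strongly convex on $\ocinterval{-\infty}{X_-\at\si}$, with convexity constant $\inf_{x\leq X_-\at{\si_\#-\eps}}H^{\prime\prime}\at x>0$ independent of $\si$ because $H^{\prime\prime}$ is continuous, positive on $\ocinterval{-\infty}{x^*}$ and tends to $c_-\,(>0)$ at $-\infty$. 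The real work is the estimate of $C_M^+\bpair{\ga_\si}{X_-\at\si}=\sup_{x\geq X_-\at\si}\bat{\int_{X_-\at\si}^x 1/\ga_\si\dint{y}}\bat{\int_x^{+\infty}\ga_\si\dint{y}}$.

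For the latter I fix once and for all a large $\overline X$, depending only on $\eps$ and $H$, with $\overline X>X_+\at{\si_\#-\eps}$ and large enough that $H_\si$ is strictly increasing and strongly convex on $\cointerval{\overline X}{+\infty}$ and $H_\si^\prime\at{\overline X}\geq c>0$ uniformly in $\si\in\ccinterval{\si_*}{\si_\#-\eps}$ (possible since $H^{\prime\prime}\to c_+>0$ and $H^\prime\to+\infty$), and split the supremum over the four intervals $\ccinterval{X_-\at\si}{X_0\at\si}$, $\ccinterval{X_0\at\si}{X_+\at\si}$, $\ccinterval{X_+\at\si}{\overline X}$ and $\cointerval{\overline X}{+\infty}$. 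On each bounded piece the factors $\int 1/\ga_\si$ and $\int\ga_\si$ are controlled crudely by $(\text{length})\times\exp\at{+\max H_\si/\nu^2}$ respectively $(\text{length})\times\exp\at{-\min H_\si/\nu^2}$, using the monotonicity pattern of $H_\si$ — increasing on $\ccinterval{X_-\at\si}{X_0\at\si}$, decreasing on $\ccinterval{X_0\at\si}{X_+\at\si}$, increasing afterwards — together with the $\si$-uniform bound $X_+\at\si-X_-\at\si\leq C$ from Remark \ref{Rem:PropertiesOfXi}; on the unbounded piece the tail is handled by log-concavity exactly as in Lemma \ref{Lem:MD.Muckenhoupt.B}, contributing $\leq C\nu^2\exp\at{-H_\si\at{\overline X}/\nu^2}$. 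Collecting the terms, every contribution turns out to be $\leq C\exp\bat{\at{H_\si\at{X_0\at\si}-H_\si\at{X_+\at\si}}/\nu^2}+C=C\exp\at{h_+\at\si/\nu^2}+C$. The one step that needs care is the cross term $\bat{\int_{X_-\at\si}^{\overline X}1/\ga_\si}\bat{\int_{\overline X}^{+\infty}\ga_\si}$, whose exponent $H_\si\at{X_0\at\si}-H_\si\at{\overline X}$ may be negative — this happens precisely when $\si$ is close to $\si_*$ — but is in any case $\leq\max\{0,\,h_+\at\si\}=h_+\at\si$, so this term too is $\leq C\exp\at{h_+\at\si/\nu^2}$. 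Hence $C_M^+\bpair{\ga_\si}{X_-\at\si}\leq C\exp\at{h_+\at\si/\nu^2}$ uniformly in $\si\in\ccinterval{\si_*}{\si_\#-\eps}$ and $0<\nu\leq1$. The main obstacle in executing this step is precisely the uniform-in-$\si$ bookkeeping on the pieces lying to the right of the shallow well, where $H_\si\at{\overline X}$ can exceed the barrier height, and checking that none of the accumulating prefactors depends on $\nu$ or $\si$.

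Combining the two Muckenhoupt bounds with Lemma \ref{Lem:MD.PoincareVersusMuckenhoupt} yields $C_P\at{\ga_\si}\leq C\exp\at{h_+\at\si/\nu^2}\leq C\exp\bat{\at{h_\#-\delta}/\nu^2}$, and it remains to turn the exponential into a power of $\tau$. By Assumption \ref{Ass:Tau} we have $\nu^2\ln\tau\to-h_\#$, hence for any $\al\in\oointerval{0}{\delta/h_\#}$ one has $\at{1-\al}h_\#>h_\#-\delta$, and therefore $\at{\al-1}\nu^2\ln\tau\to\at{1-\al}h_\#>h_\#-\delta$, so that $\exp\bat{\at{h_\#-\delta}/\nu^2}\leq\tau^{\al-1}$ for all sufficiently small $\nu$. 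Fixing, say, $\al:=\delta/\at{2h_\#}$ gives $C_P\at{\ga_\si}\leq C\tau^{\al-1}$ with $C$ and $\al$ depending only on $\eps$ and $H$, as claimed; the range $\si\in\ccinterval{\si^\#+\eps}{\si^*}$ then follows from the reflection noted above, with $\delta$ replaced by $h_\#-h_-\at{\si^\#+\eps}>0$ (and $\al$ shrunk accordingly).
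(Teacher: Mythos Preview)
Your proposal is correct and follows essentially the same approach as the paper: both estimate $C_P\at{\ga_\si}$ via the Muckenhoupt constants with reference point $X_-\at\si$, split $\cointerval{X_-\at\si}{+\infty}$ into the same four pieces using an auxiliary point $\overline{X}$, obtain the key bound $C_M^+\leq C\exp\at{h_+\at\si/\nu^2}$, and convert this into $C\tau^{\al-1}$ via Assumption~\ref{Ass:Tau} and $h_+\at\si\leq h_\#-c$. The only cosmetic difference is that the paper chooses $\overline{X}$ large enough that $H_\si\at{X_0\at\si}\leq H_\si\at{\overline{X}}$ for all $\si$ in the range, which makes the cross term $\bat{\int_{X_-\at\si}^{\overline{X}}1/\ga_\si}\bat{\int_{\overline{X}}^{+\infty}\ga_\si}$ bounded by $C\nu^2$ outright, whereas you allow either sign for $H_\si\at{X_0\at\si}-H_\si\at{\overline{X}}$ and observe that the resulting exponent is $\leq h_+\at\si$ in any case; both variants lead to the same final estimate.
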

\begin{proof} 
By symmetry and continuity, it is sufficient to consider the case  
$\si\in\ocinterval{\si_*}{\si_\#-\eps}$. As in the proof of Lemma \ref{Lem:MD.Muckenhoupt.A} we first estimate
\begin{align*}
C_M^-\at{\ga_{\si},X_-\at{\si}}&\leq
\frac{\nu^2}{\inf_{x\leq X_-\at{\si_\#}}H^{\prime\prime}\at{x}} =C_0\nu^2.
\end{align*}
Afterwards we choose $\ol{X}$ sufficiently large such that 
\begin{align*}
H_\si\at{X_0\at{\si}}\leq H_\si\at{\ol{X}},\qquad X_+\at{\si}\leq\ol{X}
\end{align*}
holds for all $\si\in\ocinterval{\si_*}{\si_\#-\eps}$, and as in the proof of 
Lemma \ref{Lem:MD.Muckenhoupt.B} we verify that
\begin{align*}
\sup\limits_{x\geq \overline{X}}\at{\int_{\overline{X}}^x\frac{1}{\ga_\si\at{y}}\dint{y}}\at{\int_x^{+\infty}\ga_\si\at{y}\dint{y}}\leq{C_1}\nu^2,
\end{align*}
where we used that  $H_\si$ is
strictly increasing and convex in the interval $\cointerval{\ol{X}}{\infty}$, see Figure \ref{Fig:Muckenhoupt}.
Due to the monotonicity properties of $H_\si$ and $H_\si^\prime$, and thanks to our choice of $\ol{X}$, we further 
obtain
\begin{align*}
\at{\int_{X_-\at{\si}}^{\overline{X}}\frac{1}{\ga_\si\at{y}}\dint{y}}\at{\int_{\overline{X}}^{+\infty}\ga_\si\at{y}\dint{y}}&=\at{\int_{X_-\at{\si}}^{\overline{X}}\frac{\ga_\si\at{\ol{X}}}{\ga_\si\at{y}}\dint{y}}\at{\int_{\overline{X}}^{+\infty}\frac{\ga_\si\at{y}}{\ga_\si\at{\ol{X}}}\dint{y}}
\\&\leq
\at{\overline{X}-X_-\at{\si}}\int_{\overline{X}}^{+\infty}\exp\at{\frac{-H_\si^\prime\bat{\overline{X}}\at{y-\overline{X}}}{\nu^2}}\dint{y
}\\&
\leq C_2\nu^2
\end{align*}
as well as
\begin{align*}
\at{\int_{X_-\at{\si}}^{X_+\at{\si}}\frac{1}{\ga_\si\at{y}}\dint{y}}\at{\int_{X_0\at{\si}}^{\overline{X}}\ga_\si\at{y}\dint{y}}&\leq
\at{C\exp\at{\frac{H_\si\bat{X_0\at\si}}{\nu^2}}}
\at{C\exp\at{-\frac{H_\si\bat{X_+\at\si}}{\nu^2}}}
\\&\leq C_3\exp\at{\frac{h_+\at{\si}}{\nu^2}}.
\end{align*}
We now abbreviate
\begin{align*}
f_\si\at{x}:=\at{\int_{X_-\at{\si}}^{x}\frac{1}{\ga_\si\at{y}}\dint{y}}\at{\int_{x}^{+\infty}\ga_\si\at{y}\dint{y}}
\end{align*}
and discuss four different cases: 
With $x\geq\overline{X}$ we estimate
\begin{align*}
f_\si\at{x}
&=
\at{\int_{X_-\at{\si}}^{\overline{X}}\frac{1}{\ga_\si\at{y}}\dint{y}}\at{\int_{x}^{+\infty}\ga_\si\at{y}\dint{y}}+
\at{\int_{\overline{X}}^x\frac{1}{\ga_\si\at{y}}\dint{y}}\at{\int_{x}^{+\infty}\ga_\si\at{y}\dint{y}}
&\leq \bat{C_2+C_1}\nu^2.
\end{align*}
For 
$x\in\ccinterval{X_+\at{\si}}{\overline{X}}$ we find
\begin{align*}
f_\si\at{x}
&\leq
\at{\int_{X_-\at{\si}}^{x}\frac{1}{\ga_\si\at{y}}\dint{y}}\at{\int_{x}^{\overline{X}}\ga_\si\at{y}\dint{y}}+C_2\nu^2 \\
&\leq 
\at{\int_{X_+\at{\si}}^{x}\frac{1}{\ga_\si\at{y}}\dint{y}}\at{\int_{x}^{\overline{X}}\ga_\si\at{y}\dint{y}}
+ C_3\exp\at{\frac{h_+\at{\si}}{\nu^2}}+C_2\nu^2,
\end{align*}
and since $H_\si$ is strictly increasing on the interval $\ccinterval{X_+\at{\si}}{\overline{X}}$, there exists a constant $C_4$
such that
\begin{align*}
f_\si\at{x}\leq
C_4\at{1+\nu^2+\exp\at{\frac{h_+\at{\si}}{\nu^2}}}.
\end{align*}
In the case of $x\in\ccinterval{X_0\at{\si}}{X_+\at\si}$ we verify
\begin{align*}
f_\si\at{x}
&\leq
\at{\int_{X_-\at{\si}}^{x}\frac{1}{\ga_\si\at{y}}\dint{y}}\at{\int_{x}^{\overline{X}}\ga_\si\at{y}\dint{y}}+C_2\nu^2
\\&\leq C_3\exp\at{\frac{h_+\at{\si}}{\nu^2}}+C_2\nu^2,
\end{align*}
and for $x\in\ccinterval{X_-\at\si}{X_0\at{\si}}$ we finally get
\begin{align*}
f_\si\at{x}
&\leq
\at{\int_{X_-\at{\si}}^{x}\frac{1}{\ga_\si\at{y}}\dint{y}}\at{\int_{x}^{\overline{X}}\ga_\si\at{y}\dint{y}}+C_2\nu^2
\\
&\leq
\at{\int_{X_-\at{\si}}^{x}\frac{1}{\ga_\si\at{y}}\dint{y}}\at{\int_{x}^{X_0\at{\si}}\ga_\si\at{y}\dint{y}}
+C_3\exp\at{\frac{h_+\at{\si}}{\nu^2}}+C_2\nu^2
\\&\leq C_5\at{1+\nu^2+\exp\at{\frac{h_+\at{\si}}{\nu^2}}},
\end{align*}
where the last inequality holds since 
$H_\si$ is strictly increasing on the interval $\ccinterval{X_-\at{\si}}{X_0\at{\si}}$.
Taking the supremum over all $x\geq X_-\at{\si}$ we now obtain,
thanks to Lemma \ref{Lem:MD.PoincareVersusMuckenhoupt}, the bound
\begin{align*}
C_P\at{\ga}\leq \max\big\{C_M^-\at{\ga_{\si},X_-\at{\si}},\,
C_M^+\at{\ga_{\si},X_-\at{\si}}\big\}&\leq C\at{1+\nu^2+\exp\at{\frac{h_+\at{\si}}{\nu^2}}}.
\end{align*}
The claim now follows 
because Assumption \ref{Ass:Tau} implies 
\begin{align*}
\tau=\exp\at{-\frac{h_\#\bat{1+\Do{1}}}{\nu^2}}
\end{align*}
and since we have $0\leq h_+\at{\si}\leq h_\#-c$ for some $c>0$ depending on $\eps$.
\end{proof}

\subsubsection{Estimates for the mass near the stable peak positions}\label{sect:MassInPeaks}
%
In order to establish the mass-dissipation estimates,  we 
introduce the dissipation functional
\begin{align}
\label{Eqn:DefDissipationFunctional}
\calD_\si\at\varrho&:=\int_\Rset
\frac{\Bat{\nu^2\partial_x\varrho\at{x}+\bat{H^\prime\at{x}-\si}\varrho\at{x}}^2}{\varrho\at{x}}\dint{x}.
\end{align}
This definition is consistent with \eqref{Eqn:DefDissipation} and implies 
\begin{align}
\label{Eqn:MD.AuxFormula}
\calD_\si\at{\varrho}=4\nu^4\int_\Rset\bat{\partial_xw}^2\ga_\si\dint{x},\quad
\int_{J}\varrho\dint{x}=\int_Jw^2\ga_\si\dint{x}
\qquad\text{for}\quad
\varrho=w^2\ga_\si.
\end{align}
Our first mass-dissipation estimate implies for each $\si\in\oointerval{\si_*}{\si^*}$ that the mass is concentrated
near the stable peak positions $X_-\at\si$ and $X_+\at\si$ provided that the dissipation is sufficiently small.
\begin{lemma}[upper bound for the mass outside the stable peaks]
\label{Lem:MD.MassOutsideTwoPeaks}
For each $\eps$ and any $\eta$
with
\begin{align*}
0<\eps<\tfrac12\at{\si^*-\si_*},\qquad
0< \eta <  \min\Big\{ x^*-X_-\at{\si^*-\eps},\,X_+\at{\si_*+\eps}-x_*\Big\}
\end{align*}
there exist constants $\alpha$ and $C$, which depend only on $\eps$ and $\eta$, such that
\begin{align*}
\int_{\Rset\setminus B_{\eta}\at{X_-\at\si}\cup B_{\eta}\at{X_+\at\si}}\varrho\at{x}\dint{x}\leq C\tau^\alpha\at{\frac{\calD_\si\at\varrho}{\tau}+1}
\end{align*} 
for all $\si\in\oointerval{\si_*+\eps}{\si^*-\eps}$, any smooth probability measure $\varrho$, and all sufficiently small $\nu>0$.
\end{lemma}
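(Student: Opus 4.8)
The plan is to write $\varrho$ as a square times the local equilibrium density, apply the weighted Poincar\'e estimate of Lemma~\ref{Lem:MD.AuxEstimate} separately on the two half-lines $I_{\si,\pm}$ determined by the barrier position $X_0\at\si$, insert the uniform bound for $C_P\at{\ga_{\si,\pm}}$ from Lemma~\ref{Lem:MD.Muckenhoupt.A}, and finally absorb the polynomial-in-$\nu$ prefactors and the exponentially small equilibrium-mass ratios into powers of $\tau$ via the Kramers scaling of Assumption~\ref{Ass:Tau}. Concretely, I would set $w:=\sqrt{\varrho/\ga_\si}$, which is admissible because $\ga_\si$ is smooth and strictly positive; then \eqref{Eqn:MD.AuxFormula} gives $\int_J\varrho\dint{x}=\int_J w^2\ga_\si\dint{x}$ for every measurable $J$ and $\int_\Rset\at{\partial_x w}^2\ga_\si\dint{x}=\calD_\si\at\varrho/\at{4\nu^4}$, and if $\calD_\si\at\varrho=\infty$ there is nothing to prove. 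The constraints on $\eps,\eta$ and the monotonicity of $X_\pm$ (Remark~\ref{Rem:PropertiesOfXi}) yield $X_-\at\si+\eta<x^*\leq X_0\at\si\leq x_*<X_+\at\si-\eta$, so the bad set $J:=\Rset\setminus\bat{B_\eta\at{X_-\at\si}\cup B_\eta\at{X_+\at\si}}$ is, up to the single point $X_0\at\si$, a disjoint union of finitely many open intervals, each contained in $I_{\si,-}$ or in $I_{\si,+}$; I write $J^\pm:=J\cap I_{\si,\pm}$, so that $\int_J\varrho\dint{x}=\int_{J^-}\varrho\dint{x}+\int_{J^+}\varrho\dint{x}$.

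Applying Lemma~\ref{Lem:MD.AuxEstimate} with $I=I_{\si,-}$ and $\ga=\ga_{\si,-}$ to each of the (at most two) intervals composing $J^-$, summing, and doing the same on the $+$ side gives
\begin{align*}
\int_{J^\pm}\varrho\dint{x}\leq C\,C_P\at{\ga_{\si,\pm}}\int_\Rset\at{\partial_x w}^2\ga_\si\dint{x}+C\,\frac{\D\int_{J^\pm}\ga_\si\dint{x}}{\D\int_{I_{\si,\pm}}\ga_\si\dint{x}}\int_\Rset\varrho\dint{x}.
\end{align*}
Here Lemma~\ref{Lem:MD.Muckenhoupt.A} bounds $C_P\at{\ga_{\si,\pm}}$ by a constant depending only on $\eps$ and $H$, so the first term is at most $C\nu^{-4}\calD_\si\at\varrho$, and since $\varrho$ has unit mass the second term is $C$ times the equilibrium-mass ratio $Q_\pm:=\int_{J^\pm}\ga_\si\dint{x}\big/\int_{I_{\si,\pm}}\ga_\si\dint{x}$. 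It therefore remains to show that $Q_\pm$ is exponentially small in $1/\nu^2$, uniformly for $\si\in\ccinterval{\si_*+\eps}{\si^*-\eps}$.

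For $Q_-$ (the case of $Q_+$ is symmetric) I would exploit that $H_\si$ has a nondegenerate minimum at $X_-\at\si$, is strictly monotone on each side of it, and is uniformly convex on $\ocinterval{-\infty}{x^*}$: an elementary Laplace estimate (convexity to control the left tail, monotonicity on $\bccinterval{X_-\at\si}{X_0\at\si}$ for the middle part) gives $\int_{J^-}\ga_\si\dint{x}\leq C\nu^2\exp\bat{-\bat{H_\si\at{X_-\at\si}+\delta}/\nu^2}$, whereas restricting to a $\nu$-neighbourhood of $X_-\at\si$ and using the uniform upper bound on $H_\si''$ gives $\int_{I_{\si,-}}\ga_\si\dint{x}\geq c\,\nu\,\exp\bat{-H_\si\at{X_-\at\si}/\nu^2}$, hence $Q_-\leq C\nu\exp\at{-\delta/\nu^2}\leq C\exp\bat{-\delta/\at{2\nu^2}}$ for small $\nu$. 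Here the \emph{partial barrier}
\begin{align*}
\delta:=\inf_{\si\in\ccinterval{\si_*+\eps}{\si^*-\eps}}\min\Big\{H_\si\at{X_-\at\si-\eta}-H_\si\at{X_-\at\si},\;H_\si\at{X_-\at\si+\eta}-H_\si\at{X_-\at\si}\Big\}
\end{align*}
is strictly positive by compactness, and the constants $c,C,\delta$ depend only on $\eps$, $\eta$, and $H$.

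Finally, Assumption~\ref{Ass:Tau} yields $\tau^\beta=\exp\bat{-\beta h_\#\bat{1+\Do{1}}/\nu^2}$ for every $\beta>0$, so that for any fixed $\alpha\in\oointerval{0}{1}$ with $\alpha h_\#<\delta/2$ both $\nu^{-4}\leq\tau^{\alpha-1}$ and $\exp\bat{-\delta/\at{2\nu^2}}\leq\tau^\alpha$ hold for all sufficiently small $\nu$; such an $\alpha$ can be chosen depending only on $\eps,\eta,H$. Substituting these into the displayed inequality gives $\int_{J^\pm}\varrho\dint{x}\leq C\tau^{\alpha-1}\calD_\si\at\varrho+C\tau^\alpha=C\tau^\alpha\bat{\calD_\si\at\varrho/\tau+1}$, and summing over the two sides completes the proof. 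The only genuinely non-routine point is the uniform Laplace estimate for the ratios $Q_\pm$ — in particular keeping the partial barrier $\delta$ bounded away from $0$ as $\si$ ranges over the closed subinterval $\ccinterval{\si_*+\eps}{\si^*-\eps}$; everything else amounts to bookkeeping of the polynomial factor $\nu^{-4}$ and the exponential weight $\exp\bat{-\delta/\at{2\nu^2}}$ against the Kramers scale $\tau$.
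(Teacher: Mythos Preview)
Your proposal is correct and follows essentially the same route as the paper: split $\Rset$ at $X_0\at\si$, apply Lemma~\ref{Lem:MD.AuxEstimate} on each half with $\ga=\ga_{\si,\pm}$ and $w^2=\varrho/\ga_\si$, invoke Lemma~\ref{Lem:MD.Muckenhoupt.A} for the Poincar\'e constants, and absorb $\nu^{-4}$ and the exponentially small equilibrium-mass ratios into $\tau^\alpha$ via Assumption~\ref{Ass:Tau}. The only differences are cosmetic: you spell out the Laplace estimate for the ratios $Q_\pm$ (which the paper merely asserts), and you are careful to apply Lemma~\ref{Lem:MD.AuxEstimate} interval-by-interval on the two components of $J^\pm$, whereas the paper applies it directly to the non-connected set $I_{\si,\pm}\setminus B_\eta\at{X_\pm\at\si}$ --- harmless, since the proof of that lemma never actually uses connectedness of $J$.
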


\begin{proof}  
Due to the imposed bounds for $\eta$, the definitions of $\ga_\si$ and $I_{\si,\pm}$ -- see equations \eqref{Eqn:DefEquilibriumDensities} and \eqref{Eqn:DefSiIntervals} as well as Figure \ref{Fig:Equilibria} -- imply the existence of constants $C>0$ and $0<\alpha<1$ such that
\begin{align*}
\frac{\D \int_{I_{\si,-}\setminus B_\eta\at{X_-\at\si}}\ga_\si\at{x}\dint{x}}
{\D \int_{I_{\si,-}}\ga_\si\at{x}\dint{x}}
+
\frac{\D \int_{I_{\si,+}\setminus B_\eta\at{X_+\at\si}}\ga_\si\at{x}\dint{x}}{
\D \int_{I_{\si,+}}\ga_\si\at{x}\dint{x}}
\leq C\tau^\alpha
\end{align*}
holds for all sufficiently small $\nu>0$. In other words, the mass of the equilibrium density $\ga_\si/z_\si$ is 
almost completely located near the stable peak positions $X_-\at{\si}$ and $X_+\at{\si}$, see Figure \ref{Fig:Equilibria}. Using Lemma \ref{Lem:MD.AuxEstimate} twice 
with 
\begin{align*}
\ga=\ga_{\si,\pm},\qquad I=I_{\si,\pm}, \qquad w^2=\varrho/\ga_\si, \qquad J=I_{\si,\pm}\setminus
B_\eta\bat{X_\pm\at\si}
\end{align*}
we therefore arrive -- see also \eqref{Eqn:MD.AuxFormula} and recall that $\int_{I_{\si,\pm}}w^2\ga_\si\dint{x}\leq\int_\Rset\varrho\dint{x}=1$ -- 
at the estimate
\begin{align*}
\int_{\Rset\setminus \bat{ B_\eta\at{X_-\at\si} \cup B_\eta\at{X_+\at\si}} }\varrho\at{x}\dint{x}
\leq 2\Bat{C_P\at{\ga_{\si,-}}+ C_P\at{\ga_{\si,+}}}\frac{\calD_\si\at{\varrho}}{4\nu^4}+
C\tau^\alpha.
\end{align*}
The assertion now follows since Lemma \ref{Lem:MD.Muckenhoupt.A} provides
$
C_P\at{\ga_{\sigma,\pm}}\leq C$
and because
 Assumption \ref{Ass:Tau} yields $\nu^{-4}\tau\leq \tau^{\alpha}$ for all sufficiently small $\nu>0$. 
\end{proof}

The second mass-dissipation estimate applies to strictly supercritical $\sigma$ and reveals
that the dissipation controls the mass near the global minimizer of $H_\si$, which is 
$X_-\at{\si}$ or $X_+\at{\si}$ for $\si<\si_\#$ or $\si>\si^\#$, respectively, see Remark \ref{Rem:PropertiesOfXi}.
\begin{lemma}[upper bound for the mass outside the most stable peak]
\label{Lem:MD.MassOutsideOnePeak}
For each $\eps>0$ and any $\eta$ 
with
\begin{align*}
0< \eta <  \Big\{ x^*-X_-\bat{\si^\#-\eps},\,X_+\bat{\si_\#+\eps}-x_*\Big\}
\end{align*}
there exist  constants $\alpha$ and $C$, which depend only on $\eps$ and $\eta$, such that the implications
\begin{align*}
\si\in\ccinterval{\si^{\#}+\eps}{\si^\#+1/\eps}\quad\implies\quad
\int_{ B_{\eta}\at{X_+\at\si}}\varrho\dint{x}\geq 1-C\tau^\alpha\at{\frac{\calD_\si\at\varrho}{\tau}+1}
\end{align*}
and
\begin{align*}
\si\in\ccinterval{\si_{\#}-1/\eps}{\si_{\#}-\eps}\quad\implies\quad
\int_{B_{\eta}\at{X_-\at\si}}\varrho\dint{x}\geq 1-C\tau^\alpha\at{\frac{\calD_\si\at\varrho}{\tau}+1}
\end{align*}
hold for any smooth probability measure $\varrho$ and all sufficiently small $\nu$.
\end{lemma}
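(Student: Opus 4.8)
The plan is to follow the scheme of the proof of Lemma~\ref{Lem:MD.MassOutsideTwoPeaks}, but now with the single interval $I=\Rset$ and a single target ball. By the symmetry $x\rightsquigarrow-x$, $\si\rightsquigarrow-\si$ (which maps $H$ to $\wt{H}$ with $\wt{H}\at{x}=H\at{-x}$ and exchanges the roles of $\si_\#$, $\si^\#$ and of $X_-$, $X_+$) it suffices to establish the first implication, i.e.\ the case $\si\in\ccinterval{\si^\#+\eps}{\si^\#+1/\eps}$, for which $X_+\at\si$ is the \emph{global} minimizer of the effective potential $H_\si$. Indeed, if $\si\leq\si^*$ then $H_\si$ is a double-well potential whose competing local minimum satisfies $H_\si\at{X_-\at\si}-H_\si\at{X_+\at\si}=h_+\at\si-h_-\at\si$, which is bounded below by some $c\at\eps>0$ on the whole range since $h_+$ is increasing, $h_-$ is decreasing, and $h_+\at{\si^\#}>h_\#=h_-\at{\si^\#}$; and if $\si\geq\si^*$ then $H_\si$ is a nonconvex single-well potential with minimizer $X_+\at\si$.

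The first step is to quantify the equilibrium mass outside the peak, namely to show that
\begin{align*}
C_{J}\at{\ga_\si}:=\frac{\D\int_{J}\ga_\si\at{x}\dint{x}}{\D\int_\Rset\ga_\si\at{x}\dint{x}}\leq C\tau^\al,\qquad J:=\Rset\setminus B_\eta\at{X_+\at\si},
\end{align*}
with constants $C$, $\al$ depending only on $\eps$, $\eta$ and uniform for $\si$ in the range and all small $\nu$. For the denominator, a Taylor expansion of $H_\si$ at its critical point $X_+\at\si$ together with a uniform bound on $\babs{H^{\prime\prime}}$ near $X_+\at\si$ gives $\int_\Rset\ga_\si\dint{x}\geq c\nu\exp\at{-H_\si\at{X_+\at\si}/\nu^2}$. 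For the numerator I split $J$ into the bounded part $J\cap\ccinterval{-M}{M}$, on which a compactness argument (using that $X_+\at\si$ is the unique global minimizer, uniformly in $\si$) yields $H_\si\at{x}-H_\si\at{X_+\at\si}\geq c_1>0$, and the two tails, on which the quadratic growth of $H$ forces $H_\si\at{x}\geq H_\si\at{X_+\at\si}+c_2\abs{x}^2/2$ once $M$ is large. Dividing and invoking Assumption~\ref{Ass:Tau}, which makes $\nu^{-1}\exp\at{-c_1/\nu^2}$ exponentially small and hence bounded by $C\tau^\al$, gives the claim.

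The second step is to bound the Poincar\'e constant $C_P\at{\ga_\si}$: on $\ccinterval{\si^\#+\eps}{\si^*}$, where $H_\si$ is a supercritical double-well, Lemma~\ref{Lem:MD.Muckenhoupt.C} gives $C_P\at{\ga_\si}\leq C\tau^{\al-1}$, while on $\ccinterval{\si^*}{\si^\#+1/\eps}$ Lemma~\ref{Lem:MD.Muckenhoupt.B} gives $C_P\at{\ga_\si}\leq C\leq C\tau^{\al-1}$, so $C_P\at{\ga_\si}\leq C\tau^{\al-1}$ throughout. Applying Lemma~\ref{Lem:MD.AuxEstimate} with $I=\Rset$, $\ga=\ga_\si$, $w^2=\varrho/\ga_\si$ and $J=\Rset\setminus B_\eta\at{X_+\at\si}$, and inserting \eqref{Eqn:MD.AuxFormula} together with $\int_\Rset w^2\ga_\si\dint{x}=1$, we obtain
\begin{align*}
\int_{\Rset\setminus B_\eta\at{X_+\at\si}}\varrho\at{x}\dint{x}\leq\frac{C_P\at{\ga_\si}}{2\nu^4}\calD_\si\at\varrho+2C_J\at{\ga_\si}\leq\frac{C\tau^{\al-1}}{\nu^4}\calD_\si\at\varrho+C\tau^\al.
\end{align*}
Since $\tau$ decays faster than any power of $\nu$, we have $\nu^{-4}\tau^{\al/2}\leq1$ for small $\nu$, so the first term is at most $\tau^{\al/2}\at{\calD_\si\at\varrho/\tau}$; relabelling $\al/2$ as $\al$ and using that $\varrho$ is a probability measure yields the asserted lower bound for $\int_{B_\eta\at{X_+\at\si}}\varrho\dint{x}$. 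I expect the main obstacle to be the first step: the Laplace-type estimate for $C_J\at{\ga_\si}$ must be made genuinely uniform in $\si$ across the transition at $\si=\si^*$, where $H_\si$ degenerates from a double-well to a single-well potential, and one has to keep careful track of the uniform gap $h_+\at\si-h_-\at\si\geq c\at\eps>0$ that renders the competing well negligible. A minor additional point is matching the auxiliary $\eps$-parameters when invoking Lemmas~\ref{Lem:MD.Muckenhoupt.B} and~\ref{Lem:MD.Muckenhoupt.C}.
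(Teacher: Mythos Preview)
Your proposal is correct and follows essentially the same approach as the paper: apply Lemma~\ref{Lem:MD.AuxEstimate} with $I=\Rset$, $\ga=\ga_\si$, $J=\Rset\setminus B_\eta\at{X_+\at\si}$, bound $C_P\at{\ga_\si}$ by combining Lemmas~\ref{Lem:MD.Muckenhoupt.B} and~\ref{Lem:MD.Muckenhoupt.C} over the two subranges, and bound the equilibrium mass ratio $C_J\at{\ga_\si}$ by a Laplace-type estimate. The paper is terser---it simply asserts $C_P\at{\ga_\si}\tau/\nu^4\leq C\tau^\al$ and the equilibrium-mass bound with a reference to Figure~\ref{Fig:Equilibria}---whereas you spell out the Laplace argument and the uniformity across $\si=\si^*$; but the structure is the same.
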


\begin{proof} We only prove the first implication; the second one follows by analogous arguments.
By Lemma \ref{Lem:MD.Muckenhoupt.B} and Lemma \ref{Lem:MD.Muckenhoupt.C}, there exist positive constants $C$ and $\alpha$ such that
\begin{align*}
\frac{C_P\at{\ga_\si}\tau}{\nu^4}\leq C\tau^\alpha.
\end{align*}
Making $\alpha$ smaller and $C$ larger (if necessary) we can also assume -- see again Figure \ref{Fig:Equilibria} -- that
\begin{align*}
\frac{\D\int_{\Rset\setminus B_\eta\at{X_+\at\si}}\ga_\si\at{x}\dint{x}}
{\D\int_{\Rset}\ga_\si\at{x}\dint{x}}
\leq C\tau^\alpha
\end{align*}
for all sufficiently small $\nu$. The assertion now follows by applying
Lemma \ref{Lem:MD.AuxEstimate} with
$\ga=\ga_\si$, $I=\Rset$, and $J=\Rset\setminus B_\eta\at{X_+\at\si}$.
\end{proof}
%
%
\subsection{Dynamical stability of peaks}\label{sect:PeakStability}
%
%
In our convergence proof we have 
to guarantee that any solution to the nonlocal Fokker-Planck equation \eqref{Eqn:FP1}+\eqref{Eqn:FP2}
can -- at each sufficiently large time $t$ and depending on the value of $\si\at{t}$ -- be approximated by either two or one
stable peaks located at $X_-\bat{\si\at{t}}$ and/or
$X_+\bat{\si\at{t}}$. In view of the mass-dissipation 
estimates from \S\ref{sect:MassDiss} it is clear that such an approximation is possible if the dissipation is small but our approach lacks pointwise estimates for $\calD\at{t}$. 
As already mention in \S\ref{sect:overview}, we therefore control the approximation error by 
certain combinations of the moment $\xi$ and the 
partial masses $m_-$, $m_0$, and $m_+$ because these quantities can be bounded pointwise in time. We also recall
that $\xi$ and the masses $m_i$ are defined in \eqref{Eqn:Def.MomentXi} and \eqref{Eqn:Def.PartialMasses}, respectively, and that $m_-\at{t}+m_0\at{t}+m_+\at{t}=1$ holds by construction. 
\par
In this section we derive upper bounds for $\xi\at{t}+m_0\at{t}$ and discuss
the evolution of $m_-$ and $m_+$ afterwards in \S\ref{sect:Montonicity}. 
We start with some auxiliary results which hold pointwise in time and do not rely on dynamical arguments.
\begin{lemma}[dissipation bounds $\xi$]
\label{Lem:PS.AuxFormula}
There exists a constant $C$ such that 
\begin{align*}
\xi\at{t}\leq \calD\at{t}+C\nu^2
\end{align*}
holds for all $t\geq0$ and $\nu>0$.
\end{lemma}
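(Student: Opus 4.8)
The plan is to relate the moment $\xi\at{t}$ directly to the dissipation $\calD\at{t}$ by exploiting that both are integrals against $\varrho$, with the dissipation controlling the "drift plus diffusion" combination $\nu^2\partial_x\varrho + (H^\prime-\si)\varrho$ while $\xi$ involves only the drift part $(H^\prime-\si)^2\varrho$. First I would write, using the substitution $\varrho = w^2\ga_\si$ as in \eqref{Eqn:MD.AuxFormula} (or equivalently working directly), the elementary pointwise identity that expands the square in \eqref{Eqn:DefDissipation}:
\begin{align*}
\calD\at{t}=\int_\Rset\frac{\bat{\nu^2\partial_x\varrho}^2}{\varrho}\dint{x}
+2\nu^2\int_\Rset\bat{H^\prime\at{x}-\si\at{t}}\partial_x\varrho\pair{t}{x}\dint{x}
+\int_\Rset\bat{H^\prime\at{x}-\si\at{t}}^2\varrho\pair{t}{x}\dint{x}.
\end{align*}
The last term on the right is exactly $\xi\at{t}$, and the first term is nonnegative, so it suffices to control the cross term from below.

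The cross term is where the main work lies: integrating by parts, $2\nu^2\int_\Rset(H^\prime-\si)\partial_x\varrho\dint{x} = -2\nu^2\int_\Rset H^{\prime\prime}\at{x}\varrho\pair{t}{x}\dint{x}$, using that $\varrho\pair{t}{\cdot}$ is a probability density decaying at infinity (so boundary terms vanish) and that $\si$ is constant in $x$. Hence
\begin{align*}
\xi\at{t}=\calD\at{t}-\int_\Rset\frac{\bat{\nu^2\partial_x\varrho}^2}{\varrho}\dint{x}+2\nu^2\int_\Rset H^{\prime\prime}\at{x}\varrho\pair{t}{x}\dint{x}\leq \calD\at{t}+2\nu^2\int_\Rset H^{\prime\prime}\at{x}\varrho\pair{t}{x}\dint{x}.
\end{align*}
It then remains to bound $\int_\Rset H^{\prime\prime}\varrho\dint{x}$ by a constant independent of $\nu$. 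By Assumption \ref{Ass:Potential}, $H^{\prime\prime}$ is continuous and converges to finite limits $c_\pm$ as $x\to\pm\infty$, hence $H^{\prime\prime}$ is globally bounded on $\Rset$; since $\varrho\pair{t}{\cdot}$ is a probability measure, $\int_\Rset H^{\prime\prime}\varrho\dint{x}\leq\norm{H^{\prime\prime}}_\infty=:C/2$, which gives the claim with this $C$ depending only on $H$. (Alternatively, if one did not want to invoke global boundedness of $H^{\prime\prime}$, one could use that $H^{\prime\prime}$ grows at most like a constant and absorb a $\int x^2\varrho\dint{x}$ term via the uniform second-moment bound from Lemma \ref{Lem:PropertiesOfSolutions}, but the asymptotically-linear assumption makes the direct bound cleaner.)

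The only mild subtlety — the step I would be most careful about — is justifying the integration by parts and the vanishing of the boundary terms $[\nu^2(H^\prime-\si)\varrho]_{-\infty}^{+\infty}$: this uses the smoothness of $\varrho\pair{t}{\cdot}$ for $t>0$ from Lemma \ref{Lem:PropertiesOfSolutions} together with its decay (finite second moment plus the $\fspaceL^\infty$ and regularity bounds from Appendix \ref{app:ExistenceAndUniqueness}), which ensures $\varrho$ and $\partial_x\varrho$ decay fast enough relative to the linear growth of $H^\prime-\si$. Everything else is a one-line algebraic manipulation, so the lemma is essentially immediate once the boundary terms are dispatched.
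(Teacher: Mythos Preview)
Your proof is correct and follows essentially the same approach as the paper: expand the square in the definition of $\calD$, drop the nonnegative Fisher-information term, integrate the cross term by parts to produce $-2\nu^2\int H''\varrho\,dx$, and bound this via $\|H''\|_\infty<\infty$ from Assumption~\ref{Ass:Potential}. The paper's proof is the same one-line computation, only written more tersely and without the discussion of boundary terms.
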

\begin{proof}
From the definition of the dissipation, see 
 \eqref{Eqn:DefDissipation}, we infer that 
\begin{align*}
\calD\at{t}&=\int_\Rset\at{\bat{H^\prime\at{x}-\si}^2\varrho\pair{t}{x}+\nu^4\frac{\bat{\partial_x \varrho\pair{t}{x}}^2}{\varrho\pair{t}{x}}+2\nu^2 \bat{H^\prime\at{x}-\si} \partial_x \varrho\pair{t}{x}}\dint{x}
\\&\geq\int_\Rset\bat{H^\prime\at{x}-\si}^2\varrho\pair{t}{x}\dint{x}-2\nu^2
\int_\Rset H^{\prime\prime}\at{x} \varrho\pair{t}{x}\dint{x},
\end{align*}
and this gives the desired result since Assumption \ref{Ass:Potential} 
implies  $\abs{H^{\prime\prime}\at{x}}\leq{C}$ for all $x\in\Rset$.
\end{proof}
\begin{lemma}[relations between $\ell$, $\si$, and $m_\pm$]
\label{Lem:XiControlsL}
For each $\eps$ with $0<\eps<\tfrac12\at{\si^*-\si_*}$ there exists a constant $C$,
which depends on $\eps$ but not on $\nu$, such that the implications
\begin{align*}
\si\at{t}\in\ocinterval{-\infty}{\si^*-\eps}\quad\implies\quad
\babs{\ell\at{t}-X_-\bat{\si\at{t}}}\leq C\sqrt{\xi\at{t}+m_0\at{t}+m_+\at{t}}
\end{align*}
as well as
\begin{align*}
\si\at{t}\in\ccinterval{\si_*+\eps}{\si^*-\eps}\quad\implies\quad
\babs{\ell\at{t}-m_-\at{t}X_-\bat{\si\at{t}}-m_+\at{t}X_+\bat{\si\at{t}}}\leq C\sqrt{\xi\at{t}+m_0\at{t}}
\end{align*}
and
\begin{align*}
\si\at{t}\in\cointerval{\si_*+\eps}{+\infty}\quad\implies\quad
\babs{\ell\at{t}-X_+\bat{\si\at{t}}}\leq C\sqrt{\xi\at{t}+m_-\at{t}+m_0\at{t}}
\end{align*}
hold for all $0\leq{t}\leq{T}$ and all $\nu>0$.
\end{lemma}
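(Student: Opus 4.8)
The plan is to express the quantity $\ell(t)=\int_\Rset x\,\varrho(t,x)\dint x$ (which equals $\ell(t)$ by Lemma \ref{Lem:PropertiesOfSolutions}) as a mass-weighted combination of the values $X_-(\si)$, $X_0(\si)$, $X_+(\si)$ plus an error that is controlled by $\sqrt{\xi}$. The starting point is the elementary identity
\begin{align*}
\ell(t)-\sum_{i\in\{-,0,+\}} m_i(t)\,X_i\bat{\si(t)}
=\sum_{i\in\{-,0,+\}}\int_{R_i}\bat{x-X_i\bat{\si(t)}}\varrho(t,x)\dint x,
\end{align*}
where $R_-=\ocinterval{-\infty}{x^*}$, $R_0=\ccinterval{x^*}{x_*}$, $R_+=\cointerval{x_*}{+\infty}$ are the three regions defining $m_\pm$, $m_0$ in \eqref{Eqn:Def.PartialMasses}. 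On each region, $x-X_i(\si)$ is comparable to $H'(x)-\si$, since $H'(X_i(\si))=\si$ and $H'$ has uniformly bounded and (away from $\si_*,\si^*$) uniformly positive derivative on the corresponding branch by Remark \ref{Rem:PropertiesOfXi}. Concretely, on $R_-$ one has $|x-X_-(\si)|\le C\,|H'(x)-\si|$ by the mean value theorem applied to the convex branch $X_-$ — this uses Remark \ref{Rem:PropertiesOfXi}(3), which holds on the whole domain of $X_-$; similarly on $R_+$ via $X_+$; and on the bounded region $R_0$ one has the trivial bound $|x-X_i(\si)|\le C$ for any choice of $i$.

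From here I would apply Cauchy–Schwarz region by region: $\int_{R_i}|x-X_i(\si)|\varrho\dint x\le \bat{\int_{R_i}(x-X_i(\si))^2\varrho\dint x}^{1/2}\bat{\int_{R_i}\varrho\dint x}^{1/2}$. Over $R_-$ and $R_+$ this is at most $C\,\xi(t)^{1/2}$ using $(x-X_i(\si))^2\le C\,(H'(x)-\si)^2$ and the definition \eqref{Eqn:Def.MomentXi} of $\xi$; over $R_0$ it is at most $C\,m_0(t)^{1/2}$. This yields the \emph{middle} implication directly: for $\si(t)\in\ccinterval{\si_*+\eps}{\si^*-\eps}$ both $X_\pm$ have uniformly positive derivative, so
\begin{align*}
\babs{\ell(t)-m_-(t)X_-\bat{\si(t)}-m_+(t)X_+\bat{\si(t)}}\le C\,\xi(t)^{1/2}+C\,m_0(t)^{1/2}\le C\sqrt{\xi(t)+m_0(t)}.
\end{align*}
For the first implication, $\si(t)\le\si^*-\eps$: here $X_-$ is still good (its domain is $\ocinterval{-\infty}{\si^*}$ and the bound from Remark \ref{Rem:PropertiesOfXi}(3) applies up to $\si^*$, and the relevant infimum of $H''$ on $\ocinterval{-\infty}{X_-(\si^*-\eps)}$ is positive), but $X_+(\si)$ may fail to exist or be badly behaved when $\si<\si_*$. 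The fix is to \emph{not} split off $m_+X_+$ and $m_0X_0$ but instead write $\ell(t)-X_-(\si(t))=\int_\Rset (x-X_-(\si))\varrho\dint x$, bound the $R_-$-part by $C\,\xi^{1/2}$ as above, and bound the $R_0\cup R_+$ part crudely by $C\,(m_0+m_+)^{1/2}$ after controlling the tail: on $R_+$, $|x-X_-(\si)|$ is not bounded, but $|x-X_-(\si)|\le C(1+|x|)$ and $(1+|x|)^2\le C(1+x^2)$, so Cauchy–Schwarz gives $\int_{R_+}|x-X_-(\si)|\varrho\dint x\le C\bat{\int_\Rset(1+x^2)\varrho\dint x}^{1/2}m_+^{1/2}\le C\,m_+^{1/2}$ by the uniform second-moment bound in Lemma \ref{Lem:PropertiesOfSolutions}; similarly the $R_0$ contribution is $\le C\,m_0^{1/2}$. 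Adding gives the first implication. The third implication is symmetric, using $X_+$ on $\cointerval{\si_*+\eps}{+\infty}$ and absorbing the $R_-\cup R_0$ mass via the second moment.

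The only mild subtlety — and the main thing to get right — is the interplay between the range of $\si$ and which inverse branch is uniformly Lipschitz: one must use the one-sided bound from Remark \ref{Rem:PropertiesOfXi}(3) (valid on the \emph{entire} domain of $X_\pm$, with a $\si$-independent constant) for the "good" branch, and fall back on the uniform second-moment estimate of Lemma \ref{Lem:PropertiesOfSolutions} to kill the contribution of the "bad" region, rather than trying to expand it in terms of a branch that may not be defined there. Everything else is a routine Cauchy–Schwarz plus the algebraic inequality $a+b\le\sqrt2\,\sqrt{a^2+b^2}$ for nonnegative $a,b$, or more simply $\sqrt{u}+\sqrt{v}\le\sqrt2\sqrt{u+v}$.
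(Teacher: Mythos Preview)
Your approach is essentially the paper's: split $\ell(t)=\int x\varrho\dint x$ over the three regions $R_-,R_0,R_+$, compare $x-X_\pm(\si)$ to $H'(x)-\si$ on the relevant stable branch, apply Cauchy--Schwarz to extract $\sqrt{\xi}$, and absorb the ``bad'' region via the uniform second-moment bound from Lemma~\ref{Lem:PropertiesOfSolutions}. The paper only writes out the first implication and organizes the algebra slightly differently---writing $\ell-X_-=-(m_0+m_+)X_-+\int_{R_-}(x-X_-)\varrho+\int_{R_0\cup R_+}x\,\varrho$---but this is the same decomposition as yours after regrouping.

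One correction is needed in your justification of the key pointwise bound $|x-X_-(\si)|\le C|H'(x)-\si|$ on $R_-$. You cite Remark~\ref{Rem:PropertiesOfXi}(3), but that item reads $|\si_2-\si_1|\le C|X_\pm(\si_2)-X_\pm(\si_1)|$ and therefore gives only the \emph{reverse} inequality. Item~(2) would give what you want, but its constant blows up as $\si\to\si^*$, i.e.\ for $x$ near $x^*$, so a direct mean-value argument on $X_-$ does not work uniformly on all of $R_-$. The paper instead appeals directly to Assumption~\ref{Ass:Potential}: since $\si(t)\le\si^*-\eps$ forces $X_-(\si(t))\le x^*-c$, one gets the bound from the mean-value theorem applied to $H'$ (not $X_-$) when $x$ is bounded away from $x^*$, where $H''\ge c'>0$; for $x$ close to $x^*$ one has $H'(x)-\si(t)\ge H'(x)-(\si^*-\eps)$ bounded below by a positive constant while $|x-X_-(\si(t))|$ stays bounded, so the inequality holds trivially there. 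With this repair the rest of your plan goes through unchanged.
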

\begin{proof}
We only prove the first implication; the derivations of the second and the third one are similar.
From the definitions of the partial masses $m_i$ with $i\in\{-,0,+\}$, see again \eqref{Eqn:Def.PartialMasses}, 
and the functions $X_i$, see Remark \ref{Rem:PropertiesOfXi}, we infer that 
\begin{align*}
\ell\at{t}-X_-\bat{\si\at{t}}=
-\bat{m_0\at{t}+m_+\at{t}}X_-\at{\si\at{t}}+\int_{-\infty}^{x^*}\bat{x-X_-\at{\si\at{t}}}\varrho\pair{t}{x}\dint{x}+
\int_{x^*}^{+\infty}x\varrho\pair{t}{x}\dint{x}.
\end{align*}
Thanks to $\si\at{t}\leq \si^*-\eps$ and the uniform bounds for $\abs{\si\at{t}}$, see Lemma \ref{Lem:PropertiesOfSolutions}, we have
\begin{align*}
-C\leq X_-\at{\si\at{t}}\leq x^*-c
\end{align*}
for some positive constants $c$ and $C$. In view of the properties of $H^{\prime}$ and $H^{\prime\prime}$, see Assumption \ref{Ass:Potential}, we therefore get
\begin{align*}
\abs{x-X_-\bat{\si\at{t}}}\leq C\abs{H^\prime\at{x}-\si\at{t}}\qquad \text{for all}\quad x\leq x^*
\end{align*}
and hence
\begin{align*}
\abs{\ell\at{t}-X_-\bat{\si\at{t}}}\leq C\at{m_0\at{t}+m_+\at{t}+\int_{-\infty}^{x^*}\babs{H^\prime\at{x}-\si\at{t}}\varrho\pair{t}{x}\dint{x}+
\int_{x^*}^{+\infty}x\varrho\pair{t}{x}\dint{x}}.
\end{align*}
Moreover, H\"older's inequality yields
\begin{align*}
\int_{-\infty}^{x^*}\babs{H^\prime\at{x}-\si\at{t}}\varrho\pair{t}{x}\dint{x}\leq
\at{m_-\at{t}\int_{-\infty}^{x^*}\babs{H^\prime\at{x}-\si\at{t}}^2\varrho\pair{t}{x}\dint{x}}^{1/2}
\leq\sqrt{\xi\at{t}}
\end{align*}
thanks to $m_-\at{t}\leq1$, as well as
\begin{align*}
\int_{x^*}^{+\infty}x\varrho\pair{t}{x}\dint{x}\leq
\bat{m_0\at{t}+m_+\at{t}}^{1/2}\at{\int_{x^*}^{+\infty}x^2\varrho\pair{t}{x}\dint{x}}^{1/2}
\leq{C}\sqrt{m_0\at{t}+m_+\at{t}}
\end{align*}
due to the uniform moment estimates from Lemma \ref{Lem:PropertiesOfSolutions}. The first implication now follows
from these results since we have $m_i\at{t}\leq \sqrt{m_i\at{t}}$ and because $\sqrt{a}+\sqrt{b}\leq \sqrt2\sqrt{a+b}$ holds for all $a,b\geq 0$.
\end{proof}
The assertions and the proof of Lemma \ref{Lem:XiControlsL} can easily be generalized to other moments. 
\begin{remark}
\label{Rem:XiControlsMoments}
For any continuous moment weight $\psi$ that grows at most linearly and each $\eps$ as in Lemma \ref{Lem:XiControlsL} 
there exists a constant $C$, which depends on $\eps$ and $\psi$ but not on $\nu$, such that
\begin{align*}
\abs{\int_\Rset\psi\at{x}\varrho\pair{t}{x}\dint{x}-\sum_{j\in\{-,+\}}m_j\at{t}\psi\Bat{X_j\bat{\si\at{t}}}}\leq C\sqrt{\xi\at{t}+m_0\at{t}}
\end{align*}
holds for all sufficiently small $\nu$ as long as $\si\at{t}\in\ccinterval{\si_*+\eps}{\si^*-\eps}$. Moreover, similar results 
hold in the cases $\si\at{t}\in\ocinterval{-\infty}{\si^*-\eps}$ and $\si\at{t}\in\cointerval{\si_*+\eps}{+\infty}$.
\end{remark}
%
%
\subsubsection{Evolution of the moment \texorpdfstring{$\xi$}{}}
%
%
We next study the dynamics of the moment $\xi$ from \eqref{Eqn:Def.MomentXi} and establish an upper bound 
in terms of the auxiliary mass
\begin{align*}
m_\eta\at{t}&:=\int_{x^*-\eta}^{x_*+\eta} 
\varrho\pair{t}{x}\dint{x},
\end{align*}
where $\eta>0$ denotes a free parameter. To this end we derive the moment balance for $\xi$ from the Fokker-Planck equation \eqref{Eqn:FP1}+\eqref{Eqn:FP2}, estimate the arising integrals to obtain appropriate bounds for
$\dot\xi$, and finally apply simple ODE arguments.
\begin{lemma}[pointwise estimate for $\xi$] 
\label{Lem:PS.BoundForF}
For each $\eta>0$ there exists a constant $C$, which depends on $\eta$ but not on $\nu$, such that
\begin{align*}
\sup\limits_{t\in\ccinterval{t_1}{t_2}}\xi\at{t}\leq \xi\at{t_1} + C\Bat{\nu^2+\sup\limits_{t\in\ccinterval{t_1}{t_2}}m_\eta\at{t}}
\end{align*}
holds for all $0\leq t_1<t_2<T$ and all sufficiently small $\nu>0$.
\end{lemma}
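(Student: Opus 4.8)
The plan is to derive a differential inequality for $\xi\at{t}$ along solutions of \eqref{Eqn:FP1}+\eqref{Eqn:FP2} and then integrate it. First I would compute the moment balance: differentiating $\xi\at{t}=\int_\Rset\bat{H^\prime\at{x}-\si\at{t}}^2\varrho\pair{t}{x}\dint{x}$ in time, using \eqref{Eqn:FP1} for $\tau\partial_t\varrho$, integrating by parts, and also accounting for the $-2\dot\si\at{t}\int_\Rset\bat{H^\prime\at{x}-\si\at{t}}\varrho\pair{t}{x}\dint{x}$ term coming from the explicit $\si$-dependence of the weight. The key point is that \eqref{Eqn:FP2} gives $\int_\Rset\bat{H^\prime\at{x}-\si\at{t}}\varrho\pair{t}{x}\dint{x}=-\tau\dot\ell\at{t}$, so this last term is $2\tau\dot\si\at{t}\dot\ell\at{t}$; I would need a crude pointwise bound on $\dot\si$ here, but since the whole contribution carries a factor $\tau$ and $\dot\ell$ is bounded, a polynomial-in-$\nu$ bound on $\dot\si$ (available from the a priori estimates behind Lemma \ref{Lem:PropertiesOfSolutions}) makes it a lower-order error; alternatively one avoids $\dot\si$ altogether by expanding $\bat{H'-\si}^2$ and using the moment identity for $\int H' \varrho$. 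Carrying out the integration by parts in the transport term produces, schematically, a term $-\tfrac{2}{\tau}\int_\Rset \bat{H^\prime-\si}^2 H^{\prime\prime}\varrho\dint{x}$ plus diffusive terms of order $\nu^2/\tau$.

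**Splitting by the sign of $H^{\prime\prime}$.** The main structural step is to split the bad term $\int_\Rset\bat{H^\prime-\si}^2 H^{\prime\prime}\varrho\dint{x}$ over the region $\{H^{\prime\prime}>0\}$ and the spinodal interval $\oointerval{x^*}{x_*}$. On $\{H^{\prime\prime}>0\}$ this term has the right (dissipative) sign and can be used to absorb $\xi$ up to a constant, because $H^{\prime\prime}\geq c>0$ outside a neighbourhood of $\{x^*,x_*\}$ and $\bat{H'-\si}^2$ is comparable to itself there; more precisely one bounds below $\int_{\{|H''|\geq c\}}\bat{H'-\si}^2 H''\varrho \geq c\,\xi - C\int_{x^*-\eta}^{x_*+\eta}\bat{H'-\si}^2\varrho$, and on the short interval $\ccinterval{x^*-\eta}{x_*+\eta}$ the integrand $\bat{H'-\si}^2$ is bounded by a constant (using $\babs{\si}\le C$ from Lemma \ref{Lem:PropertiesOfSolutions}), so that last integral is $\le C\, m_\eta\at{t}$. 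The remaining contribution from the spinodal interval, where $H''<0$ and the term has the wrong sign, is likewise controlled by $C\, m_\eta\at{t}$ for the same reason. The diffusive terms contribute $\le C\nu^2/\tau$ after using the bound $\nu^2\norm{\varrho\pair{t}{\cdot}}_\infty\le C$ from Lemma \ref{Lem:PropertiesOfSolutions} (and integration by parts to move the $x$-derivative off $\varrho$ where needed).

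**Integrating the ODE inequality.** Putting these together yields a pointwise bound of the form $\tau\dot\xi\at{t}\le -c\,\xi\at{t}+C\bat{\nu^2+m_\eta\at{t}}$ for $t$ in the relevant range (with the $\nu^2$ already absorbing the $\nu^2/\tau$ term via Assumption \ref{Ass:Tau}, or kept as such and noting $\nu^2/\tau$ is exponentially large — careful: actually here the gain is the factor $1/\tau$ on the left, so after dividing one gets $\dot\xi\le -\tfrac{c}{\tau}\xi+\tfrac{C}{\tau}\bat{\nu^2+m_\eta}$, and Grönwall with the huge rate $c/\tau$ makes the initial value $\xi\at{t_1}$ persist only up to the relaxation time, not uniformly). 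This is the delicate point, so I would instead keep the inequality in the crude form $\dot\xi\at{t}\le \tfrac{1}{\tau}\bat{C\nu^2+C\sup_{[t_1,t_2]}m_\eta - c\,\xi\at{t}}$ and argue: either $\xi\at{t}$ stays below $\tfrac{C}{c}\bat{\nu^2+\sup m_\eta}$, or $\dot\xi<0$; by a standard maximum-type argument for scalar ODEs, $\sup_{[t_1,t_2]}\xi\le \max\{\xi\at{t_1},\ \tfrac{C}{c}\bat{\nu^2+\sup_{[t_1,t_2]}m_\eta}\}\le \xi\at{t_1}+C\bat{\nu^2+\sup_{[t_1,t_2]}m_\eta}$, which is exactly the claimed estimate.

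**Main obstacle.** The hardest part is controlling the contribution of the spinodal region and the neighbourhood of $\{x^*,x_*\}$ to the moment balance in a way that only costs $m_\eta\at{t}$ rather than something uncontrolled: one must check that on $\ccinterval{x^*-\eta}{x_*+\eta}$ the weight $\bat{H'\at{x}-\si\at{t}}^2$ and all coefficients appearing (including those from the diffusive integration by parts, which involve $H''$ and $H'''$) are bounded by a constant depending only on $\eta$ and $H$, uniformly in the admissible $\si\at{t}$-range and in $\nu$ — this uses asymptotic linearity of $H'$ (Assumption \ref{Ass:Potential}) and the uniform bound on $\babs{\si\at{t}}$. A secondary technical nuisance is the rigorous handling of the explicit-$\si$-dependence term $2\tau\dot\si\dot\ell$; I would dispatch it either via the a priori $\nu$-dependent bounds on $\dot\si$ from Appendix \ref{app:ExistenceAndUniqueness} or, more cleanly, by rewriting $\xi$ without the moving weight as $\xi=\int\at{H'}^2\varrho - 2\si\int H'\varrho + \si^2$ and differentiating that, where $\tfrac{\ddiff}{\ddiff t}\si$ only multiplies quantities of size $\tau$.
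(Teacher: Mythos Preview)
Your approach is essentially the same as the paper's: derive the moment ODE for $\xi$, split the term $\int H''\,(H'-\si)^2\varrho$ over the stable set and the enlarged spinodal interval $\ccinterval{x^*-\eta}{x_*+\eta}$, arrive at $\tau\dot\xi\le -c\,\xi + C(\nu^2+M_\eta)$, and conclude by ODE comparison. Two small points are worth cleaning up. First, for the $\dot\si$-contribution the paper simply differentiates \eqref{Eqn:FP2} to obtain $\tau\dot\si=\tau\ddot\ell+\nu^2\int H'''\varrho-\int H''(H'-\si)\varrho$, which is bounded by a constant independent of $\nu$; since the term in $\tau\dot\xi$ is actually $2\tau^2\dot\si\,\dot\ell$ (you dropped one factor of $\tau$), this contributes only $C\tau\le C\nu^2$, and no separate ``polynomial-in-$\nu$'' bound on $\dot\si$ is needed (or available---$\dot\si$ itself can be of order $1/\tau$). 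Second, the diffusive term does not require the $\fspaceL^\infty$-bound on $\varrho$: after two integrations by parts it is $\nu^2\int\psi''\varrho$ with $\psi''=2(H'')^2+2H'''(H'-\si)$, and $\babs{\psi''}\le C(1+|x|)$ together with the uniform second-moment bound from Lemma~\ref{Lem:PropertiesOfSolutions} gives $\le C\nu^2$ directly; this matters because invoking $\nu^2\|\varrho\|_\infty\le C$ would only be valid for $t\ge t_*$, whereas the lemma is stated for all $t_1\ge 0$.
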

\begin{proof}
Using the abbreviation $\psi\pair{t}{x}:=\bat{H^\prime\at{x}-\si\at{t}}^2$
as well as
\eqref{Eqn:FP2} and integration by parts, we easily compute
\begin{align*}
\tau\dot{\xi}\at{t}&=-2\tau\dot\si\at{t}\int_{\Rset} 
\bat{H^\prime\at{x}-\si\at{t}}\varrho\pair{t}{x}\dint{x}+
\int_{\Rset} 
\psi\pair{t}{x}\tau\partial_t\varrho\pair{t}{x}\dint{x}
\\
&=2\tau^2\dot\si\at{t}\dot{\ell}\at{t}+
\int_{\Rset} 
\psi\pair{t}{x}\partial_x\Bat{\nu^2\partial_x\varrho\pair{t}{x}+
\bat{H^\prime\at{x}-\si\at{t}}\varrho\pair{t}{x}}\dint{x}
\\
&=2\tau^2\dot\si\at{t}\dot{\ell}\at{t}
+\nu^2\int_{\Rset} 
\psi^{\prime\prime}\pair{t}{x}\varrho\pair{t}{x}\dint{x}
-2
\int_{\Rset} 
H^{\prime\prime}\at{x}\psi\pair{t}{x}\varrho\pair{t}{x}\dint{x},
\end{align*}
as well as
\begin{align*}
\tau\dot\si\at{t}=\tau\ddot{\ell}\at{t}+
\nu^2\int_{\Rset} 
H^{\prime\prime\prime}\at{x}\varrho\pair{t}{x}\dint{x}
-
\int_{\Rset} 
H^{\prime\prime}\at{x}\bat{H^\prime\at{x}-\si\at{t}}\varrho\pair{t}{x}\dint{x}.
\end{align*}
In view of 
\begin{align*}
\abs{H^{\prime\prime}\at{x}}+\abs{H^{\prime\prime\prime}\at{x}}\leq C
,\qquad 
\abs{ \psi^{\prime\prime}\pair{t}{x}}+\abs{H^{\prime}\at{x}}\leq C\at{1+\abs{x}}
,\qquad 
\abs{ \psi\pair{t}{x}}\leq C\at{1+x^2}
\end{align*} 
and
\begin{align*}
\nabs{\dot\ell\at{t}}+\nabs{\ddot\ell\at{t}}+\abs{\si\at{t}}+\int_{\Rset}\at{1+x^2}\varrho\pair{t}{x}\dint{x}\leq{C},
\end{align*} 
see Assumption \ref{Ass:Potential}, Assumption \ref{Ass:Constraint}, and Lemma \ref{Lem:PropertiesOfSolutions}, we therefore find
\begin{align*}
\tau\dot{\xi}\at{t}
&\leq C\at{\nu^2+\tau}-
2
\int_{\Rset} 
H^{\prime\prime}\at{x}\psi\pair{t}{x}\varrho\pair{t}{x}\dint{x}.
\end{align*}
Moreover, since $H$ is smooth by Assumption \ref{Ass:Potential} there exist constants $c$ and $C$ such that
\begin{align*}
H^{\prime\prime}\at{x}\geq c \quad\text{for all}\quad
x\in\Rset\setminus{\ccinterval{x^*-\eta}{x_*+\eta}},\qquad
\abs{H^{\prime\prime}\at{x}}\leq C\quad\text{for all}\quad x\in \ccinterval{x^*-\eta}{x_*+\eta},
\end{align*}
and this 
implies
\begin{align*}
\int_{\Rset} 
H^{\prime\prime}\at{x}\psi\pair{t}{x}\varrho\pair{t}{x}\dint{x}
&\geq
\int\limits_{\Rset\setminus \ccinterval{x^*-\eta}{x_*+\eta}}
H^{\prime\prime}\at{x}\psi\pair{t}{x}\varrho\pair{t}{x}\dint{x}+
\int\limits_{x^*-\eta}^{x_*+\eta}
H^{\prime\prime}\at{x}\psi\pair{t}{x}\varrho\pair{t}{x}\dint{x}
\\
&\geq
c\int\limits_{\Rset\setminus \ccinterval{x^*-\eta}{x_*+\eta}}\psi\pair{t}{x}\varrho\pair{t}{x}\dint{x}-
Cm_\eta\at{t} = c\,\xi\at{t}-C M_\eta,
\end{align*}
where $M_\eta$ is shorthand for $\sup_{t\in\ccinterval{t_1}{t_2}}m_\eta\at{t}$. 
Combining all partial results we finally get 
\begin{align*}
\tau\dot{\xi}\at{t}&\leq -c \xi\at{t}+C\at{\nu^2 + \tau+M_\eta}
\end{align*}
for all $t\in\ccinterval{t_1}{t_2}$, and the comparison principle for scalar ODE finishes the proof.
\end{proof}

\subsubsection{Conditional stability estimates}
%
%
We are now able to establish partial results on the dynamical stability of peaks. More precisely, 
assuming that the dynamical multiplier $\si\at{t}$ from \eqref{Eqn:FP2} remains confined to certain 
intervals we derive estimates
that control the evolution of the moment $\xi\at{t}+m_0\at{t}$. In the proof we employ -- 
apart from the upper bounds for $\xi$ derived in Lemma \ref{Lem:PS.BoundForF} -- 
local comparison principles for linear Fokker-Planck equations in order to show that only a very small amount of 
mass can flow into the unstable interval $\oointerval{x^*}{x_*}$. 
In this context we recall that $\norm{\varrho\pair{t}{\cdot}}_\infty\leq C\nu^{-2}$ holds for all times 
$t\geq t_*$, where  
\begin{align*}
 t_*=\nu^2\tau \quad\text{and}\quad t_*=0
\end{align*}
for generic and well-prepared initial data, respectively, see Proposition \ref{Lem:PropertiesOfSolutions} and Remark \ref{Rem:WellPreparedInitialData}.  
\begin{figure}[ht!]%
\centering{%
\includegraphics[height=.275\textwidth, draft=\figdraft]{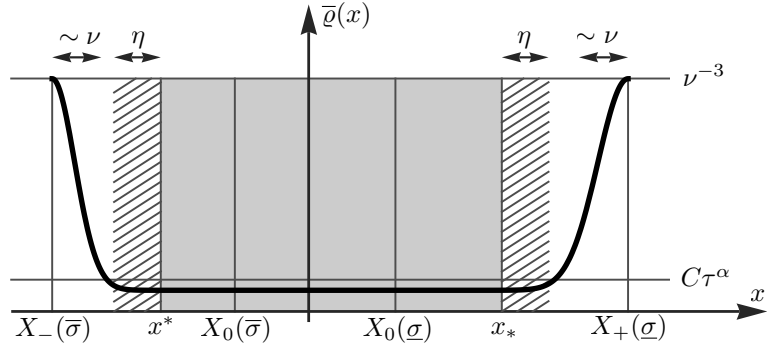}%
}%
\caption{Cartoon of the supersolution $\ol\varrho$ and the characteristic lengths from the proof of Lemma \ref{Lem:PS.NoMassInSpinodalRegion1};
the hatched regions indicate intervals of length $\eta$. The strictly decreasing and increasing branches are given by the rescaled equilibrium densities $\ga_{\ol\si}$ and $\ga_{\ul\si}$, respectively.
For $0<\nu\ll1$, $\bar\varrho$ is therefore very small in $\ccinterval{x^*-\eta}{x_*+\eta}$ and exhibits boundary layers with 
width of order $\nu$ near $X_-\at{\ol\si}$ and $X_+\at{\ul\si}$.
}%
\label{Fig:NoMassInSpinodalRegion1}
\end{figure}%
\begin{lemma}[first conditional estimate for $\xi+m_0$] 
\label{Lem:PS.NoMassInSpinodalRegion1}
For each $\eps$ with $0<\eps<\tfrac12\at{\si^*-\si_*}$ there exists a positive
constant
$C$, which depends on $\eps$ but not on $\nu$, such that
the implication

\begin{align*}
\si\at{t}\in\ccinterval{\si_*+\eps}{\si^*-\eps}
\;\;\text{for all}\;\; t\in\ccinterval{t_1}{t_2}
\quad \implies \quad
\sup_{t\in\ccinterval{t_1}{t_2}}\bat{\xi\at{t}+m_0\at{t}}\leq 
C\Bat{\xi\at{t_1}+m_0\at{t_1}+\nu^2}
\end{align*}
holds for all $\tsnu\leq t_1<t_2\leq T$ and all sufficiently small $\nu>0$.
\end{lemma}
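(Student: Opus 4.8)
The plan is to feed the a priori bound of Lemma~\ref{Lem:PS.BoundForF} with a supersolution estimate for the mass that can enter the spinodal interval. Since $m_0\leq m_\eta$ and Lemma~\ref{Lem:PS.BoundForF} gives $\sup_{\ccinterval{t_1}{t_2}}\xi\leq \xi\at{t_1}+C\bat{\nu^2+\sup_{\ccinterval{t_1}{t_2}}m_\eta}$ for every $\eta>0$, it suffices to fix a small, $\eps$-dependent $\eta>0$ and to show $\sup_{t\in\ccinterval{t_1}{t_2}}m_\eta\at{t}\leq C\bat{\xi\at{t_1}+m_0\at{t_1}+\nu^2}$. I would therefore first pick an interval $\ccinterval{a}{b}$ with $X_-\at{\si^*-\eps}<a<x^*-\eta$ and $x_*+\eta<b<X_+\at{\si_*+\eps}$, which is possible for $\eta$ small; the monotonicity of $H^\prime$ away from the spinodal interval then gives a $\delta>0$, independent of $\nu$, with $H^\prime\at{x}-\si\at{t}\geq 2\delta$ on $\ccinterval{a}{x^*}$ and $\si\at{t}-H^\prime\at{x}\geq 2\delta$ on $\ccinterval{x_*}{b}$, uniformly for $t$ with $\si\at{t}\in\ccinterval{\si_*+\eps}{\si^*-\eps}$.

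Next I would split $\varrho=\varrho_{\rm I}+\varrho_{\rm II}$, where $\varrho_{\rm I}$ and $\varrho_{\rm II}$ solve the \emph{linear} Fokker-Planck equation \eqref{Eqn:FP1} with the same coefficient function $\si$ as $\varrho$, with initial datum at time $t_1$ equal to $\varrho\pair{t_1}{\cdot}$ on $\ccinterval{a}{b}$ and $0$ elsewhere, respectively to the complementary datum; by linearity, uniqueness and the maximum principle these are nonnegative and sum to $\varrho$. The part $\varrho_{\rm I}$ is controlled by conservation of mass: since $\at{H^\prime\at{x}-\si\at{t_1}}^2\geq 4\delta^2$ on $\ccinterval{a}{x^*}\cup\ccinterval{x_*}{b}$, we get $\int_a^b\varrho\pair{t_1}{x}\dint{x}\leq m_0\at{t_1}+\at{4\delta^2}^{-1}\xi\at{t_1}$, hence $\int_{x^*-\eta}^{x_*+\eta}\varrho_{\rm I}\pair{t}{x}\dint{x}\leq\int_\Rset\varrho_{\rm I}\pair{t}{x}\dint{x}=\int_a^b\varrho\pair{t_1}{x}\dint{x}\leq m_0\at{t_1}+C\xi\at{t_1}$ for all $t\in\ccinterval{t_1}{t_2}$.

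The part $\varrho_{\rm II}$ is dominated by a stationary supersolution on $\ccinterval{a}{b}$. By Lemma~\ref{Lem:PropertiesOfSolutions} and $t_1\geq\tsnu$ we have $\varrho_{\rm II}\pair{t}{a}+\varrho_{\rm II}\pair{t}{b}\leq 2\norm{\varrho\pair{t}{\cdot}}_\infty\leq C/\nu^2$ on $\ccinterval{t_1}{t_2}$, while $\varrho_{\rm II}\pair{t_1}{\cdot}\equiv0$ on $\ccinterval{a}{b}$. I would then construct a positive, piecewise smooth $\ol\varrho$ on $\ccinterval{a}{b}$ with: (i)~$\partial_x\bat{\nu^2\partial_x\ol\varrho+\bat{H^\prime-\si}\ol\varrho}\leq0$ on $\ccinterval{a}{b}$ for \emph{every} $\si\in\ccinterval{\si_*+\eps}{\si^*-\eps}$; (ii)~$\ol\varrho\at{a},\ol\varrho\at{b}\geq C/\nu^2$; (iii)~$\int_{x^*-\eta}^{x_*+\eta}\ol\varrho\at{x}\dint{x}\leq C\nu^2$ for all small $\nu$. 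The building block is $\ga_\si$: a short computation gives $\partial_x\bat{\nu^2\partial_x\ga_{\si_0}+\bat{H^\prime-\si}\ga_{\si_0}}=\nu^{-2}\at{\si_0-\si}\bat{\si_0-H^\prime\at{x}}\ga_{\si_0}$, so $\ga_{\si^*-\eps}$ is a stationary supersolution on $\{H^\prime\geq\si^*-\eps\}\supseteq\ccinterval{X_-\at{\si^*-\eps}}{X_0\at{\si^*-\eps}}$ and $\ga_{\si_*+\eps}$ on $\{H^\prime\leq\si_*+\eps\}\supseteq\ccinterval{X_0\at{\si_*+\eps}}{X_+\at{\si_*+\eps}}$, whereas on the intermediate crest region $\ccinterval{X_0\at{\si^*-\eps}}{X_0\at{\si_*+\eps}}\subseteq\oointerval{x^*}{x_*}$, where $H^{\prime\prime}<0$, every constant is a supersolution. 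Gluing a decreasing multiple of $\ga_{\si^*-\eps}$ near $a$, a small flat piece over the crest region, and an increasing multiple of $\ga_{\si_*+\eps}$ near $b$ produces only downward jumps of $\partial_x\ol\varrho$ — this is exactly where $\si^*-\eps\geq\si_*+\eps$ is used — so (i) survives the gluing; the normalizing constants are chosen to enforce (ii); and (iii) follows because $H_{\si^*-\eps}$ and $H_{\si_*+\eps}$ are strictly monotone between their wells and the crest, which makes $\ol\varrho$ exponentially small throughout $\ccinterval{x^*-\eta}{x_*+\eta}$. Property~(i) together with the hypothesis $\si\at{t}\in\ccinterval{\si_*+\eps}{\si^*-\eps}$ makes $\ol\varrho$ a supersolution of the equation solved by $\varrho_{\rm II}$; the parabolic comparison principle on $\ccinterval{a}{b}\times\ccinterval{t_1}{t_2}$ then yields $\varrho_{\rm II}\leq\ol\varrho$ there, hence $\int_{x^*-\eta}^{x_*+\eta}\varrho_{\rm II}\pair{t}{x}\dint{x}\leq C\nu^2$. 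Adding the two contributions gives $m_\eta\at{t}\leq m_0\at{t_1}+C\xi\at{t_1}+C\nu^2$ on $\ccinterval{t_1}{t_2}$, and the first paragraph finishes the proof.

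I expect the construction of $\ol\varrho$ in (ii)–(iii) to be the main obstacle: one must make $\ol\varrho$ of size $\gtrsim\nu^{-2}$ at \emph{both} endpoints and at the same time exponentially small on the whole $\eta$-neighbourhood of the spinodal interval, and this uniformly over all admissible $\si$, which forces a careful choice of $\eta$, of the two gluing points inside $\oointerval{x^*}{x_*}$, and of the normalizing constants; the accompanying moment balance for $m_\eta$ and the verification of the comparison principle (for an operator whose zeroth-order coefficient $H^{\prime\prime}$ is merely bounded) are then standard.
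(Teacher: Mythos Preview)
Your proposal is correct and follows essentially the same strategy as the paper: split $\varrho$ linearly according to whether the initial datum lies inside or outside a fixed interval around the spinodal region, control the inner part by mass conservation and $\xi\at{t_1}+m_0\at{t_1}$, and dominate the outer part on that interval by a stationary supersolution glued from monotone branches of two equilibrium densities $\ga_{\si_0}$ (together with a constant over the crest), using the $\fspaceL^\infty$-bound $\norm{\varrho}_\infty\leq C\nu^{-2}$ for the boundary comparison; then Lemma~\ref{Lem:PS.BoundForF} closes the argument. The only cosmetic differences are that the paper picks the reference values $\ol\si\in\oointerval{\si^*-\eps}{\si^*}$ and $\ul\si\in\oointerval{\si_*}{\si_*+\eps}$ so that $h_-\at{\ol\si}=h_+\at{\ul\si}$, which makes the glued supersolution automatically $\fspaceC^1$ with equal boundary heights $\nu^{-3}$, whereas you use $\si^*-\eps$, $\si_*+\eps$ directly and allow downward jumps of $\partial_x\ol\varrho$; and the paper splits into three pieces $\varrho_-,\varrho_0,\varrho_+$ rather than two, which amounts to writing your $\varrho_{\rm II}$ as $\varrho_-+\varrho_+$.
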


\begin{proof}
Within this proof we regard \eqref{Eqn:FP1} as a non-autonomous but linear PDE for $\varrho$, that means
we ignore \eqref{Eqn:FP2} and regard $\si$ as a given function of time. 
\par
{\ul{\emph{Preliminaries:}}} We first choose $\ul\si,\,\ol\si\in\oointerval{\si_*}{\si^*}$ 
such that
\begin{align*}
h_+\at{\ul\si}=h_-\at{\ol\si}=\tfrac12\min\big\{h_+\at{\si_*+\eps},\;h_-\at{\si^*-\eps}\big\},
\end{align*}
and the monotonicity properties of $h_-$ and $h_+$ -- see Figure \ref{Fig:EffectivePotential} -- 
ensure that $\ul\si\in\oointerval{\si_*}{\si_*+\eps}$ and $\ol\si\in\oointerval{\si^*-\eps}{\si^*}$.
Employing the monotonicity of  $X_-$, $X_0$, and $X_+$ 
we verify -- see  Remark \ref{Rem:PropertiesOfXi}  and Figure \ref{Fig:NoMassInSpinodalRegion1} -- the order relations
\begin{align*}
X_-\bat{\si^*-\eps}<X_-\at{\ol\si}<x^*<X_0\at{\ol\si}<X_0\bat{\si^*-\eps}
\end{align*}
and
\begin{align*}
X_0\bat{\si_*+\eps}<X_0\at{\ul\si}<x_*<
X_+\at{\ul\si}<X_+\bat{\si_*+\eps}
\end{align*}
and thus we can choose $\eta>0$ sufficiently small such that
the distance between any two adjacent points in these chains is larger than $\nu$. In particular, there exists a constant $C$ which depends only on $H$ and $\eps$ such that
\begin{align}
\label{Lem:PS.NoMassInSpinodalRegion1.Eqn2}
\chi_{\ccinterval{X_-\at{\ol\si}}{X_+\at{\ul\si}}}\at{x}\leq
C\at{\bat{H^\prime\at{x}-\si}^2 + \chi_{\ccinterval{x^*}{x_*}}\at{x}}
\end{align}
holds for all $x\in\Rset$ and any $
\si\in
\ccinterval{\si_*+\eps}{\si^*-\eps}$, where $\chi_I$ denotes as usual the indicator function of the interval $I$.
\par
{\ul{\emph{Construction of a supersolution:}}} We define a local supersolution $\ol\varrho$ on the interval $\ccinterval{X_-\at{\ol\si}}{X_+\at{\ul\si}}$  
by combining rescaled versions of the monotone
branches of $\ga_{\ol\si}$ and $\ga_{\ul\si}$, where the latter are defined in \eqref{Eqn:DefEquilibriumDensities}. More precisely, we set
\begin{align*}
\ol{\varrho}\at{x}:=\nu^{-3}
\left\{
\begin{array}{lcl}
\D\exp\at{\frac{H_{\ol\si}\at{X_-\at{\ol\si}}-H_{\ol\si}\at{x}}{\nu^2}}
&\text{for}&X_-\at{\ol\si}\leq x \leq X_0\at{\ol\si},\\
\D\exp\at{\frac{-h_-\at{\ol\si}}{\nu^2}}
&\text{for}&X_0\at{\ol\si}\leq x\leq X_0\at{\ul\si},\\
\D
\exp\at{\frac{H_{\ul\si}\at{X_+\at{\ul\si}}-H_{\ul\si}\at{x}}{\nu^2}}
&\text{for}&X_0\at{\ul\si}\leq x\leq X_+\at{\ul\si}.
\end{array}
\right.
\end{align*}
Our choice of $\ol\si$ and $\ul\si$ implies that $\ol\varrho$ is continuous with
\begin{align*}
\ol{\varrho}\bat{X_-\at{\ol\si}}=\ol\varrho\bat{X_+\at{\ul\si}}=\nu^{-3}
\end{align*}
and the smallness of $\eta$ guarantees the existence of constants $\al$ and $C$ such that
\begin{align*}
\ol{\varrho}\at{x}\leq C\tau^\alpha\qquad \text{for all}\qquad x\in\ccinterval{x^*-\eta}{x_*+\eta}
\end{align*}
and all sufficiently small $\nu$.
Moreover, $\ol\varrho$ is continuously differentiable and piecewise
twice continuously differentiable with
\begin{align*}
\partial_x\Bat{\nu^2 \partial_x \ol\varrho\at{x} + \bat{H^\prime\at{x}-\si\at{t}}\ol\varrho\at{x}}  =
\left\{\begin{array}{lcl}
\bat{\ol\si-\si\at{t}}\partial_x\ol\varrho\at{x}&\text{for}& {X_-\at{\ol\si}}<x<{X_0\at{\ol\si}},\\
H^{\prime\prime}\at{x}\ol\varrho\at{x}&\text{for}& X_0\at{\ol\si}<x<X_0\at{\ul\si},\\
\bat{\ul\si-\si\at{t}}\partial_x\ol\varrho\at{x}&\text{for}& {X_0\at{\ul\si}}<x<{X_+\at{\ul\si}}.
\end{array}
\right.
\end{align*}
Combining this with 
\begin{align*}
\ul\si\leq\si\at{t}\leq \ol\si\quad\text{for}\quad
t\in\ccinterval{t_1}{t_2},\qquad
H^{\prime\prime}\at{x}\leq0\quad\text{for}\quad
x\in\ccinterval{X_0\at{\ol\si}}{X_0\at{\ul\si}}
\end{align*}
and
\begin{align*}
\partial_x\ol\varrho\at{x}\leq0\quad \text{for}\quad
x\in\ccinterval{X_-\at{\ol\si}}{X_0\at{\ol\si}}
,\qquad
\partial_x\ol\varrho\at{x}\geq0\quad \text{for}\quad
x\in\ccinterval{X_0\at{\ul\si}}{X_+\at{\ul\si}}
\end{align*}
gives
\begin{align*}
\partial_x\Bat{\nu^2 \partial_x \ol\varrho\at{x} + \bat{H^\prime\at{x}-\si\at{t}}\ol\varrho\at{x}}  \leq0 =\tau\partial_t\bar\varrho\at{x},
\end{align*}
and we conclude that $\ol\varrho$ is in fact a supersolution to the linear Fokker-Planck equation
\eqref{Eqn:FP1} on the time-space domain
$\ccinterval{t_1}{t_2}\times\ccinterval{X_-\at{\ol\si}}{X_+\at{\ul\si}}$.
\par
{\ul{\emph{Moment estimates:}}} We 
next define three solutions $\varrho_-$, $\varrho_0$, and $\varrho_+$ 
to the linear PDE \eqref{Eqn:FP1} on the time interval $\ccinterval{t_1}{t_2}$ by imposing
the initial conditions
\begin{align*}
\varrho_-\pair{t_1}{x}&=\varrho\pair{t_1}{x}\chi_{\ocinterval{-\infty}{X_-\at{\ol\si}}}\at{x},\\
\varrho_0\pair{t_1}{x}&=\varrho\pair{t_1}{x}\chi_{\ocinterval{X_-\at{\ol\si}}{X_+\at{\ul\si}}}\at{x},\\
\varrho_+\pair{t_1}{x}&=\varrho\pair{t_1}{x}\chi_{\cointerval{X_+\at{\ul\si}}{+\infty}}\at{x}.
\end{align*}
All three functions are nonnegative and satisfy $\varrho_-+\varrho_-+\varrho_+=\varrho$ by construction.
Thanks to $t_1\geq t_*$ and the $\fspaceL^\infty$-estimates from Lemma \ref{Lem:PropertiesOfSolutions} we
therefore get
\begin{align*}
\varrho_\pm\pair{t}{x}\leq\varrho\pair{t}{x}\leq \frac{C}{\nu^2}\qquad
\text{for all}\quad
\pair{t}{x}\in\ccinterval{t_1}{t_2}\times\Rset,
\end{align*}
and this implies
\begin{align*}
\varrho_\pm\pair{t}{X_-\at{\ol\si}}\leq \nu^{-3}=\ol\varrho\at{X_-\at{\ol\si}},\qquad
\varrho_\pm\pair{t}{X_+\at{\ul\si}}\leq\nu^{-3}= \ol\varrho\at{X_+\at{\ul\si}}
\end{align*}
for all $t\in\ccinterval{t_1}{t_2}$ and all sufficiently small $\nu$. Since we also have $0=\varrho_\pm\pair{t_1}{x}<\ol\varrho\at{x}$ for all $x\in\ccinterval{X_-\at{\ol\si}}{X_+\at{\ul\si}}$, the comparison principle for linear and parabolic PDEs yields
\begin{align*}
\varrho_\pm\pair{t}{x}\leq\ol\varrho\pair{t}{x}\qquad
\text{for all}\quad \pair{t}{x}\in\ccinterval{t_1}{t_2}\times\ccinterval{X_-\at{\ol\si}}{X_+\at{\ul\si}}
\end{align*}
and hence
\begin{align}
\notag
\int_{x^*-\eta}^{x_*+\eta}\bat{\varrho_-\pair{t}{x}+\varrho_+\pair{t}{x}}\dint{x}\leq
2 \int_{x^*-\eta}^{x_*+\eta}\ol\varrho\at{x}\dint{x}\leq{C}\tau^\alpha\qquad
\text{for all}\quad t\in\ccinterval{t_1}{t_2}.
\end{align}
On the other hand, using
the mass conservation property of \eqref{Eqn:FP1} we estimate
\begin{align*}
\int_{x^*-\eta}^{x_*+\eta}\varrho_0\pair{t}{x}
\dint{x}\leq
\int_{-\infty}^{+\infty}\varrho_0\pair{t}{x}\dint{x}= 
\int_{-\infty}^{+\infty}\varrho_0\pair{t_1}{x}\dint{x}=
\int_{X_-\at{\ol\si}}^{X_+\at{\ul\si}}\varrho\pair{t_1}{x}\dint{x},
\end{align*}
and 
\eqref{Lem:PS.NoMassInSpinodalRegion1.Eqn2} combined with the definition of $\xi$ and $m_0$, see \eqref{Eqn:Def.MomentXi} and \eqref{Eqn:Def.PartialMasses}, provides
\begin{align*}
\int_{X_-\at{\ol\si}}^{X_+\at{\ul\si}}\varrho\pair{t_1}{x}\dint{x}\leq {C}\bat{\xi\at{t_1}+m_0\at{t_1}}.
\end{align*}
In view of $\varrho=\varrho_-+\varrho_0+\varrho_+$ and by
taking the supremum over $t$ we finally get
\begin{align}
\label{Lem:PS.NoMassInSpinodalRegion1.Eqn4}
\sup\limits_{t\in\ccinterval{t_1}{t_2}}
\int_{x^*-\eta}^{x_*+\eta}\varrho\pair{t}{x}
\dint{x}\leq C\at{ \int_{X_-\at{\ol\si}}^{X_+\at{\ul\si}}\varrho\pair{t_1}{x}\dint{x}+\tau^\alpha}\leq
C\Bat{\xi\at{t_1}+m_0\at{t_1}+\nu^2},
\end{align}
where we used that $\tau^\al\leq\nu^2$.
The desired result is now provided by \eqref{Lem:PS.NoMassInSpinodalRegion1.Eqn4} and
Lemma \ref{Lem:PS.BoundForF}.
\end{proof}
Lemma \ref{Lem:PS.NoMassInSpinodalRegion1} allows us to control the evolution of 
$\xi+m_0$ as long as $\si$ is strictly between the bifurcation values $\si_*$ and $\si^*$, that means 
as long as the effective potential $H_\si$ has two proper wells. We next derive two similar results that cover time intervals in which $H_\si$ is either a single-well or a degenerate double-well potential. The corresponding pointwise upper bounds 
for  $\xi\at{t}+m_0\at{t}$, however, depend additionally on either $m_-\at{t_1}$ or $m_+\at{t_1}$.
\begin{figure}[ht!]%
\centering{%
\includegraphics[height=.275\textwidth, draft=\figdraft]{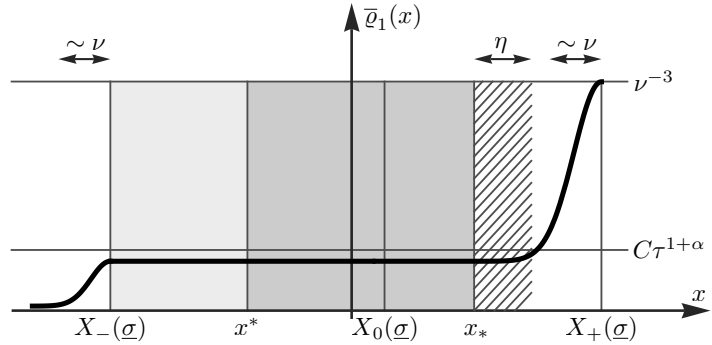}%
}%
\caption{Cartoon of the function $\ol\varrho_1$ from the proof of Lemma \ref{Lem:PS.NoMassInSpinodalRegion2}, which provides
a local supersolution 
on both $\ocinterval{-\infty}{X_-\at{\ul\si}}$ and 
$\ccinterval{x^*}{X_+\at{\ul\si}}$ but not on 
the interval
$\ccinterval{X_-\at{\ul\si}}{x^*}$ (light gray area). The corresponding defect term, however, can be compensated by a small time-dependent function 
$\ol\varrho_2$.}%
\label{Fig:NoMassInSpinodalRegion2}
\end{figure}%
\begin{lemma}[second and third conditional estimate for $\xi+m_0$] 
\label{Lem:PS.NoMassInSpinodalRegion2}
For each $\eps>0$ there exists a positive constant $C$,
which depends on $\eps$ but not on $\nu$, such that
the implications
\begin{align*}
\si\at{t}\geq \si^\#+\eps\;\;\text{for all}\;\; t\in\ccinterval{t_1}{t_2} \;\;\; \implies \;\;\;
\sup\limits_{t\in\ccinterval{t_1}{t_2}}\bat{\xi\at{t}+m_0\at{t}}\leq {C}\bat{\xi\at{t_1}+m_0\at{t_1}+m_-\at{t_1}+
\nu^2}
\end{align*}
and 
\begin{align*}
\si\at{t}\leq \si_\#-\eps
\;\;\text{for all}\;\; t\in\ccinterval{t_1}{t_2}
\;\;\;\implies\;\;\;
\sup\limits_{t\in\ccinterval{t_1}{t_2}}\!\bat{\xi\at{t}+m_0\at{t}}\leq 
{C}\Bat{\xi\at{t_1}+m_0\at{t_1}+m_+\at{t_1}+\nu^2}
\end{align*}
hold for all $\tsnu\leq t_1<t_2\leq T$ and all sufficiently small $\nu>0$.
\end{lemma}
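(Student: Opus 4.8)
The two implications are proved by the same argument, with the roles of the left and the right stable well interchanged, so I only describe the case $\si\at{t}\leq\si_\#-\eps$ on $\ccinterval{t_1}{t_2}$. As in the proof of Lemma~\ref{Lem:PS.NoMassInSpinodalRegion1} I would regard \eqref{Eqn:FP1} as a linear non-autonomous equation for $\varrho$ with $\si$ a prescribed function, and by Lemma~\ref{Lem:PS.BoundForF} it suffices to bound $\sup_{t\in\ccinterval{t_1}{t_2}}m_\eta\at{t}$ for one small fixed $\eta>0$. Fix the auxiliary value $\ul\si:=\si_\#-\eps/2$, so that $\ul\si>\si\at{t}$ throughout, $h_-\at{\ul\si}>h_\#$, and $X_-\at{\si\at{t}}<X_-\at{\ul\si}<x^*$, and shrink $\eta$ so that $x^*-\eta>X_-\at{\ul\si}$ and $x_*+\eta<X_+\at{\ul\si}$.

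Next I would split the initial datum as $\varrho\pair{t_1}{\cdot}=\varrho_{\mathrm{d}}\pair{t_1}{\cdot}+\varrho_{\mathrm{s}}\pair{t_1}{\cdot}$ with $\varrho_{\mathrm{d}}\pair{t_1}{\cdot}=\varrho\pair{t_1}{\cdot}\chi_{\ocinterval{-\infty}{X_-\at{\ul\si}}}$ (the part carrying the deep-well peak) and $\varrho_{\mathrm{s}}\pair{t_1}{\cdot}=\varrho\pair{t_1}{\cdot}\chi_{\oointerval{X_-\at{\ul\si}}{+\infty}}$, and let $\varrho_{\mathrm{d}},\varrho_{\mathrm{s}}$ solve \eqref{Eqn:FP1} with these data; both are nonnegative, add up to $\varrho$, and, since $t_1\geq\tsnu$, are bounded by $C\nu^{-2}$ on $\ccinterval{t_1}{t_2}\times\Rset$ by Lemma~\ref{Lem:PropertiesOfSolutions}. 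For $\varrho_{\mathrm{s}}$ I would only use mass conservation: on $\ccinterval{X_-\at{\ul\si}}{x^*}$ one has $H^\prime\at{x}\geq H^\prime\bat{X_-\at{\ul\si}}=\ul\si$, hence $\babs{H^\prime\at{x}-\si\at{t_1}}\geq\ul\si-\at{\si_\#-\eps}=\eps/2$, so that $\int_{X_-\at{\ul\si}}^{x^*}\varrho\pair{t_1}{x}\dint{x}\leq\tfrac{4}{\eps^2}\xi\at{t_1}$, and therefore
\begin{align*}
\int_\Rset\varrho_{\mathrm{s}}\pair{t}{x}\dint{x}=\int_{X_-\at{\ul\si}}^{+\infty}\varrho\pair{t_1}{x}\dint{x}\leq C\bat{\xi\at{t_1}+m_0\at{t_1}+m_+\at{t_1}}\qquad\text{for all }t\in\ccinterval{t_1}{t_2},
\end{align*}
so in particular $\int_{x^*-\eta}^{x_*+\eta}\varrho_{\mathrm{s}}\pair{t}{x}\dint{x}$ is bounded by the same quantity. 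This piece accounts precisely for the term $m_+\at{t_1}$ in the claim and reflects that mass initially located in or past the shallow right well of $H_{\si\at{t}}$ may, and generically does, drift leftward through the spinodal interval without any barrier to slow it down.

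It remains to show $\int_{x^*-\eta}^{x_*+\eta}\varrho_{\mathrm{d}}\pair{t}{x}\dint{x}\leq C\nu^2$, and this is the analytical heart. Because $\si\at{t}$ may be as small as $-C$ and may even lie below $\si_*$, so that $H_{\si\at{t}}$ is a single well, one cannot close up the confining barrier on the right side as in Lemma~\ref{Lem:PS.NoMassInSpinodalRegion1}; instead I would work with $\ol\varrho=\ol\varrho_1+\ol\varrho_2$ on the bounded interval $\ccinterval{X_-\at{\ul\si}}{X_+\at{\ul\si}}$ (see Figure~\ref{Fig:NoMassInSpinodalRegion2}). The stationary profile $\ol\varrho_1$ is assembled from rescaled monotone branches of the frozen equilibrium $\ga_{\ul\si}$ from \eqref{Eqn:DefEquilibriumDensities} (together with simple convex-tail pieces), normalized so that $\ol\varrho_1\geq C\nu^{-2}$ at the two endpoints while $\ol\varrho_1\leq C\tau^\alpha$ on $\ccinterval{x^*-\eta}{x_*+\eta}$; using $\ul\si>\si\at{t}$ and the monotonicity of $H_{\ul\si}$ and $H_{\ul\si}^\prime$, $\ol\varrho_1$ satisfies the stationary supersolution inequality for \eqref{Eqn:FP1} off a single fixed compact interval, where it is an explicit, bounded, exponentially small function, so that its defect $g$ is supported there and satisfies $\abs{g}\leq C\tau^\alpha\nu^{-2}$. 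I would let $\ol\varrho_2$ solve \eqref{Eqn:FP1} with source $g$ and zero initial datum; then $\ol\varrho_1+\ol\varrho_2$ is a genuine supersolution, $\varrho_{\mathrm{d}}\pair{t_1}{\cdot}$ vanishes on $\oointerval{X_-\at{\ul\si}}{X_+\at{\ul\si}}$ and is dominated by $\ol\varrho_1$ at the endpoints, and the parabolic comparison principle yields $\varrho_{\mathrm{d}}\leq\ol\varrho_1+\ol\varrho_2$ on the rectangle. The $\ol\varrho_1$-contribution to $\int_{x^*-\eta}^{x_*+\eta}$ is $\leq C\tau^\alpha\leq C\nu^2$ by Assumption~\ref{Ass:Tau}; the $\ol\varrho_2$-contribution is kept $\leq C\nu^2$ by observing that on the support of $g$ the drift $-\bat{H^\prime\at{x}-\si\at{t}}$ is strictly negative, so $\ol\varrho_2$ is transported away from the spinodal interval, and then estimating $\int_{x^*-\eta}^{x_*+\eta}\ol\varrho_2\pair{t}{x}\dint{x}$ by a further elementary supersolution (or a direct mass estimate exploiting the sign of the drift). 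Collecting both pieces gives $\sup_{t\in\ccinterval{t_1}{t_2}}m_\eta\at{t}\leq C\bat{\xi\at{t_1}+m_0\at{t_1}+m_+\at{t_1}+\nu^2}$, and Lemma~\ref{Lem:PS.BoundForF} together with $m_0\at{t}\leq m_\eta\at{t}$ finishes the proof.

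The main obstacle is the construction and verification of $\ol\varrho_1$ and $\ol\varrho_2$: one must patch together confining barriers for the deep well of $H_{\si\at{t}}$ in a way that is uniform over the whole admissible range $\si\at{t}\in\bccinterval{-C}{\si_\#-\eps}$, which straddles the supercritical double-well regime — where $h_+\at\si$ may be arbitrarily close to $0$ — and the single-well regime $\si<\si_*$, so that no single frozen equilibrium will do; and one must show that the unavoidable defect, after division by the slow-time factor $\tau$ in \eqref{Eqn:FP1}, still respects the $\DO{\nu^2}$ budget, which is where both the gap $h_-\at{\ul\si}>h_\#$ and the sign of the drift on the defect interval enter. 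Everything else — the mass-conservation bound for $\varrho_{\mathrm{s}}$, the reduction via Lemma~\ref{Lem:PS.BoundForF}, and the comparison principle — is routine.
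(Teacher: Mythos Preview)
Your splitting $\varrho=\varrho_{\mathrm d}+\varrho_{\mathrm s}$ and the mass--conservation bound for $\varrho_{\mathrm s}$ are exactly right and match the paper. The gap is in the supersolution for $\varrho_{\mathrm d}$. Working on the \emph{bounded} interval $\ccinterval{X_-\at{\ul\si}}{X_+\at{\ul\si}}$ forces you to dominate $\varrho_{\mathrm d}\pair{t}{X_+\at{\ul\si}}$, hence to require $\ol\varrho_1\bat{X_+\at{\ul\si}}\geq C\nu^{-2}$. But any profile built from branches of $\ga_{\ul\si}$ that reaches this value at $X_+\at{\ul\si}$ must be \emph{increasing} on a neighbourhood of $X_+\at{\ul\si}$ (since $X_+\at{\ul\si}$ is a local minimum of $H_{\ul\si}$), and for $\si\at{t}<\ul\si$ an increasing branch of $\ga_{\ul\si}$ produces the defect
\begin{align*}
\partial_x\Bat{\nu^2\partial_x\ol\varrho_1+\bat{H^\prime-\si\at{t}}\ol\varrho_1}=\bat{\ul\si-\si\at{t}}\,\partial_x\ol\varrho_1>0,
\end{align*}
which near $X_+\at{\ul\si}$ is of order $\nu^{-5}$, not $C\tau^\alpha\nu^{-2}$. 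So the defect is \emph{not} confined to the region where $\ol\varrho_1$ is exponentially small, and your proposed control of $\ol\varrho_2$ via ``sign of the drift'' has to absorb an $O(\nu^{-5})$ source --- this is not routine and your sketch does not close it.

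The paper avoids this entirely by taking the comparison domain to be the \emph{half--line} $\cointerval{X_-\at{\ul\si}}{+\infty}$, so no boundary condition at $X_+\at{\ul\si}$ is needed. The supersolution $\ol\varrho_1$ equals $\nu^{-3}$ at $X_-\at{\ul\si}$, follows the decreasing branch of $\ga_{\ul\si}$ down to the constant $\nu^{-3}\exp\bat{-h_-\at{\ul\si}/\nu^2}$ on $\ccinterval{X_0\at{\ul\si}}{X_+\at{\ul\si}}$, and then continues to $+\infty$ along the \emph{decreasing} tail of $\ga_{\ul\si}$; since that tail is decreasing and $\si\at{t}<\ul\si$, it is a genuine supersolution there. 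The only defect comes from the constant piece on $\ccinterval{x_*}{X_+\at{\ul\si}}$ (where $H^{\prime\prime}>0$), and because $h_-\at{\ul\si}>h_\#$ the constant itself is $\leq C\tau^{1+\alpha}$. With exponent $1+\alpha>1$, the total mass of $\ol\varrho_2$ satisfies $\tau\tfrac{\dint}{\dint t}\int\ol\varrho_2\leq C\tau^{1+\alpha}$, hence $\int\ol\varrho_2\leq C\tau^\alpha$ by plain integration --- no drift--sign argument is needed.
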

\begin{proof}
As in the proof of Lemma \ref{Lem:PS.NoMassInSpinodalRegion1}, we justify the 
first implication by constructing appropriate supersolutions
to the linear PDE \eqref{Eqn:FP1} and by splitting the 
function $\varrho$; the second implication can be proven along the same lines.
\par%
{\ul{\emph{Construction of a supersolution:}}}
We set
\begin{align*}
\ul\si:=\si^\#+\tfrac12\eps,
\end{align*}
assume without loss of generality that $\ul\si<\si^*$ (for
$\ul\si\geq\si^*$, our arguments can even be simplified considerably), and
 fix $\eta>0$ sufficiently small 
such that
\begin{align*}
2\eta\leq X_+\at{\ul\si}-x_*,\qquad 2\eta \leq X_+\bat{\si^\#+\eps}-X_+\at{\ul\si}.
\end{align*}
We also define -- see Figure 
\ref{Fig:NoMassInSpinodalRegion2} for an illustration -- a
piecewise smooth and continuously differentiable function 
$\ol\varrho_1:\ocinterval{-\infty}{X_+\at{\ul\si}}\to\Rset$ by 
\begin{align*}
\ol{\varrho}_1\at{x}:=\nu^{-3}
\left\{
\begin{array}{lcl}
\D\exp\at{\frac{H_{\ol\si}\at{X_-\at{\ul\si}}-h_+\at{\ul\si}-H_{\ul\si}\at{x}}{\nu^2}}
&\text{for}&-\infty< x \leq X_-\at{\ul\si},\\
\D\exp\at{\frac{-h_+\at{\ul\si}}{\nu^2}}
&\text{for}&X_-\at{\ul\si}\leq x\leq X_0\at{\ul\si},\\
\D
\exp\at{\frac{H_{\ul\si}\at{X_+\at{\ul\si}}-H_{\ul\si}\at{x}}{\nu^2}}
&\text{for}&X_0\at{\ul\si}\leq x\leq X_+\at{\ul\si},
\end{array}
\right.
\end{align*}
and find -- thanks to $h_+\at{\ul\si}>h_+\at{\si^\#}=h_\#$ and our choice of $\eta$ -- two positive constants $C$, $\al$
such that
\begin{align*}
\sup\limits_{x\in\ocinterval{-\infty}{x_*+\eta}}\ol\varrho_1\at{x}\leq C\tau^{1+\al}
\end{align*}
holds all sufficiently small $\nu>0$.
Moreover, 
by direct computations as in the proof of Lemma \ref{Lem:PS.NoMassInSpinodalRegion1} we arrive at
\begin{align*}
\partial_x\Bat{ \nu^2\partial_{x}\ol\varrho_1\at{x} +\bat{H^\prime\at{x}-\si\at{t}}\ol\varrho_1\at{x}}\leq C\tau^{1+\al}\chi_{\ccinterval{X_-\at{\ul\si}}{x^*}}\at{x}
\end{align*}
for all $t$ with $\si\at{t}>\ul\si$, and conclude
that $\varrho_1$ satisfies the differential inequality for a supersolution on $\ocinterval{-\infty}{X_+\ul\si}$ up to very small error terms. In order to eliminate the latter,
we denote by
$\ol\varrho_2:\ccinterval{t_1}{t_2}\times\Rset\to\Rset$ the solution to the linear and inhomogeneous initial value problem
\begin{align*}
\tau\partial_t \ol\varrho_2\pair{t}{x} = \partial_x\Bat{\nu^2
\partial_x \ol\varrho_2\pair{t}{x}+\bat{H^\prime\at{x}-\si\at{t}}\ol\varrho_2\pair{t}{x}}+C\tau^{1+\al}\chi_{\ccinterval{X_-\at{\ul\si}}{x^*}}\at{x},\qquad \ol\varrho_2\pair{t_1}{x}=0.
\end{align*}
The function $\ol\varrho\pair{t}{x}:=\ol\varrho_1\at{x}+\ol\varrho_2\pair{t}{x}$ is then a time-dependent supersolution to the linear Fokker-Planck equation \eqref{Eqn:FP1} on the time-space domain $\ccinterval{t_1}{t_2}\times\ocinterval{-\infty}{X_+\at{\ul\si}}$. 
Moreover, since $\ol\varrho_2$ is non-negative and satisfies
\begin{align*}
\tau\frac{\dint}{\dint{t}}\int_\Rset\ol\varrho_2\pair{t}{x}\dint{x} \leq C\tau^{1+\al},
\end{align*}
we also get
\begin{align*}
\ol\varrho\pair{t}{X_+\at{\ul\si}}\geq\ol\varrho_1\at{X_+\at{\ul\si}}=\nu^{-3}
\end{align*}
as well as
\begin{align*}
\int_{-\infty}^{x_*+\eta}\ol\varrho\pair{t}{x}\dint{x}\leq 
\int_{-\infty}^{x_*+\eta}\ol\varrho_1\at{x}\dint{x}+
\int_{\Rset}\ol\varrho_2\pair{t}{x}\dint{x}\leq C\tau^\al
\end{align*}
for all $t\in\ccinterval{t_1}{t_2}$.
\par
{\ul{\emph{Moment estimates:}}}
The initial conditions
\begin{align*}
\varrho_{-0}\pair{t_1}{x}=\varrho\pair{t_1}{x}\chi_{\ocinterval{-\infty}{X_+\at{\ul\si}}}\at{x},\qquad 
\varrho_{+}\pair{t_1}{x}=\varrho\pair{t_1}{x}\chi_{\cointerval{X_+\at{\ul\si}}{+\infty}}\at{x},
\end{align*}
define two solutions $\varrho_{-0}$ and $\varrho_+$ to \eqref{Eqn:FP1}, which are defined on $\ccinterval{t_1}{t_2}\times\Rset$ and satisfy $\varrho_{-0}+\varrho_{+}=\varrho$ along with 
\begin{align*}
0\leq\varrho_{-0}\pair{t}{x},\,\varrho_+\pair{t}{x}\leq \nu^{-3}
\end{align*}
for all sufficiently small $\nu>0$, see Lemma \ref{Lem:PropertiesOfSolutions}. The comparison principle  -- applied to \eqref{Eqn:FP1} and with respect to the time-space domain 
$\ccinterval{t_1}{t_2}\times\ocinterval{-\infty}{X_+\at{\ul\si}}$ -- now yields $\varrho_+\leq\ol\varrho$ and hence
\begin{align*}
\int_{x^*-\eta}^{x_*+\eta}\varrho_{+}\pair{t}{x}\dint{x}
\leq
\int_{x^*-\eta}^{x_*+\eta}\ol\varrho\pair{t}{x}\dint{x}\leq C\tau^\al,
\end{align*}
while the estimate
\begin{align*}
\int_{x^*-\eta}^{x_*+\eta}\varrho_{-0}\pair{t}{x}\dint{x}&\leq
\int_{-\infty}^\infty\varrho_{-0}\pair{t_1}{x}\dint{x}=
\int_{-\infty}^{X_+\at{\ul\si}}\varrho\pair{t_1}{x}\dint{x}
\\&\leq \int_{x^*}^{x_*}\varrho\pair{t_1}{x}\dint{x}
+
C\int_\Rset\bat{H^\prime\at{x}-\si\at{t_1}}\varrho\pair{t_1}{x}\dint{x}\leq C\at{m_0\at{t_1}+\xi\at{t_1}}
\end{align*}
holds due to our choice of $\ul\si$ and $\eta$. In summary, we have
\begin{align*}
\sup_{t\in\ccinterval{t_1}{t_2}}\int_{x^*-\eta}^{x_*+\eta}\varrho\pair{t}{x}\dint{x}\leq
C\bat{m_0\at{t_1}+\xi\at{t_1}+\tau^\al},
\end{align*}
and the desired result is a consequence of Lemma \ref{Lem:PS.BoundForF}.
\end{proof}
%
%
\subsection{Monotonicity relations}
\label{sect:Montonicity}
In this section we complement the stability estimates from \S\ref{sect:PeakStability} by
dynamical monotonicity relations for the partial masses 
$m_-$, $m_+$  and the dynamical multiplier $\si$. These results allow us to bound the moment $\zeta$ from \eqref{Eqn:DefZeta} for all sufficiently large $t$, see \S\ref{sect:AppByStablePeaks}. 
\subsubsection{Mass transfer between the stable regions}
We first investigate the evolution of the partial masses $m_-$ and $m_+$ 
for $t\geq t_*$ by means of appropriate
moment equations. The resulting estimates imply for $0<\nu\ll1$ that the mass flux from 
the left stable interval $\ocinterval{-\infty}{x^*}$ towards
the right one $\cointerval{x_*}{+\infty}$ is -- up to small correction terms  -- 
positive for $\si\at{t}>\si_\#$ but negative for $\si\at{t}<\si^\#$, and hence that there is 
essentially no mass transfer in the subcritical regime $\si\at{t}\in\oointerval{\si_\#}{\si^\#}$. 
These findings perfectly agree with the large deviations results that we obtained in \S\ref{sect:HeuristicDynamics}
by analyzing the orders of magnitude in Kramers formula \eqref{Eqn:KramersFormula}.
\begin{figure}[ht!]%
\centering{%
\includegraphics[height=.3\textwidth, draft=\figdraft]{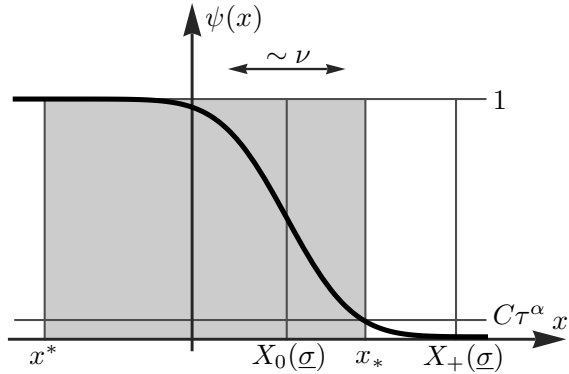}%
}%
\caption{Cartoon of the moment weight $\psi$ that is used in the proof of Lemma \ref{Lem:ChangeOfMasses}.
The strictly decreasing branch of $\psi$ on the interval $\ccinterval{x^*}{X_+\at{\ul\si}}$ is given by the rescaled and shifted primitive of $-1/\ga_{\ul\si}$ and has effective width of order $\nu$. For $0<\nu\ll1$, the function $\psi$ is therefore close to the indicator function of $\oointerval{-\infty}{X_0\at{\ul\si}}$.
}%
\label{Fig:ChangeOfMasses}
\end{figure}%
\begin{lemma}[monotonicity estimates for $m_\pm$]
\label{Lem:ChangeOfMasses}
For each $\eps$ with
\begin{align*}
0<\eps<\min\{ \si^*-\si_\#,\;\si^\#-\si_*\}
\end{align*}
there exist constants $\alpha$ and $C$, which depend on $\eps$ but not on $\nu$, such that
the implications
\begin{align*}
\si\at{t}\geq \si_\#+\eps
\quad\text{for all}\quad t\in\ccinterval{t_1}{t_2}
\quad\implies \quad
\sup\limits_{t\in\ccinterval{t_1}{t_2}} m_-\at{t}\leq m_-\at{t_1}+m_0\at{t_1}+C\tau^\alpha
\end{align*}
and
\begin{align*}
\si\at{t}\leq \si^\#-\eps
\quad\text{for all}\quad t\in\ccinterval{t_1}{t_2}
\quad\implies \quad
\sup\limits_{t\in\ccinterval{t_1}{t_2}} m_+\at{t}\leq m_+\at{t_1}+m_0\at{t_1}+C\tau^\alpha
\end{align*}
hold for $\tsnu\leq t_1<t_2\leq T$ and all sufficiently small $\nu>0$.
\end{lemma}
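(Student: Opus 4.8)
The plan is to establish the first implication; the second one follows from the mirror-image construction (interchanging $X_-,X_+$ and $h_+,h_-$), and throughout I treat \eqref{Eqn:FP1} as a linear non-autonomous PDE with $\si$ a prescribed function of $t$, exactly as in the proofs of Lemma \ref{Lem:PS.NoMassInSpinodalRegion1} and Lemma \ref{Lem:PS.NoMassInSpinodalRegion2}. The idea is to test the equation against a weight $\psi$ that is essentially the indicator of $\noointerval{-\infty}{X_0\at{\ul\si}}$ for a value $\ul\si$ frozen slightly to the right of $\si_\#$. Concretely, I set $\ul\si:=\si_\#+\tfrac12\eps$; the restriction on $\eps$ gives $\ul\si\in\oointerval{\si_*}{\si^*}$, so $H_{\ul\si}$ is a genuine double well, $X_0\at{\ul\si}\in\oointerval{x^*}{x_*}$ is its interior maximum on $\ccinterval{x^*}{X_+\at{\ul\si}}$, and $h_+\at{\ul\si}=h_\#+c_0$ with $c_0=c_0\at{\eps}>0$ since $h_+$ is strictly increasing. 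With $Z:=\int_{x^*}^{X_+\at{\ul\si}}\exp\at{H_{\ul\si}\at{y}/\nu^2}\dint{y}$ I define $\psi\equiv1$ on $\nocinterval{-\infty}{x^*}$, $\psi\equiv0$ on $\ncointerval{X_+\at{\ul\si}}{+\infty}$, and $\psi\at{x}:=Z^{-1}\int_x^{X_+\at{\ul\si}}\exp\at{H_{\ul\si}\at{y}/\nu^2}\dint{y}$ for $x\in\ccinterval{x^*}{X_+\at{\ul\si}}$. Then $0\le\psi\le1$, $\psi$ is Lipschitz and piecewise $C^2$ with kinks only at $x^*$ and $X_+\at{\ul\si}$, and since $\psi^\prime$ is, up to normalization, the stationary density of the transport operator adjoint to \eqref{Eqn:FP1} with multiplier $\ul\si$, the identity
\begin{align*}
\nu^2\psi^{\prime\prime}\at{x}-\psi^\prime\at{x}\bat{H^\prime\at{x}-\si}=\frac{\exp\at{H_{\ul\si}\at{x}/\nu^2}}{Z}\bat{\ul\si-\si}\qquad\text{for }x\in\noointerval{x^*}{X_+\at{\ul\si}}
\end{align*}
holds for every $\si$.

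Two elementary Laplace estimates make this weight useful. Since $H_{\ul\si}$ attains its maximum on $\ccinterval{x^*}{X_+\at{\ul\si}}$ at the interior, quadratically nondegenerate point $X_0\at{\ul\si}$ (the spinodal curvature $H^{\prime\prime}\at{X_0\at{\ul\si}}$ being strictly negative), one gets $Z\ge c\nu\exp\at{H_{\ul\si}\at{X_0\at{\ul\si}}/\nu^2}$. Combined with $H_{\ul\si}\at{x_*}<H_{\ul\si}\at{X_0\at{\ul\si}}$ (as $H_{\ul\si}$ is decreasing on $\ccinterval{X_0\at{\ul\si}}{X_+\at{\ul\si}}$) this yields $\psi\at{x}\le C\tau^\alpha$ for all $x\ge x_*$, and combined with $H_{\ul\si}\at{X_+\at{\ul\si}}=H_{\ul\si}\at{X_0\at{\ul\si}}-h_+\at{\ul\si}$ and $h_+\at{\ul\si}=h_\#+c_0$ it yields $\nu^2 Z^{-1}\exp\at{H_{\ul\si}\at{X_+\at{\ul\si}}/\nu^2}\le C\nu\exp\at{-h_+\at{\ul\si}/\nu^2}\le C\tau^{1+\alpha}$ for a suitable $\alpha>0$, by Assumption \ref{Ass:Tau} and the equivalence "exponentially small $\Leftrightarrow$ bounded by $C\tau^\alpha$" from the start of \S\ref{sect:AuxResults}. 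Since $\psi\equiv1$ left of $x^*$ and $\psi\ge0$ we always have $m_-\at{t}\le\int_\Rset\psi\varrho\pair{t}{x}\dint{x}$; and splitting the integral at $x^*,x_*,X_+\at{\ul\si}$ and using $\psi\le1$, $\psi\le C\tau^\alpha$, $\psi\equiv0$ on the four pieces gives $\int_\Rset\psi\varrho\pair{t_1}{x}\dint{x}\le m_-\at{t_1}+m_0\at{t_1}+C\tau^\alpha$.

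It then remains to bound the time derivative of $t\mapsto\int_\Rset\psi\varrho\pair{t}{x}\dint{x}$. Differentiating, inserting \eqref{Eqn:FP1}, and integrating by parts twice — the second integration by parts producing boundary contributions at the two kinks, while the contributions at $\pm\infty$ vanish by the decay of $\varrho$ from Appendix \ref{app:ExistenceAndUniqueness} — and using the identity above, one obtains
\begin{align*}
\tau\frac{\ddiff}{\ddiff t}\int_\Rset\psi\varrho\pair{t}{x}\dint{x}=\int_{x^*}^{X_+\at{\ul\si}}\frac{\exp\at{H_{\ul\si}\at{x}/\nu^2}}{Z}\bat{\ul\si-\si\at{t}}\varrho\pair{t}{x}\dint{x}-\frac{\nu^2\exp\at{H_{\ul\si}\at{x^*}/\nu^2}}{Z}\varrho\pair{t}{x^*}+\frac{\nu^2\exp\at{H_{\ul\si}\at{X_+\at{\ul\si}}/\nu^2}}{Z}\varrho\pair{t}{X_+\at{\ul\si}}.
\end{align*}
The first term is nonpositive because $\si\at{t}\ge\si_\#+\eps>\ul\si$, the second is manifestly nonpositive, and the third is at most $C\tau^{1+\alpha}\nu^{-2}\norm{\varrho\pair{t}{\cdot}}_\infty\le C\tau^{1+\alpha}$ since $t\ge\tsnu$ lets us invoke the bound $\norm{\varrho\pair{t}{\cdot}}_\infty\le C\nu^{-2}$ of Lemma \ref{Lem:PropertiesOfSolutions}. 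Hence $\tfrac{\ddiff}{\ddiff t}\int_\Rset\psi\varrho\le C\tau^\alpha$; integrating over $\ccinterval{t_1}{t}$ with $t\le t_2\le T$ (absorbing $t-t_1$ into $C$) and combining with the two inequalities of the previous paragraph gives $m_-\at{t}\le m_-\at{t_1}+m_0\at{t_1}+C\tau^\alpha$ for all $t\in\ccinterval{t_1}{t_2}$, which is the claim after taking the supremum.

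The one genuinely delicate point is the boundary term at $X_+\at{\ul\si}$: it carries the wrong sign and must be absorbed into the error. This succeeds precisely because the weight is frozen at $\ul\si$ bounded away from $\si_\#$, so that the escape barrier $h_+\at{\ul\si}$ strictly exceeds the critical value $h_\#=-\lim_{\nu\to0}\nu^2\ln\tau$; were $\ul\si$ pushed to $\si_\#$ this term would be only $\DO{1}$, which mirrors the fact that $m_-$ genuinely need not be monotone at $\si=\si_\#$. Getting the signs of the two kink contributions and the Laplace lower bound on $Z$ right is the bulk of the work; the mirrored second implication is identical, with weight $\psi\approx\chi_{\noointerval{X_0\at{\ol\si}}{+\infty}}$ built from $\exp\at{H_{\ol\si}/\nu^2}$ for $\ol\si:=\si^\#-\tfrac12\eps$ and escape barrier $h_-\at{\ol\si}>h_\#$.
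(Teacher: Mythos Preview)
Your argument is correct and follows essentially the same route as the paper: you test \eqref{Eqn:FP1} against the same weight $\psi$ (a normalized primitive of $-1/\ga_{\ul\si}$ on $\ccinterval{x^*}{X_+\at{\ul\si}}$), exploit the sign of the bulk term via $\si\at{t}\geq\ul\si$ together with $\nu^2\psi''+(\ul\si-H')\psi'=0$, and absorb the single bad boundary contribution at $X_+\at{\ul\si}$ using the Laplace lower bound on $Z$, $h_+\at{\ul\si}>h_\#$, and the $\fspaceL^\infty$-bound on $\varrho$ from Lemma~\ref{Lem:PropertiesOfSolutions}. The only cosmetic differences are your choice $\ul\si=\si_\#+\tfrac12\eps$ (the paper takes $\ul\si=\si_\#+\eps$) and a stray factor $\nu^{-2}$ in your bound on the third term---you should write $C\tau^{1+\alpha}\norm{\varrho\pair{t}{\cdot}}_\infty\leq C\tau^{1+\alpha}\nu^{-2}\leq C\tau^{1+\alpha'}$---which is harmless since any power of $\nu^{-1}$ is dominated by $\tau^{-\delta}$.
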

\begin{proof}
We demonstrate the first implication only; the second one follows analogously. Our strategy in this proof is
to control the evolution of a certain upper bound for $m_-$, namely
of the moment
\begin{align*}
\ol{m}_-\at{t}:=\int_\Rset \psi\at{x}\varrho\pair{t}{x}\dint{x}.
\end{align*}
Here, the weight $\psi$ is defined as 
piecewise constant continuation
of an appropriately rescaled and shifted primitive of $-1/\ga_{\ul\si}$, where $\ul\si$ is shorthand for $\si_\#+\eps$.
More precisely, in view of \eqref{Eqn:DefEquilibriumDensities} we set
\begin{align*}
\psi\at{x}:=
\left\{
\begin{array}{lcl}
1&\;\text{for}\;& x\leq{x^*},\\
\displaystyle\frac{\displaystyle\int_x^{X_+\at{\ul\si}}\exp\at{\frac{H_{\ul\si}\at{y}}{\nu^2}}\dint{y}}{\displaystyle\int_{x^*}^{X_+\at{\ul\si}}\exp\at{\frac{H_{\ul\si}\at{y}}{\nu^2}}\dint{y}}
&\;\text{for}\;& 
{x^*}\leq{x}\leq{X_+\at{\ul\si}},\\
0&\;\text{for}\;& x\geq{X_+\at{\ul\si}},\\
\end{array}
\right.
\end{align*}
and refer to Figure \ref{Fig:ChangeOfMasses} for an illustration. Since the function
$\psi$ is continuous as well as piecewise twice continuously differentiable, we derive 
-- using \eqref{Eqn:FP1} as well as integration by parts -- the moment balance
\begin{align*}
\tau\dot{\ol{m}}_-\at{t}
&=-
\int_\Rset\psi^\prime\at{x}\Bat{\nu^2\partial_x\varrho\pair{t}{x}+\bat{H^\prime\at{x}-\sigma\at{t}}\varrho\pair{t}{x}}\dint{x}
\\&=-
\int_{x^*}^{X_+\at{\ul\si}}\psi^\prime\at{x}\Bat{\nu^2\partial_x\varrho\pair{t}{x}+\bat{H^\prime\at{x}-\sigma\at{t}}\varrho\pair{t}{x}}\dint{x}
\\&=
\mathrm{b.t.}+\int_{x^*}^{X_+\at{\ul\si}}\Bat{\nu^2\psi^{\prime\prime}\at{x}+\bat{\sigma\at{t}-H^\prime\at{x}}\psi^\prime\at{x}}\varrho\pair{t}{x}\dint{x}.
\end{align*}
The boundary terms are given by
\begin{align*}
\mathrm{b.t.}=\nu^2 \psi^\prime\bat{x^*+0}\varrho\bpair{t}{x^*}-
\nu^2\psi^\prime\bat{X_+\at{\ul\si}-0}\varrho\bpair{t}{X_+\at{\ul\si}},
\end{align*}
and the notation $\pm0$ indicates that the boundary values of $\psi^\prime$ are taken with respect to the interval $\ccinterval{x^*}{X_+\at{\ul\si}}$.
\par
It remains to estimate $\dot{\ol{m}}_-\at{t}$ for all $t\in\ccinterval{t_1}{t_2}$, that means for
$\si\at{t}\geq\ul\si$. We first infer from the above definition of $\psi$ and the properties of the effective potential
$H_\si$ that
\begin{align*}
\psi^\prime\at{x}\leq0,\qquad
\nu^2\psi^{\prime\prime}\at{x}+\bat{\ul\si-H^\prime\at{x}}\psi^\prime\at{x}=0 
\end{align*}
holds for all $x\in\ccinterval{x^*}{X_+\at{\ul\si}}$, and this implies
\begin{align*}
\tau\dot{\ol{m}}_-\at{t}&\leq
\nu^2\babs{\psi^\prime\bat{X_+\at{\ul\si}-0}}\varrho\bpair{t}{X_+\at{\ul\si}}.
\end{align*}
We then observe that the asymptotic properties of $\ga_\si$ -- see Figure \ref{Fig:Equilibria} -- imply 
\begin{align*}
\int_{x^*}^{X_+\at{\ul\si}}\exp\at{\frac{H_{\ul\si}\at{y}}{\nu^2}}\dint{y}\geq c\nu\exp\at{
\frac{
H_{\ul{\si}}\bat{X_0\at{\ul{\si}}}
}{\nu^2}
}
\end{align*}
for some sufficiently small constant $c>0$, and in view of
$h_+\at{\ul\si}>h_+\at{\si_\#}=h_\#$  and the scaling relation from Assumption \ref{Ass:Tau} we verify that the estimates
\begin{align*}
\abs{\psi^\prime\at{X_+\at{\ul\si}-0}}&\leq \frac{C}{\nu}
\exp\at{\frac{H_{\ul{\si}}\bat{X_+\at{\ul{\si}}}-H_{\ul{\si}}\bat{X_0\at{\ul{\si}}}}{\nu^2}}
\\&=
\frac{C}{\nu}
\exp\at{\frac{-h_+\at{\ul{\si}}}{\nu^2}}
\\&=
\frac{C}{\nu}
\exp\at{\frac{-h_\#}{\nu^2}}\exp\at{\frac{h_\#-h_+\at{\ul{\si}}}{\nu^2}}
\leq C \tau^{1+\alpha} 
\end{align*}
and
\begin{align*}
\sup_{x\geq x_*}\psi\at{x}=\psi\at{x_*}\leq 
C
\exp\at{\frac{H_{\ul{\si}}\bat{x_*}-H_{\ul{\si}}\bat{X_0\at{\ul{\si}}}}{\nu^2}}\leq C\tau^\alpha
\end{align*}
hold for some positive constants $\alpha$, $C$ and all sufficiently small $\nu>0$. Moreover,  
Lemma \ref{Lem:PropertiesOfSolutions} ensures that $\varrho\bpair{t}{X_+\at{\ul\si}}\leq C\nu^{-2}$. Combining all estimates derived so far we finally obtain
\begin{math}
\dot{\ol{m}}_-\at{t}\leq C\tau^{\alpha}
\end{math}, %
and hence 
\begin{align*}
\sup_{t\in\ccinterval{t_1}{t_2}}m_-\at{t}\leq 
\sup_{t\in\ccinterval{t_1}{t_2}}\ol{m}_-\at{t}\leq \ol{m}_-\at{t_1}+C\tau^\al\leq m_-\at{t_1}+m_0\at{t_1}+C\tau^\al,
\end{align*}
where we used that $t_2-t_1\leq{T}$ and $\ol{m}_-\at{t_1}\leq m_-\at{t_1}+m_0\at{t_1}+\sup_{x\geq x_*}\psi\at{x}$.
\end{proof}
%
%
\subsubsection{Dynamical relations \texorpdfstring{between $\si$ and $\ell$}{}}%
%
%
As an important consequence of the monotonicity relations for $m_-$ and $m_+$ we now
establish, up to some small error terms, monotonicity relations between the dynamical control $\ell$ and the dynamical multiplier $\si$. These
results have three important implications, which can informally be summarized as follows:
\begin{enumerate}
\item If $\si\at{t}\approx\si_\#$ or $\si\at{t}\approx\si^\#$ holds for all $t$ in some interval $\ccinterval{t_1}{t_2}$, 
then $\ell$ must be essentially decreasing or increasing, respectively, on this interval. The dynamical 
constraint \eqref{Eqn:FP2P} then implies in the limit $\nu\to0$ that the phase fraction $\mu= m_+-m_-$ is decreasing and increasing for $\si=\si_\#$
and $\si=\si^\#$, respectively.
\item If $\ccinterval{t_1}{t_2}$ is some time interval such that $\si$ behaves nicely and
\begin{enumerate}
\item crosses $\si_\#$ from above in the sense of $\si\at{t_2}<\si_\#<\si\at{t_1}<\si^\#$, or
\item crosses $\si^\#$ from below via $\si_\#<\si\at{t_1}<\si^\#<\si\at{t_2}$,
\end{enumerate}
then $t_2-t_1$ can be bounded from below by $\abs{\si\at{t_2}-\si\at{t_1}}$. This implies, roughly speaking,
that solutions for small $\nu$ cannot change too rapidly from subcritical $\si$ to supercritical $\si$.
\item 
If $\ccinterval{t_1}{t_2}$ is some time interval such that $\si$ stays inside the subcritical range $\oointerval{\si_\#}{\si^\#}$,
then $\abs{\si\at{t_2}-\si\at{t_1}}$ can be bounded from above by $\abs{\ell\at{t_2}-\ell\at{t_1}}$, and 
this gives rise to Lipschitz estimates for subcritical $\si$ in the limit $\nu\to0$.
\end{enumerate}
\begin{lemma}[conditional monotonicity relations]
\label{Lem:EllAndSigma}
Let $\eps$ be fixed with $0<\eps<\tfrac12\at{\si^\#-\si_\#}$. Then the implications
\begin{align*}
\si\at{t}\in\bccinterval{\tfrac12\nat{\si_*+\si_\#}}{ \si^\#-\eps}
\quad\text{for all}\quad t\in\ccinterval{t_1}{t_2}
\quad\implies \quad
g\bat{\si\at{t_1}-\si\at{t_2}}\leq \ell\at{t_1}-\ell\at{t_2}+\mathrm{e.t.}
\end{align*}
and
\begin{align*}
\si\at{t}\in\bccinterval{ \si_\#+\eps}{\tfrac12\nat{\si^\#+\si^*}}
\quad\text{for all}\quad t\in\ccinterval{t_1}{t_2}
\quad\implies \quad
g\bat{\si\at{t_2}-\si\at{t_1}}\leq \ell\at{t_2}-\ell\at{t_1}+\mathrm{e.t.}
\end{align*}
hold with error terms
\begin{align*}
\mathrm{e.t.}:=C\Bat{\sqrt{\xi\at{t_1}+m_0\at{t_1}}+\sqrt{\xi\at{t_2}+m_0\at{t_2}}}+C\tau^\al
\end{align*}
for all $\tsnu\leq{t_1}\leq{t_2}\leq T$ and all sufficiently small $\nu>0$. Here, 
the constants $\alpha$, $C$ depend on $\eps$ but not on $\nu$,
and 
$g$ is the increasing and piecewise linear function $g\at{s}=C_{\sgn\at{s}}s$, where the constants $C_-,\,C_+>0$ are 
independent of both $\eps$ and $\nu$. 
\end{lemma}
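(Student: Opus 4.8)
The plan is to reduce everything to the macroscopic constraint function $\calL$ from \eqref{Eqn:LimDyn.Def2}, which already encodes the relevant geometry. Writing $\mu\at{t}=m_+\at{t}-m_-\at{t}$ and using $m_-\at{t}+m_0\at{t}+m_+\at{t}=1$ together with Lemma~\ref{Lem:XiControlsL}, I would first record that $\babs{\ell\at{t_i}-\calL\bpair{\si\at{t_i}}{\mu\at{t_i}}}\leq C\sqrt{\xi\at{t_i}+m_0\at{t_i}}$ for $i\in\{1,2\}$; this uses that in both implications $\si\at{t_i}$ stays in a fixed compact subinterval of $\oointerval{\si_*}{\si^*}$ whose endpoints are bounded away from $\si_*$, $\si^*$ by constants depending only on $H$ and $h_\#$. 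Thus it suffices to bound $\calL\bpair{\si\at{t_1}}{\mu\at{t_1}}-\calL\bpair{\si\at{t_2}}{\mu\at{t_2}}$ from below by $g\bat{\si\at{t_1}-\si\at{t_2}}$ up to $\mathrm{e.t.}$ (and symmetrically for the second implication), which I would do by splitting it as a $\si$-variation at frozen $\mu=\mu\at{t_1}$ plus a $\mu$-variation at frozen $\si=\si\at{t_2}$.

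For the $\si$-variation I would use $\partial_\si\calL\pair{\si}{\mu}=\tfrac{1-\mu}{2}X_-^\prime\at{\si}+\tfrac{1+\mu}{2}X_+^\prime\at{\si}$, which for $\mu\in\ccinterval{-1}{1}$ is a convex combination of $X_-^\prime\at{\si}$ and $X_+^\prime\at{\si}$ and hence, by Remark~\ref{Rem:PropertiesOfXi}, satisfies $0<C_+\leq\partial_\si\calL\pair{\si}{\mu}\leq C_-$ for all $\si$ in the (fixed compact) union of the two $\si$-ranges, with $C_\pm$ depending only on $H$ and $h_\#$, as required. Integrating $\partial_\si\calL\pair{s}{\mu\at{t_1}}$ over $s$ between $\si\at{t_2}$ and $\si\at{t_1}$ and distinguishing the sign of $\si\at{t_1}-\si\at{t_2}$ then gives exactly $\calL\bpair{\si\at{t_1}}{\mu\at{t_1}}-\calL\bpair{\si\at{t_2}}{\mu\at{t_1}}\geq g\bat{\si\at{t_1}-\si\at{t_2}}$ with $g\at{s}=C_{\sgn\at{s}}s$, and this $g$ is increasing since $C_\pm>0$.

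For the $\mu$-variation I would use $\calL\bpair{\si\at{t_2}}{\mu\at{t_1}}-\calL\bpair{\si\at{t_2}}{\mu\at{t_2}}=\tfrac12\bat{X_+\at{\si\at{t_2}}-X_-\at{\si\at{t_2}}}\bat{\mu\at{t_1}-\mu\at{t_2}}$, the prefactor being positive and bounded by Remark~\ref{Rem:PropertiesOfXi}. The decisive input is the one-sided monotonicity of the partial masses: writing $\mu=2m_++m_0-1$ and invoking, in the first implication (where $\si\at{t}\leq\si^\#-\eps$ on $\ccinterval{t_1}{t_2}$ and $t_1\geq\tsnu$), the bound $m_+\at{t_2}\leq m_+\at{t_1}+m_0\at{t_1}+C\tau^\al$ from Lemma~\ref{Lem:ChangeOfMasses}, one obtains $\mu\at{t_1}-\mu\at{t_2}=2\bat{m_+\at{t_1}-m_+\at{t_2}}+\bat{m_0\at{t_1}-m_0\at{t_2}}\geq-2m_0\at{t_1}-m_0\at{t_2}-C\tau^\al\geq-\mathrm{e.t.}$, using $m_0\at{t_i}\leq\sqrt{\xi\at{t_i}+m_0\at{t_i}}$. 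Hence the $\mu$-variation is bounded below by $-\mathrm{e.t.}$, and adding the two pieces together with the error from Lemma~\ref{Lem:XiControlsL} proves the first implication; note that all error contributions have precisely the required form $C\sqrt{\xi\at{t_1}+m_0\at{t_1}}+C\sqrt{\xi\at{t_2}+m_0\at{t_2}}+C\tau^\al$. The second implication is the mirror image, now using the other half of Lemma~\ref{Lem:ChangeOfMasses} ($m_-\at{t_2}\leq m_-\at{t_1}+m_0\at{t_1}+C\tau^\al$ because $\si\at{t}\geq\si_\#+\eps$) to get $\mu\at{t_2}-\mu\at{t_1}=2\bat{m_-\at{t_1}-m_-\at{t_2}}+\bat{m_0\at{t_1}-m_0\at{t_2}}\geq-\mathrm{e.t.}$. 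I expect the only delicate point to be the sign bookkeeping — ensuring that each stray $m_0$-term and each one-sided mass defect lands on the correct side of the inequality and can be absorbed into $\mathrm{e.t.}$ — whereas the $\eps$-independence of $C_\pm$ is immediate since they come purely from the geometry of $\calL$ on a fixed compact $\si$-interval.
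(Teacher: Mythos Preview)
Your proposal is correct and follows essentially the same route as the paper: both reduce $\ell\at{t_1}-\ell\at{t_2}$ to a two-peak approximation via Lemma~\ref{Lem:XiControlsL}, split the difference into a ``position'' ($\si$-) variation controlled by the bounds on $X_\pm^\prime$ over the fixed interval $S=\ccinterval{\tfrac12\nat{\si_*+\si_\#}}{\tfrac12\nat{\si^\#+\si^*}}$ and a ``mass'' ($\mu$-) variation controlled by Lemma~\ref{Lem:ChangeOfMasses}, and then absorb the stray $m_0$-terms into $\mathrm{e.t.}$ via $m_0\leq\sqrt{m_0}$. The only cosmetic differences are that the paper works with $\tilde\ell=m_-X_-+m_+X_+$ rather than $\calL\pair{\si}{\mu}$ (which differ by $\tfrac{m_0}{2}\at{X_-+X_+}\leq C\sqrt{m_0}$) and swaps which endpoint is frozen in each half of the decomposition.
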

\begin{proof}
We derive the first implication only; the arguments for the second one are similar.
For the proof we suppose
that $\si\at{t}\in\ccinterval{ \tfrac12\nat{\si_*+\si_\#}}{ \si^\#-\eps}$ holds for all $t\in\ccinterval{t_1}{t_2}$ and 
set $x_{\pm}\at{t}:=X_{\pm}\bat{\si\at{t}}$ as well as
\begin{align*}
\tilde\ell\at{t}:=m_-\at{t} x_-\at{t}+m_+\at{t}x_+\at{t},\qquad
\bar{e}:=\sqrt{\xi\at{t_1}+m_0\at{t_1}}+\sqrt{\xi\at{t_2}+m_0\at{t_2}}.
\end{align*} 
Lemma \ref{Lem:XiControlsL} yields $\babs{\ell\at{t_i}-\tilde{\ell}\at{t_i}}\leq C\bar{e}$,
and we conclude that 
\begin{align}
\label{Lem:EllAndSigma.Eqn1}
\begin{split}
C\bar{e}+\ell\at{t_1}-\ell\at{t_2}&\geq \tilde\ell\at{t_1}-\tilde\ell\at{t_2}\\
&= \sum_{j\in\{-,+\}} \bat{m_j\at{t_1}-m_j\at{t_2}}x_j\at{t_1}+
\sum_{j\in\{-,+\}} m_j\at{t_2}\bat{x_j\at{t_1}-x_j\at{t_2}}.
\end{split}
\end{align}
Thanks to $m_-\at{t}+m_0\at{t}+m_+\at{t}=1$, see \eqref{Eqn:Def.PartialMasses}, we find
\begin{align*}
m_-\at{t_1}-m_-\at{t_2}=-\bat{m_0\at{t_1}-m_0\at{t_2}}-\bat{m_+\at{t_1}-m_+\at{t_2}}
\end{align*} 
and hence 
\begin{align*}
\sum_{j\in\{-,+\}} \bat{m_j\at{t_1}-m_j\at{t_2}}x_j\at{t_1}&\geq
\bat{m_+\at{t_1}-m_+\at{t_2}}\bat{x_+\at{t_1}-x_-\at{t_1}}-C\bar{e},
\end{align*}
where we used that $\abs{x_\pm\at{t_1}}\leq{C}$ and $m_0\at{t_1},m_0\at{t_2}\leq\bar{e}$.
Moreover, Lemma \ref{Lem:ChangeOfMasses} provides constants $\alpha$ and $C$ such that 
\begin{align*}
m_+\at{t_1}-m_+\at{t_2}\geq - C\bat{\bar{e}+\tau^\alpha}
\end{align*}
holds for all sufficiently small $\nu$, and in view of $x_+\at{t_1}>x_-\at{t_1}$, see Remark \ref{Rem:PropertiesOfXi}, we arrive at
\begin{align}
\label{Lem:EllAndSigma.Eqn2}
\sum_{j\in\{-,+\}} \bat{m_j\at{t_1}-m_j\at{t_2}}x_j\at{t_1}&\geq -C\bat{\bar{e}+\tau^\alpha}.
\end{align}
It remains to estimate the second sum on the right hand side of \eqref{Lem:EllAndSigma.Eqn1} depending on the sign of $\si\at{t_1}-\si\at{t_2}$.
In the case of $\si\at{t_1}-\si\at{t_2}\geq0$ we have $x_j\at{t_1}>x_j\at{t_2}$ thanks to the monotonicity of $X_\pm$, so the Mean Value Theorem implies
\begin{align*}
x_j\at{t_1}-x_j\at{t_2}=X_j^{\prime}\at{\tilde{\si}}\at{\si\at{t_1}-\si\at{t_2}}\geq C_+
\bat{\si\at{t_1}-\si\at{t_2}},\qquad C_+:=\min\limits_{\tilde{\si}\in{S}
}\min\limits_{j\in\{-,\,+\}}
X_j^\prime\at{\tilde\si},
\end{align*}
where $S$ abbreviates the interval $\ccinterval{ \tfrac12\nat{\si_*+\si_\#}}{ \tfrac12\nat{\si^\#+\si^*}}$. Similarly, for
$\si\at{t_1}-\si\at{t_2}<0$  we get
\begin{align*}
x_j\at{t_2}-x_j\at{t_1}\leq C_-
\bat{\si\at{t_2}-\si\at{t_1}},\qquad C_-:=\max\limits_{\tilde{\si}\in{S}
}\max\limits_{j\in\{-,\,+\}}
X_j^\prime\at{\tilde\si}.
\end{align*}
In summary, in both cases we have
\begin{align*}
x_j\at{t_1}-x_j\at{t_2}\geq  C_{\sgn\at{\si\at{t_1}-\si\at{t_2}}} 
\bat{\si\at{t_1}-\si\at{t_2}}=
g\bat{\si\at{t_1}-\si\at{t_2}}
\end{align*} 
and hence 
\begin{align*}
\sum_{j\in\{-,+\}} m_j\at{t_2}\bat{x_j\at{t_1}-x_j\at{t_2}}&\geq
g\bat{\si\at{t_1}-\si\at{t_2}}\bat{m_-\at{t_1}+m_+\at{t_1}}
\\&\geq 
g\bat{\si\at{t_1}-\si\at{t_2}}-C\bar{e}.
\end{align*}
The desired implication now follows by combining the latter estimate with \eqref{Lem:EllAndSigma.Eqn1} and  
\eqref{Lem:EllAndSigma.Eqn2}.
\end{proof}
%
%
\section{ Justification of the limit dynamics }\label{sect:limit}
%
In this section we finally combine all partial results from \S\ref{sect:AuxResults}
in order to justify the limit model from \S\ref{sect:limitmodel}. To this end we fix $\eps_*>0$ such that
\begin{align}
\label{Eqn:Conv.Delta}
\eps_*\leq \tfrac12\min\{\si^*-\si^\#,\;\si_\#-\si_*,\;\si^\#,\;-\si_\#\},\qquad \sup_{t\in\ccinterval{0}{T}}\abs{\si\at{t}}\leq\frac{1}{\eps_*}.
\end{align}  
holds for all $0<\nu\leq1$, and assume from now on that $0<\eps<\eps_*$. Notice that the uniform bounds from Lemma \ref{Lem:PropertiesOfSolutions} ensure that such an $\eps_*>0$ does in fact exist.
%
\subsection{Approximation by stable peaks}\label{sect:AppByStablePeaks}

Heuristically it is clear that the small-parameter dynamics of the nonlocal Fokker-Planck equation can be  described by the rate-independent limit model from \S\ref{sect:limitmodel} if and only if
the state $\varrho\pair{t}{\cdot}$ of the system can be approximated by
\begin{enumerate}
\item two narrow peaks located at $X_-\at{t}$ and $X_+\at{t}$ as long as
$\si\at{t}\in\oointerval{\si_\#}{\si^\#}$,
\item a single narrow peak located at $X_-\at{t}$ or $X_+\at{t}$ for $\si\at{t}\in\oointerval{-\infty}{\si_\#}$ or $\si\at{t}\in\oointerval{\si^\#}{+\infty}$, respectively.
\end{enumerate}
In this section we establish an $\eps$-variant of this 
approximation result. More precisely, we now prove that the moment
$\zeta\at{t}$ from \eqref{Eqn:DefZeta} is small for all times $t\geq{t_0}$ provided that the dissipation is small
at time $t_0$ and that $\nu$ is sufficiently small. This conclusion is in fact at the very core of our approach
as it allows us to convert the $\fspaceL^1$-bound for the dissipation into moment estimates that hold 
pointwise in time.
\begin{figure}[ht!]%
\centering{%
\includegraphics[height=.275\textwidth, draft=\figdraft]{\figfile{persistence}}%
}%
\caption{Schematic representation of the intervals $J_i$ as well as the $\si$-domains for the cases $N_j$ and $P_j$ as used in the proof of Lemma \ref{Lem:ZetaDeltaBounds}. For $\eps\to0$, we have 
$J_{-2}\to\ocinterval{-\infty}{\si_\#}$, $J_0\to\ccinterval{\si_\#}{\si^\#}$,  $J_{+2}\to\cointerval{\si^\#}{+\infty}$
as well as $J_{-1}\to\{\si_\#\}$ and $J_{+1}\to\{\si^\#\}$.
}%
\label{Fig:ZetaDeltaBounds}
\end{figure}%
\begin{lemma}[pointwise upper bound for $\zeta$]
\label{Lem:ZetaDeltaBounds}
For each $\eps\in\ccinterval{0}{\eps_*}$ there exist positive constants $\beta<1$ and
$C$, which depend on $\eps$ but not on $\nu$, such that the implication
\begin{align}
\label{Lem:ZetaDeltaBounds.I0}
\calD\at{t_0}\leq \tau^\beta \quad \implies \quad
\sup\limits_{t\in\ccinterval{t_0}{T}}\zeta\at{t}\leq C\nu^2
\end{align}
holds for all $\tsnu\leq t_0\leq{T}$ and all sufficiently small $\nu>0$, where $\eps_*$ and $t_*$
have been introduced in \eqref{Eqn:Conv.Delta} and Proposition \ref{Lem:PropertiesOfSolutions}.
\end{lemma}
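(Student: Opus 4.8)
The plan is to establish the implication in three stages: a pointwise ``base step'' converting one small value of $\calD$ into a bound on $\zeta$, an $\fspaceL^1$-to-pointwise bridge that makes such small values available at well-chosen reference times, and a continuation argument that propagates the bound to all of $\ccinterval{t_0}{T}$ using the conditional stability estimates of \S\ref{sect:PeakStability} and the monotonicity relations of \S\ref{sect:Montonicity}. First I would fix $\eta>0$ small enough (depending only on $\eps$) for Lemmas \ref{Lem:MD.MassOutsideTwoPeaks}, \ref{Lem:MD.MassOutsideOnePeak}, \ref{Lem:PS.NoMassInSpinodalRegion1}, \ref{Lem:PS.NoMassInSpinodalRegion2}, \ref{Lem:ChangeOfMasses} to be applicable, and then $\beta\in\oointerval{0}{1}$ so close to $1$ that $\alpha-(1-\beta)>0$ for every exponent $\alpha$ occurring in those lemmata; recall from Assumption \ref{Ass:Tau} that any positive quantity bounded by $C\tau^{\alpha'}$ with $\alpha'>0$ is $\leq\nu^2$ for all sufficiently small $\nu$.

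For the base step I would show that $\calD\at{s}\leq\tau^\beta$ already forces $\zeta\at{s}\leq C\nu^2$: Lemma \ref{Lem:PS.AuxFormula} gives $\xi\at{s}\leq\calD\at{s}+C\nu^2\leq C\nu^2$, and since $\calD_{\si\at{s}}\bat{\varrho\pair{s}{\cdot}}=\calD\at{s}$ one has $\calD_{\si\at{s}}\bat{\varrho\pair{s}{\cdot}}/\tau\leq\tau^{\beta-1}$, so — distinguishing $\si\at{s}\leq\si_\#-\eps$, $\si\at{s}\in\noointerval{\si_\#-\eps}{\si^\#+\eps}$, and $\si\at{s}\geq\si^\#+\eps$, each of which lies, thanks to $0<\eps<\eps_*$ and $\abs{\si}\leq1/\eps_*$ from \eqref{Eqn:Conv.Delta}, in the range of validity of Lemma \ref{Lem:MD.MassOutsideTwoPeaks} or Lemma \ref{Lem:MD.MassOutsideOnePeak} with the appropriate peak $X_-$ or $X_+$ — I obtain $m_0\at{s}\leq C\tau^{\alpha-(1-\beta)}\leq C\nu^2$ and, in the two supercritical cases, also $m_+\at{s}$ resp.\ $m_-\at{s}$ bounded by $C\nu^2$, hence $\zeta\at{s}\leq C\nu^2$; taking $s=t_0$ handles the initialisation. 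The bridge is that $\tau^{-1}\int_{\tsnu}^T\calD\at{t}\dint{t}\leq C$ (Lemma \ref{Lem:PropertiesOfSolutions}) forces every subinterval of $\ccinterval{\tsnu}{T}$ of length exceeding a fixed power of $\tau$ to contain a time $t_1$ with $\calD\at{t_1}\leq\tau^\beta$, so a ``good'' reference time sits behind every point of $\ccinterval{t_0}{T}$ (with $t_0$ itself available as a fallback).

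The propagation is the heart. I would run a continuation argument on the \emph{continuous} quantity $\xi+m_0$: letting $T^\ast$ be the largest time up to which $\xi+m_0\leq C^\ast\nu^2$ and assuming $T^\ast<T$, I would work on the interval from the last regime change of $\si$ before $T^\ast$ up to $T^\ast$, on which $\si$ is confined to a single admissible range $\ccinterval{\si_*+\eps}{\si^*-\eps}$, $\nocinterval{-\infty}{\si_\#-\eps}$ or $\ncointerval{\si^\#+\eps}{+\infty}$; the matching conditional estimate (Lemma \ref{Lem:PS.NoMassInSpinodalRegion1} or Lemma \ref{Lem:PS.NoMassInSpinodalRegion2}) then bounds $\xi\at{T^\ast}+m_0\at{T^\ast}$ by $C$ times the corresponding datum at that anchor — with $\xi+m_0$ controlled by the continuation hypothesis and the residual partial mass controlled by the monotonicity estimate Lemma \ref{Lem:ChangeOfMasses} traced back to a good reference time where the base step makes that mass $\DO{\nu^2}$ — which, after inserting a good reference time via the $\fspaceL^1$ bound, is $\leq\tfrac12C^\ast\nu^2$ once $C^\ast$ is large, contradicting maximality. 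With $\xi+m_0\leq C\nu^2$ on all of $\ccinterval{t_0}{T}$ in hand, the remaining pieces of $\zeta$ are treated by a case analysis on the position of $\si\at{t}$ relative to $\si_\#,\si^\#$ (the cases $N_j,P_j$ of Figure \ref{Fig:ZetaDeltaBounds}): on the strictly supercritical layers $J_{\pm2}$, Lemma \ref{Lem:ChangeOfMasses} gives $m_+\at{t}$ resp.\ $m_-\at{t}$ of order $\nu^2$; on the transition layers $J_{\pm1}$ around $\si_\#,\si^\#$ one combines Lemma \ref{Lem:PS.NoMassInSpinodalRegion1} (still valid there, as $J_{\pm1}\subset\oointerval{\si_*+\eps}{\si^*-\eps}$) with Lemmas \ref{Lem:ChangeOfMasses} and \ref{Lem:EllAndSigma} to prevent a partial mass from drifting across a critical value; on $J_0$ the contribution is just $\xi+m_0$. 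Assembling the cases gives $\zeta\at{t}\leq C\nu^2$ throughout.

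The hard part is exactly the regime bookkeeping in the propagation: every conditional estimate needs $\si$ confined to a fixed range over the whole interval of application, but at this point no modulus of continuity for $\si$ is available, so $\si$ might oscillate arbitrarily rapidly across the thresholds $\si_*+\eps$, $\si^*-\eps$, $\si_\#\pm\eps$, $\si^\#\pm\eps$; consequently $\ccinterval{t_0}{T}$ cannot be split into finitely many regime intervals and a naive iteration of the stability estimates would make the constant explode. The remedy — and the delicate point — is to track only the continuous quantity $\xi+m_0$ by a \emph{single} re-anchored application of the appropriate conditional estimate (the anchor supplied by the $\fspaceL^1$-dissipation bound), and to keep the discontinuous $m_\pm$-contributions to $\zeta$ under control separately through the essentially monotone behaviour of $m_\pm$ (Lemmas \ref{Lem:ChangeOfMasses} and \ref{Lem:EllAndSigma}); verifying that this bootstrap closes, in particular that the transition layers $J_{\pm1}$ cause no trouble and that no partial mass sneaks past $\si_\#$ or $\si^\#$ undetected, is where the bulk of the work lies. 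Everything else — the $\tau^\alpha\leq\nu^2$ accounting, the moment inequalities, and the choices of $\eta$ and $\beta$ — is routine given \S\ref{sect:AuxResults}.
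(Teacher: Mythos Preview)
Your proposal correctly assembles the ingredients (base step via mass-dissipation estimates, $\fspaceL^1$-to-pointwise bridge, conditional stability Lemmas \ref{Lem:PS.NoMassInSpinodalRegion1}--\ref{Lem:PS.NoMassInSpinodalRegion2}, monotonicity Lemma \ref{Lem:ChangeOfMasses}) and accurately diagnoses the difficulty: no Lipschitz bound on $\si$ is yet available, so regime changes could accumulate. However, the continuation argument on $\xi+m_0$ does not close as written. If your anchor $t_1$ is ``the last regime change before $T^\ast$'', then at $t_1$ you only know $\xi\at{t_1}+m_0\at{t_1}\leq C^\ast\nu^2$ from the continuation hypothesis itself, and a single application of, say, Lemma \ref{Lem:PS.NoMassInSpinodalRegion1} yields $\xi\at{T^\ast}+m_0\at{T^\ast}\leq C_5\at{C^\ast+1}\nu^2$; since $C_5\geq1$ this is never $\leq\tfrac12 C^\ast\nu^2$, so there is no contradiction. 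Your fix---``inserting a good reference time via the $\fspaceL^1$ bound''---requires $T^\ast-t_1$ to be bounded below by a positive constant independent of $\nu$, but nothing in the proposal provides that: with $\si$ possibly oscillating rapidly, the interval $\ccinterval{t_1}{T^\ast}$ on which $\si$ stays in one regime can be arbitrarily short.

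The paper's proof supplies exactly this missing lower bound, and its structure is different from a continuation argument. It defines three \emph{overlapping} regimes $N_-$, $N_0$, $N_+$ (covering $J_{-2}\cup J_{-1}$, $J_{-1}\cup J_0\cup J_{+1}$, $J_{+1}\cup J_{+2}$ respectively) and shows in Part~1 that one application of the relevant conditional estimate, anchored at a time with $\calD\leq\tau^\beta$, already controls $\zeta$ on the whole stretch where $\si$ stays in one $N$-regime. The overlap is the point: to pass from one $N$-regime to the next, $\si$ must \emph{fully traverse} a buffer interval $J_{\pm1}$ of width $\tfrac12\eps$. Part~2 then shows---using Lemma \ref{Lem:EllAndSigma} for the cases $P_{0\pm}$ and Lemma \ref{Lem:XiControlsL} for $P_{\pm0}$, together with the already-established bound $\zeta\leq C\nu^2$ at the start of the traversal---that each such traversal takes time at least some $c>0$ depending only on $\eps$. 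This is a \emph{local} modulus of continuity for $\si$ across the transition layers, derived inside the proof; it guarantees a good dissipation time within each traversal, and the resulting iteration (Part~3) terminates after at most $T/c$ steps. You invoke Lemma \ref{Lem:EllAndSigma} only to ``prevent a partial mass from drifting across a critical value''; its second, decisive use---to bound traversal times from below and thereby make the $\fspaceL^1$-bridge effective---is the idea your argument is missing.
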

\begin{proof}
We consider the intervals 
\begin{align*}
J_{-2} = \ocinterval{-\infty}{\si_\#-\eps},\qquad
J_{-1} = \ccinterval{\si_\#-\eps}{\si_\#-\tfrac12\eps},
\end{align*}
as well as $J_0 := \ccinterval{\si_\#-\tfrac12\eps}{\si^\#+\tfrac12\eps}$ and
\begin{align*}
J_{+1} = \ccinterval{\si^\#+\tfrac12\eps}{\si^\#+\eps},\qquad
J_{+2} = \cointerval{\si^\#+\eps}{+\infty}.
\end{align*}
These intervals and the different cases considered in this proof are illustrated in Figure \ref{Fig:ZetaDeltaBounds}. 
We also recall that the two definitions \eqref{Eqn:DefDissipation} and \eqref{Eqn:DefDissipationFunctional} of the dissipation are consistent in the sense that $\calD\at{t}=\calD_{\si\at{t}}\bat{\varrho\pair{t}{\cdot}}$ holds for all $t\geq0$.
\par
{\ul{\emph{Part 1:}}} We first prove statements like \eqref{Lem:ZetaDeltaBounds.I0} under the assumption that $\si$ remains confined to 
at most two or three neighboring intervals from $\{J_{-2},\, J_{-1},\, J_{0},\, J_{+1},\, J_{+2}\}$,
and start with the case
\begin{align}
\tag{$N_{-}$}\label{Lem:ZetaDeltaBounds.CM}
\si\at{t}\in J_{-2}\cup{J_{-1}}\quad\text{for all}\quad t\in\ccinterval{t_1}{t_2},
\end{align}
where $t_0\leq t_1<t_2\leq{T}$. Lemma \ref{Lem:PS.NoMassInSpinodalRegion2} and Lemma \ref{Lem:ChangeOfMasses} then provide constants $\alpha_1$ and $C_1$ such that
\begin{align}
\label{Lem:ZetaDeltaBounds.I0.xEqn2}
\sup_{t\in\ccinterval{t_1}{t_2}}\bat{\xi\at{t}+m_0\at{t}}
\leq C_1\Bat{\xi\at{t_1}+m_+\at{t_1}+m_0\at{t_1}+\nu^2}
\end{align}
as well as
\begin{align}
\label{Lem:ZetaDeltaBounds.I0.xEqn3}
\sup_{t\in\ccinterval{t_1}{t_2}}m_+\at{t}
\leq m_+\at{t_1}+m_0\at{t_1}+C_1\tau^{\alpha_1},
\end{align}
and by Lemma \ref{Lem:MD.MassOutsideOnePeak} there exist constants $\alpha_2$ and $C_2$ such that
\begin{align*}
m_0\at{t_1}+m_+\at{t_1}\leq C_2\tau^{\alpha_2}\Bat{\tau^{-1}\calD\at{t_1} +1}.
\end{align*} 
Moreover, Lemma \ref{Lem:PS.AuxFormula} yields a constant $C_3$ with
\begin{align}
\notag
\xi\at{t_1}\leq \calD\at{t_1}+C_3\nu^2,
\end{align} 
and combining all these estimates we finally arrive at
\begin{align*}
\sup_{t\in\ccinterval{t_1}{t_2}}\zeta\at{t}&\leq
\sup_{t\in\ccinterval{t_1}{t_2}}\bat{\xi\at{t}+m_0\at{t}+m_+\at{t}}
\\&\leq  \bat{C_1+C_2\at{C_1+1}\tau^{\al_2-1}}\calD\at{t_1} + C_1\tau^{\al_1}+ C_2\at{C_1+1}\tau^{\al_2}+
C_1\at{C_3+1}\nu^2.
\end{align*}
We next choose $\beta_1\in\oointerval01$ sufficiently large such that $\alpha_2+\beta_1-1>0$ and this guarantees that the implication
\begin{align}
\label{Lem:ZetaDeltaBounds.I1}
\eqref{Lem:ZetaDeltaBounds.CM}
\quad \text{and}\quad \calD\at{t_1}\leq\tau^{\beta_1}
\qquad\implies\qquad 
\sup_{t\in\ccinterval{t_1}{t_2}}\zeta\at{t}
\leq
\sup_{t\in\ccinterval{t_1}{t_2}}
\bat{\xi\at{t}+m_0\at{t}+m_+\at{t}}
\leq C_4 \nu^2
\end{align}
holds for all sufficiently small $\nu>0$, where $C_4$ can be chosen as
$C_4:=1+C_1\bat{C_3+1}$.
\par
The arguments for the case 
\begin{align}
\tag{$N_{+}$}\label{Lem:ZetaDeltaBounds.CP}
\si\at{t}\in J_{+1}\cup{J_{+2}}\quad\text{for all}\quad t\in\ccinterval{t_1}{t_2}
\end{align}
are entirely similar. In particular, possibly changing all constants introduced so far, we readily demonstrate that
\begin{align}
\label{Lem:ZetaDeltaBounds.I2}
\eqref{Lem:ZetaDeltaBounds.CP}
\quad \text{and}\quad \calD\at{t_1}\leq\tau^{\beta_1}
\qquad\implies\qquad 
\sup_{t\in\ccinterval{t_1}{t_2}}\zeta\at{t}
\leq
\sup_{t\in\ccinterval{t_1}{t_2}}
\bat{\xi\at{t}+m_-\at{t}+m_0\at{t}}
\leq C_4\nu^2
\end{align}
holds for all sufficiently small $\nu>0$. 
\par
We next study the case
\begin{align}
\tag{$N_0$}\label{Lem:ZetaDeltaBounds.CZ}
\si\at{t}\in J_{-1}\cup{J_0}\cup{J_{+1}}\quad\text{for all}\quad t\in\ccinterval{t_1}{t_2},
\end{align}
and first observe that Lemma \ref{Lem:PS.NoMassInSpinodalRegion1} provides a constant $C_5$
such that
\begin{align}
\label{Lem:ZetaDeltaBounds.I0.xEqn1}
\sup_{t\in\ccinterval{t_1}{t_2}}\bat{\xi\at{t}+m_0\at{t}}
\leq C_5 \bat{\xi\at{t_1}+m_0\at{t_1}+\nu^2}.
\end{align}
By Lemma \ref{Lem:MD.MassOutsideTwoPeaks} we find further constants $\alpha_6$ and $C_6$ such that
\begin{align*}
m_0\at{t_1}\leq C_6\tau^{\alpha_6}\Bat{\tau^{-1}\calD\at{t_1} +1},
\end{align*} 
and we choose $\beta_2\in\oointerval{0}{1}$ sufficiently close to $1$ such that $\alpha_6+\beta_2-1>0$. This ensures
(using also \eqref{Lem:ZetaDeltaBounds.I0.xEqn3})
that the implication
\begin{align}
\label{Lem:ZetaDeltaBounds.I3}
\eqref{Lem:ZetaDeltaBounds.CZ}
\quad \text{and}\quad \calD\at{t_1}\leq\tau^{\beta_2}
\qquad\implies\qquad 
\sup_{t\in\ccinterval{t_1}{t_2}}\zeta\at{t}=
\sup_{t\in\ccinterval{t_1}{t_2}}\bat{\xi\at{t}+m_0\at{t}}\leq C_7\nu^2
\end{align}
holds for all sufficiently small $\nu>0$ and $C_7:=C_5\at{C_3+3}$.
\par
{\ul{\emph{Part 2:}}} We set
\begin{align*}
\beta:=\max\big\{\beta_1,\,\beta_2\big\} \in\oointerval{0}{1}, \qquad C:=\max\big\{C_4,\,C_7\big\}.
\end{align*}
Our next goal is to demonstrate that whenever the systems passes for $t\in\ccinterval{t_3}{t_4}\subseteq\ccinterval{\tsnu}{T}$ 
through one of the intervals ${J_{-1}}$ or ${J_{+1}}$, then there exist
at least one time $t$ in between $t_3$ and $t_4$ such that $\calD\at{t}\leq\tau^\beta$.
To this end, we have to discuss the four cases
\begin{align}
\tag{$P_{-0}$}\label{Lem:ZetaDeltaBounds.TMZ}
\si\at{t}\in {J_{-1}}\quad\text{for all}\quad t\in\ccinterval{t_3}{t_4},\qquad
\si\at{t_3} =\si_\#-\eps,\qquad
\si\at{t_4}=\si_\#-\tfrac12\eps
\end{align}
and
\begin{align}
\tag{$P_{0-}$}\label{Lem:ZetaDeltaBounds.TZM}
\si\at{t}\in {J_{-1}}\quad\text{for all}\quad t\in\ccinterval{t_3}{t_4},\qquad
\si\at{t_3} =\si_\#-\tfrac12\eps,\qquad
\si\at{t_4}=\si_\#-\eps
\end{align}
as well as
\begin{align}
\tag{$P_{+0}$}\label{Lem:ZetaDeltaBounds.TPZ}
\si\at{t}\in {J_{+1}}\quad\text{for all}\quad t\in\ccinterval{t_3}{t_4},\qquad
\si\at{t_3}=\si^\#+\eps,\qquad \si\at{t_4}=\si^\#+\tfrac12\eps
\end{align}
and
\begin{align}
\tag{$P_{0+}$}\label{Lem:ZetaDeltaBounds.TZP}
\si\at{t}\in {J_{+1}}\quad\text{for all}\quad t\in\ccinterval{t_3}{t_4},\qquad
\si\at{t_3}=\si^\#+\tfrac12\eps,\qquad \si\at{t_4}=\si^\#+\eps
\end{align}
but by symmetry it is sufficient to study \eqref{Lem:ZetaDeltaBounds.TMZ} and \eqref{Lem:ZetaDeltaBounds.TZM} only. To discuss the case \eqref{Lem:ZetaDeltaBounds.TZM}, we suppose that 
\begin{align*}
\zeta\at{t_3}=\xi\at{t_3}+m_0\at{t_3}\leq C\nu^2,
\end{align*}
and notice that our arguments for the case \eqref{Lem:ZetaDeltaBounds.CZ} -- see \eqref{Lem:ZetaDeltaBounds.I0.xEqn1} with $t_1=t_3$, $t_2=t_4$ -- imply
\begin{align*}
\xi\at{t_4}+m_0\at{t_4}\leq C_5\at{C+1}\nu^2.
\end{align*}
Lemma \ref{Lem:EllAndSigma} combined with the uniform bounds for $\nabs{\dot\ell\at{t}}$ from Assumption \ref{Ass:Constraint}  yields
constants $c_8$ and $C_9$ such that
\begin{align*}
\tfrac12\eps=\si\at{t_3}-\si\at{t_4}\leq C_8\abs{t_4-t_3}+C_9\nu
\end{align*}
and we conclude that there exists a positive constant $c_{10}$ such that
$t_4-t_3\geq c_{10}$ holds for all sufficiently small $\nu>0$. Moreover, since Lemma \ref{Lem:PropertiesOfSolutions} provides
$\int_{t_3}^{t_4}\calD\at{t}\dint{t}\leq {C_{11}}\tau$ we find at least one
time $t\in\ccinterval{t_3}{t_4}$ (which depends on $\nu$ and $\eps$) such that
\begin{align*}
\calD\at{t}\leq\frac{C_{11}}{c_{10}}\tau\leq\tau^\beta
\end{align*}
for all sufficiently small $\nu>0$. In summary, the implication
\begin{align}
\label{Lem:ZetaDeltaBounds.I4}
(P_{0\mp}) \quad\text{and}\quad
\quad \zeta\at{t_3}\leq C\nu^2
\qquad\implies\qquad \calD\at{t}\leq\tau^\beta \quad \text{for some}\quad t\in\ccinterval{t_3}{t_4}
\end{align}
holds for all sufficiently small $\nu>0$. 
\par
In the case 
\eqref{Lem:ZetaDeltaBounds.TMZ}
we assume that 
\begin{align*}
\zeta\at{t_3}=\xi\at{t_3}+m_0\at{t_3}+m_+\at{t_3}\leq C\nu^2.
\end{align*}
Similar to the above discussion for the case \eqref{Lem:ZetaDeltaBounds.CM}, we exploit  
Lemma \ref{Lem:PS.NoMassInSpinodalRegion2} and Lemma \ref{Lem:ChangeOfMasses} -- see  \eqref{Lem:ZetaDeltaBounds.I0.xEqn2} and \eqref{Lem:ZetaDeltaBounds.I0.xEqn3} with $t_1=t_3$,
$t_2=t_4$ --
and show that there is a constant $C_{12}$ such that
\begin{align*}
\xi\at{t_4}+m_0\at{t_4}+m_+\at{t_4}\leq C_{12}\nu^2
\end{align*}
holds for all sufficiently small $\nu>0$. From this and Lemma \ref{Lem:XiControlsL} we further infer that 
there is a constant $C_{13}$ such that
\begin{align*}
\abs{X_-\bat{\si\at{t_4}}-X_-\bat{\si\at{t_3}}}\leq \abs{\ell\at{t_4}-\ell\at{t_3}}+C_{13}\nu,
\end{align*}
and the properties of $X_-$ and $\ell$ imply that $t_4-t_3\geq c_{14}$ holds for all sufficiently small $\nu>0$ 
and some constant $c_{14}$.  In particular, using
$\int_{t_3}^{t_4}\calD\at{t}\dint{t}\leq {C_{11}}\tau$ once more, we show that the implication
\begin{align}
\label{Lem:ZetaDeltaBounds.I5}
(P_{\mp0}) \quad\text{and}\quad
\quad \zeta\at{t_3}\leq C\nu^2
\qquad\implies\qquad \calD\at{t}\leq\tau^\beta \quad \text{for some}\quad t\in\ccinterval{t_3}{t_4}
\end{align}
holds for all sufficiently small $\nu>0$. We finally recall that 
\begin{align}
\label{Lem:ZetaDeltaBounds.Time}
(P_{0\mp}) \quad\text{or}\quad(P_{\mp0}) \quad\implies\quad
t_4-t_3\geq\min\{c_{10},\,c_{14}\}>0
\end{align}
holds for all sufficiently small $\nu>0$.
\par
{\ul{\emph{Part 3:}}} 
We finally return to the time interval $\ccinterval{t_0}{T}$ and
establish a recursive argument that allows us to finish the proof after a finite number of iterations.
More precisely, we show that we are either done
since the assertion \eqref{Lem:ZetaDeltaBounds.I0} is satisfied for given $t_0<T$ or can replace $t_0$ by a larger time 
$\bar{t}_0\in\oointerval{t_0}{T}$ with  
\begin{align}
\label{Lem:ZetaDeltaBounds.Cond}
\sup\limits_{t\in\ccinterval{t_0}{\bar{t}_0}}\zeta\at{t}\leq C\nu^2
\qquad\text{and}\qquad
\calD\at{\bar{t}_0}\leq\tau^\beta.
\end{align}
Suppose at first that $\si\at{t_0}\in J_{-2}$. If $\si\at{t}\in J_{-2}\cup J_{-1}$ holds for
all $t\in\ccinterval{t_0}{T}$, then we are done as  \eqref{Lem:ZetaDeltaBounds.I1} with $t_1=t_0$ and 
$t_2=T$ implies \eqref{Lem:ZetaDeltaBounds.I0}. Otherwise
we consider the times
\begin{align*}
t_4:=\inf\Big\{t\in\ccinterval{t_0}{T}\;:\;\si\at{t}=\si_\#-\tfrac12\eps\Big\},\qquad
t_3:=\sup\Big\{t\in\ccinterval{t_0}{t_4}\;:\;\si\at{t}=\si_\#-\eps\Big\},
\end{align*}
which are well-defined as $\si$ is continuous. By construction,
the intervals $\ccinterval{t_0}{t_3}$ and $\ccinterval{t_3}{t_4}$ 
corresponds to the cases \eqref{Lem:ZetaDeltaBounds.CM} and \eqref{Lem:ZetaDeltaBounds.TMZ}, respectively,
and the existence of $\bar{t}_0\in\ccinterval{t_3}{t_4}$ with \eqref{Lem:ZetaDeltaBounds.Cond} is a consequence of
\eqref{Lem:ZetaDeltaBounds.I1} and \eqref{Lem:ZetaDeltaBounds.I5}. Similarly, 
the case $\si\at{t_0}\in J_{+2}$ can be traced back to the cases
\eqref{Lem:ZetaDeltaBounds.CP} and \eqref{Lem:ZetaDeltaBounds.TPZ},
and $\bar{t}_0$ is provided by \eqref{Lem:ZetaDeltaBounds.I2} and \eqref{Lem:ZetaDeltaBounds.I5}.
\par
Now suppose that $\si\at{t_0}\in J_{0}$. If $\si\at{t}\in J_{-1}\cup J_{0}\cup J_{+1}$ holds for all $t\in\ccinterval{t_0}{T}$,
then we are done as \eqref{Lem:ZetaDeltaBounds.I0} follows from \eqref{Lem:ZetaDeltaBounds.I3} with
$t_1=t_0$ and $t_2=T$. Otherwise we find
times $t_3<t_4$ such that $\ccinterval{t_0}{t_3}$ corresponds to \eqref{Lem:ZetaDeltaBounds.CZ} and $\ccinterval{t_3}{t_4}$ to either \eqref{Lem:ZetaDeltaBounds.TZM} or \eqref{Lem:ZetaDeltaBounds.TZP}, and the existence of 
$\bar{t}_0$ is implied by \eqref{Lem:ZetaDeltaBounds.I3} and \eqref{Lem:ZetaDeltaBounds.I4}.
\par
For $\si\at{t_0}\in J_{\pm 1}$, we are either done via $\si\at{t}\in{J_{\pm1}}$ for all $\ccinterval{t_0}{T}$, or we
find a time
$t_1$ such that $\si\at{t}\in{J_{\pm1}}$ for all $t\in\ccinterval{t_0}{t_1}$ and 
either $\si\at{t_1}=\si_\#\pm\eps$ or $\si\at{t_1}=\si_\#\pm\tfrac12\eps$. 
Depending on the value of $\si\at{t_1}$ we can now argue as for
$\si\at{t_0}\in{J_{\pm2}}$ or $\si\at{t_0}\in{J_{0}}$. 
\par
We have now established the recursive argument from above and  argue iteratively. In particular, between 
two subsequent iterations $t_0\leadsto \bar{t}_0$ and $\bar{t}_0\leadsto \bar{\bar{t}}_0$ the system runs through 
at least one of the four cases $\{P_{-0}$, $P_{0-}$, $P_{0+}$, $P_{+0}\}$ and 
\eqref{Lem:ZetaDeltaBounds.Time} provides a lower bound for $\bar{\bar{t}}_0-t_0$.
\end{proof}
%
%
\subsection{Continuity estimates \texorpdfstring{for $\si$}{for the multiplier}}
\label{sect:Continuity}
%
As further key ingredient to the derivation of the limit model we next show
that the dynamical multiplier $\si$ from \eqref{Eqn:FP2} is, up to some error terms, globally Lipschitz continuous in time.
These estimates become important when establishing the limit $\nu\to0$
because they imply the existence of convergent subsequences as well as the Lipschitz continuity of any limit function.
\begin{lemma}[Lipschitz continuity of $\si$ up to small error terms]
\label{Lem:LipschitzSigma}
For each $\eps\in\ccinterval{0}{\eps_*}$ there exist constants $\alpha$ and $C$, which depend on $\eps$ but not on $\nu$, as well as a constant $C_0$, which is independent of both $\eps$ and $\nu$, such that
\begin{align*}
\babs{\si\at{t_2}-\si\at{t_1}}\leq C_0\bat{\abs{t_2-t_1}+\eps}+C\bat{\tau^\alpha+\sup_{t\in\ccinterval{t_1}{t_2}}\sqrt{\zeta\at{t}}}
\end{align*}
holds for all $\tsnu\leq t_1\leq t_2\leq T$ and all sufficiently small $\nu$,
where $\eps_*$ and $t_*$
have been introduced in \eqref{Eqn:Conv.Delta} and Proposition \ref{Lem:PropertiesOfSolutions}.
\end{lemma}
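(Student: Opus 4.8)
The plan is to control $\si(t_2)-\si(t_1)$ by partitioning the interval $\ccinterval{t_1}{t_2}$ according to where $\si$ lives relative to the five reference intervals $J_{-2},\dots,J_{+2}$ from Lemma \ref{Lem:ZetaDeltaBounds}, and then to estimate the increment of $\si$ on each piece using the dynamical constraint together with the monotonicity relations from \S\ref{sect:Montonicity}. The key point is that $\si$ is determined (up to errors controlled by $\sqrt{\zeta}$) from $\ell$ and $\mu=m_+-m_-$ through the relation $\ell(t)=\calL\bpair{\si(t)}{\mu(t)}$ of Lemma \ref{Lem:XiControlsL}; since $\ell$ is genuinely Lipschitz by Assumption \ref{Ass:Constraint} and $\calL$ is strictly monotone in $\si$ on the relevant $\si$-range with a lower bound on $\partial_\si\calL$ independent of $\nu$ (this is where Remark \ref{Rem:PropertiesOfXi}(ii) and (iii) enter), the only way $\si$ can change fast is if $\mu$ changes fast. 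But $\mu$ can only change when $\si$ is near $\si_\#$ or $\si^\#$, and in the supercritical ranges $m_\pm$ are essentially monotone by Lemma \ref{Lem:ChangeOfMasses}.

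Concretely, first I would reduce to two cases by symmetry ($\si$ increasing vs.\ decreasing between $t_1$ and $t_2$) and treat, say, $\si(t_2)\geq\si(t_1)$. Using Lemma \ref{Lem:XiControlsL} (the second implication when $\si\in\ccinterval{\si_*+\eps}{\si^*-\eps}$, the first/third implications otherwise) I would write, for any $s_1<s_2$ in $\ccinterval{t_1}{t_2}$ on which $\si$ stays in $\ccinterval{\si_\#+\eps}{\tfrac12(\si^\#+\si^*)}$,
\begin{align*}
\si(s_2)-\si(s_1)\leq \text{(Lipschitz term from }\ell)+C\Bat{\sqrt{\zeta(s_1)}+\sqrt{\zeta(s_2)}}+C\tau^\alpha,
\end{align*}
which is exactly the content of the second implication of Lemma \ref{Lem:EllAndSigma} combined with $\abs{\ell(s_2)-\ell(s_1)}\leq C_0\abs{s_2-s_1}$ and the piecewise-linear lower bound $g(s)\geq C_+ s$. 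The same argument with the roles of $t_1,t_2$ reversed handles the decreasing portions via the first implication of Lemma \ref{Lem:EllAndSigma}. To patch these local estimates into a global one I would introduce the last time before $t_2$ at which $\si$ equals $\si_\#+\eps$ (resp.\ $\si^\#-\eps$), so that between that time and $t_2$ the multiplier stays in one of the monotonicity ranges where Lemma \ref{Lem:EllAndSigma} applies, while the excursions of $\si$ through the $\eps$-windows $J_{-1}\cup J_0\cup J_{+1}$ contribute at most $O(\eps)$ to the total variation of $\si$ — this is the source of the $C_0\eps$ term in the statement. In the strictly supercritical regimes $\si\leq\si_\#-\eps$ or $\si\geq\si^\#+\eps$, one uses the single-peak version of Lemma \ref{Lem:XiControlsL} (first and third implications) together with the strict monotonicity of $X_-$ resp.\ $X_+$ from Remark \ref{Rem:PropertiesOfXi}(iii) to get $\abs{\si(s_2)-\si(s_1)}\leq C\abs{\ell(s_2)-\ell(s_1)}+C\sqrt{\zeta}$ directly, with no need for mass-transfer estimates.

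I expect the main obstacle to be the bookkeeping at the interfaces between subcritical, near-critical, and supercritical $\si$-ranges: one must verify that when $\si$ leaves a subcritical window through $J_{\pm1}$ and enters a supercritical window, the relevant moments $m_0,m_+$ (resp.\ $m_-$) appearing in Lemma \ref{Lem:XiControlsL} and Lemma \ref{Lem:EllAndSigma} are still controlled by $\zeta$, which requires invoking the appropriate clause of the definition \eqref{Eqn:DefZeta} and ensuring the constants do not blow up as $\eps\to0$ (they do not, because all the $\eps$-dependence is hidden in $\tau^\alpha$ and in $\sqrt{\zeta}$, not in $C_0$). A secondary subtlety is that $\si$ need not be monotone on $\ccinterval{t_1}{t_2}$, so the decomposition into maximal monotone-range subintervals could a priori be infinite; this is harmless because on each such subinterval the bound is of the form (Lipschitz)$+$(error), and the errors telescope through the finitely many sign changes of $\si-\si_\#$ and $\si-\si^\#$ that can occur — or, more robustly, one simply bounds the increment $\si(t_2)-\si(t_1)$ (a single difference, not a total variation) by chaining three estimates: from $t_1$ to the first entry into the critical band, across the band ($O(\eps)$), and from the last exit to $t_2$, applying Lemma \ref{Lem:EllAndSigma} on the first and last legs.
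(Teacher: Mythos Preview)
Your ingredients are right --- Lemma \ref{Lem:EllAndSigma} for the subcritical range, the single-peak implications of Lemma \ref{Lem:XiControlsL} together with Remark \ref{Rem:PropertiesOfXi}(iii) for the strictly supercritical ranges, and an $O(\eps)$ bound for the near-critical windows --- and these are exactly what the paper uses. There is, however, a slip in your bookkeeping: you write that ``the $\eps$-windows $J_{-1}\cup J_0\cup J_{+1}$ contribute at most $O(\eps)$'', but $J_0=\ccinterval{\si_\#-\eps/2}{\si^\#+\eps/2}$ is the full subcritical range and has length of order~$1$, not~$\eps$. Your three-leg chaining (supercritical $\to$ band $\to$ supercritical) therefore does not work as stated; when $\si$ passes from below $\si_\#-\eps$ to above $\si^\#+\eps$, the middle leg crosses the whole subcritical interval and must itself be controlled by Lemma \ref{Lem:EllAndSigma}, not discarded as $O(\eps)$. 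The genuinely $O(\eps)$ pieces are only the two windows $\ccinterval{\si_\#-\eps}{\si_\#+\eps}$ and $\ccinterval{\si^\#-\eps}{\si^\#+\eps}$.

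The paper avoids this bookkeeping altogether by a different organizational device: instead of partitioning \emph{time}, it decomposes $\si$ itself as a sum of five projected functions $\si_j(t):=\Pi_{I_j}\si(t)$ onto five adjacent intervals (with the two near-critical intervals of width $2\eps$), so that $\si(t_2)-\si(t_1)=\sum_j\bigl(\si_j(t_2)-\si_j(t_1)\bigr)$. Each increment $\si_j(t_2)-\si_j(t_1)$ is then a \emph{single} difference, and for the non-trivial indices $j\in\{-2,0,+2\}$ one reduces to a subinterval $\ccinterval{\hat t_1}{\hat t_2}\subset\ccinterval{t_1}{t_2}$ on which $\si$ actually stays in the relevant range (using the last time $\si_j$ equals its initial value and the first subsequent time it equals its final value). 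The two near-critical projections $\si_{\pm1}$ contribute at most $2\eps$ each, trivially. This sidesteps the infinite-crossing issue and the interface bookkeeping you flagged; your approach can be repaired, but the projection decomposition is cleaner.
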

\begin{proof}
\emph{\ul{Step 0:}}
We introduce appropriate cut offs in $\si$-space. More precisely, we define
\begin{align*}
\si_{-2}\at{t}:=\Pi_{\oointerval{-\infty}{\si_\#-\eps}}\si\at{t},\qquad
\si_{0}\at{t}:=\Pi_{\oointerval{\si_\#+\eps}{\si^\#-\eps}}\si\at{t},\qquad
\si_{+2}\at{t}:=\Pi_{\oointerval{\si^\#+\eps}{+\infty}}\si\at{t},
\end{align*}
as well as
\begin{align*}
\si_{-1}\at{t}:=\Pi_{\oointerval{\si_\#-\eps}{\si_\#+\eps}}\si\at{t},\qquad
\si_{+1}\at{t}:=\Pi_{\oointerval{\si^\#-\eps}{\si^\#+\eps}}\si\at{t},
\end{align*}
where the nonlinear projectors $\Pi_{\oointerval{\ul\si}{\ol\si}}$ are given by
$\Pi_{\oointerval{\ul\si}{\ol\si}}\at\si:=\max\{\min\{\si,\ol\si\},\ul\si\}$.  These definitions
imply
\begin{align}
\label{Lem:LipschitzSigma.AR}
\sum_{j=-2}^{+2}\si_{j}\at{t}=
\si\at{t}+2\bat{\si^\#-\si_\#},
\end{align}
and since $\si$ is (for any given $\nu>0$) continuous in time, 
all projected functions $\si_j$ depend continuously on $t$ as well.
\par
\emph{\ul{Step 1:}}
To show that $\si_0$ is almost Lipschitz continuous, we assume without loss of generality that $\si_0\at{t_1}<\si_0\at{t_2}$
and consider at first the special case of $\si_0\at{t}=\si\at{t}\in\ccinterval{\si_0\at{t_1}}{\si_0\at{t_2}}$ for all 
$t\in\ccinterval{t_1}{t_2}$. Under this assumption, Lemma \ref{Lem:EllAndSigma} provides constants $\alpha$, $C$ and $C_0$ such that
\begin{align}
\label{Lem:LipschitzSigma.EstZ}
\abs{\si_0\at{t_2}-\si_0\at{t_1}}\leq
C_0\abs{\ell\at{t_2}-\ell\at{t_1}}+C\at{\tau^\alpha+\sqrt{\zeta\at{t_1}}+\sqrt{\zeta\at{t_2}}}
\end{align}
holds for all sufficiently small $\nu>0$. In the general case, 
we introduce two times $\hat{t}_1$ and $\hat{t}_2$, which both depend on $\nu$,  by
\begin{align}
\label{Lem:LipschitzSigma.Eqn1}
\hat{t}_1:=\max\big\{t\in\ccinterval{t_1}{t_2}\;:\; \si_0\at{t}=\si_0\at{t_1}\big\},\qquad
\hat{t}_2:=\min\big\{t\in\ccinterval{\hat{t}_1}{t_2}\;:\; \si_0\at{t}=\si_0\at{t_2}\big\},
\end{align}
and notice that the Intermediate Value Theorem (applied to the continuous function $\si_0$)
ensures that $\si_0$ is a bijective map between the intervals
$\ccinterval{\hat{t}_1}{\hat{t}_2}$ and $\ccinterval{\si_0\at{t_1}}{\si_0\at{t_2}}$.
In particular, our result for the special case applied to the interval $\ccinterval{\hat{t}_1}{\hat{t}_2}$
combined with $\abs{\hat{t}_2-\hat{t}_1}\leq\abs{t_2-t_1}$ yields again \eqref{Lem:LipschitzSigma.EstZ}.
\par
\emph{\ul{Step 2:}}
We next derive a Lipschitz estimate for $\si_{+2}$. As above, we suppose that 
$\si_{+2}\at{t_1}<\si_{+2}\at{t_2}$ and consider at first the special case 
of $\si_{+2}\at{t}=\si\at{t}\in\ccinterval{\si_{+2}\at{t_1}}{\si_{+2}\at{t_2}}$ for all 
$t\in\ccinterval{t_1}{t_2}$. From Lemma \ref{Lem:XiControlsL} we then infer that
\begin{align*}
\babs{\ell\at{t_i}-X_+\bat{\si_{+2}\at{t_i}}}\leq C\sqrt{\zeta\at{t_i}},\qquad i=1,2,
\end{align*}
for some constant $C$ and all sufficiently small $\nu>0$, and hence we get
\begin{align}
\label{Lem:LipschitzSigma.Eqn2}
\babs{X_+\bat{\si_{+2}\at{t_2}}-X_+\bat{\si_{+2}\at{t_1}}}\leq
\babs{\ell\at{t_2}-\ell\at{t_1}}+C\at{\sqrt{\zeta\at{t_1}}+\sqrt{\zeta\at{t_2}}}.
\end{align}
On the other hand, thanks to $\si_{+2}\at{t_i}\geq\si^\#+\eps>\si_*$ and the properties of $X_+$ -- see
Remark \ref{Rem:PropertiesOfXi} -- we have
\begin{align*}
\babs{ \si_{+2}\at{t_2}-\si_{+2}\at{t_1}}\leq{C_0}
\babs{X_+\bat{\si_{+2}\at{t_2}}-X_+\bat{\si_{+2}\at{t_1}}},
\end{align*}
and combining this with \eqref{Lem:LipschitzSigma.Eqn2} gives
\begin{align}
\label{Lem:LipschitzSigma.EstP}
\babs{ \si_{+2}\at{t_2}-\si_{+2}\at{t_1}}\leq{C_0}
\babs{\ell\at{t_2}-\ell\at{t_1}}+C\at{\sqrt{\zeta\at{t_1}}+\sqrt{\zeta\at{t_2}}}.
\end{align} 
In the general case we
introduce again two times $\hat{t}_1$ and $\hat{t}_2$ by using
\eqref{Lem:LipschitzSigma.Eqn1} with $\si_{+2}$ instead of $\si_0$, and argue as above.
Moreover, the estimate 
\begin{align}
\label{Lem:LipschitzSigma.EstM}
\babs{ \si_{-2}\at{t_2}-\si_{-2}\at{t_1}}\leq{C_0}
\babs{\ell\at{t_2}-\ell\at{t_1}}+C\at{\sqrt{\zeta\at{t_1}}+\sqrt{\zeta\at{t_2}}}.
\end{align} 
can be proven similarly.
\par
\emph{\ul{Step 3:}} By construction, we have
\begin{align*}
\abs{\si_{-1}\at{t_2}-\si_{-1}\at{t_1}}\leq 2\eps,\qquad
\abs{\si_{+1}\at{t_2}-\si_{+1}\at{t_1}}\leq 2\eps,
\end{align*}
and combining this with
 the algebraic relation \eqref{Lem:LipschitzSigma.AR}
as well as the estimates \eqref{Lem:LipschitzSigma.EstZ}, \eqref{Lem:LipschitzSigma.EstP}, and \eqref{Lem:LipschitzSigma.EstM}, we arrive at 
\begin{align*}
\babs{\si\at{t_2}-\si\at{t_1}}\leq C_0\Bat{\babs{\ell\at{t_2}-\ell\at{t_1}}+\eps}+C\at{\tau^\al+\sup_{t\in\ccinterval{0}{T}}\sqrt{\zeta\at{t}}}.
\end{align*}
Assumption \ref{Ass:Constraint} finally implies
\begin{align*}
\abs{\ell\at{t_2}-\ell\at{t_1}}\leq\abs{t_2-t_1} \sup\limits_{t\in\ccinterval{t_1}{t_2}}\babs{\dot{\ell}{\at{t}}}
\leq{C_0}\abs{t_2-t_1}
\end{align*}
and hence the desired result.
\end{proof}

%
\subsection{Passage to the limit \texorpdfstring{$\nu\to0$}{}}\label{sect:compactness}
%
We finally pass to the limit $\nu\to0$ and verify the validity of the limit model  
as formulated in Definition \ref{Def:LimitModel}. We therefore write 
\begin{align*}
\text{$\tau_\nu$ instead of $\tau$,\quad $\varrho_\nu$ instead
of $\varrho$, \quad $\si_\nu$ instead of $\si$, \quad$m_{j,\,\nu}$ instead of $m_j$,\quad
$\zeta_{\eps,\,\nu}$ instead of $\zeta$, }
\end{align*}
and define the phase fraction by 
$\mu_{\nu}:=m_{+,\,\nu}-m_{-,\,\nu}$.

\begin{theorem}[convergence to limit model along subsequences]
\label{Thm:Compactness} %
There
exists a sequence $\at{\nu_n}_{n\in\Nset}$ with $\nu_n\to0$ as $n\to\infty$ 
as well as 
two Lipschitz functions $\si_0,\,\mu_0\in\fspaceC^{0,\,1}\at{\ccinterval{0}{T}}$ such that
\begin{align}
\label{Thm:Compactness.Conv}
\norm{\si_{\nu_n}-\si_0}_{\fspaceC\at{I}}\quad +\quad
\norm{\mu_{\nu_n}-\mu_0}_{\fspaceC\at{I}}\qquad\xrightarrow{\;\;n\to\infty\;\;}\qquad 0
\end{align}
on each compact interval $I\subset\ocinterval{0}{T}$. Moreover, the convergence
\begin{align*}
\varrho_{\nu_n}\pair{t}{x}\quad \xrightharpoonup{{\;\;n\to\infty\;\;}}\quad 
\frac{1-\mu_0\at{t}}{2}\delta_{X_-\at{\si_0\at{t}}}\at{x}+\frac{1+\mu_0\at{t}}{2}\delta_{X_+\at{\si_0\at{t}}}\at{x}
\end{align*}
holds for all $t>0$ with respect to the weak$\star$ topology and the triple $\triple{\ell}{\si_0}{\mu_0}$ 
is a solution to the limit model in the sense of Definition \ref{Def:LimitModel}.
\end{theorem}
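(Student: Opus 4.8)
The plan is to combine the pointwise moment estimate of Lemma~\ref{Lem:ZetaDeltaBounds} with the almost-Lipschitz estimate of Lemma~\ref{Lem:LipschitzSigma} to extract convergent subsequences, and then to check the pointwise constraint and the flow rule in the limit. First, I would exploit the $\fspaceL^1$-bound $\tau^{-1}\int_{\tsnu}^T\calD\at t\dint t\leq C$ from Lemma~\ref{Lem:PropertiesOfSolutions}: for any $\beta\in\oointerval01$ and any $\delta>0$ there is a time $t_0\in\ccinterval{\tsnu}{\tsnu+\delta}$ (depending on $\nu$) with $\calD\at{t_0}\leq C\tau/\delta\leq\tau^\beta$ for $\nu$ small. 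Applying Lemma~\ref{Lem:ZetaDeltaBounds} at this $t_0$ gives $\sup_{t\in\ccinterval{t_0}{T}}\zeta_{\eps,\,\nu}\at t\leq C_\eps\nu^2$, so after an initial layer of width $\delta$ the moment $\zeta$ — hence $\xi+m_0$ and, in the appropriate $\si$-ranges, the "wrong-side" mass — is of order $\nu^2$. Feeding this into Lemma~\ref{Lem:LipschitzSigma} yields
\begin{align*}
\babs{\si_\nu\at{t_2}-\si_\nu\at{t_1}}\leq C_0\bat{\abs{t_2-t_1}+\eps}+C_\eps\bat{\tau^\alpha+\nu},
\end{align*}
uniformly on $\ccinterval{t_0}{T}$. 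Together with the uniform bound $\abs{\si_\nu}\leq1/\eps_*$ this gives, along a diagonal sequence $\eps_n\to0$ and $0<\nu_n\leq\bar\nu\at{\eps_n}$ chosen so small that $\eps_n+C_{\eps_n}(\tau^\alpha+\nu_n)\to0$, an equi-Lipschitz family on every compact $I\subset\ocinterval0T$; Arzelà–Ascoli then produces a Lipschitz limit $\si_0$ with $\si_{\nu_n}\to\si_0$ in $\fspaceC\at I$. Since $\mu_\nu=m_{+,\,\nu}-m_{-,\,\nu}\in\ccinterval{-1}{1}$ and $\dot\mu_\nu$ is controlled (via the moment balance used in Lemma~\ref{Lem:ChangeOfMasses}) by $C\abs{\dot\ell}+$ exponentially small terms on the subcritical and supercritical ranges, a further diagonal extraction — using Helly's selection theorem for the monotone pieces and the equicontinuity off the critical set — gives $\mu_{\nu_n}\to\mu_0\in\fspaceC^{0,\,1}$ as well, after possibly passing to a subsequence.

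Next I would identify the weak$\star$ limit of $\varrho_{\nu_n}\pair t\cdot$. Fix $t>0$. Since $\zeta_{\eps_n,\,\nu_n}\at t\leq C_{\eps_n}\nu_n^2\to0$, the moment $\xi\at t+m_0\at t\to0$, so Remark~\ref{Rem:XiControlsMoments} shows that for every continuous weight $\psi$ of at most linear growth,
\begin{align*}
\int_\Rset\psi\at x\varrho_{\nu_n}\pair t x\dint x-\sum_{j\in\{-,+\}}m_{j,\,\nu_n}\at t\,\psi\bat{X_j\at{\si_{\nu_n}\at t}}\longrightarrow0;
\end{align*}
combined with $m_{\pm,\,\nu_n}\at t\to\tfrac{1\mp\mu_0\at t}{2}$ (respectively $\tfrac{1\pm\mu_0\at t}{2}$) and $\si_{\nu_n}\at t\to\si_0\at t$, this gives testing against $\psi$. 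The uniform second-moment bound from Lemma~\ref{Lem:PropertiesOfSolutions} provides tightness, so the convergence extends to all bounded continuous test functions, i.e.\ weak$\star$ convergence to $\tfrac{1-\mu_0\at t}2\delta_{X_-\at{\si_0\at t}}+\tfrac{1+\mu_0\at t}2\delta_{X_+\at{\si_0\at t}}$. In the supercritical ranges one also uses that $\zeta$ contains the wrong-side mass, forcing $\mu_0\at t=\pm1$ there, so the stated formula is consistent with $\triple{\ell\at t}{\si_0\at t}{\mu_0\at t}\in\Om$. Passing $\nu_n\to0$ in the dynamical constraint $\ell\at t=\calL_{\si_{\nu_n}\at t}$-type identity $\ell\at t=\int x\varrho_{\nu_n}\pair t x\dint x$ (take $\psi\at x=x$ above) yields $\ell\at t=\calL\bpair{\si_0\at t}{\mu_0\at t}$ for all $t>0$, and by continuity also at $t=0$ once the state space constraint \eqref{Eqn:LimDyn.Constraints} is verified — the latter follows from the monotonicity of the branches $X_\pm$ and the fact that $\si_{\nu_n}\at t$ cannot leave $\ccinterval{\si_\#}{\si^\#}$ by more than $\eps_n$ while $m_{\mp,\,\nu_n}\at t$ stays bounded below, a consequence of the mass-dissipation Lemmas~\ref{Lem:MD.MassOutsideTwoPeaks}–\ref{Lem:MD.MassOutsideOnePeak} applied at the good times $t_0$.

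Finally I would verify the flow rule \eqref{Eqn:LimitModel.DR}. The key is that Lemma~\ref{Lem:ChangeOfMasses} shows $m_{-,\,\nu}$ is essentially non-increasing whenever $\si_\nu\at t\geq\si_\#+\eps$ and $m_{+,\,\nu}$ essentially non-increasing whenever $\si_\nu\at t\leq\si^\#-\eps$, with errors $C\tau^\alpha+C\sup m_0$. Passing to the limit, $\mu_0=m_{+,0}-m_{-,0}$ is non-increasing on any interval where $\si_0=\si_\#$ and non-decreasing where $\si_0=\si^\#$; and on any interval where $\si_0$ stays in the open subcritical range $\oointerval{\si_\#}{\si^\#}$ the same monotonicity estimates from both sides (choosing $\eps$ small enough to fit) force both $m_{\pm,0}$ to be simultaneously non-increasing, hence constant, so $\dot\mu_0=0$ there; on $\si_0<\si_\#$ or $\si_0>\si^\#$ one has $\mu_0\equiv\mp1$ from the previous paragraph, so again $\dot\mu_0=0$. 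This establishes all three cases of \eqref{Eqn:LimitModel.DR} for a.e.\ $t$. \textbf{The main obstacle} is handling the set $\{t:\si_0\at t\in\{\si_\#,\si^\#\}\}$ carefully: the constants in Lemmas~\ref{Lem:ZetaDeltaBounds} and \ref{Lem:LipschitzSigma} degenerate as $\eps\to0$, so the compactness and the passage to the limit must be carried out along a jointly chosen sequence $\eps_n,\nu_n\to0$ with $\nu_n$ fast enough (the threshold $\bar\nu\at{\eps_n}$), and one must check that the error terms $\eps_n$, $\tau_n^{\alpha(\eps_n)}$, $\sqrt{\zeta_{\eps_n,\nu_n}}$ all vanish in the limit — this is exactly where the artificial-parameter bookkeeping of \S\ref{sect:overview} has to be made precise, and where the flow-rule inequalities at critical $\si$ are recovered only in the limit rather than at fixed $\eps$.
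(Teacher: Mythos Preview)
Your overall architecture matches the paper: you correctly find a good time $t_0$ via the $\fspaceL^1$-bound on $\calD$, feed Lemma~\ref{Lem:ZetaDeltaBounds} into Lemma~\ref{Lem:LipschitzSigma}, choose a diagonal sequence $\eps_n,\nu_n\to0$ with $\nu_n\leq\bar\nu\at{\eps_n}$, and extract $\si_0$ by Arzel\`a--Ascoli; the verification of the pointwise constraint via Remark~\ref{Rem:XiControlsMoments} and of the flow rule via Lemma~\ref{Lem:ChangeOfMasses} is also what the paper does.

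The one substantive difference is how you obtain $\mu_0$. You propose a separate compactness argument for $\mu_\nu$ based on Helly's theorem for the near-monotone pieces plus equicontinuity away from the critical set. This is more delicate than you indicate: the decomposition of $\ccinterval{0}{T}$ into sub- and supercritical time intervals depends on $\nu$, Helly gives only pointwise convergence of BV functions (not uniform convergence, and not a Lipschitz limit), and your claimed control on $\dot\mu_\nu$ is not actually provided by Lemma~\ref{Lem:ChangeOfMasses}, which only bounds increments of $m_\pm$ from above. The paper bypasses all of this: once $\si_0$ is known, it \emph{defines} $\mu_0\at{t}$ pointwise by setting $\mu_0\at{t}=\mp1$ whenever $\si_0\at{t}$ is supercritical and otherwise solving $\ell\at{t}=\calL\bpair{\si_0\at{t}}{\mu_0\at{t}}$ for $\mu_0\at{t}$. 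Convergence $\mu_n\to\mu_0$ then follows directly from the $\zeta$-bound and $\si_n\to\si_0$ via Lemma~\ref{Lem:XiControlsL}, and Lipschitz continuity of $\mu_0$ is inherited from that of $\ell$ and $\si_0$ through the smooth relation $\ell=\calL\pair{\si_0}{\mu_0}$ with $\partial_\mu\calL=\tfrac12\bat{X_+-X_-}>0$. This is both simpler and gives the constraint \eqref{Eqn:LimitModel.AR} for free rather than as a separate verification step.
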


\begin{proof}
\par
\emph{\ul{Convergence of $\si$:}} 
We choose a sequence $\at{\eps_n}_{n\in\Nset}$ with $0<\eps_n<\eps_*$ for all $n$ and
$\eps_n\to0$ as $n\to\infty$. According to 
Lemma \ref{Lem:ZetaDeltaBounds} and Lemma \ref{Lem:LipschitzSigma}, there exist -- for any given $n$ --
positive constant $C_0$, $C_n$, $\alpha_n$,
and $\beta_n<1$ such that
\begin{align*}
\calD_{\nu}\at{t_0}\leq\tau_{\nu}^{\beta_n}\quad\implies\quad
\sup\limits_{t\in\ccinterval{t_0}{T}}\zeta_{\eps_n,\nu}\at{t}\leq C_n\nu^2
\end{align*}
and 
\begin{align*}
\babs{\si_\nu\at{t_2}-\si_\nu\at{t_1}}\leq  C_0\Bat{ \abs{t_2-t_1}+\eps_n}+ C_n\Bat{\tau_{\nu}^{\alpha_n}+\sup\limits_{t\in\ccinterval{t_1}{t_2}}\sqrt{\zeta_{\eps_n,\nu}\at{t}}}
\end{align*}
holds for all $n$, all times $t_0, t_1, t_2\in\ocinterval{0}{T}$, and all sufficiently small $\nu>0$, where $C_0$ is in fact
independent of $n$.  Moreover, making $C_0$ larger (if necessary) we can also assume that
\begin{align*}
{C_0}\tau_\nu\geq \int_{\nu^2\tau_\nu}^T\calD_{\nu}\at{t}\dint{t}\geq \tau_\nu^{\beta_n}\Babs{\big\{t\in\ccinterval{\nu^2\tau_\nu}{T}\;:\; \calD_{\nu}\at{t}>\tau_\nu^{\beta_n}\big\}}
\end{align*}
holds for all $\nu>0$ and $n\in\Nset$,
and hence there exists for any choice of $\nu$ and $n$ a time \begin{align*}
S_{n,\nu}\in\ccinterval{\nu^2\tau_\nu}{\nu^2\tau_\nu+C_0\tau_\nu^\at{1-\beta_n}}\quad\text{such that}\quad \calD_\nu\at{S_{n,\nu}}\leq\tau_\nu^{\beta_n}.
\end{align*}
For each $n$ we next choose 
$\nu_n>0$ sufficiently small such that 
\begin{align*}
\max\big\{C_n\nu_n^2,\;
C_n\tau_{\nu_n}^{\alpha_n}+ C_n^{3/2}\nu_n,\;
\tau_{\nu_n}^{\beta_n},\; \nu_n^2\tau_{\nu_n}+C_0\tau_{\nu_n}^{(1-\be_{\nu_n})}\big\}\leq \eps_n.
\end{align*}
In particular, using the abbreviations $\si_n:=\si_{\nu_n}$, $m_{j,\,n}:=m_{j,\,\nu_n}$, $\zeta_n:=\zeta_{\eps_n,\,\nu_n}$, and
$S_n:=S_{n,\,\nu_n}$ we obtain 
\begin{align}
\label{Thm:Compactness.Eqn0}
\sup\limits_{t\in\ccinterval{S_n}{T}}\zeta_{n}\at{t}\leq \eps_n,\qquad  S_n\leq\eps_n
\end{align}
as well as
\begin{align}
\label{Thm:Compactness.Eqn1}
\babs{\si_n\at{t_2}-\si_n\at{t_1}}\leq  C_0 \abs{t_2-t_1}+\at{C_0+1}\eps_n \quad
\text{for all}\quad t_1, t_2\in\ccinterval{S_n}{T}.
\end{align}
Let $t_0>0$ be fixed and notice that $S_n\leq t_0$ for almost all $n$. A refined version of the 
Arzel\`a-Ascoli Theorem -- see Proposition 3.3.1 in \cite{AGS05} --
guarantees 
the existence of a continuous function $\si_0$ defined on $\ccinterval{t_0}{T}$ along with a not relabeled 
subsequence such that $\norm{\si_n-\si_0}_{\fspaceC\nat{\ccinterval{t_0}{T}}}\to0$ as $n\to\infty$. Moreover,
by the usual diagonal argument
we can extract a further subsequence such that
$\norm{\si_n-\si_0}_{\fspaceC\at{I}}\to0$ for any compact $I\subset\ocinterval{0}{T}$,
and the estimate \eqref{Thm:Compactness.Eqn1} implies that $\si_0$ is Lipschitz continuous on the whole interval $\ccinterval{0}{T}$.
\par
\emph{\ul{Convergence of $\mu$ and $\varrho$:}} In what follows, we denote by $C_0$ any generic constant independent of $n$, and assume (without saying so explicitly) 
that $n$ is sufficiently large. We also 
define
\begin{align*}
\ul\si:=\tfrac12\bat{\si_*+\si_\#},\qquad
\ol\si:=\tfrac12\bat{\si^\#+\si^*},
 \end{align*}
and introduce a bounded function
$\mu_0:\ccinterval{0}{T}\to\Rset$ as follows:
For any $t$ with $\si_0\at{t}\in \ocinterval{-\infty}{\ul\si}$ we set 
$\mu_0\at{t}:=-1$ and notice that
$\sum_{j\in\{-,0,+\}}m_{j,\,n}\at{t}=1$ gives $\mu_n\at{t}-\mu_0\at{t}=2m_{+,n}\at{t}+m_{0,n}\at{t}$.
Moreover, $\eps\leq\eps_*\leq\ul{\si}-\si_*$ combined with \eqref{Eqn:DefZeta} implies 
\begin{align*}
m_{0,\,n}\at{t}+m_{+,\,n}\at{t}\leq \zeta_n\at{t},
\end{align*}
and thus we find
\begin{align}
\label{Thm:Compactness.Eqn3a}
\babs{\mu_n\at{t}-\mu_0\at{t}}\leq C_0\zeta_n\at{t}.
\end{align}
Similarly, for any $t$ with $\si_0\at{t}\in \cointerval{\ol\si}{+\infty}$ we set $\mu_0\at{t}:=+1$ and
find again \eqref{Thm:Compactness.Eqn3a}. For times $t$ with $\si_0\at{t}\in\oointerval{\ul\si}{\ol\si}$, we employ
Lemma \ref{Lem:XiControlsL} -- applied with $\eps=\min\{\si^*-\ol\si,\ul\si-\si_*\}$, which does not depend on $n$ --
to find
\begin{align}
\label{Thm:Compactness.Eqn4a}
\Babs{\ell\at{t}-\frac{1-\mu_n\at{t}}{2}X_-\bat{\si_n\at{t}}-\frac{1+\mu_n\at{t}}{2}X_+\at{\si_n\at{t}}}\leq
C_0\zeta_n\at{t}.
\end{align}
We then define $m_0\at{t}\in\Rset$  as the unique solution to
\begin{align}
\label{Thm:Compactness.Eqn4b}
\ell\at{t}=\frac{1-\mu_0\at{t}}{2}X_-\bat{\si_0\at{t}}-\frac{1+\mu_0\at{t}}{2}X_+\at{\si_0\at{t}},
\end{align}
and show that the properties of $X_\pm$ from Remark \ref{Rem:PropertiesOfXi} imply
\begin{align}
\label{Thm:Compactness.Eqn3b}
\babs{\mu_n\at{t}-\mu_0\at{t}}\leq C_0\Bat{\zeta_n\at{t} + \abs{\si_n\at{t}-\si_0\at{t}}}
\end{align}
thanks to \eqref{Thm:Compactness.Eqn3a}, \eqref{Thm:Compactness.Eqn4a}, and \eqref{Thm:Compactness.Eqn4b}.
\par
In summary, we have now defined
$\mu_0\at{t}$ for all $t\in\ccinterval{0}{T}$, and \eqref{Thm:Compactness.Eqn3a}, \eqref{Thm:Compactness.Eqn3b} combined with
\eqref{Thm:Compactness.Eqn0} and $S_n\to0$ ensure that $\mu_0$ depends in fact continuously on $t$ and satisfies
$\norm{\mu_n-\mu_0}_{\fspaceC\at{I}}\to0$ as $n\to\infty$ on any compact interval $I\subset\ocinterval{0}{1}$. 
Moreover, the claimed weak$\star$ convergence of $\varrho_{n}$ is a direct consequence of Remark \ref{Rem:XiControlsMoments} and
$\xi_n\at{t}+m_{0,\,n}\at{t}\leq\eps_n\to0$.
\par
\emph{\ul{Verification of limit dynamics:}}
Using Lemma \ref{Lem:XiControlsL}
once more 
we find
\begin{align*}
X_-\bat{\si\at{t}}=\ell\at{t}
\quad\text{for} \quad 
\si_{0}\at{t}<\si_\#
,\qquad
X_+\bat{\si\at{t}}=\ell\at{t}\quad\text{for} \quad 
\si_{0}\at{t}>\si^\#.
\end{align*}
and hence
\begin{align}
\label{Thm:Compactness.Eqn6}
\si_{0}\at{t}<\si_\#\implies \mu_0\at{t}=-1,\qquad\qquad
\si_{0}\at{t}>\si^\#\implies\mu_0\at{t}=+1
\end{align}  
for all $t\in\ccinterval{0}{T}$.
Combining these results with \eqref{Thm:Compactness.Eqn4b} we readily verify the algebraic relations
\begin{align}
\label{Thm:Compactness.Eqn5}
\btriple{\ell\at{t}}{\si_0\at{t}}{\mu_0\at{t}}\in\Om,\qquad \ell\at{t}=\calL\bpair{\si_0\at{t}}{\mu_0\at{t}},
\end{align}
where $\Om$ and $\calL$ are defined in \eqref{Eqn:LimDyn.Def1}+\eqref{Eqn:LimDyn.Def2}. The pointwise relations \eqref{Thm:Compactness.Eqn5} combined with
$\si_0,\ell\in\fspaceC^{0,\,1}\at{\ccinterval{0}{T}}$ and the smoothness of the functions $X_i$ also
imply that $\mu_0$ belongs in fact to $\fspaceC^{0,\,1}\at{\ccinterval{0}{T}}$.
\par
The dynamical relations from Definition \ref{Def:LimitModel}
follow because passing to the limit $n\to\infty$ in Lemma \ref{Lem:ChangeOfMasses} shows that the implications
\begin{align}
\label{Thm:Compactness.Eqn7}
\si_0\at{t}<\si^\# \implies\dot{\mu}_0\at{t}\leq0,\qquad \qquad 
\si_0\at{t}>\si_\# \implies\dot{\mu}_0\at{t}\geq0
\end{align}
hold for almost all $t\in\ccinterval{0}{T}$. In particular, the combination of \eqref{Thm:Compactness.Eqn6} and \eqref{Thm:Compactness.Eqn7} gives
\begin{align*}
\si\at{t}\notin\{\si_\#,\,\si^\#\}\implies \dot{\mu}_0\at{t}=0,\qquad \dot{\mu}_0\at{t}>0\implies \si\at{t}=\si^\#,\qquad
 \dot{\mu}_0\at{t}<0\implies \si\at{t}=\si_\#
\end{align*}
for almost all $t\in\ccinterval{0}{T}$.
\end{proof}

Notice that Theorem \ref{Thm:Compactness} neither implies $\si_{\nu_n}\at{0}\to \si_0\at{0}$ nor
$\mu_{\nu_n}\at{0}\to \mu_0\at{0}$. This is not surprising because we expect, as explained within \S\ref{sect:prelim},
that each solution for $\nu>0$ and generic initial data exhibits a small initial transition layer. More precisely, if the mass at time $t=0$ is not yet concentrated in
two narrow peaks, the systems undergoes a fast initial relaxation process during which $\si_\nu$ and $\mu_\nu$ may change rapidly.
After this process, that means at some time of order at most ${\tau_\nu^{1-\beta}}$, $0<\beta<1$, the dissipation is of order ${\tau_\nu^\beta}$ and our peak stability estimates imply that afterwards the state $\varrho_\nu$ can be described by two narrow peaks, which in turn are either transported by the dynamical constraint
or exchange mass by a Kramers-type phase transition. 
\par
The above arguments reveal that the limit functions
$\si_0$ and $\mu_0$ can (and in general they do) depend on the subsequence, or equivalently, on the 
microscopic details of the initial data. For well-prepared initial data, however, 
we can improve our result as follows.

\begin{theorem}[convergence for well-prepared initial data]
\label{Thm:Convergence}
For well-prepared initial data in the sense of Definition \ref{Def:WellPreparedData}, we can choose
$I=\ccinterval{0}{T}$ in \eqref{Thm:Compactness.Conv}. In particular, 
the whole family  $\bat{\triple{\ell}{\si_\nu}{\mu_\nu}}_{\nu>0}$ converges as $\nu\to0$ to a solution of the limit 
model.
\end{theorem}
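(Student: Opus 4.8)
The plan is to upgrade the subsequential convergence of Theorem~\ref{Thm:Compactness} to convergence of the full family by combining two ingredients that are special to well-prepared data: the sharpened version of Lemma~\ref{Lem:PropertiesOfSolutions} with $\tsnu=0$ (Remark~\ref{Rem:WellPreparedInitialData}), and the uniqueness statement for the limit model (Proposition~\ref{App:Prop:WellPosednessLimitModel}). First I would observe that for well-prepared data the bound $\calD\at{0}\leq C\tau$ is available, so for every fixed $\eps>0$ the hypothesis $\calD\at{t_0}\leq\tau^\beta$ of Lemma~\ref{Lem:ZetaDeltaBounds} holds with $t_0=0$ once $\nu$ is small enough; hence $\sup_{t\in\ccinterval{0}{T}}\zeta_{\eps,\nu}\at{t}\leq C\nu^2$ with no loss of an initial time $S_n$. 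Feeding this into Lemma~\ref{Lem:LipschitzSigma} gives
\begin{align*}
\babs{\si_\nu\at{t_2}-\si_\nu\at{t_1}}\leq C_0\bat{\abs{t_2-t_1}+\eps}+C_\eps\bat{\tau^\alpha+\nu}
\end{align*}
for all $t_1,t_2\in\ccinterval{0}{T}$, where now $t_1=0$ is allowed. In particular $\si_\nu\at{0}$ enters the picture, and by the well-preparedness assumption $\si_\nu\at{0}\to\si_\ini$.

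Next I would re-run the compactness argument of Theorem~\ref{Thm:Compactness} verbatim but with the time interval $I$ replaced throughout by the closed interval $\ccinterval{0}{T}$: the refined Arzel\`a--Ascoli argument (Proposition~3.3.1 in \cite{AGS05}) now applies directly on $\ccinterval{0}{T}$ because the almost-equicontinuity estimate is valid down to $t=0$, so any subsequence of $\at{\si_\nu}$ has a further subsequence converging in $\fspaceC\at{\ccinterval{0}{T}}$ to some Lipschitz limit $\si_0$, and the estimate $\babs{\mu_\nu\at{t}-\mu_0\at{t}}\leq C_0\bat{\zeta_\nu\at{t}+\abs{\si_\nu\at{t}-\si_0\at{t}}}$ together with $\zeta_\nu\to0$ uniformly upgrades this to uniform convergence of $\mu_\nu$ on all of $\ccinterval{0}{T}$. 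The limit $\triple{\ell}{\si_0}{\mu_0}$ is again a solution of the limit model by the same passage to the limit as before. The only genuinely new point is that $\si_0\at{0}=\si_\ini$ and, via \eqref{Thm:Compactness.Eqn4b} and Remark~\ref{Rem:WellPreparedInitialData}, $\mu_0\at{0}=\mu_\ini$, so the limit solution has prescribed initial data.

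The closing step is a standard subsequence-of-subsequence argument: Proposition~\ref{App:Prop:WellPosednessLimitModel} says the limit model has a \emph{unique} solution for the initial data $\pair{\si_\ini}{\mu_\ini}$. Since every subsequence of $\at{\si_\nu,\mu_\nu}$ has a further subsequence converging in $\fspaceC\at{\ccinterval{0}{T};\Rset^2}$ to that same unique solution $\pair{\si_0}{\mu_0}$, the whole family converges. The weak$\star$ convergence of $\varrho_\nu\pair{t}{\cdot}$ then holds for every $t\in\cointerval{0}{T}$, including $t=0$, again by Remark~\ref{Rem:XiControlsMoments} and $\xi_\nu+m_{0,\nu}\leq\zeta_\nu\to0$. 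I expect the main (though modest) obstacle to be purely bookkeeping: one must check that every constant $C_\eps,\alpha,\beta$ entering Lemmas~\ref{Lem:ZetaDeltaBounds} and \ref{Lem:LipschitzSigma} is still uniform when $t_0=t_1=0$, i.e.\ that nothing in those proofs secretly used $t_1\geq\tsnu$ with $\tsnu>0$ in an essential way rather than merely $t_1\geq0$; Remark~\ref{Rem:WellPreparedInitialData} is precisely the statement that it does not. No new PDE estimate is required.
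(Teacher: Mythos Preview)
Your proposal is correct and follows essentially the same approach as the paper: use well-preparedness to set $t_0=0$ (equivalently $S_n=0$) in the estimates underlying Theorem~\ref{Thm:Compactness}, obtain convergence on all of $\ccinterval{0}{T}$ including $t=0$, identify the limit initial data as $\pair{\si_\ini}{\mu_\ini}$, and then invoke uniqueness (Proposition~\ref{App:Prop:WellPosednessLimitModel}) together with the standard compactness-plus-uniqueness argument. Your write-up is in fact more explicit than the paper's, which simply points to Remark~\ref{Rem:WellPreparedInitialData} and says ``we can choose $S_n=0$ in the proof of Theorem~\ref{Thm:Compactness}''; the substance is the same.
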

\begin{proof}
By assumption, there exist values $\si_\ini\in\Rset$ and $\mu_\ini\in\ccinterval{-1}{1}$ such that
$\si_{\nu}\at{0}\to \si_\ini$ as well as $\mu_{\nu}\at{0}\to \mu_\ini$ as $\nu\to0$.
Now let $\at{\pair{\si_n}{\mu_n}}_{n\in\Nset}$ be any sequence as provided by
Theorem \ref{Thm:Compactness}. Since the initial data are well-prepared, 
we can choose $S_{n}=0$ in the proof of Theorem \ref{Thm:Compactness}, see also Remark \ref{Rem:WellPreparedInitialData}. This implies
\begin{align*}
\norm{\si_n-\si_0}_{\fspaceC\at{\ccinterval{0}{T}}}+
\norm{\mu_n-\mu_0}_{\fspaceC\at{\ccinterval{0}{T}}}\quad\xrightarrow{\nu\to0}\quad 0
\end{align*}
and hence $\si_0\at{0}=\si_\ini=\lim_{\nu\to0}\si_\nu\at{0}$ and  $\mu_{0}\at{0}= \mu_\ini=\lim_{\nu\to0}\mu_\nu\at{0}$. 
Since the 
limit model has precisely one solution with initial data $\pair{\si_\ini}{\mu_\ini}$, see Proposition \ref{App:Prop:WellPosednessLimitModel}, we conclude that each sequence from
Theorem \ref{Thm:Compactness} has the same limit, and standard arguments (compactness+uniqueness of accumulation points=convergence)
provide the claimed convergence.
\end{proof}
%
%
%
\appendix
\section{Solutions to the nonlocal Fokker-Planck equation}\label{app:ExistenceAndUniqueness}
In this appendix we show that the initial value problem to the nonlocal Fokker-Planck equation \eqref{Eqn:FP1} and
\eqref{Eqn:FP2} is well-posed
with state space
\begin{align*}
\fspace{P}^2\at\Rset:=\Big\{\text{probability measures on $\Rset$ with bounded variance}\Big\}.
\end{align*}
To this end we suppose that the final time $T$ with $0<T<\infty$ is fixed 
and denote by $\ell\in\fspaceC^1\at{\ccinterval{0}{T}}$ and 
$\varrho_\ini\in\fspace{P}^2\at\Rset$ a given control function and some prescribed initial data, respectively.
Moreover, in what follows we allow for arbitrary
(i.e., uncoupled) parameters $\nu,\tau>0$.
\par
Our existence and uniqueness proof is
based on a fixed point argument and constructs
solutions to the nonlocal problem by iterating the solution operator of a linear PDE with a nonlinear integral operator.
The idea is as follows. For any $\si\in\fspaceC\at{\ccinterval{0}{T}}$, we denote by
$\calR\ato\si$
the solution to the linear PDE \eqref{Eqn:FP1}. In other words, for each $\si$ the function
$\calR\ato\si$ 
satisfies the initial value problem
\begin{align}
\label{Eqn:LinearFP}
\tau\partial_t \calR\ato\si\pair{t}{x}=\nu^2\partial_x^2\calR\ato\si\pair{t}{x}
+\partial_x\Bat{\bat{H^\prime\at{x}-\si\at{t}}\calR\ato\si\pair{t}{x}},\qquad
\calR\ato\si\pair{0}{x}=\varrho_\ini\at{x}
\end{align}
with $x\in\Rset$ and $t\in\ccinterval{0}{T}$. Using $\calR$ we now observe that 
the dynamical constraint 
\eqref{Eqn:FP2} is equivalent to the 
fixed point equation $\si=\calS\ato\si$,
where the operator $\calS$ is defined by 
\begin{align*}
\calS\ato{\si}\at{t}:=\int_\Rset H^\prime\at{x}\calR\ato{\si}\pair{t}{x}\dint{x} + \tau\dot\ell\at{t}.
\end{align*}
Notice that \eqref{Eqn:FP2} implies \eqref{Eqn:FP2P} if and only if the initial data are admissible in the sense of
$\int_\Rset x\varrho_\ini\at{x}\dint{x}=\ell\at{0}$.
\bigpar
Our first result in this section employs Banach's Fixed Point Theorem in order to show that $\calS$ admits a unique fixed point
in the space of continuous functions.
\begin{proposition}[existence and uniqueness of solutions]
\label{Prop:ExistenceAndUniqueness}
For any $\tau,\nu>0$, there exists a unique solution $\varrho$ to \eqref{Eqn:FP1}\emph{+}\eqref{Eqn:FP2} in the time-space-domain $\ccinterval{0}{T}\times\Rset$. In particular,
$\varrho$ is smooth in $\ocinterval{0}{T}\times\Rset$ as well as continuous in $t$ with respect to 
the weak$\star$ topology in $\fspace{P}^2\at\Rset$, and
$\si$ is continuously differentiable on $\ccinterval{0}{T}$.
\end{proposition}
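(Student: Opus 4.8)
The plan is to carry out the fixed-point scheme sketched in the excerpt; the crucial ingredient is a Volterra-type Lipschitz estimate for the map $\si\mapsto\calR\ato\si$ on $\fspaceC\at{\ccinterval{0}{T}}$.

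\emph{Step 1: the linear flow.} For fixed $\si\in\fspaceC\at{\ccinterval{0}{T}}$ the equation \eqref{Eqn:LinearFP} is uniformly parabolic with drift $b\pair{t}{x}=H^\prime\at{x}-\si\at{t}$, which by the asymptotic linearity of $H^\prime$ in Assumption \ref{Ass:Potential}(3) is globally Lipschitz in $x$ and of at most linear growth. Standard parabolic theory (construction of the fundamental solution, or the associated stochastic representation) then produces a unique solution $\calR\ato\si$ that is nonnegative, mass-conserving (by the divergence structure together with the decay of $\varrho$ and $\partial_x\varrho$ at infinity), smooth in $\ocinterval{0}{T}\times\Rset$, and weak$\star$-continuous in $t$ with $\calR\ato\si\pair{0}{\cdot}=\varrho_\ini$. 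Testing \eqref{Eqn:LinearFP} with $x^2$ and integrating by parts gives
\begin{align*}
\tau\frac{\ddiff}{\ddiff t}\int_\Rset x^2\,\calR\ato\si\pair{t}{x}\dint{x}=2\nu^2-2\int_\Rset x\bat{H^\prime\at{x}-\si\at{t}}\calR\ato\si\pair{t}{x}\dint{x},
\end{align*}
and since $xH^\prime\at{x}\geq cx^2-C$ for large $\abs{x}$ (as $H^{\prime\prime}\to c_\pm>0$) while $\si$ enters only linearly, a Gronwall argument yields $\int_\Rset x^2\,\calR\ato\si\pair{t}{x}\dint{x}\leq C\bat{1+\int_\Rset x^2\varrho_\ini\at{x}\dint{x}}$ on $\ccinterval{0}{T}$, uniformly over bounded sets of $\si$'s. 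In particular $\calR\ato\si\pair{t}{\cdot}\in\fspace{P}^2\at\Rset$.

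\emph{Step 2: Lipschitz dependence.} Set $M\ato\si\at{t}:=\int_\Rset H^\prime\at{x}\,\calR\ato\si\pair{t}{x}\dint{x}$, so that $\calS\ato\si=M\ato\si+\tau\dot\ell$. For $\si_1,\si_2\in\fspaceC\at{\ccinterval{0}{T}}$ the difference $w:=\calR\ato{\si_1}-\calR\ato{\si_2}$ solves
\begin{align*}
\tau\partial_t w=\nu^2\partial_x^2 w+\partial_x\Bat{\bat{H^\prime-\si_1}w}+\partial_x\Bat{\bat{\si_2-\si_1}\calR\ato{\si_2}},\qquad w\pair{0}{\cdot}=0,
\end{align*}
so the inhomogeneity is the only source of growth. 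Since both $\calR\ato{\si_i}$ carry the uniform second-moment bound from Step 1, standard energy/duality estimates for this linear equation — tracking e.g.\ the weighted quantity $\int_\Rset\at{1+x^2}\abs{w}\dint{x}$ — give $\sup_{s\in\ccinterval{0}{t}}\int_\Rset\at{1+x^2}\babs{w\pair{s}{x}}\dint{x}\leq\tfrac{C}{\tau}\int_0^t\babs{\si_1\at{s}-\si_2\at{s}}\dint{s}$, and because $\abs{H^\prime\at{x}}\leq C\at{1+\abs{x}}$ this yields the Volterra-type bound
\begin{align*}
\babs{M\ato{\si_1}\at{t}-M\ato{\si_2}\at{t}}\leq\frac{C}{\tau}\int_0^t\babs{\si_1\at{s}-\si_2\at{s}}\dint{s}\qquad\text{for all}\quad t\in\ccinterval{0}{T}.
\end{align*}
One also checks that $M\ato\si$ is continuous in $t$: weak$\star$-continuity of $\calR\ato\si\pair{t}{\cdot}$ together with the uniform second-moment bound gives the uniform integrability needed to pass to the limit in $\int_\Rset H^\prime\,\calR\ato\si\dint{x}$. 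Hence $\calS$ maps $\fspaceC\at{\ccinterval{0}{T}}$ into itself.

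\emph{Step 3: fixed point and regularity.} Equipping $\fspaceC\at{\ccinterval{0}{T}}$ with the weighted norm $\norm{\si}_\lambda:=\sup_{t\in\ccinterval{0}{T}}e^{-\lambda t}\nabs{\si\at{t}}$ and choosing $\lambda$ large, the Volterra bound turns $\calS$ into a strict contraction (equivalently, some iterate $\calS^n$ is a contraction in the sup-norm), so $\calS$ has a unique fixed point $\si_*\in\fspaceC\at{\ccinterval{0}{T}}$. Setting $\varrho:=\calR\ato{\si_*}$ we obtain a solution of \eqref{Eqn:FP1}+\eqref{Eqn:FP2} by the definitions of $\calR$ and $\calS$, and uniqueness follows because the $\si$-component of any solution is necessarily a fixed point of $\calS$. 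Smoothness of $\varrho$ in $\ocinterval{0}{T}\times\Rset$ and weak$\star$-continuity in $\fspace{P}^2\at\Rset$ are inherited from the linear theory once $\si_*$ is continuous, and a standard bootstrap upgrades the regularity further. Finally, differentiating $\si_*\at{t}=M\ato{\si_*}\at{t}+\tau\dot\ell\at{t}$ and using the moment balance $\tau\frac{\ddiff}{\ddiff t}M\ato{\si_*}\at{t}=\nu^2\int_\Rset H^{\prime\prime\prime}\at{x}\varrho\pair{t}{x}\dint{x}-\int_\Rset H^{\prime\prime}\at{x}\bat{H^\prime\at{x}-\si_*\at{t}}\varrho\pair{t}{x}\dint{x}$ (computed as in Lemma \ref{Lem:PS.BoundForF}), whose right-hand side is continuous in $t$ by the moment bounds, shows that $M\ato{\si_*}\in\fspaceC^1$ and, together with the regularity of $\ell$, that $\si_*\in\fspaceC^1\at{\ccinterval{0}{T}}$. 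The main obstacle is Step 2: the Volterra-type Lipschitz estimate for the linear flow on the unbounded line with a linearly growing drift, carried out so that the uniform second-moment control survives; the remaining ingredients (mass conservation, moment bounds, the contraction, the bootstrap) are routine.
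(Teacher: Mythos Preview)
Your overall scheme matches the paper's: freeze $\sigma$, solve the linear problem to define $\calR[\sigma]$, and run a Banach contraction on $\calS$ in a weighted sup-norm. Steps~1 and~3 are fine and essentially identical to the paper's treatment.

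The gap is Step~2, which you correctly flag as the crux but do not actually carry out. Tracking $\int(1+x^2)\abs{w}\dint{x}$ for $w=\calR[\sigma_1]-\calR[\sigma_2]$ does not close in the way you suggest: the forcing in the equation for $w$ is $\partial_x\bigl((\sigma_2-\sigma_1)\calR[\sigma_2]\bigr)$, i.e.\ in divergence form. Testing against a Kruzhkov mollifier $h_\eps'(w)$ (weighted or not) and integrating by parts leaves a residual term of the form $(\sigma_2-\sigma_1)\int\phi\, h_\eps''(w)\,w_x\,\calR[\sigma_2]\dint{x}$ that cannot be absorbed without a~priori control on $w_x$ or on $\norm{\partial_x\calR[\sigma_2]}_{L^1}$, neither of which is available uniformly near $t=0$ for general $\varrho_\ini\in\fspace{P}^2\at\Rset$. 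A genuine duality argument via the backward Kolmogorov equation would work, but you do not set it up; the phrase ``standard energy/duality estimates'' hides precisely the step that needs an idea.

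The paper's device is to pass to the primitive: set $R\pair{t}{x}=\int_{-\infty}^x w\pair{t}{y}\dint{y}$, the difference of cumulative distribution functions. The second-moment bound gives $R\pair{t}{\cdot}\in\fspaceL^1\at\Rset$, and $R$ satisfies
\[
\tau\partial_t R=\nu^2\partial_x^2 R+\bat{H'-\sigma_2}\,\partial_x R-(\sigma_2-\sigma_1)\,\varrho_1,
\]
where now both the drift and the forcing are in \emph{non}-divergence form. The Kruzhkov trick with $h_\eps(r)=\sqrt{\eps+r^2}$ then yields $\tau\tfrac{\ddiff}{\ddiff t}\int h_\eps(R)\dint{x}\leq C\int h_\eps(R)\dint{x}+\abs{\sigma_2-\sigma_1}$, hence $\int\abs{R\pair{t}{\cdot}}\dint{x}\leq \tau^{-1}e^{Ct/\tau}\int_0^t\abs{\sigma_2-\sigma_1}\dint{s}$. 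Since $H''$ is bounded, $\bigl|\calS[\sigma_2](t)-\calS[\sigma_1](t)\bigr|=\bigl|\int H''R\dint{x}\bigr|\leq C\int\abs{R}\dint{x}$, which is exactly the Volterra estimate you need. In short, the right object to estimate is the Wasserstein-$1$ distance $\int\abs{R}\dint{x}$, not a weighted $\fspaceL^1$-norm of $w$.
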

\begin{proof}
\par
{\ul{\emph{Operators and moment balances:}}}
For given $\si\in\fspaceC\at{\ccinterval{0}{1}}$, the existence, uniqueness and regularity of $\calR\ato{\si}$ can be
established by adapting standard methods. For instance, \cite[Section 6, Corollary 4.2 and Theorem 4.5]{Fri75} guarantees the existence and uniqueness of smooth solutions under 
slightly stronger assumptions, namely the boundedness of $H^\prime$. For linearly increasing $H^\prime$,
we are only aware of results concerning the stochastic Langevin equation $\tau\dint{x}=\bat{\si\at{t}-H^\prime\at{x}}\dint{t}+\sqrt{2}\nu\dint{W}$; see for instance \cite[Section 5, Theorem 1.1]{Fri75}. The solution $\calR\ato\si$ to \eqref{Eqn:LinearFP} is then provided by the corresponding probability distribution function for finding a particle at $\pair{t}{x}$. We also refer 
the reader to \cite{JKO98,ASZ09}, which study the existence and uniqueness problem for similar equations in the framework of Wasserstein gradient flows, and to \cite{Ebe13}, which generalizes this approach to \eqref{Eqn:FP1}+
\eqref{Eqn:FP2P}. 
\par
Using the PDE \eqref{Eqn:LinearFP} as well as integration by parts we deduce that $\varrho=\calR\ato{\si}$ satisfies the moment balance
\begin{align}
\label{Prop:ExistenceAndUniqueness.Eqn0}
\tau\frac{\dint}{\dint{t}}\int_\Rset \psi\at{x}\varrho\pair{t}{x}\dint{x}=
\nu^2\int_\Rset\psi^{\prime\prime}\at{x}\varrho\pair{t}{x}\dint{x}+
\int_\Rset\psi^\prime\at{x}
\bat{\si\at{t}-H^\prime\at{x}} \varrho\pair{t}{x}\dint{x}
\end{align}
for any weight function $\psi$ with $\abs{\psi\at{x}}+\abs{\psi^{\prime\prime}\at{x}}\leq {C} \at{1+x^2}$
and $\abs{\psi^\prime\at{x}}\leq C\at{1+\abs{x}}$ for all $x\in\Rset$, and this implies 
the desired continuity of moments with respect to $t$. For $\psi\at{x}=1$  we obtain
$\int\varrho\pair{t}{x}\dint{x}=1$ and with $\psi\at{x}=1+x^2$ we verify that
\begin{align*}
\int_\Rset x^2\varrho\pair{t}{x}\dint{x}\leq \at{1+\int_\Rset x^2\varrho_\ini\at{x}\dint{x}}
\exp\at{C\frac{1+\nu^2+\norm{\si}_\infty}{\tau}t}
\end{align*} 
holds for all $t\in\ccinterval{0}{T}$, where we used that
$\abs{H^\prime\at{x}}$ grows at most linearly as $x\to\pm\infty$ according to Assumption \ref{Ass:Potential}.
Moreover, the choice $\psi\at{x}=H^\prime\at{x}$ reveals that the operator $\calS$ is well defined.
\par
{\ul{\emph{Lipschitz estimates:}}}
We next consider two functions $\si_1,\si_2\in\fspaceC\at{\ccinterval{0}{T}}$, abbreviate $\varrho_i:=\calR\ato{\si_i}$, and introduce
$R_1$ and $R_2$ by
\begin{align*}
R_i\pair{t}{x}:=\int_{-\infty}^x\varrho_i\pair{t}{y}\dint{y}.
\end{align*}
The function $R:=R_2-R_1$ then satisfies
\begin{align*}
\tau\partial_{t}R\pair{t}{x}=\nu^2\partial_x^2R\pair{t}{x}+ 
\bat{H^{\prime}\at{x}-\si_2\at{t}}\partial_x R\pair{t}{x}
-
\bat{\si_2\at{t}-\si_1\at{t}}\varrho_1\pair{t}{x}.
\end{align*}
In view of $\varrho_i\pair{t}{\cdot}\in\fspace{P}^2\at\Rset$ we verify 
\begin{align*}
\babs{R\pair{t}{x}}=\abs{\int_{x}^{\infty}\varrho_2\pair{t}{y}-\varrho_1\pair{t}{y}\dint{y}}\leq \frac{1}{x^2}
\int_{x}^{\infty}y^2\abs{\varrho_2\pair{t}{y}-\varrho_1\pair{t}{y}}\dint{y}=\Do{x^{-2}}
\end{align*}
for all $x>0$, and since a similar estimate holds for $x<0$ we verify
$R\pair{t}{\cdot}\in\fspaceL^1\at{\Rset}$ for all $t$ as well as 
\begin{align*}
\Babs{\calS\ato{\si_2}\at{t}-\calS\ato{\si_1}\at{t}}=\abs{\int_\Rset H^\prime\at{x}\partial_xR\pair{t}{x}\dint{x}}=
\abs{\int_\Rset H^{\prime\prime}\at{x}R\pair{t}{x}\dint{x}}
\leq{C}\int_\Rset \abs{R\pair{t}{x}}\dint{x}.
\end{align*}
In order to establish an $\fspaceL^1$-bounds for $R$, we now fix $\eps>0$ and approximate the modulus function
by
$h_\eps\at{r}:=\sqrt{\eps+r^2}$. Thanks to
$-1\leq h_\eps^\prime\at{r}\leq{1}$ and 
$
h_\eps^{\prime\prime}\at{r}\geq0$ for all $r\in\Rset$,
we obtain the moment estimate
\begin{align*}
\tau\frac{\dint}{\dint{t}}\int_\Rset h_\eps\bat{R\pair{t}{x}}\dint{x}&\leq
-\int_\Rset H^{\prime\prime}\at{x}h_\eps\bat{R\pair{t}{x}}\dint{x}
-
\bat{\si_2\at{t}-\si_1\at{t}}\int_\Rset h_\eps^\prime\bat{R\pair{t}{x}} \varrho_1\pair{t}{x}
\\&
\leq
C \int_\Rset  h_\eps\bat{R\pair{t}{x}}\dint{x}+\Babs{\si_2\at{t}-\si_1\at{t}},
\end{align*}
where $C:=\sup_{x\in\Rset}\abs{H^{\prime\prime}\at{x}}$. Using the comparison principle for ODEs and
passing to the limit $\eps\to0$ we therefore get
\begin{align*}
\int_\Rset\babs{R\pair{t}{x}}\dint{x}\leq \tau^{-1}\exp\at{C{\tau}^{-1}t} \int_0^t\Babs{\si_2\at{s}-\si_1\at{s}}\dint{s},
\end{align*}
where we used that $R\pair{0}{\cdot}=0$ holds by construction. 
\par
{\ul{\emph{Fixed point argument:}}}
The estimates derived so far
ensure that
\begin{align*}
\Babs{\calS\ato{\si_2}\at{t}-\calS\ato{\si_1}\at{t}}= C \int_0^t\Babs{\si_2\at{s}-\si_1\at{s}}\dint{s}
\end{align*}
holds for some constant $C$ depending on $\nu$, $\tau$, $T$, $H$, and the initial data $\varrho_\ini$. Consequently,
$\calS$ is contractive with respect to  
$\norm{\si} := \sup_{t\in\ccinterval{0}{T}} \exp\at{-2C t} \abs{\sigma\at{t}}$, which is 
equivalent to the standard norm in $\fspace{C}\at{\ccinterval{0}{T}}$. The existence of a unique fixed point is therefore granted by Banach's Contraction Principle.
Now suppose that $\calS\ato{\si}=\si$. From 
\eqref{Prop:ExistenceAndUniqueness.Eqn0} with $\psi\at{x}=H^\prime\at{x}$ and $\psi\at{x}=x$ we then conclude that
$\si$ is continuously differentiable and that \eqref{Eqn:FP2} is satisfied, respectively.
\end{proof}

We finally derive some bounds for the solutions 
to the nonlocal Fokker-Planck equation \eqref{Eqn:FP1}+\eqref{Eqn:FP2} which hold for all sufficiently small parameters $\tau$ and $\nu$.

\begin{proposition}[uniform bounds for solutions]
\label{Prop:UniformBounds}
Suppose that $0<\nu\leq \ol\nu$ and 
$0<\tau\leq\ol\tau$.
Then, each solution from 
Proposition \ref{Prop:ExistenceAndUniqueness} satisfies
\begin{align*}
\sup\limits_{t\in\ccinterval{0}{T}}\at{\babs{\si\at{t}}+\int_\Rset x^2\varrho\pair{t}{x}\dint{x}}\leq{C}
\end{align*}
and
\begin{align*}
\sup\limits_{t\in\ccinterval{\nu^2\tau }{T}}\norm{\varrho\pair{t}{\cdot}}_\infty
\leq\frac{C}{\nu^2},\qquad
\int_{\nu^2\tau}^T\calD\at{t}\dint{t}\leq C \tau ,
\end{align*}
where the constant $C$ is independent of $\tau$ and $\nu$ but depends on $H$, $\bar\tau$,
$\bar\nu$, $\ell$, $T$, and $\int_\Rset x^2\varrho_\ini\at{x}\dint{x}$.
\end{proposition}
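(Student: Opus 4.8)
The plan is to establish the three estimates in the order (i) bounds for $\si$ and the second moment, (ii) the $\fspaceL^\infty$-bound, (iii) the dissipation bound, since each of them uses the preceding ones; throughout one may work on $\ocinterval{0}{T}$, where $\varrho$ is smooth by Proposition~\ref{Prop:ExistenceAndUniqueness}, and integrate the relevant balances from a positive time.

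\emph{Step (i): second moment and multiplier.} Write $M_2\at{t}:=\int_\Rset x^2\varrho\pair{t}{x}\dint{x}$. Inserting $\psi\at{x}=x^2$ into the moment balance \eqref{Prop:ExistenceAndUniqueness.Eqn0} gives
\[
\tau\dot M_2\at{t}=2\nu^2+2\int_\Rset x\bat{\si\at{t}-H^\prime\at{x}}\varrho\pair{t}{x}\dint{x}.
\]
By Assumption~\ref{Ass:Potential} the function $H^\prime$ is asymptotically linear with positive asymptotic slopes, so there are constants $c>0$ and $C$ with $xH^\prime\at{x}\geq c\,x^2-C$ and $\abs{H^\prime\at{x}}\leq C\at{1+\abs{x}}$ for all $x\in\Rset$. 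Combining the latter with $\int_\Rset x\varrho\pair{t}{x}\dint{x}=\ell\at{t}$ and the mean-field formula \eqref{Eqn:FP2} yields $\babs{\si\at{t}}\leq C\bat{1+\sqrt{M_2\at{t}}}$, and then Young's inequality together with $\nu\leq\ol\nu$, $\tau\leq\ol\tau$ gives the differential inequality $\tau\dot M_2\at{t}\leq C-c\,M_2\at{t}$. By the scalar comparison principle $M_2\at{t}\leq\max\big\{M_2\at{0},\,C/c\big\}$ for all $t$, and since $M_2\at{0}=\int_\Rset x^2\varrho_\ini\at{x}\dint{x}$ this bounds $M_2$, and hence $\si$, uniformly in $\tau$, $\nu$.

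\emph{Step (ii): $\fspaceL^\infty$-bound.} After rescaling time by $\tau$, equation \eqref{Eqn:FP1} becomes a Fokker--Planck equation with diffusivity $\nu^2$ and a linearly growing, confining drift. First I would run a Nash--Moser iteration: testing against $\varrho^{p-1}$ and integrating by parts, the drift contribution collapses to $-\tfrac1p\int_\Rset H^{\prime\prime}\varrho^p\dint{x}$, in which only the bounded quantity $H^{\prime\prime}$ appears (Assumption~\ref{Ass:Potential}), while the mass $\int_\Rset\varrho\pair{t}{x}\dint{x}=1$ is conserved. Combined with the one-dimensional Nash inequality this produces the ultracontractivity estimate $\norm{\varrho\pair{s_0+\delta}{\cdot}}_\infty\leq C\,e^{C\delta}\at{\nu^2\delta}^{-1/2}$ on rescaled time windows of length $\delta$, with $C$ independent of $\tau$ and $\nu$. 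Applying it with $\delta=\nu^2$ on a window ending at an arbitrary rescaled time in $\ccinterval{\nu^2}{T/\tau}$, and using $e^{C\nu^2}\leq C$, yields $\norm{\varrho\pair{t}{\cdot}}_\infty\leq C\nu^{-2}$ for all $t\in\ccinterval{\nu^2\tau}{T}$. (Alternatively one may invoke Aronson-type Gaussian upper bounds for the fundamental solution, absorbing the unbounded drift by a localization argument based on the second-moment bound from Step~(i).) Making this iteration quantitative in $\nu$ — i.e. the careful bookkeeping of the constants through the dyadic steps — is the main technical obstacle of the proof.

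\emph{Step (iii): dissipation bound.} Integrating the energy balance \eqref{Eqn:EnergyBalance} over $\ccinterval{\nu^2\tau}{T}$ gives
\[
\int_{\nu^2\tau}^{T}\calD\at{t}\dint{t}=\tau\Bat{\calE\at{\nu^2\tau}-\calE\at{T}}+\tau\int_{\nu^2\tau}^{T}\si\at{t}\dot\ell\at{t}\dint{t}.
\]
The last term is bounded by $C\tau$ since $\si$ is bounded by Step~(i) and $\abs{\dot\ell},T\leq C$ by Assumption~\ref{Ass:Constraint}. For the energy endpoints, $H$ is bounded below and satisfies $\abs{H\at{x}}\leq C\at{1+x^2}$, so $\int_\Rset H\varrho\pair{t}{x}\dint{x}$ lies between $\min H$ and $C\bat{1+M_2\at{t}}\leq C$; the Boltzmann entropy satisfies the Gaussian lower bound $\int_\Rset\varrho\pair{t}{x}\ln\varrho\pair{t}{x}\dint{x}\geq-\tfrac12\ln\bat{2\pi e\,M_2\at{t}}\geq-C$, while at $t=\nu^2\tau$ the $\fspaceL^\infty$-bound from Step~(ii) gives $\int_\Rset\varrho\ln\varrho\dint{x}\leq\ln\norm{\varrho\pair{\nu^2\tau}{\cdot}}_\infty\leq C\ln\at{1/\nu}$, hence $\nu^2\int_\Rset\varrho\ln\varrho\dint{x}\leq C\nu^2\ln\at{1/\nu}\leq C$. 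Therefore $\babs{\calE\at{\nu^2\tau}}+\babs{\calE\at{T}}\leq C$ uniformly in $\tau$, $\nu$, and the claimed estimate $\int_{\nu^2\tau}^{T}\calD\at{t}\dint{t}\leq C\tau$ follows.
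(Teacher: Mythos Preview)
Your proof is correct and tracks the paper closely in Steps~(i) and~(iii). In Step~(i) you close the loop between $M_2$ and $\si$ in a single ODE argument (substituting $\abs{\si}\leq C\bat{1+\sqrt{M_2}}$ directly into the balance for $M_2$), whereas the paper does it in two stages: first bounding $M_2$ in terms of $\norm{\si}_\infty$ via comparison, then $\norm{\si}_\infty$ in terms of $\sup M_2$, and closing algebraically via $\norm{\si}_\infty\leq C\sqrt{1+\norm{\si}_\infty}$. In Step~(iii) you bound the entropy from below via the maximum-entropy property of Gaussians, while the paper uses the relative-entropy representation $\calE\at{T}=\calE_0\bat{\varrho\pair{T}{\cdot}}=\nu^2\int\varrho\ln\at{\varrho/\ga_0}\dint{x}$ together with $\ln z+1/z\geq1$ and $\int\ga_0\dint{x}\leq C\nu$; both are standard.

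Step~(ii) is a genuinely different route. The paper does not iterate at all: it writes \eqref{Eqn:FP1} via Duhamel's principle with the rescaled heat kernel, treating $\partial_x\bat{\at{H^\prime-\si}\varrho}$ as a source, and introduces the weighted quantity $M_{t_0}\at{t}:=\sup_{0\leq s\leq t}\sqrt{s}\,\norm{\varrho\pair{t_0+s}{\cdot}}_\infty$. A H\"older estimate on the Duhamel integral (using the second-moment bound to control $\norm{\at{H^\prime-\si}\varrho}_2$) produces a self-referential inequality $M_{t_0}\at{t}\leq C\sqrt{\tau}/\nu+C\sqrt{t\,M_{t_0}\at{t}}/\at{\nu^{3/2}\tau^{1/4}}$, which one solves and evaluates at $t=\nu^2\tau$. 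Your Nash--Moser argument is equally valid: the key observation that after testing against $\varrho^{p-1}$ the drift contributes only through the bounded quantity $H^{\prime\prime}$ is precisely what makes the iteration close despite the linearly growing drift, and the resulting ultracontractivity bound $\at{\nu^2\delta}^{-1/2}$ matches the Duhamel estimate. The Duhamel route is more explicit and avoids the dyadic bookkeeping you flag as the main technical obstacle; your route is more robust and would generalise more easily, but requires more care in tracking constants through the iteration.
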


\begin{proof}
\ul{\emph{Moment estimates:}}
Due to the dynamical constraint \eqref{Eqn:FP2}, the moment balance \eqref{Prop:ExistenceAndUniqueness.Eqn0} with $\psi\at{x}=x^2$ implies
\begin{align*}
\tau\frac{\dint}{\dint{t}}\int_\Rset x^2\varrho\pair{t}{x}\dint{x}\leq 2\nu^2+2\norm{\si}_\infty\norm{\ell}_\infty+2C-
2c \int_\Rset x^2\varrho\pair{t}{x}\dint{x},
\end{align*}
where $c$ and $C$ are chosen such that $xH^\prime\at{x}\geq cx^2-C$ holds for all $x\in\Rset$. Employing
the comparison principle for scalar ODEs we therefore find
\begin{align*}
\int_\Rset x^2\varrho\pair{t}{x}\dint{x}&\leq  \max\left\{\frac{\nu^2+\norm{\si}_\infty\norm{\ell}_\infty+C}{c},\,
\int_\Rset x^2\varrho_\ini\at{x}\dint{x}\right\}\leq {C}\bat{1+\norm{\si}_\infty}.
\end{align*}
Moreover, by applying H\"older's inequality to \eqref{Eqn:FP2} we get
\begin{align*}
\babs{\si\at{t}}&\leq \tau\babs{\dot\ell\at{t}}+\at{\int_\Rset \abs{H^\prime\at{x}}^2\varrho\pair{t}{x}\dint{x}}^{1/2}
\at{\int_\Rset\varrho\pair{t}{x}\dint{x}}^{1/2}\\
&\leq
C+C\at{\int_\Rset x^2\varrho\pair{t}{x}\dint{x}}^{1/2},
\end{align*}
where $C$ is some constant independent of $\tau$ and $\nu$. The combination of both estimates
gives
\begin{align*}
\norm{\si}_\infty\leq C\sqrt{1+\norm{\si}_\infty},
\end{align*}
and the desired moment bounds follow immediately. 
\par
\ul{\emph{$\fspaceL^\infty$-estimate after waiting time $\nu^2\tau$:}} Parabolic regularity theory implies that $\norm{\varrho\pair{t}{\cdot}}_\infty$
is well-defined for all $t>0$ but it remains to understand how this quantity depends on $t$ and the parameters $\tau$, $\nu$. To this end we  fix $t_0$ with $0< t_0<T$, consider the function
\begin{align*}
M_{t_0}\at{t}:=\sup\limits_{0\leq{s}\leq{t}}\norm{\sqrt{s}\varrho\pair{t_0+s}{\cdot}}_\infty,
\end{align*}
and 
denote by $C$ any generic constant that is independent of $\tau$, $\nu$ and $t_0$.
Using the rescaled heat kernel
\begin{align*}
K\pair{t}{x}:=\sqrt{\frac{\tau}{4\pi\nu^2t}}\exp\at{-\frac{\tau x^2}{4\nu^2t}},
\end{align*}
as well as Duhamel's Principle, any solution to \eqref{Eqn:FP1}+\eqref{Eqn:FP2} can be written as
\begin{align*}
\varrho\pair{t_0+t}{x}=I_{1,\,t_0}\pair{t}{x}+I_{2,\,t_0}\pair{t}{x},
\end{align*}
where 
\begin{align*}
I_{1,t_0}\pair{t}{x}:=\int_\Rset K\pair{t}{x-y}\varrho\pair{t_0}{y} \dint{y}
\end{align*}
and
\begin{align*}
I_{2,\,t_0}\pair{t}{x}:=\frac{1}{\tau}\int_0^t\int_\Rset K_x\pair{t-s}{x-y}f\pair{t_0+s}{y} \dint{y}\dint{s},\qquad
f\pair{t}{x}:=\bat{H^\prime\at{x}-\si\at{t}}\varrho\pair{t}{x}.
\end{align*}
The first term can be estimated by
\begin{align*}
\babs{I_{1,\,t_0}\pair{t}{x}}\leq \norm{K\pair{t}{\cdot}}_\infty 
\int_\Rset \varrho\pair{t_0}{y}\dint{y}\leq \frac{C}{\nu}\sqrt{\frac{\tau}{ t}},
\end{align*}
whereas for the second term we employ H\"older's inequality to find
\begin{align*}
\babs{I_{2,\,t_0}\pair{t}{x}}\leq \frac{1}{\tau}\int_0^t \at{\int_\Rset K_x\pair{t-s}{y}^2\dint{y}}^{1/2} \at{\int_\Rset f\pair{t_0+s}{y}^2\dint{y}}^{1/2} .
\end{align*}
By direct computations we verify 
\begin{align*}
\int_\Rset K_x\pair{t-s}{y}^2\dint{y}=\at{\frac{\tau}{\nu^2 \at{t-s}}}^{3/2}\at{\frac{1}{2\pi}\int_\Rset\abs{y}^2\exp\at{-2y^2}\dint{x}},
\end{align*}
and using  $\abs{H^\prime\at{x}}\leq {C}\at{1+\abs{x}}$, $\int_\Rset\varrho\pair{t}{x}\dint{x}=1$ as well
as the uniform moment bounds derived above we get
\begin{align*}
\int_\Rset f\pair{t_0+s}{y}^{2}\dint{y}&\leq C\norm{\varrho\pair{t_0+s}{\cdot}}_\infty\at{\babs{\si\at{t_0+s}}^2+1+\int_\Rset y^2\varrho\pair{t_0+s}{y}\dint{y}}\\
&\leq {C}s^{-1/2}M_{t_0}\at{s}.
\end{align*}
The latter three estimates imply
\begin{align*}
\abs{I_{2,\,t_0}\pair{t}{x}}\leq  
\frac{C}{\nu^{3/2}\tau^{1/4}}
\int_0^t \at{t-s}^{-3/4}s^{-1/4}\sqrt{M_{t_0}\at{s}}\dint{s}\leq\frac{C \sqrt{M_{t_0}\at{t}}}{\nu^{3/2}\tau^{1/4}},
\end{align*}
where we used the identity $\int_0^t \at{t-s}^{-3/4}s^{-1/4}\dint{s}=\int_0^1 \at{1-s}^{-3/4}s^{-1/4}\dint{s}<\infty$
and that
$M_{t_0}$ is an increasing function in $t$.
We therefore 
get
\begin{align*}
\sqrt{t}\norm{\varrho\pair{t_0+t}{\cdot}}_\infty\leq 
\frac{C\sqrt{\tau}}{\nu}+\frac{C\sqrt{t M_{t_0}\at{t}}}{\nu^{3/2}\tau^{1/4}},
\end{align*}
and since an analogous estimate holds for all $0\leq{s}\leq{t}$, we arrive 
at the estimate
\begin{align*}
M_{t_0}\at{t}\leq 
\frac{C\sqrt{\tau}}{\nu}+\frac{C\sqrt{t M_{t_0}\at{t}}}{\nu^{3/2}\tau^{1/4}}.
\end{align*}
This implies
\begin{align}
\label{Prop:UniformBounds.Eqn1}
\sqrt{t}\norm{\varrho\pair{t_0+t}{\cdot}}_\infty \leq M_{t_0}\at{t}\leq C\max\left\{
\frac{\sqrt{\tau}}{\nu},\,
\frac{t}{\nu^3\sqrt\tau}
\right\}, 
\end{align}
and for $t=\nu^2\tau$ we get
\begin{align*}
\norm{\varrho\pair{t_0+\nu^2\tau}{\cdot}}_\infty\leq \frac{C}{\nu^2}.
\end{align*} 
The claimed $\fspaceL^\infty$-estimate now follows since $t_0$ was arbitrary and $C$ independent of $t_0$.
\par
\ul{\emph{Bounds for energy and dissipation:}}
The energy balance \eqref{Eqn:EnergyBalance} implies
\begin{align*}
\int_{\nu^2\tau}^T\calD\at{t}\dint{t}&=
\tau\at{\calE\at{\nu^2\tau}-\calE\at{T}+\int_{\nu^2\tau}^T\si\at{t}\dot\ell\at{t}\dint{t}}\leq
\tau\at{\calE\at{\nu^2\tau}-\calE\at{T}+C},
\end{align*}
and from the definition of the energy \eqref{Eqn:DefEnergy}, the above $\fspaceL^\infty$-bounds,
and $H\at{x}\leq {C}\at{1+x^2}$ we infer that
\begin{align*}
\calE\at{\nu^2\tau}&\leq \nu^2 \int_\Rset \varrho\pair{\nu^2\tau}{x}\ln \varrho\pair{\nu^2\tau}{x}\dint{x}+C\int_\Rset \at{1+x^2} \varrho\pair{\nu^2\tau}{x}\dint{x}
\\&\leq \at{\nu^2\ln\frac{C}{\nu^2}}+C\leq {C}.
\end{align*}
In order to derive a lower bound for $\calE\at{T}$, we assume (without loss of generality)
that the global minimum of $H$ 
is normalized to $0$. The properties of $H$, see Assumption \ref{Ass:Potential}, then guarantee the existence
of constants $c>0$ as well as $\bar{x}_-<0$ and $\bar{x}_+>0$ such that
\begin{align*}
H\at{x}\geq c \left\{\begin{array}{lcl}
\at{x-\bar{x}_-}^2&\text{for}& x\leq{0},\\
\at{x-\bar{x}_+}^2&\text{for}& x\geq{0},
\end{array}\right.
\end{align*}
and hence we estimate
\begin{align*}
\int_\Rset \ga_0\at{x}\dint{x}\leq 
\int_{-\infty}^0\exp\at{-\frac{c\at{x-\bar{x}_-}^2}{\nu^2}}\dint{x}+
\int_0^{+\infty}\exp\at{-\frac{c\at{x-\bar{x}_+}^2}{\nu^2}}\dint{x}\leq{C}\nu,
\end{align*}
where $\ga_0\at{x}:=\exp\at{-H\at{x}/\nu^2}$. This implies, see 
also \eqref{Eqn:DefEnergyFunctional.A} and \eqref{Eqn:DefEnergyFunctional.B},
\begin{align*}
\calE\at{T}&=\calE_0\bat{\varrho\pair{T}{\cdot}}=\nu^2\int_\Rset\varrho\pair{t}{x}\ln\at{\frac{\varrho\pair{t}{x}}{\ga_0\at{x}}}\dint{x}
\\&\geq\nu^2\int_\Rset\varrho\pair{t}{x}\at{\ln\at{\frac{\varrho\pair{t}{x}}{\ga_0\at{x}}}+\frac{\ga_0\at{x}}{\varrho\pair{t}{x}}-1}\dint{x}-\nu^2\int_\Rset \ga_0\at{x}\dint{x}+\nu^2\int_\Rset \varrho\pair{t}{x}\dint{x}
\\&\geq 0-C\nu^3+\nu^2,
\end{align*}
where we used that $\ln{z}+1/z\geq1$ holds for all $z>0$. The desired $\fspaceL^1$-estimate for the dissipation follows immediately.
\end{proof}

\begin{lemma}[refined bounds for more regular initial data]
\label{Lem:ImpovedInitialData}
For initial data $\varrho_\ini\in\fspace L^\infty\at\Rset$ we have
\begin{align*}
\sup\limits_{t\in\ccinterval{0}{T}}\norm{\varrho\pair{t}{\cdot}}_\infty
\leq\frac{C}{\nu^2},\qquad
\int_{0}^T\calD\at{t}\dint{t}\leq C \tau 
\end{align*}
for some constant $C$ which depends only on $H$, $\bar\tau$,
$\bar\nu$, $\ell$, $\int_\Rset x^2\varrho_\ini\at{x}\dint{x}$, and $\nu^2\norm{\varrho_\ini}_\infty$.
\end{lemma}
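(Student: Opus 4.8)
The plan is to upgrade the two uniform bounds of Proposition~\ref{Prop:UniformBounds} by using that the extra $\fspaceL^\infty$-control on $\varrho_\ini$ makes the initial waiting time $\nu^2\tau$ unnecessary. Since the moment estimates in the proof of Proposition~\ref{Prop:UniformBounds} use no $\fspaceL^\infty$-information, they remain available and give $\sup_{t\in\ccinterval{0}{T}}\bat{\abs{\si\at{t}}+\int_\Rset x^2\varrho\pair{t}{x}\dint{x}}\leq C$; I will use this throughout. By the same proposition we already have $\sup_{t\in\ccinterval{\nu^2\tau}{T}}\norm{\varrho\pair{t}{\cdot}}_\infty\leq C\nu^{-2}$, so for the first assertion it only remains to bound $\norm{\varrho\pair{t}{\cdot}}_\infty$ on the short interval $\ccinterval{0}{\nu^2\tau}$.

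On $\ccinterval{0}{\nu^2\tau}$ I would rerun the Duhamel argument of Proposition~\ref{Prop:UniformBounds} with $t_0=0$ but \emph{without} the weight $\sqrt{s}$, which is no longer needed because $\varrho_\ini$ is bounded. Writing $\varrho\pair{t}{x}=I_1\pair{t}{x}+I_2\pair{t}{x}$ with the same heat-kernel and source terms, the first term obeys $\abs{I_1\pair{t}{x}}\leq\norm{\varrho_\ini}_\infty$ since $K\pair{t}{\cdot}$ is a probability density, while the source term is estimated exactly as there, now using $\int_\Rset f\pair{s}{y}^2\dint{y}\leq C\norm{\varrho\pair{s}{\cdot}}_\infty\leq C M\at{t}$ from the moment bounds, with $M\at{t}:=\sup_{0\leq s\leq t}\norm{\varrho\pair{s}{\cdot}}_\infty$, which gives $\abs{I_2\pair{t}{x}}\leq C\tau^{-1/4}\nu^{-3/2}t^{1/4}\sqrt{M\at{t}}$. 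For $t\leq\nu^2\tau$ the coefficient $C\tau^{-1/4}\nu^{-3/2}t^{1/4}$ is at most $C\nu^{-1}$, so the (a priori finite, by parabolic regularity) quantity $M\at{t}$ satisfies the quadratic inequality $M\at{t}\leq\norm{\varrho_\ini}_\infty+C\nu^{-1}\sqrt{M\at{t}}$, which forces $M\at{t}\leq C\bat{\norm{\varrho_\ini}_\infty+\nu^{-2}}\leq C\nu^{-2}$ because $\nu^2\norm{\varrho_\ini}_\infty\leq C$. Together with the bound on $\ccinterval{\nu^2\tau}{T}$ this yields $\sup_{t\in\ccinterval{0}{T}}\norm{\varrho\pair{t}{\cdot}}_\infty\leq C\nu^{-2}$.

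For the dissipation bound I would integrate the energy balance \eqref{Eqn:EnergyBalance} over $\ccinterval{0}{T}$ to get $\int_0^T\calD\at{t}\dint{t}=\tau\bat{\calE\at{0}-\calE\at{T}+\int_0^T\si\at{t}\dot\ell\at{t}\dint{t}}$, so it suffices to bound $\calE\at{0}$ from above and $\calE\at{T}$ from below. The lower bound $\calE\at{T}\geq-C$ is obtained verbatim as in Proposition~\ref{Prop:UniformBounds}, via $\ln z+1/z\geq1$ and $\int_\Rset\exp\at{-H\at{x}/\nu^2}\dint{x}\leq C\nu$, and uses no $\fspaceL^\infty$-information. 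For the upper bound I would use $\varrho_\ini\ln\varrho_\ini\leq\varrho_\ini\ln\norm{\varrho_\ini}_\infty$ and $H\at{x}\leq C\at{1+x^2}$, together with the second-moment bound, to get $\calE\at{0}\leq\nu^2\ln\norm{\varrho_\ini}_\infty+C\leq\nu^2\ln\bat{\nu^2\norm{\varrho_\ini}_\infty}-\nu^2\ln\nu^2+C\leq C$ for $\nu\leq\bar\nu$, where the final constant depends only on $\bar\nu$ and $\nu^2\norm{\varrho_\ini}_\infty$. Hence $\int_0^T\calD\at{t}\dint{t}\leq C\tau$.

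There is no deep obstacle here; the proof is a routine refinement. The two points requiring a little care are: (i) that the Duhamel coefficient $C\tau^{-1/4}\nu^{-3/2}t^{1/4}$ really is of order $\nu^{-1}$ on $\ccinterval{0}{\nu^2\tau}$, which is precisely what lets the quadratic inequality for $M\at{t}$ close at the scale $\nu^{-2}$, and (ii) that $\calE\at{0}$ is finite, which follows because $\abs{\varrho_\ini\ln\varrho_\ini}\leq C\sqrt{\varrho_\ini}$ on $\{\varrho_\ini<1\}$ and $\int_\Rset\sqrt{\varrho_\ini}\dint{x}\leq\bat{\int_\Rset\at{1+x^2}\varrho_\ini\dint{x}}^{1/2}\bat{\int_\Rset\at{1+x^2}^{-1}\dint{x}}^{1/2}<\infty$ by Cauchy--Schwarz.
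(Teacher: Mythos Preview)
Your proposal is correct and follows essentially the same approach as the paper: estimate $I_1$ via Young's inequality using $\varrho_\ini\in\fspaceL^\infty$, control $I_2$ on $\ccinterval{0}{\nu^2\tau}$ by the same Duhamel/heat-kernel bound as in Proposition~\ref{Prop:UniformBounds}, invoke that proposition for $t\geq\nu^2\tau$, and derive the dissipation bound from the energy balance with $\calE\at{0}\leq\nu^2\ln\norm{\varrho_\ini}_\infty+C\leq C$. The one cosmetic difference is that the paper, for the $I_2$ term, directly plugs in the already-established weighted bound \eqref{Prop:UniformBounds.Eqn1} (which gives $M_0\at{t}\leq C\sqrt{\tau}/\nu$ on $\ccinterval{0}{\nu^2\tau}$) rather than setting up your fresh unweighted quadratic inequality $M\at{t}\leq\norm{\varrho_\ini}_\infty+C\nu^{-1}\sqrt{M\at{t}}$; both close at the same scale $\nu^{-2}$ and the difference is purely bookkeeping.
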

\begin{proof}
In this case we can estimate 
\begin{align*}
I_{1,\,0}\pair{t}{x}\leq\norm{\varrho_\ini}_\infty  \int_\Rset K\pair{t}{x}\dint{x}=\norm{\varrho_\ini}_\infty.
\end{align*}
Moreover, for $0\leq{s}\leq {t}\leq \nu^2\tau$ we infer from \eqref{Prop:UniformBounds.Eqn1} that
\begin{align*}
\sqrt{s}\norm{\varrho\pair{s}{\cdot}}_\infty\leq M_0\at{s}\leq M_0\at{t}\leq \frac{C\sqrt{\tau}}{\nu}
\end{align*}
and this implies 
\begin{align*}
I_{2,\,0}\pair{t}{x}\leq
\frac{C}{\nu^{3/2}\tau^{1/4}}
\int_0^t \at{t-s}^{-3/4}\sqrt{\norm{\varrho\pair{s}{\cdot}}_\infty}\dint{s}
\leq
\frac{C}{\nu^{2}}.
\end{align*}
The claimed $\fspaceL^\infty$-estimate now follows from summing both inequalities (for $0\leq{t}\leq \nu^2\tau$) 
and using
Proposition \ref{Prop:UniformBounds} (for $\nu^2\tau\leq{t}\leq{T}$). Moreover, 
the $\fspaceL^1$-bound for the dissipation can be derived as in the proof of Proposition \ref{Prop:UniformBounds}.
\end{proof}

\section{Solutions to the limit model}\label{app:LimitModel}
%
%
\begin{figure}[ht!]%
\centering{%
\includegraphics[height=.3\textwidth, draft=\figdraft]{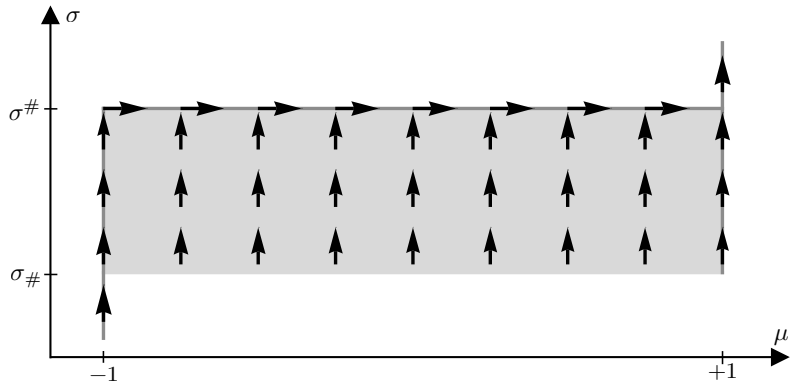}%
}%
\caption{Cartoon of the piecewise smooth vector field $\calV_+$ (arrows) on the set $\Xi$ (gray area) as used in the proof of
Proposition \ref{App:Prop:WellPosednessLimitModel}. For given initial data from $\Xi$, 
there exists a unique integral curve which is continuous and piecewise  continuous differentiable.
}%
\label{Fig:VectorField}
\end{figure}%
We prove that the initial value problem for the limit model has always a unique solution.
\begin{proposition}[well-posedness of the limit dynamics in the fast reaction regimes]
\label{App:Prop:WellPosednessLimitModel}
For any $\ell$ as in Assumption \ref{Ass:Constraint}, and any given initial data $\si\at{0}$ and $\mu\at{0}$ with
$\triple{\ell\at{0}}{\si\at{0}}{\mu\at{0}}\in\Om$, there exist two functions $\si$ and $\mu$ 
on $\ccinterval{0}{T}$ such that 
\begin{enumerate}
\item both $\si$ and $\mu$ are continuous, 
piecewise continuously differentiable, and attain the initial data,
\item the triple $\triple{\ell}{\si}{\mu}$ is a solution to the limit model in the sense of Definition \ref{Def:LimitModel}.
\end{enumerate}
Moreover, $\si$ and $\mu$ are uniquely determined by $\ell$,
$\si\at{0}$, and $\mu\at{0}$.
\end{proposition}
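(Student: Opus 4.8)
The plan is to reformulate the limit model as an autonomous-in-state (but non-autonomous in $t$) differential inclusion on the two-dimensional state space $\Om$, and then to solve it explicitly by tracking which of the three $\si$-regimes the state currently lies in. First I would use the algebraic constraint $\ell\at{t}=\calL\bpair{\si\at{t}}{\mu\at{t}}$ from \eqref{Eqn:LimitModel.AR} to eliminate one variable: since $\calL$ is smooth and, by Remark \ref{Rem:PropertiesOfXi}, strictly monotone in both arguments on the relevant ranges, for fixed $\mu$ the map $\si\mapsto\calL\pair{\si}{\mu}$ is a strictly increasing bijection wherever $\si\notin\{\si_\#,\si^\#\}$, so $\si$ is locally a smooth function of $\pair{\ell}{\mu}$. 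This lets me view the unknown as the single scalar internal variable $\mu$, driven by the given $\ell$, with $\si$ slaved to $\pair{\ell}{\mu}$. The flow rule \eqref{Eqn:LimitModel.DR} then splits the dynamics into three cases according to where $\si\pair{\ell\at{t}}{\mu\at{t}}$ sits relative to $\si_\#,\si^\#$.

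Second I would construct the solution regime by regime. While $\si$ stays strictly between $\si_\#$ and $\si^\#$ (or strictly outside $\ccinterval{\si_\#}{\si^\#}$), the rule forces $\dot\mu=0$, so $\mu$ is frozen and $\si\at{t}=\si\bpair{\ell\at{t}}{\mu}$ evolves with the same regularity as $\ell$ by the implicit function theorem; the state simply slides along a level set of $\mu$ in the $\pair{\ell}{\si}$-plane, exactly as in Figure \ref{Fig:LimitModel}. When this sliding motion brings $\si$ to the value $\si^\#$ from below with $\ell$ still increasing, the state must switch to the Kramers branch $\si\equiv\si^\#$; there $\calL\pair{\si^\#}{\mu}$ is strictly increasing in $\mu$, so the constraint $\ell\at{t}=\calL\pair{\si^\#}{\mu\at{t}}$ uniquely determines $\mu\at{t}=\calL\pair{\si^\#}{\cdot}^{-1}\bat{\ell\at{t}}$, which is $\fspaceC^{0,1}$ and piecewise $\fspaceC^1$, and indeed satisfies $\dot\mu\geq0$ precisely when $\dot\ell\geq0$, consistently with \eqref{Eqn:LimitModel.DR}; this continues as long as $\dot\ell\geq0$ and $\mu$ has not yet hit the boundary $+1$ (after which $\mu$ is frozen at $+1$ and $\si$ moves past $\si^\#$). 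The symmetric analysis handles $\si=\si_\#$ with $\dot\ell\leq0$ and the boundary $\mu=-1$. Concatenating these pieces over $\ccinterval{0}{T}$ produces a continuous, piecewise $\fspaceC^1$ pair $\pair{\si}{\mu}$; piecewise smoothness is inherited from $\ell\in\fspaceC^2$ and the fact that, by Assumption \ref{Ass:Constraint}, $\dot\ell$ changes sign only finitely often, so there are only finitely many switching times.

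Third, for uniqueness I would argue that any solution in the sense of Definition \ref{Def:LimitModel} must follow exactly this itinerary: at a.e.\ $t$ the value of $\si\at{t}$ decides, via \eqref{Eqn:LimitModel.DR}, whether $\dot\mu\at{t}=0$ or $\dot\mu$ is sign-constrained, and in every case the algebraic relation $\ell\at{t}=\calL\bpair{\si\at{t}}{\mu\at{t}}$ together with the monotonicity of $\calL$ pins down the pair $\pair{\si\at{t}}{\mu\at{t}}$ in terms of $\ell\at{t}$ and the current regime. The only freedom is at transition times, and continuity of $\si$ and $\mu$ plus the sign conditions forbid spurious switches (e.g.\ one cannot have $\dot\mu>0$ while $\dot\ell<0$ on the branch $\si=\si^\#$, and one cannot leave the branch $\si=\si^\#$ downward). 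A clean way to package this is a Gr\"onwall/monotonicity comparison: given two solutions with the same data, show that the difference of the $\mu$-components cannot become positive, using that on each branch $\mu$ is either frozen or an explicit Lipschitz function of $\ell$ alone.

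The main obstacle is the careful bookkeeping at the switching times, i.e.\ proving that the itinerary is well-defined and finite and that the pasted solution is genuinely piecewise $\fspaceC^1$ with no accumulation of switches; this is where one must use $\ell\in\fspaceC^2$ with bounded derivatives (Assumption \ref{Ass:Constraint}) to control the number of sign changes of $\dot\ell$ and to rule out chattering between the free regime and the Kramers branches. Everything else — existence on each branch and local uniqueness — reduces to the implicit function theorem and the strict monotonicity of $\calL$ recorded in \eqref{Eqn:LimDyn.Def2} and Remark \ref{Rem:PropertiesOfXi}. I would present the argument geometrically in the $\pair{\ell}{\si}$-plane, following the vector field $\calV_\pm$ of Figure \ref{Fig:VectorField}, which makes the case distinctions transparent.
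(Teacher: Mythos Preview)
Your approach is essentially the paper's: both pass from $t$ to $\ell$ as the driving parameter (the paper does this by normalising $\dot\ell=\pm1$), both read the dynamics as the flow of a piecewise smooth vector field on the state set $\Xi$ of Figure~\ref{Fig:VectorField}, and both glue the solution across subintervals on which $\dot\ell$ has a fixed sign. One caveat: your assertion that Assumption~\ref{Ass:Constraint} forces $\dot\ell$ to change sign only finitely often is not correct --- $\ell\in\fspaceC^2\at{\ccinterval{0}{T}}$ alone does not bound the number of zeros of $\dot\ell$ --- but the paper's proof simply postulates the finite decomposition $0=T_0<\dots<T_N=T$ without justification, so this is a gap you share with the paper rather than a defect of your argument relative to it.
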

\begin{proof}
We observe that
\begin{align*}
\triple{\ell}{\si}{\mu}\in\Om\qquad\text{implies}\qquad \pair{\mu}{\si}\in\Xi,
\end{align*}
where $\Om$ has been introduced in \eqref{Eqn:LimDyn.Def1} and the closed set $\Xi$ is defined by
\begin{align*}
\Xi:=\{-1\}\times({-\infty},\,{\si^\#}] \; \cup\; \oointerval{-1}{+1}\times[{\si_\#},\,{\si^\#}]\;\cup\; \{+1\}\times\cointerval{\si_\#}{+\infty},
\end{align*}
see Figure \ref{Fig:VectorField} for an illustration. Moreover, for each point 
$\pair{\mu}{\si}\in\Xi$ there exists a unique value for $\ell$ such that $\ell=\calL\pair{\si}{\mu}$ with $\calL$ as in \eqref{Eqn:LimDyn.Def2}.
We proceed with discussing three special cases: If $\ell\at{t}=\ell\at{0}$ holds for all $t\in\ccinterval{0}{T}$, then the unique solution to the limit model is given by $\si\at{t}=\si\at{0}$ and
$\mu\at{t}=\mu\at{0}$. In the case of $\dot{\ell}\at{t}>0$ for all $t\in\oointerval{0}{T}$, we argue as follows.
By reparametrization of time, we can assume that $\dot{\ell}\at{t}=1$. The pointwise 
constraint $\ell\at{t}=\calL\bpair{\si\at{t}}{\mu\at{t}}$ then implies
that any solution to the limit model satisfies
\begin{align*}
\bpair{\dot\mu\at{t}}{\dot{\si}\at{t}}=\calV_+\bpair{\mu\at{t}}{{\si}\at{t}}
\end{align*}
for almost all $t\in\oointerval{0}{T}$, where the vector field
$\calV_+:\Xi\to\Rset^2$ is defined by
\begin{align*}
\calV_+\pair{\mu}{\si}=\left\{\begin{array}{lcl}
\pair{\D\Bat{\frac{X_+\at{\si^\#}-X_-\at{\si^\#}}{2}}^{-1}}{0}&&\text{for}-1\leq\mu<+1\text{ and }\si=\si^\#,\\
\pair{0}{\at{\D\frac{1-\mu}{2}X_-^\prime\at\si+\frac{1+\mu}{2}X_+^\prime\at\si}^{-1}}&&\text{for all other points in $\Xi$.}\\
\end{array}\right.
\end{align*}
Since $\calV_+$ is piecewise smooth on $\Xi$, there exists a unique continuous integral curve that 
emanates from the initial data and is moreover piecewise continuously differentiable. The arguments for the third case, that is $\dot{\ell}\at{t}<0$ for all $t\in\oointerval{0}{T}$, are entirely similar but involve a different vector field $\calV_-$. For arbitrary $\ell$, we introduce times
\begin{math}
0=T_0<T_1<\tdots<T_N=T
\end{math}
such that for any $i=1\tdots{N}$ and all $t\in\oointerval{T_{i-1}}{T_i}$ we have either
$\dot\ell\at{t}<0$, or $\dot\ell\at{t}=0$, or $\dot\ell\at{t}>0$.
The assertion now follows by iterating the arguments for the special cases. 
\end{proof}

\section*{Acknowledgement} 
The authors are grateful to the anonymous referees for their very helpful comments regarding the
exposition of the material and the readability of the paper, and to
Andr\'e Schlichting for pointing them to the intimate relation between Poincar\'e and Muckenhoupt constants.
They also wish to thank Henry Frohman and Alexander Mielke
for their valuable remarks on an earlier version of the paper.
Finally, the authors acknowledge the support by the 
Collaborative Research Center \emph{Singular Phenomena and Scaling in Mathematical Models} (DFG SFB 611, University of Bonn).
%
\newcommand{\etalchar}[1]{$^{#1}$}


\begin{thebibliography}{DHM{\etalchar{+}}14}

\bibitem[AGS05]{AGS05}
Luigi Ambrosio, Nicola Gigli, and Giuseppe Savar{\'e}.
\newblock {\em Gradient flows in metric spaces and in the space of probability
  measures}.
\newblock Lectures in Mathematics ETH Z\"urich. Birkh\"auser Verlag, Basel,
  2005.

\bibitem[AMP{\etalchar{+}}12]{AMPSV12}
Steffen Arnrich, Alexander Mielke, Mark~A. Peletier, Giuseppe Savar\'e, and
  Marco Veneroni.
\newblock Passing to the limit in a {W}asserstein gradient flow: from diffusion
  to reaction.
\newblock {\em Calc. Var. and PDE}, 44(3-4):419--454, 2012.

\bibitem[ASZ09]{ASZ09}
Luigi Ambrosio, Giuseppe Savar{\'e}, and Lorenzo Zambotti.
\newblock Existence and stability for {F}okker-{P}lanck equations with
  log-concave reference measure.
\newblock {\em Probab. Theory Related Fields}, 145(3-4):517--564, 2009.

\bibitem[Ber13]{Ber13}
Nils Berglund.
\newblock Kramers' law: validity, derivations and generalisations.
\newblock {\em Markov Process. Related Fields}, 19(3):459--490, 2013.

\bibitem[DGH11]{DGH11a}
Wolfgang Dreyer, Clemens Guhlke, and Michael Herrmann.
\newblock Hysteresis and phase transition in many-particle storage systems.
\newblock {\em Contin. Mech. Thermodyn.}, 23(3):211--231, 2011.

\bibitem[DHM{\etalchar{+}}14]{DHMRW14}
Wolfgang Dreyer, Robert Huth, Alexander Mielke, Joachim Rehberg, and Michael
  Winkler.
\newblock Global existence for a nonlocal and nonlinear {F}okker-{P}lanck
  equation.
\newblock {\em Z. Angew. Math. Phys. (ZAMP)}, pages 448--453, 2014.
\newblock to appear.

\bibitem[DiB02]{DiB02}
Emmanuele DiBenedetto.
\newblock {\em Real analysis}.
\newblock Birkh\"auser Advanced Texts: Basler Lehrb\"ucher. [Birkh\"auser
  Advanced Texts: Basel Textbooks]. Birkh\"auser Boston Inc., Boston, MA, 2002.

\bibitem[DJG{\etalchar{+}}10]{DJGHMG10}
Wolfgang Dreyer, Janko Jamnik, Clemens Guhlke, Robert Huth, Jo{\v{z}}e
  Mo{\v{s}}kon, and Miran Gaber{\v{s}}{\v{c}}ek.
\newblock The thermodynamic origin of hysteresis in insertion batteries.
\newblock {\em Nature Mater.}, 9:448--453, 2010.

\bibitem[Ebe13]{Ebe13}
Simon Eberle.
\newblock Well-posedness of a nonlocal {F}okker-{P}lanck equation.
\newblock Master's thesis, Institute for Applied Mathematics, University of
  Bonn, 2013.

\bibitem[Fou05]{Fou05}
Pierre Foug{\`e}res.
\newblock Spectral gap for log-concave probability measures on the real line.
\newblock In {\em S\'eminaire de {P}robabilit\'es {XXXVIII}}, volume 1857 of
  {\em Lecture Notes in Math.}, pages 95--123. Springer, Berlin, 2005.

\bibitem[Fri75]{Fri75}
Avner Friedman.
\newblock {\em Stochastic differential equations and applications. {V}ol. 1}.
\newblock Academic Press [Harcourt Brace Jovanovich Publishers], New York,
  1975.
\newblock Probability and Mathematical Statistics, Vol. 28.

\bibitem[HN11]{HN11}
Michael Herrmann and Barbara Niethammer.
\newblock Kramers' formula for chemical reactions in the context of a
  {W}asserstein gradient flow.
\newblock {\em Comm. Math. Sc.}, 9(2):623--635, 2011.

\bibitem[HNV12]{HNV12}
Michael Herrmann, Barbara Niethammer, and Juan~J.L. Vel\'{a}zquez.
\newblock Kramers and non-kramers phase transitions in many-particle systems
  with dynamical constraint.
\newblock {\em SIAM Multiscale Model. Simul.}, 10(3):818--852, 2012.

\bibitem[HTB90]{HTB90}
Peter Hanggi, Peter Talkner, and Michal Borkovec.
\newblock Reaction-rate theory: fifty years after {K}ramers.
\newblock {\em Rev. Modern Phys.}, 62(2):251--341, 1990.

\bibitem[Hut12]{Hut12}
Robert Huth.
\newblock {\em On a {F}okker-{P}lanck equation coupled with a constraint --
  analysis of a lithium-ion battery model}.
\newblock PhD thesis, Institut f\"ur Mathematik, Humboldt Universit\"at zu
  Berlin, 2012.

\bibitem[JKO97]{JKO97}
Richard Jordan, David Kinderlehrer, and Felix Otto.
\newblock Free energy and the {F}okker-{P}lanck equation.
\newblock {\em Phys. D}, 107(2-4):265--271, 1997.
\newblock Landscape paradigms in physics and biology (Los Alamos, NM, 1996).

\bibitem[JKO98]{JKO98}
Richard Jordan, David Kinderlehrer, and Felix Otto.
\newblock The variational formulation of the {F}okker-{P}lanck equation.
\newblock {\em SIAM J. Math. Anal.}, 29(1):1--17, 1998.

\bibitem[Kra40]{Kra40}
Hendrik~Anthony Kramers.
\newblock Brownian motion in a field of force and the diffusion model of
  chemical reactions.
\newblock {\em Physica}, 7:284--304, 1940.

\bibitem[Mie11a]{Mie11c}
Alexander Mielke.
\newblock Differential, energetic, and metric formulations for rate-independent
  processes.
\newblock In {\em Nonlinear PDE's and Applications}, Lecture Notes in
  Mathematics, pages 87--170. Springer Berlin Heidelberg, 2011.

\bibitem[Mie11b]{Mie11b}
Alexander Mielke.
\newblock Emergence of rate-independent dissipation from viscous systems with
  wiggly energies.
\newblock {\em Contin. Mech. Thermodyn.}, 24:591--606, 2011.

\bibitem[MT12]{MT12}
Alexander Mielke and Lev Truskinovsky.
\newblock From discrete visco-elasticity to continuum rate-independent
  plasticity: {R}igorous results.
\newblock {\em Arch. Rat. Mech. Anal.}, 203(2):577--619, 2012.

\bibitem[PSV10]{PSV10}
Mark~A. Peletier, Giuseppe Savar{\'e}, and Marco Veneroni.
\newblock From diffusion to reaction via {$\Gamma$}-convergence.
\newblock {\em SIAM J. Math. Anal.}, 42(4):1805--1825, 2010.

\bibitem[PT05]{PT05}
Giuseppe Puglisi and Lev Truskinovsky.
\newblock Thermodynamics of rate-independent plasticity.
\newblock {\em J. Mech. Phys. Solids}, 53(3):655--679, 2005.

\bibitem[Ris89]{Ris89}
Hannes Risken.
\newblock {\em The {F}okker-{P}lanck equation}, volume~18 of {\em Springer
  Series in Synergetics}.
\newblock Springer-Verlag, Berlin, second edition, 1989.
\newblock Methods of solution and applications.

\bibitem[Sch12]{Sch12}
Andr\'e Schlichting.
\newblock {\em Eyring-{K}ramers formula for {P}oincar\'e and logarithmic
  {S}obolev inequalities}.
\newblock PhD thesis, Fakult\"at f\"ur Mathematik und Informatik, Universit\"at
  Leipzig, 2012.

\end{thebibliography}
\end{document}